\newcommand{\nequation}{\setcounter{equation}{0}}
\renewcommand{\theequation}{\mbox{\arabic{section}.\arabic{equation}}}
\newcommand{\R}{{\Bbb R}}
\newcommand{\C}{{\Bbb C}}
\newcommand{\D}{{\Bbb D}}
\newcommand{\proofbegin}{\noindent{\it Proof.\quad}}
\newcommand{\proofend}{\hfill$\Box$\bigskip}
\newcommand{\diag}{\text{\upshape diag\,}}
\newcommand{\re}{\text{\upshape Re\,}}
\def\ds{\displaystyle}
\DeclareMathOperator{\dist}{dist}
\tikzset{middlearrow/.style={
			decoration={markings,
				mark= at position 0.6 with {\arrow{#1}} ,
			},
			postaction={decorate}
		}
	}
\tikzset{->-/.style={decoration={
				markings,
				mark=at position #1 with {\arrow{latex}}},postaction={decorate}}}
\tikzset{-<-/.style={decoration={
				markings,
				mark=at position #1 with {\arrowreversed{latex}}},postaction={decorate}}}
				\tikzset{
	master/.style={
		execute at end picture={
			\coordinate (lower right) at (current bounding box.south east);
			\coordinate (upper left) at (current bounding box.north west);
		}
	},
	slave/.style={
		execute at end picture={
			\pgfresetboundingbox
			\path (upper left) rectangle (lower right);
		}
	}
}
\tikzset{cross/.style={cross out, draw, 
         minimum size=2*(#1-\pgflinewidth), 
         inner sep=0pt, outer sep=0pt}}
\def\XXint#1#2#3{{\setbox0=\hbox{$#1{#2#3}{\int}$}
\vcenter{\hbox{$#2#3$}}\kern-.5\wd0}}
\newtheorem{theorem}{Theorem}[section]
\newtheorem{proposition}[theorem]{Proposition}
\newtheorem{lemma}[theorem]{Lemma}
\newtheorem{definition}[theorem]{Definition}
\newtheorem{assumption}[theorem]{Assumption}
\newtheorem{remark}[theorem]{Remark}
\newtheorem{figuretext}{Figure}
\newtheorem{RHproblem}[theorem]{RH problem}
\date{\today}
\title[T\MakeLowercase{he} B\MakeLowercase{oussinesq equation on the half-line}]
{\Large T\MakeLowercase{he} B\MakeLowercase{oussinesq equation on the half-line}}
\author{C\MakeLowercase{hristophe} C\MakeLowercase{harlier}}
\address{\small Research Institute in Mathematics and Physics, UCLouvain, 1348 Louvain-La-Neuve, Belgium.}
\email{christophe.charlier@uclouvain.be}
\begin{document}

\begin{abstract} 
We study the initial-boundary value problem for the Boussinesq equation on the half-line. Assuming that the solution exists, we prove that it can be recovered from its initial-boundary values via the solution of a $3\times 3$ Riemann-Hilbert problem. The contour consists of $18$ arcs on the unit circle, $18$ segments and $18$ half-lines, and the associated jump matrices involve $9$ reflection coefficients.
\end{abstract}

\maketitle

\noindent
{\small{\sc AMS Subject Classification (2020)}: 35G31, 35Q15, 37K15, 76B15.}

\noindent
{\small{\sc Keywords}: Riemann-Hilbert problem, direct and inverse scattering, initial-boundary value problem.}


\section{Introduction}\nequation

In 1872, Boussinesq \cite{B1872} introduced the equation 
\begin{align}\label{badboussinesq}
u_{tt} = u_{xx} + (u^2)_{xx} + u_{xxxx}
\end{align}
as a model for dispersive long waves of small amplitude propagating in a rectangular channel with a flat bottom.  Here $u(x,t)$ is a real-valued function and subscripts denote partial derivatives. This equation supports solitons \cite{H1973} and admits a Lax pair \cite{Z1974}. It is also linearly ill-posed and is therefore sometimes referred to as the ``bad" Boussinesq equation. Existence of global solutions have been studied on the line \cite{CLmain} as well as for some initial-boundary value problems \cite{KL1977, LS1985, Y2002}. 

In \cite{CLmain}, the direct and inverse scattering problems for \eqref{badboussinesq} on the line --- with smooth solitonless solutions that decay rapidly as $|x|\to +\infty$ --- were solved via a Riemann-Hilbert (RH) problem. This approach was then extended in \cite{CLscatteringsolitons} to handle solutions with solitons, and used in \cite{CLmain, CLsectorI, CLsectorII, CLsectorIV, CLsectorV} to derive asymptotic formulas for the long-time behavior of some global solutions. This approach was also used in \cite{C blowup} to analyze blow-up solutions.

 The goal of this paper is to initiate the study of the direct and inverse problems for the Boussinesq equation on the half-line, for smooth solutions without solitons and with fast decay as $x\to +\infty$. More precisely, we consider the initial-boundary value problem for \eqref{badboussinesq} in the half-line domain
\begin{align}\label{halflinedomain}
  \{0 \leq x < +\infty, \; 0 \leq t \leq T\},
\end{align}
with the initial data
\begin{align}\label{initial data u}
u(x,0)=u_{0}(x), \qquad u_{t}(x,0)=u_{1}(x), \qquad x \geq 0,
\end{align}
and with the boundary values
\begin{align}\label{boundaryvalues u}
u(0,t) = \tilde{u}_{0}(t), \quad u_{x}(0,t) = \tilde{u}_{1}(t), \quad u_{xx}(0,t) = \tilde{u}_{2}(t), \quad u_{xxx}(0,t) = \tilde{u}_3(t), \qquad t \in [0,T].
\end{align}
In \eqref{halflinedomain}--\eqref{boundaryvalues u}, $T > 0$ is a finite constant, $u_{0}, u_{1}$ are smooth functions on $[0,+\infty)$ with fast decay at $+\infty$, and $\tilde{u}_{0}, \tilde{u}_{1}, \tilde{u}_{2}, \tilde{u}_3$ are smooth functions on $[0,T]$.

Assuming that the solution $u(x,t)$ of this initial-boundary value problem exists, we will show that it can be expressed in terms of the solution $M(x,t,k)$ of a $3 \times 3$ RH problem, which depends on $u$ only through its initial-boundary values. The jump contour $\Gamma$ of this RH problem consists of nine lines and the unit circle,
\begin{align*}
\Gamma = i \R \cup \omega i \R \cup \omega^{2} i \R \cup e^{\frac{\pi i}{12}} \R \cup \omega e^{\frac{\pi i}{12}} \R \cup \omega^{2} e^{\frac{\pi i}{12}} \R \cup e^{\frac{3\pi i}{12}} \R \cup \omega e^{\frac{3\pi i}{12}} \R \cup \omega^{2} e^{\frac{3\pi i}{12}} \R \cup \partial \D,
\end{align*}
where $\omega := e^{\frac{2\pi i}{3}}$ and $\partial \D := \{k \in \C \, | \, |k| = 1\}$ (see Figure \ref{fig: Gamma}), and the associated jump matrix is expressed in terms of nine reflection coefficients $r_1(k), \tilde{r}_1(k), r_{2}(k)$, $\tilde{r}_{2}(k), \hat{r}_{2}(k), \check{r}_{2}(k),R_{1}(k)$, $R_{2}(k), \tilde{R}_{2}(k)$. Our main results can be summarized as follows: 
\begin{enumerate}[$-$]
\item Theorem \ref{thm:r1r2} solves the direct problem: given some initial-boundary values $u_0, u_1, \tilde{u}_{0}, \tilde{u}_{1}$, $\tilde{u}_{2}, \tilde{u}_3$, it establishes several properties of $r_1, \tilde{r}_1, r_{2}, \tilde{r}_{2}, \hat{r}_{2}, \check{r}_{2}, R_{1}, R_{2}, \tilde{R}_{2}$. 

\item Theorem \ref{thm:inverse sca} solves the inverse problem: it shows that the solution $u(x,t)$ of (\ref{badboussinesq}) on the half-line domain \eqref{halflinedomain} with initial-boundary values \eqref{initial data u}-\eqref{boundaryvalues u} can be recovered from the solution $M(x,t,k)$ of the aforementioned $3 \times 3$ RH problem.
\end{enumerate}

\begin{figure}
\begin{center}
\scalebox{0.79}{\begin{tikzpicture}[scale=1.1]
\node at (0,0) {};
\draw[black,line width=0.45 mm,->-=0.55,->-=0.87] (0,0)--(15:4.5);
\draw[black,line width=0.45 mm,->-=0.55,->-=0.87] (0,0)--(30:4.5);
\draw[black,line width=0.45 mm,->-=0.55,->-=0.87] (0,0)--(45:4.5);
\draw[black,line width=0.45 mm,->-=0.55,->-=0.87] (0,0)--(75:4.5);
\draw[black,line width=0.45 mm,->-=0.55,->-=0.87] (0,0)--(90:4.5);
\draw[black,line width=0.45 mm,->-=0.55,->-=0.87] (0,0)--(105:4.5);
\draw[black,line width=0.45 mm,->-=0.55,->-=0.87] (0,0)--(135:4.5);
\draw[black,line width=0.45 mm,->-=0.55,->-=0.87] (0,0)--(150:4.5);
\draw[black,line width=0.45 mm,->-=0.55,->-=0.87] (0,0)--(165:4.5);
\draw[black,line width=0.45 mm,->-=0.55,->-=0.87] (0,0)--(-15:4.5);
\draw[black,line width=0.45 mm,->-=0.55,->-=0.87] (0,0)--(-30:4.5);
\draw[black,line width=0.45 mm,->-=0.55,->-=0.87] (0,0)--(-45:4.5);
\draw[black,line width=0.45 mm,->-=0.55,->-=0.87] (0,0)--(-75:4.5);
\draw[black,line width=0.45 mm,->-=0.55,->-=0.87] (0,0)--(-90:4.5);
\draw[black,line width=0.45 mm,->-=0.55,->-=0.87] (0,0)--(-105:4.5);
\draw[black,line width=0.45 mm,->-=0.55,->-=0.87] (0,0)--(-135:4.5);
\draw[black,line width=0.45 mm,->-=0.55,->-=0.87] (0,0)--(-150:4.5);
\draw[black,line width=0.45 mm,->-=0.55,->-=0.87] (0,0)--(-165:4.5);

\draw[black,line width=0.45 mm,-<-=0,-<-=0.055,->-=0.115,->-=0.175,->-=0.237,-<-=0.26,-<-=0.32,-<-=0.385,->-=0.445,->-=0.503,->-=0.568,-<-=0.595,-<-=0.66,-<-=0.72,->-=0.78,->-=0.845,->-=0.905,-<-=0.93] ([shift=(-180:3cm)]0,0) arc (-180:180:3cm);

\node at (-85:4) {\footnotesize $\Gamma_{10''}$ };
\node at (-70:4) {\footnotesize $\Gamma_{11''}$ };
\node at (-41:4) {\footnotesize $\Gamma_{12''}$ };
\node at (-26:4) {\footnotesize $\Gamma_{13''}$ };
\node at (-11.5:4) {\footnotesize $\Gamma_{14''}$ };
\node at (18.3:4) {\footnotesize $\Gamma_{15''}$ };
\node at (33.7:4) {\footnotesize $\Gamma_{16''}$ };
\node at (49:4) {\footnotesize $\Gamma_{17''}$ };
\node at (79.2:4) {\footnotesize $\Gamma_{18''}$ };
\node at (93.6:4) {\footnotesize $\Gamma_{1''}$ };
\node at (108.5:4) {\footnotesize $\Gamma_{2''}$ };
\node at (138:4) {\footnotesize $\Gamma_{3''}$ };
\node at (153:4) {\footnotesize $\Gamma_{4''}$ };
\node at (168:4) {\footnotesize $\Gamma_{5''}$ };
\node at (-161.5:4) {\footnotesize $\Gamma_{6''}$ };
\node at (-146:4) {\footnotesize $\Gamma_{7''}$ };
\node at (-130.5:4) {\footnotesize $\Gamma_{8''}$ };
\node at (-100.5:4) {\footnotesize $\Gamma_{9''}$ };

\node at (95:2.5) {\footnotesize $\Gamma_{1'}$ };
\node at (109.5:2.5) {\footnotesize $\Gamma_{2'}$ };
\node at (139:2.5) {\footnotesize $\Gamma_{3'}$ };
\node at (154:2.5) {\footnotesize $\Gamma_{4'}$ };
\node at (168.5:2.5) {\footnotesize $\Gamma_{5'}$ };
\node at (-160:2.5) {\footnotesize $\Gamma_{6'}$ };
\node at (-144.5:2.5) {\footnotesize $\Gamma_{7'}$ };
\node at (-128.5:2.5) {\footnotesize $\Gamma_{8'}$ };
\node at (-99:2.5) {\footnotesize $\Gamma_{9'}$ };
\node at (-83:2.5) {\footnotesize $\Gamma_{10'}$ };
\node at (-68:2.5) {\footnotesize $\Gamma_{11'}$ };
\node at (-39:2.5) {\footnotesize $\Gamma_{12'}$ };
\node at (-24:2.5) {\footnotesize $\Gamma_{13'}$ };
\node at (-9.5:2.5) {\footnotesize $\Gamma_{14'}$ };
\node at (20:2.5) {\footnotesize $\Gamma_{15'}$ };
\node at (35.5:2.5) {\footnotesize $\Gamma_{16'}$ };
\node at (51:2.5) {\footnotesize $\Gamma_{17'}$ };
\node at (81:2.5) {\footnotesize $\Gamma_{18'}$ };


\node at (-22.5:3.3) {\footnotesize $\Gamma_{13}$};
\node at (0:3.3) {\footnotesize $\Gamma_{14}$};
\node at (22.5:3.3) {\footnotesize $\Gamma_{15}$};
\node at (37.5:3.3) {\footnotesize $\Gamma_{16}$};
\node at (60:3.27) {\footnotesize $\Gamma_{17}$};
\node at (82.5:3.25) {\footnotesize $\Gamma_{18}$};
\node at (96.5:3.2) {\footnotesize $\Gamma_{1}$};
\node at (120:3.25) {\footnotesize $\Gamma_{2}$};
\node at (142:3.27) {\footnotesize $\Gamma_{3}$};
\node at (157:3.25) {\footnotesize $\Gamma_{4}$};
\node at (180:3.25) {\footnotesize $\Gamma_{5}$};
\node at (202.5:3.25) {\footnotesize $\Gamma_{6}$};
\node at (217.5:3.25) {\footnotesize $\Gamma_{7}$};
\node at (240:3.23) {\footnotesize $\Gamma_{8}$};
\node at (262.5:3.25) {\footnotesize $\Gamma_{9}$};
\node at (277.5:3.25) {\footnotesize $\Gamma_{10}$};
\node at (300:3.3) {\footnotesize $\Gamma_{11}$};
\node at (322.5:3.35) {\footnotesize $\Gamma_{12}$};

\end{tikzpicture} \begin{tikzpicture}[scale=1.1]
\node at (0,0) {};
\draw[black,line width=0.45 mm] (0,0)--(15:4.5);
\draw[black,line width=0.45 mm] (0,0)--(30:4.5);
\draw[black,line width=0.45 mm] (0,0)--(45:4.5);
\draw[black,line width=0.45 mm] (0,0)--(75:4.5);
\draw[black,line width=0.45 mm] (0,0)--(90:4.5);
\draw[black,line width=0.45 mm] (0,0)--(105:4.5);
\draw[black,line width=0.45 mm] (0,0)--(135:4.5);
\draw[black,line width=0.45 mm] (0,0)--(150:4.5);
\draw[black,line width=0.45 mm] (0,0)--(165:4.5);
\draw[black,line width=0.45 mm] (0,0)--(-15:4.5);
\draw[black,line width=0.45 mm] (0,0)--(-30:4.5);
\draw[black,line width=0.45 mm] (0,0)--(-45:4.5);
\draw[black,line width=0.45 mm] (0,0)--(-75:4.5);
\draw[black,line width=0.45 mm] (0,0)--(-90:4.5);
\draw[black,line width=0.45 mm] (0,0)--(-105:4.5);
\draw[black,line width=0.45 mm] (0,0)--(-135:4.5);
\draw[black,line width=0.45 mm] (0,0)--(-150:4.5);
\draw[black,line width=0.45 mm] (0,0)--(-165:4.5);

\draw[black,line width=0.45 mm] ([shift=(-180:3cm)]0,0) arc (-180:180:3cm);

\node at (-82.5:4) {\footnotesize $D_{10}$ };
\node at (-60:4) {\footnotesize $D_{11}$ };
\node at (-37.5:4) {\footnotesize $D_{12}$ };
\node at (-22.5:4) {\footnotesize $D_{13}$ };
\node at (0:4) {\footnotesize $D_{14}$ };
\node at (22.5:4) {\footnotesize $D_{15}$ };
\node at (37.5:4) {\footnotesize $D_{16}$ };
\node at (60:4) {\footnotesize $D_{17}$ };
\node at (82.5:4) {\footnotesize $D_{18}$ };
\node at (97.5:4) {\footnotesize $D_{1}$ };
\node at (120:4) {\footnotesize $D_{2}$ };
\node at (142.5:4) {\footnotesize $D_{3}$ };
\node at (157.5:4) {\footnotesize $D_{4}$ };
\node at (180:4) {\footnotesize $D_{5}$ };
\node at (-157.5:4) {\footnotesize $D_{6}$ };
\node at (-142.5:4) {\footnotesize $D_{7}$ };
\node at (-120:4) {\footnotesize $D_{8}$ };
\node at (-97.5:4) {\footnotesize $D_{9}$ };

\node at (97.5:2.5) {\footnotesize $E_{1}$ };
\node at (120:2.5) {\footnotesize $E_{2}$ };
\node at (142.5:2.5) {\footnotesize $E_{3}$ };
\node at (157.5:2.5) {\footnotesize $E_{4}$ };
\node at (180:2.5) {\footnotesize $E_{5}$ };
\node at (-157.5:2.5) {\footnotesize $E_{6}$ };
\node at (-142.5:2.5) {\footnotesize $E_{7}$ };
\node at (-120:2.5) {\footnotesize $E_{8}$ };
\node at (-97.5:2.5) {\footnotesize $E_{9}$ };
\node at (-82.5:2.5) {\footnotesize $E_{10}$ };
\node at (-60:2.5) {\footnotesize $E_{11}$ };
\node at (-37.5:2.5) {\footnotesize $E_{12}$ };
\node at (-22.5:2.5) {\footnotesize $E_{13}$ };
\node at (0:2.5) {\footnotesize $E_{14}$ };
\node at (22.5:2.5) {\footnotesize $E_{15}$ };
\node at (37.5:2.5) {\footnotesize $E_{16}$ };
\node at (60:2.5) {\footnotesize $E_{17}$ };
\node at (82.5:2.5) {\footnotesize $E_{18}$ };


\draw[fill] (0:3) circle (0.08);
\draw[fill] (60:3) circle (0.08);
\draw[fill] (120:3) circle (0.08);
\draw[fill] (180:3) circle (0.08);
\draw[fill] (240:3) circle (0.08);
\draw[fill] (300:3) circle (0.08);

\node at (0:3.3) {\footnotesize $\kappa_1$};
\node at (60:3.25) {\footnotesize $\kappa_2$};
\node at (120:3.27) {\footnotesize $\kappa_3$};
\node at (180:3.3) {\footnotesize $\kappa_4$};
\node at (240:3.25) {\footnotesize $\kappa_5$};
\node at (300:3.3) {\footnotesize $\kappa_6$};

\draw[dashed] (-4.5,-4.5)--(-4.5,4.5);

\end{tikzpicture}}
\end{center}
\begin{figuretext}\label{fig: Gamma}
The contour $\Gamma = \cup_{j=1}^{18} \overline{\Gamma_j\cup \Gamma_{j'} \cup \Gamma_{j''}}$ (left), and the open sets $D_{n},E_{n}$, $n=1,\ldots,18$ together with the sixth roots of unity $\kappa_{j}$, $j=1,\ldots,6$ (right).
\end{figuretext}
\end{figure}

In 1997, Fokas developed a method for analyzing initial-boundary value problems for integrable partial differential equations with $2\times 2$ Lax pairs \cite{Fokas1997, Fokas2002}. Since then, this method has been generalized and applied to various integrable equations on the half-line, such as the nonlinear Schr\"{o}dinger equation \cite{BFS2003, FHM2017, LL2023}, the mKdV equation \cite{BFS2004, LNonlinearFourier}, the ``good" Boussinesq equation \cite{HM2015}, the Camassa-Holm equation \cite{BS2008}, and the Degasperis--Procesi equation \cite{L degasperis}. This work can be viewed as a contribution to this ongoing effort. The approach in this paper is primarily inspired by \cite{CLmain}, but also by an extension of the Fokas method developed by Lenells \cite{L3x3} for solving initial-boundary value problems with $3\times 3$ Lax pairs.

\section{Main results}\nequation\label{mainsec}
Our first result concerns the direct problem of constructing  $r_1(k), \tilde{r}_1(k), r_{2}(k)$, $\tilde{r}_{2}(k), \hat{r}_{2}(k)$, $\check{r}_{2}(k), R_{1}(k), R_{2}(k), \tilde{R}_{2}(k)$ in terms of the initial and boundary values $u_0(x), u_1(x), \tilde{u}_{0}(t)$, $\tilde{u}_{1}(t), \tilde{u}_{2}(t), \tilde{u}_3(t)$.

\subsection{The direct problem}
Let $\R_{+} := [0,+\infty)$ and let $\mathcal{S}(\R_{+})$ denote the Schwartz class of all smooth functions $f$ on $\R_{+}$ such that $f$ and all its derivatives have rapid decay as $x \to + \infty$.  
\begin{definition}\label{def: sol of bad bouss}\upshape
We say that $u(x,t)$ is a {\it Schwartz class solution of \eqref{badboussinesq} with existence time $T\in (0,+\infty)$, initial data $u_0, u_{1} \in \mathcal{S}(\R_{+})$ and boundary values $\tilde{u}_{0},\tilde{u}_{1}, \tilde{u}_{2}, \tilde{u}_{3} \in C^{\infty}([0,T])$} if
\begin{enumerate}[$(i)$] 
  \item $u$ is a smooth real-valued function of $(x,t) \in \R_{+} \times [0,T]
  $.

\item $u$ satisfies \eqref{badboussinesq} for $(x,t) \in \R_{+} \times [0,T]$ and 
\begin{align*}
& u(x,0) = u_0(x), & & u_{t}(x,0) = u_1(x), & &   & &    & & x \in \R_{+}, \\
& u(0,t) = \tilde{u}_{0}(t), & & u_{x}(0,t) = \tilde{u}_{1}(t), & & u_{xx}(0,t) = \tilde{u}_{2}(t), & & u_{xxx}(0,t) = \tilde{u}_{3}(t), & & t \in [0,T].
\end{align*}

  \item $u$ has rapid decay as $x \to +\infty$: for each integer $N \geq 1$,
\begin{align}\label{rapiddecay u}
\sup_{\substack{x \in [0,+\infty) \\ t \in [0, T]}} \sum_{i =0}^N (1+|x|)^N |\partial_x^i u(x,t)| < +\infty.
\end{align}
\end{enumerate} 
\end{definition}

Let $u_0, u_1 \in \mathcal{S}(\R_{+})$ and $\tilde{u}_{0}, \tilde{u}_{1}, \tilde{u}_{2}, \tilde{u}_3 \in C^{\infty}([0,T])$ be some given real-valued functions. In what follows, we suppose that there exists a Schwartz class solution $u(x,t)$ of \eqref{badboussinesq} with existence time $T\in (0,\infty)$, initial data $u_0, u_{1}$ and boundary values $\tilde{u}_{0},\tilde{u}_{1}, \tilde{u}_{2}, \tilde{u}_{3}$. The associated reflection coefficients $r_1, \tilde{r}_1, r_{2}, \tilde{r}_{2}, \hat{r}_{2}, \check{r}_{2}, R_{1}, R_{2}, \tilde{R}_{2}$ are defined as follows. 

For $x \geq 0$ and $t\in [0,T]$, define 
\begin{align}
& v(x,t) = \int_{+\infty}^x u_t(x', t) dx', \qquad v_0(x) = \int_{+\infty}^x u_1(x') dx', \label{vxtdef intro} \\
& \tilde{v}_{0}(t) = \int_{+\infty}^0 u_1(x') dx' + \int_{0}^{t}(\tilde{u}_{1}+2\tilde{u}_{0}\tilde{u}_{1}+\tilde{u}_{3})(t')dt'. \nonumber
\end{align} 
Let $\omega := e^{\frac{2\pi i}{3}}$ and define $\{l_j(k), z_j(k)\}_{j=1}^3$ by
\begin{align}\label{lmexpressions intro}
& l_{j}(k) = i \frac{\omega^{j}k + (\omega^{j}k)^{-1}}{2\sqrt{3}}, \qquad z_{j}(k) = i \frac{(\omega^{j}k)^{2} + (\omega^{j}k)^{-2}}{4\sqrt{3}}, \qquad k \in \C\setminus \{0\}.
\end{align}
For $x \geq 0$, $t\in [0,T]$ and $k\in \C\setminus \{0\}$, let $\mathsf{U}(x,t,k)$ and $\mathsf{V}(x,t,k)$ be given by
\begin{align}\label{mathsfUdef intro}
& \mathsf{U}(x,t,k) = P(k)^{-1} \begin{pmatrix}
0 & 0 & 0 \\
0 & 0 & 0 \\
-\frac{u_{x}(x,t)}{4}-\frac{iv(x,t)}{4\sqrt{3}} & -\frac{u(x,t)}{2} & 0
\end{pmatrix} P(k), \\
& \mathsf{V}(x,t,k) = P(k)^{-1} \begin{pmatrix}
-i \frac{u(x,t)}{\sqrt{3}} & 0 & 0 \\
- i \frac{u_{x}(x,t)}{4\sqrt{3}} - \frac{v(x,t)}{4} & i \frac{u(x,t)}{2\sqrt{3}} & 0 \\
-i \frac{u_{xx}(x,t)}{4\sqrt{3}}-\frac{v_{x}(x,t)}{4} & \frac{iu_{x}(x,t)}{4\sqrt{3}}-\frac{v(x,t)}{4} & i \frac{u(x,t)}{2\sqrt{3}}
\end{pmatrix} P(k),
\end{align} 
where
\begin{align}\label{Pdef intro}
P(k) = \begin{pmatrix}
1 & 1 & 1  \\
l_{1}(k) & l_{2}(k) & l_{3}(k) \\
l_{1}(k)^{2} & l_{2}(k)^{2} & l_{3}(k)^{2}
\end{pmatrix}.
\end{align}

Let $\mu_{1}(0,t,k), \mu_{3}(x,t,k), \mu_{1}^A(0,t,k), \mu_{3}^A(x,t,k)$ be the unique $3 \times 3$-matrix valued solutions of the Volterra integral equations
\begin{subequations}\label{XXAdef intro}
\begin{align}
& \mu_1(0,t,k) = I -  \int_{t}^{T} e^{(t-t')\hat{\mathcal{Z}}(k)} (\mathsf{V}\mu_1)(0,t',k)dt', \label{def mu 1 intro} \\
& \mu_3(x,t,k) = I -  \int_{x}^{+\infty} e^{(x-x')\hat{\mathcal{L}}(k)} (\mathsf{U}\mu_3)(x',t,k)dx',  \label{def mu 3 intro} \\
& \mu^{A}_1(0,t,k) = I + \int_{t}^{T} e^{(t'-t)\hat{\mathcal{Z}}(k)} (\mathsf{V}^{T}\mu_1^{A})(0,t',k)dt', \label{def muA 1 intro} \\
& \mu^{A}_3(x,t,k) = I + \int_{x}^{+\infty} e^{(x'-x)\hat{\mathcal{L}}(k)} (\mathsf{U}^{T}\mu_3^{A})(x',t,k)dx', \label{def muA 3 intro}
\end{align}
\end{subequations}
where $\mathsf{U}^T$ and $\mathsf{V}^T$ denote the transposes of $\mathsf{U}$ and $\mathsf{V}$, respectively, $\mathcal{L} = \diag(l_1 , l_2 , l_3)$, $\mathcal{Z} = \diag(z_1 , z_2 , z_3)$, and given a $3\times 3$ matrix $B$, $e^{\hat{B}}$ denotes the operator which acts on a $3\times 3$ matrix $A$ by $e^{\hat{B}}A = e^{B} A e^{-B}$. Note that the definitions of $\mu_1(0,t,k), \mu_3(x,0,k), \mu^{A}_1(0,t,k)$, $\mu^{A}_3(x,0,k)$ depend on $u(x,t)$ only through its initial-boundary values. 

Define also $s,s^{A},S,S^{A}$ by
\begin{align}
& S(k) = \mu_1(0,0,k), & &  s(k) = \mu_3(0,0,k), & & S^{A}(k) = \mu_{1}^{A}(0,0,k), & & s^{A}(k) = \mu_{3}^{A}(0,0,k). \label{sS SAsA def intro}
\end{align}
The six functions $r_{1},\tilde{r}_{1},r_{2},\tilde{r}_{2},\hat{r}_{2},\check{r}_{2}$ are defined by
\begin{align}\label{r1r2def}
\begin{cases}
r_1(k) = \frac{s_{12}(k)}{s_{11}(k)}, & k \in (\Gamma_{1'}\cup \Gamma_{10''}\cup \partial \D)\setminus \mathcal{Q}, \\[0.1cm]
\tilde{r}_1(k) = \frac{(S^{-1}s)_{12}(k)}{(S^{-1}s)_{11}(k)}, & k \in (\Gamma_{1'}\cup \Gamma_{10''}\cup \partial \D) \setminus \mathcal{Q}, \\[0.1cm]
r_2(k) = \frac{s^A_{12}(k)}{s^A_{11}(k)}, \quad & k \in (\Gamma_{1''}\cup \Gamma_{10'}\cup \partial \D)\setminus \mathcal{Q}, \\[0.15cm]
\tilde{r}_2(k) = \frac{(S^{T}s^{A})_{12}(k)}{(S^{T}s^{A})_{11}(k)}, \quad & k \in (\Gamma_{1''}\cup \Gamma_{10'}\cup \partial \D)\setminus \mathcal{Q}, \\[0.1cm]
\hat{r}_2(k) = \frac{s^A_{12}S^{A}_{33}-s^{A}_{32}S^{A}_{13}}{s^A_{11}S^{A}_{33}-s^{A}_{31}S^{A}_{13}}(k), \quad & k \in (\Gamma_{1''}\cup \Gamma_{10'}\cup \partial \D) \setminus \mathcal{Q}, \\[0.15cm]
\check{r}_2(k) = \frac{s^A_{12}S^{A}_{22}-s^{A}_{22}S^{A}_{12}}{s^A_{11}S^{A}_{22}-s^{A}_{21}S^{A}_{12}}(k), \quad & k \in (\Gamma_{1''}\cup \Gamma_{10'}\cup \partial \D) \setminus \mathcal{Q},
\end{cases}
\end{align}	
and the functions $R_{1}, R_{2}, \tilde{R}_{2}$ are defined by
\begin{align}\label{def of R1 R2}
\begin{cases}
\ds R_{1}(k) = \frac{s^{A}_{23}S^{A}_{31}-s^{A}_{33}S^{A}_{21}}{s_{11}(S^{-1}s)_{11}}, & k \in \big( \bar{D}_{10} \cup \bar{D}_{11} \cup \bar{D}_{12} \cup \bar{E}_{1} \cup \bar{E}_{2} \cup \bar{E}_{3} \cup \partial \D \big) \setminus \hat{\mathcal{Q}}, \\[0.3cm]
\ds R_{2}(k) = \frac{S^{A}_{12}s_{33}}{s^{A}_{11}(s^{A}_{11}S^{A}_{22}-s^{A}_{21}S^{A}_{12})}, & k \in \big( \bar{D}_{1} \cup \bar{D}_{2} \cup \bar{D}_{3} \cup \bar{E}_{10} \cup \bar{E}_{11} \cup \bar{E}_{12} \cup \partial \D \big) \setminus \hat{\mathcal{Q}}, \\[0.4cm]
\ds \tilde{R}_{2}(k) = \frac{S_{21}(S^{-1}s)_{33}}{(S^{T}s^{A})_{11}(s^{A}_{11}S^{A}_{33}-s^{A}_{31}S^{A}_{13})}, & k \in \big( \bar{D}_{1} \cup \bar{D}_{2} \cup \bar{D}_{3} \cup \bar{E}_{10} \cup \bar{E}_{11} \cup \bar{E}_{12} \cup \partial \D \big) \setminus \hat{\mathcal{Q}},
\end{cases}
\end{align}
where $\partial \D$ is the unit circle, the contours $\Gamma_{1'},\Gamma_{1''},\Gamma_{10'},\Gamma_{10''}$ are the open segments and half-lines shown in Figure \ref{fig: Gamma}, i.e.
\begin{align*}
\Gamma_{1'}=(0,i), \quad \Gamma_{1''}=(i,i\infty), \quad \Gamma_{10'}=(0,-i), \quad \Gamma_{10''}=(-i,-i\infty),
\end{align*}
$\mathcal{Q}=\{\kappa_{j}\}_{j=1}^{6}$, $\hat{\mathcal{Q}}=\mathcal{Q}\cup \{0\}$, $\kappa_{j}=e^{\frac{\pi i(j-1)}{3}}$, $j=1,\ldots,6$ and the open sets $D_{n},E_{n}$, $n=1,\ldots,18$ are as indicated in Figure \ref{fig: Gamma} (see also \eqref{def of Dj and Ej}).

\subsubsection{Assumption of no solitons}

Solitons correspond to the presence of zeros for certain spectral functions involving $s,S,s^{A},S^{A}$, so we first discuss the domains of definition of these functions. The functions $S(k), S^{A}(k)$ are well-defined for $\C\setminus \hat{\mathcal{Q}}$, $s(k)$ is well-defined for  $k \in (\omega^{2} \hat{\mathcal{S}}_{3}, \omega \hat{\mathcal{S}}_{3}, \hat{\mathcal{S}}_{3})\setminus \hat{\mathcal{Q}}$,\footnote{The notation $k \in (\omega^{2} \hat{\mathcal{S}}_{3}, \omega \hat{\mathcal{S}}_{3}, \hat{\mathcal{S}}_{3})\setminus \hat{\mathcal{Q}}$ indicates that the first, second, and third columns of $s(k)$ are well-defined for $k$ in the sets $\omega^2\hat{\mathcal{S}}_{3}\setminus \hat{\mathcal{Q}}$, $\omega \hat{\mathcal{S}}_{3}\setminus \hat{\mathcal{Q}}$, and $\hat{\mathcal{S}}_{3}\setminus \hat{\mathcal{Q}}$, respectively.} and $s^{A}(k)$ is well-defined for  $k \in (\omega^{2} \hat{\mathcal{S}}_{3}^{*}, \omega \hat{\mathcal{S}}_{3}^{*}, \hat{\mathcal{S}}_{3}^{*})\setminus \hat{\mathcal{Q}}$, where 
\begin{align*}
\mathcal{S}_{3} & = \big\{k \in \C: \big(\arg k \in (\tfrac{\pi}{6},\tfrac{5\pi}{6}) \mbox{ and } |k| > 1\big) \mbox{ or } \big(\arg k \in (-\tfrac{5\pi}{6},-\tfrac{\pi}{6}) \mbox{ and } |k| < 1\big) \big\} \\
& = \mbox{int}(\cup_{j=16}^{18} \bar{D}_{j} \cup \cup_{j=1}^{3} \bar{D}_{j} \cup \cup_{j=7}^{12} \bar{E}_{j}),
\end{align*}
$\hat{\mathcal{S}}_{3} = \partial \D \cup \bar{\mathcal{S}}_{3}$, and $\hat{\mathcal{S}}_{3}^{*} := \{z : \overline{z}^{-1}\in \hat{\mathcal{S}}_{3}\}$, see also Figure \ref{fig: s and sA}. (See also Propositions \ref{sprop}, \ref{sAprop} for further properties of $s,S,s^{A},S^{A}$.) For simplicity, we assume that no solitons are present. 

\begin{figure}
\begin{center}
\scalebox{0.5}{

\begin{tikzpicture}[scale=1.1,master]
\node at (0,0) {};

\draw[line width=0.025 mm, dashed,fill=gray!30] (0,0) --  (30:4.5) arc(30:150:4.5) -- cycle;
\draw[line width=0.25 mm, dashed,fill=white] (0,0) --  (30:3) arc(30:150:3) -- cycle;
\node at (90:3.8) {$\mathcal{S}_{3}$ };

\draw[line width=0.025 mm, dashed,fill=gray!30] (0,0) --  (150:4.5) arc(150:270:4.5) -- cycle;
\draw[line width=0.25 mm, dashed,fill=white] (0,0) --  (150:3) arc(150:270:3) -- cycle;
\node at (210:3.8) {$\omega\mathcal{S}_{3}$ };

\draw[line width=0.025 mm, dashed,fill=gray!30] (0,0) --  (270:4.5) arc(270:390:4.5) -- cycle;
\draw[line width=0.25 mm, dashed,fill=white] (0,0) --  (270:3) arc(270:390:3) -- cycle;
\node at (330:3.8) {$\omega^{2}\mathcal{S}_{3}$ };

\draw[line width=0.25 mm, dashed,fill=gray!30] (0,0) --  (-150:3) arc(-150:-30:3) -- cycle;
\node at (-90:2) {$\mathcal{S}_{3}$ };

\draw[line width=0.25 mm, dashed,fill=gray!30] (0,0) --  (-30:3) arc(-30:90:3) -- cycle;
\node at (30:2) {$\omega\mathcal{S}_{3}$ };

\draw[line width=0.25 mm, dashed,fill=gray!30] (0,0) --  (90:3) arc(90:210:3) -- cycle;
\node at (150:2) {$\omega^{2}\mathcal{S}_{3}$ };

\draw[black,line width=0.45 mm] (30:3)--(30:4.5);
\draw[black,line width=0.45 mm] (0,0)--(90:3);
\draw[black,line width=0.45 mm] (150:3)--(150:4.5);
\draw[black,line width=0.45 mm] (0,0)--(-30:3);
\draw[black,line width=0.45 mm] (-90:3)--(-90:4.5);
\draw[black,line width=0.45 mm] (0,0)--(-150:3);

\draw[black,line width=0.45 mm] ([shift=(-180:3cm)]0,0) arc (-180:180:3cm);

\draw[fill] (0:3) circle (0.08);
\draw[fill] (60:3) circle (0.08);
\draw[fill] (120:3) circle (0.08);
\draw[fill] (180:3) circle (0.08);
\draw[fill] (240:3) circle (0.08);
\draw[fill] (300:3) circle (0.08);
\end{tikzpicture} 
\hspace{0.5cm}
\begin{tikzpicture}[scale=1.1,slave]
\node at (0,0) {};

\draw[line width=0.25 mm, dashed,fill=gray!30] (0,0) --  (-150:4.5) arc(-150:-30:4.5) -- cycle;
\draw[line width=0.25 mm, dashed,fill=white] (0,0) --  (-150:3) arc(-150:-30:3) -- cycle;
\node at (-90:3.8) {$\mathcal{S}_{3}^{*}$ };

\draw[line width=0.25 mm, dashed,fill=gray!30] (0,0) --  (-30:4.5) arc(-30:90:4.5) -- cycle;
\draw[line width=0.25 mm, dashed,fill=white] (0,0) --  (-30:3) arc(-30:90:3) -- cycle;
\node at (30:3.8) {$\omega\mathcal{S}_{3}^{*}$ };

\draw[line width=0.25 mm, dashed,fill=gray!30] (0,0) --  (90:4.5) arc(90:210:4.5) -- cycle;
\draw[line width=0.25 mm, dashed,fill=white] (0,0) --  (90:3) arc(90:210:3) -- cycle;
\node at (150:3.8) {$\omega^{2}\mathcal{S}_{3}^{*}$ };

\draw[line width=0.25 mm, dashed,fill=gray!30] (0,0) --  (30:3) arc(30:150:3) -- cycle;
\node at (90:2) {$\mathcal{S}_{3}^{*}$ };

\draw[line width=0.25 mm, dashed,fill=gray!30] (0,0) --  (150:3) arc(150:270:3) -- cycle;
\node at (210:2) {$\omega\mathcal{S}_{3}^{*}$ };

\draw[line width=0.25 mm, dashed,fill=gray!30] (0,0) --  (270:3) arc(270:390:3) -- cycle;
\node at (330:2) {$\omega^{2}\mathcal{S}_{3}^{*}$ };

\draw[black,line width=0.45 mm] (0,0)--(30:3);
\draw[black,line width=0.45 mm] (90:3)--(90:4.5);
\draw[black,line width=0.45 mm] (0,0)--(150:3);
\draw[black,line width=0.45 mm] (-30:3)--(-30:4.5);
\draw[black,line width=0.45 mm] (0,0)--(-90:3);
\draw[black,line width=0.45 mm] (-150:3)--(-150:4.5);

\draw[black,line width=0.45 mm] ([shift=(-180:3cm)]0,0) arc (-180:180:3cm);

\draw[fill] (0:3) circle (0.08);
\draw[fill] (60:3) circle (0.08);
\draw[fill] (120:3) circle (0.08);
\draw[fill] (180:3) circle (0.08);
\draw[fill] (240:3) circle (0.08);
\draw[fill] (300:3) circle (0.08);
\end{tikzpicture}}
\end{center}
\begin{figuretext}\label{fig: s and sA}
The sets $\mathcal{S}_{3},\omega\mathcal{S}_{3},\omega^{2}\mathcal{S}_{3}$ (left), and $\mathcal{S}_{3}^{*},\omega\mathcal{S}_{3}^{*},\omega^{2}\mathcal{S}_{3}^{*}$ (right).
\end{figuretext}
\end{figure}

\begin{assumption}[Absence of solitons]\label{solitonlessassumption}\upshape
Assume that
\begin{itemize}
\item $s_{11},(S^{-1}s)_{11}$ are nonzero on $(\bar{D}_{13} \cup \bar{D}_{14} \cup \bar{D}_{15} \cup \bar{E}_{4} \cup \bar{E}_{5} \cup \bar{E}_{6} \cup \partial \D) \setminus \hat{\mathcal{Q}}$,
\item $s^A_{11}, (S^{T}s^{A})_{11}$ are nonzero on $(\bar{E}_{13} \cup \bar{E}_{14} \cup \bar{E}_{15} \cup \bar{D}_{4} \cup \bar{D}_{5} \cup \bar{D}_{6} \cup \partial \D) \setminus \hat{\mathcal{Q}}$,
\item $s^A_{11}S^{A}_{33}-s^{A}_{31}S^{A}_{13}$ is nonzero on $\omega^{2}\hat{\mathcal{S}}_{3}^{*} \setminus \hat{\mathcal{Q}}$.
\end{itemize}
\end{assumption}
\begin{remark}\label{remark:sym combined with no soliton assumption}
The following symmetries will be established in Propositions \ref{sprop} and \ref{sAprop}: 
\begin{align*}
& s_{11}(k) = s_{11}(\tfrac{1}{\omega^{2}k}), \qquad (S^{-1}s)_{11}(k) = (S^{-1}s)_{11}(\tfrac{1}{\omega^{2}k}), \\
& s_{11}^{A}(k) = s_{11}^{A}(\tfrac{1}{\omega^{2}k}), \qquad (S^{T}s^{A})_{11}(k) = (S^{T}s^{A})_{11}(\tfrac{1}{\omega^{2}k}), \\
& (s^A_{11}S^{A}_{33}-s^{A}_{31}S^{A}_{13})(k) = (s^A_{11}S^{A}_{22}-s^{A}_{21}S^{A}_{12})(\tfrac{1}{\omega^{2}k}). 
\end{align*}
The above relations, combined with Assumption \ref{solitonlessassumption}, imply that
(i) $s_{11}$ and $(S^{-1}s)_{11}$ are nonzero on $\omega^{2}\hat{\mathcal{S}}_{3} \setminus \hat{\mathcal{Q}}$, (ii) $s_{11}^{A}$ and $(S^{T}s^{A})_{11}$ are nonzero on $\omega^{2}\hat{\mathcal{S}}_{3}^{*} \setminus \hat{\mathcal{Q}}$, and (iii) $s^A_{11}S^{A}_{22}-s^{A}_{21}S^{A}_{12}$ is nonzero on $\omega^{2}\hat{\mathcal{S}}_{3}^{*} \setminus \hat{\mathcal{Q}}$.
\end{remark}

\subsubsection{Assumption of generic behavior at $k = \pm 1$}
We further assume that some spectral functions have generic behavior at $k=\pm 1$.

\begin{assumption}[Generic behavior at $k = \pm 1$]\label{originassumption}\upshape
Assume for $k_{\star} =1$ and $k_{\star}=-1$ that
\begin{align*}
& \lim_{k \to k_{\star}} (k-k_{\star}) s_{11}(k) \neq 0, \quad \lim_{k \to k_{\star}} (k-k_{\star}) s_{13}(k) \neq 0, \quad \lim_{k \to k_{\star}} s_{31}(k) \neq 0, \quad \lim_{k \to k_{\star}} s_{33}(k) \neq 0, \\
& \lim_{k \to k_{\star}} (k-k_{\star}) (S^{-1}s)_{11}(k) \neq 0, \quad \lim_{k \to k_{\star}} (k-k_{\star}) (S^{-1}s)_{13}(k) \neq 0, \\
& \lim_{k \to k_{\star}} (S^{-1}s)_{31}(k) \neq 0, \quad \lim_{k \to k_{\star}} (S^{-1}s)_{33}(k) \neq 0, \\
& \lim_{k \to k_{\star}} (k-k_{\star}) s^{A}_{11}(k) \neq 0, \quad \lim_{k \to k_{\star}} (k-k_{\star}) s^{A}_{31}(k) \neq 0, \quad \lim_{k \to k_{\star}} s^{A}_{13}(k) \neq 0, \quad \lim_{k \to k_{\star}} s^{A}_{33}(k) \neq 0, \\
& \lim_{k \to k_{\star}} (k-k_{\star}) (S^{T}s^{A})_{11}(k) \neq 0, \quad \lim_{k \to k_{\star}} (k-k_{\star}) (S^{T}s^{A})_{31}(k) \neq 0, \\ 
& \lim_{k \to k_{\star}} (S^{T}s^{A})_{13}(k) \neq 0, \quad \lim_{k \to k_{\star}} (S^{T}s^{A})_{33}(k) \neq 0, \\
& \lim_{k \to k_{\star}} (k-k_{\star}) (s^{A}_{11}S^{A}_{33}-s^{A}_{31}S^{A}_{13})(k) \neq 0, \quad  \lim_{k \to k_{\star}} (k-k_{\star}) (s^A_{11}S^{A}_{22}-s^{A}_{21}S^{A}_{12})(k) \neq 0, \\
& \lim_{k \to k_{\star}} (k-k_{\star})^{2} (s^{A}_{31}S^{A}_{22}-s^{A}_{21}S^{A}_{32})(k) \neq 0, \quad \lim_{k \to k_{\star}} (k-k_{\star}) (s^{A}_{33}S^{A}_{22}-s^{A}_{23}S^{A}_{32})(k) \neq 0, \\
& \lim_{k \to k_{\star}} (k-k_{\star})^{2} (s^{A}_{31}S^{A}_{11}-s^{A}_{11}S^{A}_{31})(k) \neq 0, \quad \lim_{k \to k_{\star}} (k-k_{\star}) (s^{A}_{33}S^{A}_{11}-s^{A}_{13}S^{A}_{31})(k) \neq 0, \\
& \lim_{k \to k_{\star}} (s^{A}_{23}S^{A}_{11}-s^{A}_{13}S^{A}_{21})(k) \neq 0, \quad \lim_{k \to k_{\star}} (k-k_{\star}) (s^{A}_{22}S^{A}_{11}-s^{A}_{12}S^{A}_{21})(k) \neq 0, \\
& \lim_{k \to k_{\star}} (s^{A}_{23}S^{A}_{33}-s^{A}_{33}S^{A}_{23})(k) \neq 0, \quad \lim_{k \to k_{\star}} (k-k_{\star}) (s^{A}_{22}S^{A}_{33}-s^{A}_{32}S^{A}_{23})(k) \neq 0,
\end{align*}
where the limits are taken within the respective domains of definition.
\end{assumption}

\subsubsection{Statement of the theorem}
We now state our main result on the direct scattering problem for \eqref{badboussinesq} on the half-line.

\begin{theorem}[Direct scattering: properties of $r_{1},\tilde{r}_{1},r_{2},\tilde{r}_{2},\hat{r}_{2},\check{r}_{2},R_{1}, R_{2}, \tilde{R}_{2}$]\label{thm:r1r2}
\, \newline Suppose $u$ is a Schwartz class solution of (\ref{badboussinesq}) with existence time $T \in (0, \infty)$, initial data $u_0, u_{1} \in \mathcal{S}(\R_{+})$, and boundary values $\tilde{u}_{0},\tilde{u}_{1}, \tilde{u}_{2}, \tilde{u}_3 \in C^{\infty}([0,T])$ such that Assumptions \ref{solitonlessassumption} and \ref{originassumption} hold.
Then the spectral functions $r_1,\tilde{r}_{1}:(\Gamma_{1'}\cup \Gamma_{10''}\cup \partial \D)\setminus \mathcal{Q} \to \C$, $r_2,\tilde{r}_{2},\hat{r}_{2},\check{r}_{2}: (\Gamma_{1''}\cup \Gamma_{10'}\cup \partial \D)\setminus \mathcal{Q} \to \C$ and $R_{1}, R_{2}, \tilde{R}_{2}$ are well-defined by \eqref{r1r2def}--\eqref{def of R1 R2} and have the following properties:
\begin{enumerate}[$(i)$]
 \item \label{Theorem2.3itemi}
 $r_{1},\tilde{r}_{1},r_{2},\tilde{r}_{2},\hat{r}_{2},\check{r}_{2}$ admit extensions such that $r_1,\tilde{r}_{1} \in C^\infty(\Gamma_{1'}\cup \Gamma_{10''}\cup \partial \D)$\footnote{In particular, $r_1(k)$ has consistent derivatives to all orders as $k \to k_\star$, $k_\star = \pm i$, $k\in \Gamma_{1'}\cup \Gamma_{10''}\cup \partial \D$.}
and $r_{2},\tilde{r}_{2},\hat{r}_{2}$, $\check{r}_{2} \in C^\infty((\Gamma_{1''}\cup \Gamma_{10'}\cup \partial \D) \setminus \{\omega^{2}, -\omega^{2}\})$. The functions $R_{1}, R_{2}, \tilde{R}_{2}$ are smooth on their domains of definition and analytic on the interior of these domains.

\item \label{Theorem2.3itemii}
$r_{1}(\kappa_{j})\neq 0 \neq \tilde{r}_{1}(\kappa_{j})$ for $j=1,\ldots,6$. $r_{2}(k),\tilde{r}_{2}(k),\hat{r}_{2}(k),\check{r}_{2}(k)$ have simple poles at $k=\omega^2$ and $k = -\omega^2$, and simple zeros at $k=\omega$ and $k=-\omega$. Moreover,
\begin{subequations}
\begin{align}
& r_{1}(1) = r_{1}(-1) = \tilde{r}_{1}(1) = \tilde{r}_{1}(-1) = 1, \label{r1at0} \\
& r_{2}(1) = r_{2}(-1) = \tilde{r}_{2}(1) = \tilde{r}_{2}(-1) = \hat{r}_{2}(1) = \hat{r}_{2}(-1) = \check{r}_{2}(1) = \check{r}_{2}(-1) = -1. \label{r2at0}
\end{align}
\end{subequations}

\item \label{Theorem2.3itemiii}
We have
\begin{subequations}
\begin{align}
& r_{1}(k) = \frac{2iu(0,0)}{\sqrt{3}k^{2}} + O(k^{-3}), & & \mbox{as } k \to \infty, \; k \in \Gamma_{10''}, \label{r1 at inf} \\
& \tilde{r}_{1}(k) = -\frac{2iu(0,T)}{\sqrt{3}}k^{2}e^{-\frac{T}{4|k|^{2}}}+O(k^{3} e^{-\frac{T}{4|k|^{2}}}), & & \mbox{as } k \to 0, \; k \in \Gamma_{1'}, \label{rt1 at 0} \\
& r_{2}(k) = -\frac{2iu(0,0)}{\sqrt{3}}k^{2} +  O(k^{3}), & & \mbox{as } k \to 0, \; k \in \Gamma_{10'},  \label{r2 at 0} \\
& \tilde{r}_{2}(k) = \frac{2iu(0,T)}{\sqrt{3}k^{2}}e^{-\frac{T|k|^{2}}{4}} + O(k^{-3} e^{-\frac{T|k|^{2}}{4}}), & & \mbox{as } k \to \infty, \; k \in \Gamma_{1''}, \label{rt2 at inf} \\
& R_{1}(k) = -\frac{2iu(0,0)}{\sqrt{3}}k^{2} + O(k^{3}), & & \mbox{as } k \to 0, \; k \in \Gamma_{3'}, \label{R1 at 0} 
 \\
& R_{2}(k) = -\frac{2iu(0,0)}{\sqrt{3}}k^{2} +  O(k^{3}), & & \mbox{as } k \to 0, \; k \in \Gamma_{12'}, \label{R2 at 0} \\
& \tilde{R}_{2}(k) = -\frac{2iu(0,0)}{\sqrt{3}k^{2}} + O(k^{-3}), & & \mbox{as } k \to \infty, \; k \in \Gamma_{3''}. \label{Rt2 at inf}
\end{align}
\end{subequations}

\item \label{Theorem2.3itemiv}
For all $k \in \partial \D \setminus \{-\omega,\omega\}$, we have
\begin{subequations}\label{relations on the unit circle}
\begin{align}
& r_{1}(\tfrac{1}{\omega k}) + r_{2}(\omega k) + r_{1}(\omega^{2} k) r_{2}(\tfrac{1}{k}) = 0, \label{r1r2 relation on the unit circle} \\
& \tilde{r}_{1}(\tfrac{1}{\omega k}) + \tilde{r}_{2}(\omega k) + \tilde{r}_{1}(\omega^{2} k) \tilde{r}_{2}(\tfrac{1}{k}) = 0, \label{rt1rt2 relation on the unit circle} \\
& r_{2}(k)-\hat{r}_{2}(k) +r_{1}(\omega k)\big( r_{2}(\tfrac{1}{\omega^{2}k}) - \check{r}_{2}(\tfrac{1}{\omega^{2}k}) \big) = 0, \label{zero on the unit circle} \\
& \tilde{r}_{2}(k)-\check{r}_{2}(k) +\tilde{r}_{1}(\omega k)\big( \tilde{r}_{2}(\tfrac{1}{\omega^{2}k}) - \hat{r}_{2}(\tfrac{1}{\omega^{2}k}) \big) = 0, \label{zero tilde on the unit circle} \\
& R_{1}(k) = r_{1}(k)-\tilde{r}_{1}(k), \quad R_{2}(k) = r_{2}(k)-\check{r}_{2}(k), \quad \tilde{R}_{2}(k) = \tilde{r}_{2}(k)-\hat{r}_{2}(k). \label{R1R2 on the unit circle}
\end{align}
In fact, \eqref{r1r2 relation on the unit circle} is also equivalent to any of the following two relations
\begin{align}\label{r1r2 relation on the unit circle new}
r_{2}(k) = \frac{r_{1}( \omega k)r_{1}(\omega^{2} k)-r_{1}(\frac{1}{k})}{1-r_{1}(\omega k)r_{1}(\frac{1}{\omega k})}, \qquad r_{1}(k) = \frac{r_{2}(\omega k)r_{2}(\omega^{2}k)-r_{2}(\frac{1}{k})}{1-r_{2}(\omega k)r_{2}(\frac{1}{\omega k})},
\end{align}
\eqref{rt1rt2 relation on the unit circle} is equivalent to any of the following two relations
\begin{align}\label{rt1rt2 relation on the unit circle new}
\tilde{r}_{2}(k) = \frac{\tilde{r}_{1}( \omega k)\tilde{r}_{1}(\omega^{2} k)-\tilde{r}_{1}(\frac{1}{k})}{1-\tilde{r}_{1}(\omega k)\tilde{r}_{1}(\frac{1}{\omega k})}, \qquad \tilde{r}_{1}(k) = \frac{\tilde{r}_{2}(\omega k)\tilde{r}_{2}(\omega^{2}k)-\tilde{r}_{2}(\frac{1}{k})}{1-\tilde{r}_{2}(\omega k)\tilde{r}_{2}(\frac{1}{\omega k})},
\end{align}
\end{subequations}
and \eqref{zero on the unit circle}--\eqref{zero tilde on the unit circle} can be rewritten (using \eqref{r1r2 relation on the unit circle}--\eqref{rt1rt2 relation on the unit circle}) as
\begin{align*}
& \hat{r}_{2}(k) = \frac{r_{1}(\omega k) \tilde{r}_{1}(\omega^{2}k) -r_{1}(\frac{1}{k})}{1-r_{1}(\omega k)\tilde{r}_{1}(\frac{1}{\omega k})}, \qquad \check{r}_{2}(k) = \frac{\tilde{r}_{1}(\omega k) r_{1}(\omega^{2}k) -\tilde{r}_{1}(\frac{1}{k})}{1-\tilde{r}_{1}(\omega k)r_{1}(\frac{1}{\omega k})}.
\end{align*}


\end{enumerate} 
\end{theorem}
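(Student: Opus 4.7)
My plan is to deduce each item from four inputs: (a) the Volterra integral equations \eqref{XXAdef intro} for $\mu_1,\mu_3,\mu_1^A,\mu_3^A$ (in particular the standard smoothness of their entries on the closures of their natural domains of boundedness for Schwartz data), (b) Propositions \ref{sprop}, \ref{sAprop} on $s,S,s^A,S^A$ (domains of analyticity, symmetries under $k\mapsto \tfrac{1}{\omega^j k}$, cofactor/determinant identities), (c) the no-soliton Assumption \ref{solitonlessassumption}, which guarantees that the denominators in \eqref{r1r2def}--\eqref{def of R1 R2} do not vanish on the relevant closed sets, and (d) the generic-behavior Assumption \ref{originassumption}, which controls the orders of simultaneous zeros at $k=\pm 1$ and thus the extension of $r_1,\tilde r_1$ through $\pm 1$ and of $r_2,\tilde r_2,\hat r_2,\check r_2$ through $\pm \omega^2,\pm\omega$.

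For item \eqref{Theorem2.3itemi}, I would observe that each of the nine coefficients in \eqref{r1r2def}--\eqref{def of R1 R2} is built from entries of $s,S,s^A,S^A$ that are smooth on the closure of their domains of definition minus $\hat{\mathcal{Q}}$; analyticity on the interior is inherited from the Volterra equations. The only nontrivial points are $\pm 1\in\partial\D$, where both numerator and denominator can vanish: the orders prescribed by Assumption \ref{originassumption} are exactly what is needed to make each ratio extend smoothly (for $r_2,\tilde r_2,\hat r_2,\check r_2$ one also has to exclude $\pm\omega^2$, hence the restriction in item \eqref{Theorem2.3itemi}). For item \eqref{Theorem2.3itemii}, the non-vanishing of $r_1,\tilde r_1$ at the $\kappa_j$ and the simple pole/zero structure of $r_2,\tilde r_2,\hat r_2,\check r_2$ at $\pm\omega^2,\pm\omega$ again come from substituting the orders of Assumption \ref{originassumption} into the definitions, combined with the symmetry $s_{11}(k)=s_{11}(\tfrac{1}{\omega^2 k})$ and its analogs recalled in Remark \ref{remark:sym combined with no soliton assumption}. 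The normalizations $r_1(\pm 1)=\tilde r_1(\pm 1)=1$ and $r_2(\pm 1)=\cdots=-1$ in \eqref{r1at0}--\eqref{r2at0} are extracted by computing $P(k)$ at $k=\pm 1$ (where two of the $l_j$'s coincide so that $P$ has a one-dimensional kernel in the relevant direction) and reading off the leading behavior of the $2\times 2$ cofactors in $s,s^A$; I would set up a local factorization of $s_{11}$ and $s_{12}$ (and similarly for $s^A$) of the form $s_{11}(k)=\tfrac{\alpha(k)}{k\mp 1}$, $s_{12}(k)=\tfrac{\beta(k)}{k\mp 1}$ with $\alpha(\pm 1)=\beta(\pm 1)$ forced by the column structure of $P(k)$, which yields the stated limits.

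For item \eqref{Theorem2.3itemiii}, the asymptotics follow from iterating \eqref{XXAdef intro}. As $k\to\infty$ in $\Gamma_{10''}$ one has $\re(l_1-l_2)(k)<0$ and $l_j(k)=O(k)$, so a single Neumann iteration of \eqref{def mu 3 intro} produces
\begin{align*}
(\mu_3)_{12}(0,0,k)=-\int_0^{+\infty}\!e^{(l_1-l_2)(k)x'}(\mathsf U)_{12}(x',0,k)\,dx'+O(k^{-3}),
\end{align*}
and the explicit form of $P(k)^{-1}\cdot(\text{bottom row})\cdot P(k)$ in \eqref{mathsfUdef intro} gives $(\mathsf U)_{12}=O(k^{-2})$ with leading coefficient a multiple of $u$, integrating to the $\tfrac{2iu(0,0)}{\sqrt 3}k^{-2}$ stated in \eqref{r1 at inf}. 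The exponential factors $e^{-T|k|^2/4}$ and $e^{-T/(4|k|^2)}$ in \eqref{rt1 at 0}, \eqref{rt2 at inf} arise from the endpoint $t'=T$ in \eqref{def mu 1 intro}, \eqref{def muA 1 intro}, where $e^{(t-T)\hat{\mathcal Z}}$ contributes the dominant decay of $(z_1-z_2)(k)$; the value $u(0,T)$ appears because that integration is concentrated at $t'=T$. The asymptotics \eqref{R1 at 0}--\eqref{Rt2 at inf} are then obtained from those of $r_1,\tilde r_1,r_2,\tilde r_2,\hat r_2,\check r_2$ by invoking relation \eqref{R1R2 on the unit circle} (proved in item \eqref{Theorem2.3itemiv}).

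For item \eqref{Theorem2.3itemiv}, I would first prove the global relation: evaluating $\mu_1(0,0,k)=S(k)$ and $\mu_3(0,0,k)=s(k)$ at $t=0$ and using the compatibility of the $x$- and $t$-parts of the Lax pair yields a matrix identity of the form $s(k)=\mu_1(0,T,k)^{-1}e^{T\hat{\mathcal Z}}S(k)$-type; pushing $k$ onto $\partial\D$, where $e^{T\mathcal Z(k)}$ has all entries of modulus one in the appropriate entries, gives algebraic relations between the entries of $s$ and of $S^{-1}s$. Combined with the involution $k\mapsto\tfrac{1}{\omega^2 k}$ from Remark \ref{remark:sym combined with no soliton assumption} and the adjugate identities between $(s,S)$ and $(s^A,S^A)$, this produces \eqref{r1r2 relation on the unit circle}, \eqref{rt1rt2 relation on the unit circle}, \eqref{zero on the unit circle}, \eqref{zero tilde on the unit circle} after taking ratios; the equivalent forms \eqref{r1r2 relation on the unit circle new}, \eqref{rt1rt2 relation on the unit circle new} are immediate algebraic rearrangements. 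Finally, \eqref{R1R2 on the unit circle} is obtained by expanding the numerators in \eqref{def of R1 R2} using the cofactor formula for $s^{-1}$ and $S^{-1}$ and comparing with \eqref{r1r2def}.

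The main obstacle will be item \eqref{Theorem2.3itemiv}, and within it the bookkeeping that turns a single Lax-pair global relation into the six independent identities stated, in the presence of the twelve-fold dihedral symmetry $k\mapsto\tfrac{1}{\omega^j k}$ and $k\mapsto\omega^j k$. Each of the relations involves a triple of arguments $\{k,\omega k,\omega^2 k\}$ or $\{k,\tfrac{1}{\omega k},\tfrac{1}{\omega^2 k}\}$, and lining these up with the correct cofactor entries of the $3\times 3$ matrices $s,S,s^A,S^A$ is the substantive work; the remaining items (i)–(iii), though notationally heavy, are routine Neumann-series and cofactor computations.
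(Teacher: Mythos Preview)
Your plan for items \eqref{Theorem2.3itemi}, \eqref{Theorem2.3itemii}, and \eqref{Theorem2.3itemiv} is essentially the paper's: the smoothness and pole/zero structure come from Propositions \ref{sprop}--\ref{sAprop} together with Assumptions \ref{solitonlessassumption}--\ref{originassumption}, and the identities on $\partial\D$ are pure algebra once one uses $s^A=(s^{-1})^T$, $S^A=(S^{-1})^T$ and the $\mathcal{A},\mathcal{B}$ symmetries. (The global relation you invoke for item \eqref{Theorem2.3itemiv} is not needed there; the cofactor identities plus the symmetries suffice, and this is exactly what the paper does in one line.)

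There are, however, two concrete gaps in your treatment of item \eqref{Theorem2.3itemiii}. First, for the asymptotics of $R_1,R_2,\tilde R_2$ you propose to invoke \eqref{R1R2 on the unit circle}. That relation holds only on $\partial\D$, whereas the limits \eqref{R1 at 0}--\eqref{Rt2 at inf} are taken along $\Gamma_{3'},\Gamma_{12'},\Gamma_{3''}$, none of which lies on $\partial\D$; moreover $r_1,\tilde r_1$ are not even defined on $\Gamma_{3'}$. The paper instead reads these asymptotics directly from the definitions \eqref{def of R1 R2} together with the large/small-$k$ expansions of $s,S,s^A,S^A$ in Propositions \ref{sprop}\,(e) and \ref{sAprop}\,(e) and the $\mathcal{B}$-symmetry.

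Second, your Laplace heuristic for \eqref{rt1 at 0} and \eqref{rt2 at inf} does not produce the exponential factor or the coefficient $u(0,T)$. The paper is explicit about this: substituting the power-series expansions of $s,S,s^A,S^A$ into $\tilde r_2=(S^Ts^A)_{12}/(S^Ts^A)_{11}$ yields only $\tilde r_2(k)=O(k^{-3})$, with no trace of $e^{-T|k|^2/4}$. The missing ingredient is the exact identity
\[
(S^{-1}s)(k)=e^{-T\hat{\mathcal Z}(k)}\mu_3(0,T,k),\qquad (S^Ts^A)(k)=e^{T\hat{\mathcal Z}(k)}\mu_3^A(0,T,k),
\]
which follows from \eqref{mu3mu2mu1sS} and $\mu_1(0,T,k)=I$. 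This rewrites $\tilde r_1,\tilde r_2$ as ratios of entries of $\mu_3(0,T,\cdot)$ and $\mu_3^A(0,T,\cdot)$ times an \emph{explicit} exponential $e^{(z_1-z_2)T}$; the expansions \eqref{mujlargekexpansion t=0} then give the $u(0,T)$ leading coefficient. Your integration-by-parts on the $\mu_1$ equation is targeting $S$, not $S^{-1}s$, and cannot recover this structure.
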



\subsection{The inverse problem}
Our next result shows that the inverse problem of recovering $u$ from the scattering data $r_{1},\tilde{r}_{1},r_{2},\tilde{r}_{2},\hat{r}_{2},\check{r}_{2}, R_{1}, R_{2}, \tilde{R}_{2}$ can be solved using the solution $M(x,t,k)$ to a $3\times 3$ RH problem.

The jump contour $\Gamma$ of this RH problem is given by
\begin{align*}
\Gamma = \cup_{j=1}^{18} \overline{\Gamma_j\cup \Gamma_{j'} \cup \Gamma_{j''}},
\end{align*}
where $\Gamma_{n},\Gamma_{n'},\Gamma_{n''}$, $n=1,\ldots,18$ are the open sets indicated in Figure \ref{fig: Gamma}.

Let $\Gamma_\star = \cup_{j=1}^{6}\{e^{\frac{5\pi i}{12}}\kappa_{j},i\kappa_j,e^{\frac{7\pi i}{12}}\kappa_{j}\}_{j=1}^6  \cup \{0\}$ denote the set of intersection points of $\Gamma$, so that
\begin{align*}
\Gamma\setminus \Gamma_\star = \cup_{j=1}^{18} \Gamma_j\cup \Gamma_{j'} \cup \Gamma_{j''}.
\end{align*}
The jump matrix $v(x,t,k)$ is defined for $k\in \Gamma\setminus (\Gamma_\star \cup \mathcal{Q})$ by 
\begin{align}\label{vdef}
v(x,t,k) = e^{x\hat{\mathcal{L}}+t\hat{\mathcal{Z}}}\tilde{v}(k),
\end{align}
where $\{l_j(k), z_j(k)\}_{j=1}^3$ are defined by (\ref{lmexpressions intro}) and $\tilde{v}(k)$ is given by
\small \begin{align*}
& \tilde{v}_{1''}= \begin{pmatrix}
1-r_{2}(\frac{1}{k})\tilde{r}_{2}(k) & -r_{2}(\frac{1}{k}) & 0 \\
\tilde{r}_{2}(k) & 1 & 0 \\
\tilde{g}(k) & -h(k) & 1
\end{pmatrix}, \; \tilde{v}_{2''} = \begin{pmatrix}
1 & 0 & 0 \\
0 & 1 & 0 \\
0 & R_{1}(\frac{1}{\omega k}) & 1
\end{pmatrix}, \; \tilde{v}_{3''} = \begin{pmatrix}
1 & 0 & 0 \\
-\tilde{R}_{2}(k) & 1 & 0 \\
0 & 0 & 1
\end{pmatrix}, \\
& \tilde{v}_{4''} = \begin{pmatrix}
1 & 0 & 0 \\
0 & 1 & -r_{1}(\omega k) \\
0 & \tilde{r}_{1}(\frac{1}{\omega k}) & 1-r_{1}(\omega k)\tilde{r}_{1}(\frac{1}{\omega k})
\end{pmatrix}, \; \tilde{v}_{5''} = \begin{pmatrix}
1 & 0 & 0 \\
0 & 1 & 0 \\
R_{2}(\frac{1}{\omega^{2}k}) & 0 & 1
\end{pmatrix}, \; \tilde{v}_{6''} = \begin{pmatrix}
1 & 0 & 0 \\
0 & 1 & R_{1}(\omega k) \\
0 & 0 & 1
\end{pmatrix}, \\
& \tilde{v}_{7''} = \begin{pmatrix}
1 & 0 & \tilde{r}_{2}(\omega^{2}k) \\
-h(\omega^{2}k) & 1 & \tilde{g}(\omega^{2}k) \\
-r_{2}(\frac{1}{\omega^{2}k}) & 0 & 1-r_{2}(\frac{1}{\omega^{2}k})\tilde{r}_{2}(\omega^{2}k)
\end{pmatrix}\hspace{-0.05cm}, \; \tilde{v}_{8''} = \begin{pmatrix}
1 & 0 & 0  \\
R_{1}(\frac{1}{k}) & 1 & 0 \\
0 & 0 & 1
\end{pmatrix}\hspace{-0.05cm}, \; \tilde{v}_{9''} = \begin{pmatrix}
1 & 0 & -\tilde{R}_{2}(\omega^{2}k) \\
0 & 1 & 0 \\
0 & 0 & 1
\end{pmatrix}\hspace{-0.05cm}, \\
& \tilde{v}_{10''} = \begin{pmatrix}
1 & -r_{1}(k) & 0 \\
\tilde{r}_{1}(\frac{1}{k}) & 1-r_{1}(k)\tilde{r}_{1}(\frac{1}{k}) & 0 \\
0 & 0 & 1
\end{pmatrix}, \; \tilde{v}_{11''} = \begin{pmatrix}
1 & 0 & 0 \\
0 & 1 & R_{2}(\frac{1}{\omega k}) \\
0 & 0 & 1
\end{pmatrix}, \; \tilde{v}_{12''} = \begin{pmatrix}
1 & R_{1}(k) & 0 \\
0 & 1 & 0 \\
0 & 0 & 1
\end{pmatrix}, \\
& \tilde{v}_{13''} = \begin{pmatrix}
1 & \tilde{g}(\omega k) & -h(\omega k) \\
0 & 1-r_{2}(\frac{1}{\omega k})\tilde{r}_{2}(\omega k) & -r_{2}(\frac{1}{\omega k}) \\
0 & \tilde{r}_{2}(\omega k) & 1
\end{pmatrix}\hspace{-0.05cm}, \; \tilde{v}_{14''} = \begin{pmatrix}
1 & 0 & R_{1}(\frac{1}{\omega^{2}k}) \\
0 & 1 & 0 \\
0 & 0 & 1
\end{pmatrix}\hspace{-0.05cm}, \; \tilde{v}_{15''} = \begin{pmatrix}
1 & 0 & 0 \\
0 & 1 & 0  \\
0 & -\tilde{R}_{2}(\omega k) & 1
\end{pmatrix}\hspace{-0.05cm}, \\
& \tilde{v}_{16''} = \begin{pmatrix}
1-r_{1}(\omega^{2}k)\tilde{r}_{1}(\frac{1}{\omega^{2}k}) & 0 & \tilde{r}_{1}(\frac{1}{\omega^{2}k}) \\
0 & 1 & 0 \\
-r_{1}(\omega^{2}k) & 0 & 1
\end{pmatrix}\hspace{-0.05cm}, \; \tilde{v}_{17''} = \begin{pmatrix}
1 & R_{2}(\frac{1}{k}) & 0 \\
0 & 1 & 0 \\
0 & 0 & 1
\end{pmatrix}\hspace{-0.05cm}, \; \tilde{v}_{18''} = \begin{pmatrix}
1 & 0 & 0 \\
0 & 1 & 0 \\
R_{1}(\omega^{2}k) & 0 & 1
\end{pmatrix}\hspace{-0.05cm}, \\
& \tilde{v}_{1'} = \begin{pmatrix}
1 & -\tilde{r}_{1}(k) & 0 \\
r_{1}(\frac{1}{k}) & 1-r_{1}(\frac{1}{k})\tilde{r}_{1}(k) & 0 \\
0 & 0 & 1
\end{pmatrix}, \; \tilde{v}_{2'} = \begin{pmatrix}
1 & 0 & 0 \\
0 & 1 & \tilde{R}_{2}(\frac{1}{\omega k}) \\
0 & 0 & 1
\end{pmatrix}, \; \tilde{v}_{3'} = \begin{pmatrix}
1 & -R_{1}(k) & 0 \\
0 & 1 & 0 \\
0 & 0 & 1
\end{pmatrix}, \\
& \tilde{v}_{4'} = \begin{pmatrix}
1 & g(\omega k) & -\tilde{h}(\omega k) \\
0 & 1-r_{2}(\omega k) \tilde{r}_{2}(\frac{1}{\omega k}) & -\tilde{r}_{2}(\frac{1}{\omega k}) \\
0 & r_{2}(\omega k) & 1
\end{pmatrix}, \; \tilde{v}_{5'} = \begin{pmatrix}
1 & 0 & -R_{1}(\frac{1}{\omega^{2}k}) \\
0 & 1 & 0 \\
0 & 0 & 1
\end{pmatrix}, \; \tilde{v}_{6'} = \begin{pmatrix}
1 & 0 & 0 \\
0 & 1 & 0 \\
0 & -R_{2}(\omega k) & 1 
\end{pmatrix}, \\
& \tilde{v}_{7'} = \begin{pmatrix}
1 - r_{1}(\frac{1}{\omega^{2}k})\tilde{r}_{1}(\omega^{2}k) & 0 & r_{1}(\frac{1}{\omega^{2}k}) \\
0 & 1 & 0 \\
-\tilde{r}_{1}(\omega^{2}k) & 0 & 1 
\end{pmatrix}, \; \tilde{v}_{8'} = \begin{pmatrix}
1 & \tilde{R}_{2}(\frac{1}{k}) & 0 \\
0 & 1 & 0 \\
0 & 0 & 1
\end{pmatrix}, \; \tilde{v}_{9'} = \begin{pmatrix}
1 & 0 & 0 \\
0 & 1 & 0 \\
-R_{1}(\omega^{2}k) & 0 & 1
\end{pmatrix}, \\
& \tilde{v}_{10'} = \begin{pmatrix}
1 - r_{2}(k)\tilde{r}_{2}(\frac{1}{k}) & -\tilde{r}_{2}(\frac{1}{k}) & 0 \\
r_{2}(k) & 1 & 0 \\
g(k) & -\tilde{h}(k) & 1
\end{pmatrix}, \; \tilde{v}_{11'} = \begin{pmatrix}
1 & 0 & 0 \\
0 & 1 & 0 \\
0 & - R_{1}(\frac{1}{\omega k}) & 1
\end{pmatrix}, \; \tilde{v}_{12'} = \begin{pmatrix}
1 & 0 & 0 \\
-R_{2}(k) & 1 & 0 \\
0 & 0 & 1
\end{pmatrix}, \\
& \tilde{v}_{13'} = \begin{pmatrix}
1 & 0 & 0 \\
0 & 1 & -\tilde{r}_{1}(\omega k) \\
0 & r_{1}(\frac{1}{\omega k}) & 1-r_{1}(\frac{1}{\omega k})\tilde{r}_{1}(\omega k)
\end{pmatrix}, \; \tilde{v}_{14'} = \begin{pmatrix}
1 & 0 & 0 \\
0 & 1 & 0 \\
\tilde{R}_{2}(\frac{1}{\omega^{2}k}) & 0 & 1
\end{pmatrix}, \; \tilde{v}_{15'} = \begin{pmatrix}
1 & 0 & 0 \\
0 & 1 & -R_{1}(\omega k) \\
0 & 0 & 1
\end{pmatrix}, \\
& \tilde{v}_{16'} = \hspace{-0.05cm}\begin{pmatrix}
1 & 0 & r_{2}(\omega^{2}k) \\
-\tilde{h}(\omega^{2}k) & 1 & g(\omega^{2}k) \\
- \tilde{r}_{2}(\frac{1}{\omega^{2}k}) & 0 & 1-r_{2}(\omega^{2}k)\tilde{r}_{2}(\frac{1}{\omega^{2}k})
\end{pmatrix}\hspace{-0.05cm}, \; \tilde{v}_{17'} = \hspace{-0.05cm}\begin{pmatrix}
1 & 0 & 0 \\
-R_{1}(\frac{1}{k}) & 1 & 0 \\
0 & 0 & 1
\end{pmatrix}\hspace{-0.05cm}, \; \tilde{v}_{18'} = \hspace{-0.05cm}\begin{pmatrix}
1 & 0 & \hspace{-0.1cm}-R_{2}(\omega^{2}k) \\
0 & 1 & \hspace{-0.1cm}0 \\
0 & 0 & \hspace{-0.1cm}1
\end{pmatrix}\hspace{-0.05cm}, \\
& \tilde{v}_{1} = \begin{pmatrix}
1 & -\tilde{r}_{1}(k) & \check{r}_{2}(\omega^{2}k) \\
-\tilde{r}_{2}(k) & 1+\tilde{r}_{1}(k)\tilde{r}_{2}(k) & \hat{r}_{2}(\frac{1}{\omega k})-\check{r}_{2}(\omega^{2}k)\tilde{r}_{2}(k) \\
-\tilde{r}_{2}(\frac{1}{\omega^{2}k}) - r_{1}(\frac{1}{\omega k})\tilde{r}_{2}(k) & r_{1}(\frac{1}{\omega k}) + \tilde{r}_{1}(k) \big( \tilde{r}_{2}(\frac{1}{\omega^{2}k}) + r_{1}(\frac{1}{\omega k})\tilde{r}_{2}(k) \big) & f_{4}(\omega^{2}k)
\end{pmatrix}, \\
& \tilde{v}_{2} = \begin{pmatrix}
1 & -\tilde{r}_{1}(k) & \tilde{r}_{2}(\omega^{2}k) \\
-\tilde{r}_{2}(k) & 1+\tilde{r}_{1}(k)\tilde{r}_{2}(k) & \tilde{r}_{2}(\frac{1}{\omega k})-\tilde{r}_{2}(\omega^{2}k)\tilde{r}_{2}(k) \\
-\tilde{r}_{2}(\frac{1}{\omega^{2}k}) - \tilde{r}_{1}(\frac{1}{\omega k})\tilde{r}_{2}(k) & \tilde{r}_{1}(\frac{1}{\omega k}) + \tilde{r}_{1}(k) \big( \tilde{r}_{2}(\frac{1}{\omega^{2}k}) + \tilde{r}_{1}(\frac{1}{\omega k})\tilde{r}_{2}(k) \big) & f_{2}(\omega^{2}k)
\end{pmatrix}, \\
& \tilde{v}_{3} = \begin{pmatrix}
1 & -r_{1}(k) & \tilde{r}_{2}(\omega^{2}k) \\
-\hat{r}_{2}(k) & 1+r_{1}(k)\hat{r}_{2}(k) & \tilde{r}_{2}(\frac{1}{\omega k})-\tilde{r}_{2}(\omega^{2}k)\hat{r}_{2}(k) \\
-\check{r}_{2}(\frac{1}{\omega^{2}k}) - \tilde{r}_{1}(\frac{1}{\omega k})\hat{r}_{2}(k) & \tilde{r}_{1}(\frac{1}{\omega k}) + r_{1}(k) \big( \check{r}_{2}(\frac{1}{\omega^{2}k}) + \tilde{r}_{1}(\frac{1}{\omega k})\hat{r}_{2}(k) \big) & f_{2}(\omega^{2}k)
\end{pmatrix}, \\
& \tilde{v}_{4} = \hspace{-0.05cm} \begin{pmatrix}
f_{3}(k) & -r_{2}(\frac{1}{k})\hspace{-0.03cm}-\hspace{-0.03cm}\tilde{r}_{1}(\frac{1}{\omega^{2}k})r_{2}(\omega k) & r_{1}(\omega k)r_{2}(\frac{1}{k})\hspace{-0.03cm}+\hspace{-0.03cm}\tilde{r}_{1}(\frac{1}{\omega^{2}k})\big(1\hspace{-0.03cm}+\hspace{-0.03cm}r_{1}(\omega k)r_{2}(\omega k)\big) \\
\hat{r}_{2}(k) & 1 & -r_{1}(\omega k) \\
\check{r}_{2}(\frac{1}{\omega^{2}k})\hspace{-0.03cm}-\hspace{-0.03cm}r_{2}(\omega k)\hat{r}_{2}(k) & -r_{2}(\omega k) & 1+r_{1}(\omega k)r_{2}(\omega k)
\end{pmatrix}\hspace{-0.05cm}, \\
& \tilde{v}_{5} = \hspace{-0.05cm}\begin{pmatrix}
f_{1}(k) & -r_{2}(\frac{1}{k})\hspace{-0.03cm}-\hspace{-0.03cm}r_{1}(\frac{1}{\omega^{2}k})r_{2}(\omega k) & r_{1}(\omega k)r_{2}(\frac{1}{k})\hspace{-0.03cm}+\hspace{-0.03cm}r_{1}(\frac{1}{\omega^{2}k})\big(1\hspace{-0.03cm}+\hspace{-0.03cm}r_{1}(\omega k)r_{2}(\omega k)\big) \\
r_{2}(k) & 1 & -r_{1}(\omega k) \\
r_{2}(\frac{1}{\omega^{2}k})\hspace{-0.03cm}-\hspace{-0.03cm}r_{2}(\omega k)r_{2}(k) & -r_{2}(\omega k) & 1+r_{1}(\omega k)r_{2}(\omega k)
\end{pmatrix}\hspace{-0.05cm}, \\
& \tilde{v}_{6} = \hspace{-0.05cm}\begin{pmatrix}
f_{1}(k) & -\hat{r}_{2}(\frac{1}{k})\hspace{-0.03cm}-\hspace{-0.03cm}r_{1}(\frac{1}{\omega^{2}k})\check{r}_{2}(\omega k) & \tilde{r}_{1}(\omega k)\hat{r}_{2}(\frac{1}{k})\hspace{-0.03cm}+\hspace{-0.03cm}r_{1}(\frac{1}{\omega^{2}k})\big(1\hspace{-0.03cm}+\hspace{-0.03cm}\tilde{r}_{1}(\omega k)\check{r}_{2}(\omega k)\big) \\
r_{2}(k) & 1 & -\tilde{r}_{1}(\omega k) \\
r_{2}(\frac{1}{\omega^{2}k})\hspace{-0.03cm}-\hspace{-0.03cm}\check{r}_{2}(\omega k)r_{2}(k) & -\check{r}_{2}(\omega k) & 1+\tilde{r}_{1}(\omega k)\check{r}_{2}(\omega k)
\end{pmatrix}\hspace{-0.05cm}, \\
& \tilde{v}_{7} = \hspace{-0.06cm} \begin{pmatrix}
1+\tilde{r}_{1}(\omega^{2}k)\tilde{r}_{2}(\omega^{2}k) & \hspace{-0.16cm} \hat{r}_{2}(\frac{1}{k})\hspace{-0.04cm}-\hspace{-0.04cm}\check{r}_{2}(\omega k)\tilde{r}_{2}(\omega^{2}k) & \hspace{-0.16cm} -\tilde{r}_{2}(\omega^{2}k) \\
\tilde{r}_{1}(\omega^{2}k)\tilde{r}_{2}(\frac{1}{\omega k}) \hspace{-0.04cm}+\hspace{-0.04cm} r_{1}(\frac{1}{k})\big( 1\hspace{-0.04cm}+\hspace{-0.04cm}\tilde{r}_{1}(\omega^{2}k)\tilde{r}_{2}(\omega^{2}k) \big) & \hspace{-0.16cm} f_{4}(\omega k) & \hspace{-0.16cm} -\tilde{r}_{2}( \frac{1}{\omega k})\hspace{-0.04cm}-\hspace{-0.04cm} r_{1}(\frac{1}{k})\tilde{r}_{2}(\omega^{2}k) \\
-\tilde{r}_{1}(\omega^{2}k) & \hspace{-0.16cm} \check{r}_{2}(\omega k) & \hspace{-0.16cm} 1
\end{pmatrix}\hspace{-0.06cm} , \\
& \tilde{v}_{8} = \hspace{-0.06cm} \begin{pmatrix}
1+\tilde{r}_{1}(\omega^{2}k)\tilde{r}_{2}(\omega^{2}k) & \hspace{-0.16cm} \tilde{r}_{2}(\frac{1}{k})\hspace{-0.04cm}-\hspace{-0.04cm}\tilde{r}_{2}(\omega k)\tilde{r}_{2}(\omega^{2}k) & \hspace{-0.16cm} -\tilde{r}_{2}(\omega^{2}k) \\
\tilde{r}_{1}(\omega^{2}k)\tilde{r}_{2}(\frac{1}{\omega k}) \hspace{-0.04cm}+\hspace{-0.04cm} \tilde{r}_{1}(\frac{1}{k})\big( 1\hspace{-0.04cm}+\hspace{-0.04cm}\tilde{r}_{1}(\omega^{2}k)\tilde{r}_{2}(\omega^{2}k) \big) & \hspace{-0.16cm} f_{2}(\omega k) & \hspace{-0.16cm} -\tilde{r}_{2}( \frac{1}{\omega k}) \hspace{-0.04cm}-\hspace{-0.04cm} \tilde{r}_{1}(\frac{1}{k})\tilde{r}_{2}(\omega^{2}k) \\
-\tilde{r}_{1}(\omega^{2}k) & \hspace{-0.16cm} \tilde{r}_{2}(\omega k) & \hspace{-0.16cm} 1
\end{pmatrix}\hspace{-0.06cm} , \\
& \tilde{v}_{9} = \hspace{-0.06cm} \begin{pmatrix}
1+r_{1}(\omega^{2}k)\hat{r}_{2}(\omega^{2}k) & \hspace{-0.16cm} \tilde{r}_{2}(\frac{1}{k})\hspace{-0.04cm}-\hspace{-0.04cm}\tilde{r}_{2}(\omega k)\hat{r}_{2}(\omega^{2}k) & \hspace{-0.16cm} -\hat{r}_{2}(\omega^{2}k) \\
r_{1}(\omega^{2}k)\check{r}_{2}(\frac{1}{\omega k}) \hspace{-0.04cm}+\hspace{-0.04cm} \tilde{r}_{1}(\frac{1}{k})\big( 1\hspace{-0.04cm}+\hspace{-0.04cm}r_{1}(\omega^{2}k)\hat{r}_{2}(\omega^{2}k) \big) & \hspace{-0.16cm} f_{2}(\omega k) & \hspace{-0.1cm} -\check{r}_{2}( \frac{1}{\omega k})\hspace{-0.04cm}-\hspace{-0.04cm}\tilde{r}_{1}(\frac{1}{k})\hat{r}_{2}(\omega^{2}k) \\
-r_{1}(\omega^{2}k) & \hspace{-0.16cm} \tilde{r}_{2}(\omega k) & \hspace{-0.16cm} 1
\end{pmatrix}\hspace{-0.06cm} , \\
& \tilde{v}_{10} = \begin{pmatrix}
1 & -r_{1}(k) & \hat{r}_{2}(\omega^{2}k) \\
-r_{2}(k) & 1+r_{1}(k)r_{2}(k) & \check{r}_{2}(\frac{1}{\omega k})-r_{2}(k)\hat{r}_{2}(\omega^{2}k) \\
-r_{2}(\frac{1}{\omega^{2}k})-\tilde{r}_{1}(\frac{1}{\omega k})r_{2}(k) & r_{1}(k)r_{2}(\frac{1}{\omega^{2}k}) + \tilde{r}_{1}(\frac{1}{\omega k})\big( 1+r_{1}(k)r_{2}(k) \big) & f_{3}(\omega^{2}k)
\end{pmatrix}, \\
& \tilde{v}_{11} = \begin{pmatrix}
1 & -r_{1}(k) & r_{2}(\omega^{2}k) \\
-r_{2}(k) & 1+r_{1}(k)r_{2}(k) & r_{2}(\frac{1}{\omega k})-r_{2}(k)r_{2}(\omega^{2}k) \\
-r_{2}(\frac{1}{\omega^{2}k})-r_{1}(\frac{1}{\omega k})r_{2}(k) & r_{1}(k)r_{2}(\frac{1}{\omega^{2}k}) + r_{1}(\frac{1}{\omega k})\big( 1+r_{1}(k)r_{2}(k) \big) & f_{1}(\omega^{2}k)
\end{pmatrix}, \\
& \tilde{v}_{12} = \begin{pmatrix}
1 & -\tilde{r}_{1}(k) & r_{2}(\omega^{2}k) \\
-\check{r}_{2}(k) & 1+\tilde{r}_{1}(k)\check{r}_{2}(k) & r_{2}(\frac{1}{\omega k})-\check{r}_{2}(k)r_{2}(\omega^{2}k) \\
-\hat{r}_{2}(\frac{1}{\omega^{2}k})-r_{1}(\frac{1}{\omega k})\check{r}_{2}(k) & \tilde{r}_{1}(k)\hat{r}_{2}(\frac{1}{\omega^{2}k}) + r_{1}(\frac{1}{\omega k})\big( 1+\tilde{r}_{1}(k)\check{r}_{2}(k) \big) & f_{1}(\omega^{2}k)
\end{pmatrix}, \\
& \tilde{v}_{13} = \hspace{-0.06cm} \begin{pmatrix}
f_{4}(k) & \hspace{-0.1cm} -\tilde{r}_{2}(\frac{1}{k})\hspace{-0.04cm} - \hspace{-0.04cm} r_{1}(\frac{1}{\omega^{2}k})\tilde{r}_{2}(\omega k) & \tilde{r}_{1}(\omega k)\tilde{r}_{2}(\frac{1}{k})\hspace{-0.04cm}+\hspace{-0.04cm}r_{1}(\frac{1}{\omega^{2}k}) \big( 1\hspace{-0.04cm}+\hspace{-0.04cm}\tilde{r}_{1}(\omega k)\tilde{r}_{2}(\omega k) \big) \\
\check{r}_{2}(k) & \hspace{-0.1cm} 1 & -\tilde{r}_{1}(\omega k) \\
\hat{r}_{2}(\frac{1}{\omega^{2}k})\hspace{-0.04cm}-\hspace{-0.04cm}\check{r}_{2}(k)\tilde{r}_{2}(\omega k) & \hspace{-0.1cm} -\tilde{r}_{2}(\omega k) & 1+\tilde{r}_{1}(\omega k)\tilde{r}_{2}(\omega k)
\end{pmatrix}\hspace{-0.06cm}, \\
& \tilde{v}_{14} = \hspace{-0.06cm} \begin{pmatrix}
f_{2}(k) & \hspace{-0.1cm} -\tilde{r}_{2}(\frac{1}{k})\hspace{-0.04cm}-\hspace{-0.04cm}\tilde{r}_{1}(\frac{1}{\omega^{2}k})\tilde{r}_{2}(\omega k) & \tilde{r}_{1}(\omega k)\tilde{r}_{2}(\frac{1}{k})\hspace{-0.04cm}+\hspace{-0.04cm}\tilde{r}_{1}(\frac{1}{\omega^{2}k}) \big( 1\hspace{-0.04cm}+\hspace{-0.04cm}\tilde{r}_{1}(\omega k)\tilde{r}_{2}(\omega k) \big) \\
\tilde{r}_{2}(k) & \hspace{-0.1cm} 1 & -\tilde{r}_{1}(\omega k) \\
\tilde{r}_{2}(\frac{1}{\omega^{2}k})\hspace{-0.04cm}-\hspace{-0.04cm}\tilde{r}_{2}(k)\tilde{r}_{2}(\omega k) & \hspace{-0.1cm} -\tilde{r}_{2}(\omega k) & 1+\tilde{r}_{1}(\omega k)\tilde{r}_{2}(\omega k)
\end{pmatrix}\hspace{-0.06cm}, \\
& \tilde{v}_{15} = \hspace{-0.06cm} \begin{pmatrix}
f_{2}(k) & \hspace{-0.1cm} -\check{r}_{2}(\frac{1}{k})\hspace{-0.04cm}-\hspace{-0.04cm}\tilde{r}_{1}(\frac{1}{\omega^{2}k})\hat{r}_{2}(\omega k) & r_{1}(\omega k)\check{r}_{2}(\frac{1}{k})\hspace{-0.04cm}+\hspace{-0.04cm}\tilde{r}_{1}(\frac{1}{\omega^{2}k}) \big( 1+r_{1}(\omega k)\hat{r}_{2}(\omega k) \big) \\
\tilde{r}_{2}(k) & \hspace{-0.1cm} 1 & -r_{1}(\omega k) \\
\tilde{r}_{2}(\frac{1}{\omega^{2}k})\hspace{-0.04cm}-\hspace{-0.04cm}\tilde{r}_{2}(k)\hat{r}_{2}(\omega k) & \hspace{-0.1cm} -\hat{r}_{2}(\omega k) & 1+r_{1}(\omega k)\hat{r}_{2}(\omega k)
\end{pmatrix}\hspace{-0.06cm}, \\
& \tilde{v}_{16} = \hspace{-0.06cm} \begin{pmatrix}
1+r_{1}(\omega^{2}k)r_{2}(\omega^{2}k) & \check{r}_{2}(\frac{1}{k})\hspace{-0.04cm}-\hspace{-0.04cm}r_{2}(\omega^{2}k)\hat{r}_{2}(\omega k) & -r_{2}(\omega^{2}k) \\
\tilde{r}_{1}(\frac{1}{k})\hspace{-0.04cm}+\hspace{-0.04cm}r_{1}(\omega^{2}k)\big( r_{2}(\frac{1}{\omega k}) \hspace{-0.04cm}+\hspace{-0.04cm} \tilde{r}_{1}(\frac{1}{k})r_{2}(\omega^{2}k) \big) & f_{3}(\omega k) & -r_{2}(\frac{1}{\omega k}) \hspace{-0.04cm}-\hspace{-0.04cm} \tilde{r}_{1}(\frac{1}{k})r_{2}(\omega^{2}k) \\[0.05cm]
-r_{1}(\omega^{2}k) & \hat{r}_{2}(\omega k) & 1
\end{pmatrix}\hspace{-0.06cm}, \\
& \tilde{v}_{17} = \hspace{-0.06cm} \begin{pmatrix}
1+r_{1}(\omega^{2}k)r_{2}(\omega^{2}k) & r_{2}(\frac{1}{k})\hspace{-0.04cm}-\hspace{-0.04cm}r_{2}(\omega^{2}k)r_{2}(\omega k) & -r_{2}(\omega^{2}k) \\
r_{1}(\frac{1}{k})\hspace{-0.04cm}+\hspace{-0.04cm}r_{1}(\omega^{2}k)\big( r_{2}(\frac{1}{\omega k}) \hspace{-0.04cm}+\hspace{-0.04cm} r_{1}(\frac{1}{k})r_{2}(\omega^{2}k) \big) & f_{1}(\omega k) & -r_{2}(\frac{1}{\omega k}) \hspace{-0.04cm}-\hspace{-0.04cm} r_{1}(\frac{1}{k})r_{2}(\omega^{2}k) \\[0.05cm]
-r_{1}(\omega^{2}k) & r_{2}(\omega k) & 1
\end{pmatrix}\hspace{-0.06cm}, \\
& \tilde{v}_{18} = \hspace{-0.06cm}\begin{pmatrix}
1+\tilde{r}_{1}(\omega^{2}k)\check{r}_{2}(\omega^{2}k) & r_{2}(\frac{1}{k})\hspace{-0.04cm}-\hspace{-0.04cm}\check{r}_{2}(\omega^{2}k)r_{2}(\omega k) & -\check{r}_{2}(\omega^{2}k) \\
r_{1}(\frac{1}{k})\hspace{-0.04cm}+\hspace{-0.04cm}\tilde{r}_{1}(\omega^{2}k)\big( \hat{r}_{2}(\frac{1}{\omega k}) \hspace{-0.04cm}+\hspace{-0.04cm} r_{1}(\frac{1}{k})\check{r}_{2}(\omega^{2}k) \big) & f_{1}(\omega k) & -\hat{r}_{2}(\frac{1}{\omega k}) \hspace{-0.04cm}-\hspace{-0.04cm} r_{1}(\frac{1}{k})\check{r}_{2}(\omega^{2}k) \\[0.05cm]
-\tilde{r}_{1}(\omega^{2}k) & r_{2}(\omega k) & 1
\end{pmatrix}\hspace{-0.06cm},
\end{align*} \normalsize
where 
\begin{subequations}\label{def of h}
\begin{align}
& h(k) := -R_{1}(\omega^{2}k)r_{2}(\tfrac{1}{k}), & & g(k) := -r_{2}(k) \big( R_{1}(\tfrac{1}{\omega k}) + R_{1}(\omega^{2}k)\tilde{r}_{2}(\tfrac{1}{k}) \big), \\
& \tilde{h}(k) := R_{1}(\omega^{2}k)\tilde{r}_{2}(\tfrac{1}{k}), & & \tilde{g}(k) := \tilde{r}_{2}(k) \big( R_{1}(\tfrac{1}{\omega k}) + R_{1}(\omega^{2}k)r_{2}(\tfrac{1}{k}) \big).
\end{align}
\end{subequations}
where $\tilde{v}_j$ denotes the restriction of $\tilde{v}$ to $\Gamma_{j}$, and where $\{f_{j}(k)\}_{j=1}^{6}$ are defined for $k \in \partial \D$ by
\begin{subequations}\label{def of f}
\begin{align}
f_{1}(k) & = 1 + r_{1}(\tfrac{1}{\omega^{2}k})r_{2}(\tfrac{1}{\omega^{2}k})+r_{1}(k)r_{2}(k) \\
& = 1 + r_{1}(\tfrac{1}{\omega^{2}k})r_{2}(\tfrac{1}{\omega^{2}k})-r_{2}(k) \big( r_{2}(\tfrac{1}{k}) + r_{1}(\tfrac{1}{\omega^{2}k})r_{2}(\omega k) \big) \\
& = 1 + r_{1}(\tfrac{1}{\omega^{2}k})r_{2}(\tfrac{1}{\omega^{2}k})-r_{2}(k) \big( \hat{r}_{2}(\tfrac{1}{k}) + r_{1}(\tfrac{1}{\omega^{2}k})\check{r}_{2}(\omega k) \big), \\
f_{2}(k) & = 1 + \tilde{r}_{1}(\tfrac{1}{\omega^{2}k})\tilde{r}_{2}(\tfrac{1}{\omega^{2}k})+\tilde{r}_{1}(k)\tilde{r}_{2}(k) \\
& = 1 + \tilde{r}_{1}(\tfrac{1}{\omega^{2}k})\tilde{r}_{2}(\tfrac{1}{\omega^{2}k})-\tilde{r}_{2}(k) \big( \tilde{r}_{2}(\tfrac{1}{k}) + \tilde{r}_{1}(\tfrac{1}{\omega^{2}k})\tilde{r}_{2}(\omega k) \big) \\
& = 1 + \tilde{r}_{1}(\tfrac{1}{\omega^{2}k})\tilde{r}_{2}(\tfrac{1}{\omega^{2}k})-\tilde{r}_{2}(k) \big( \check{r}_{2}(\tfrac{1}{k}) + \tilde{r}_{1}(\tfrac{1}{\omega^{2}k})\hat{r}_{2}(\omega k) \big), \\
f_{3}(k) & = 1 + \tilde{r}_{1}(\tfrac{1}{\omega^{2}k})\check{r}_{2}(\tfrac{1}{\omega^{2}k})-\hat{r}_{2}(k) \big( r_{2}(\tfrac{1}{k}) + \tilde{r}_{1}(\tfrac{1}{\omega^{2}k})r_{2}(\omega k) \big), \\
f_{4}(k) & = 1 + r_{1}(\tfrac{1}{\omega^{2}k})\hat{r}_{2}(\tfrac{1}{\omega^{2}k})-\check{r}_{2}(k) \big( \tilde{r}_{2}(\tfrac{1}{k}) + r_{1}(\tfrac{1}{\omega^{2}k})\tilde{r}_{2}(\omega k) \big).
\end{align}
\end{subequations}

A long but straightforward computation shows that $v$ satisfies the symmetries
\begin{align}\label{vsymm}
v(x,t,k) = \mathcal{A} v(x,t,\omega k)\mathcal{A}^{-1}
 = \mathcal{B} v(x,t, k^{-1})^{-1}\mathcal{B}, \qquad k \in \Gamma\setminus (\Gamma_\star \cup \mathcal{Q}),
\end{align}
where $\mathcal{A}$ and $\mathcal{B}$ are the matrices defined by 
\begin{align}\label{def of Acal and Bcal}
\mathcal{A} := \begin{pmatrix}
0 & 0 & 1 \\
1 & 0 & 0 \\
0 & 1 & 0
\end{pmatrix} \qquad \mbox{ and } \qquad \mathcal{B} := \begin{pmatrix}
0 & 1 & 0 \\
1 & 0 & 0 \\
0 & 0 & 1
\end{pmatrix}.
\end{align}

\begin{RHproblem}[RH problem for $M$]\label{RH problem for M}
Find $M(x,t,k)$ with the following properties:
\begin{enumerate}[(a)]
\item $M(x,t,\cdot) : \mathbb{C}\setminus \Gamma \to \mathbb{C}^{3 \times 3}$ is analytic.

\item The limits of $M(x,t,k)$ as $k$ approaches $\Gamma\setminus (\Gamma_\star \cup \mathcal{Q})$ from the left and right (with respect to the orientation of $\Gamma\setminus (\Gamma_\star \cup \mathcal{Q})$ shown in Figure \ref{fig: Gamma}) exist, are continuous on $\Gamma\setminus (\Gamma_\star \cup \mathcal{Q})$, and are denoted by $M_{+}(x,t,k)$ and $M_{-}(x,t,k)$, respectively. Moreover, they satisfy
\begin{align}\label{Mjumpcondition}
& M_{+}(x,t,k) = M_{-}(x,t,k)v(x,t,k), \qquad k \in \Gamma \setminus (\Gamma_\star \cup \mathcal{Q}),
\end{align}
where $v$ is defined by \eqref{vdef}.

\item As $k \to \infty$, 
\begin{align}\label{asymp for M at infty in RH def}
M(x,t,k) = I + \frac{M^{(1)}(x,t)}{k} + \frac{M^{(2)}(x,t)}{k^{2}} + O\bigg(\frac{1}{k^3}\bigg),
\end{align}
where the matrices $M^{(1)}$ and $M^{(2)}$ depend on $x$ and $t$ but not on $k$, and satisfy
\begin{align}\label{singRHMatinftyb}
M_{12}^{(1)} = M_{13}^{(1)} = M_{12}^{(2)} + M_{21}^{(2)} = 0.
\end{align}

\item There exist matrices $\{\mathcal{M}_{14}^{(l)}(x,t),\widetilde{\mathcal{M}}_{5}^{(l)}(x,t)\}_{l=-1}^{+\infty}$ depending on $x$ and $t$ but not on $k$ such that, for any $N \geq -1$,
\begin{subequations}\label{singRHMat0}
\begin{align}
& M(x,t,k) = \sum_{l=-1}^{N} \mathcal{M}_{14}^{(l)}(x,t)(k-1)^{l} + O((k-1)^{N+1}) \qquad \text{as}\ k \to 1, \ k \in \bar{D}_{14}, \\
& M(x,t,k) = \sum_{l=-1}^{N} \widetilde{\mathcal{M}}_{5}^{(l)}(x,t)(k+1)^{l} + O((k+1)^{N+1}) \qquad \text{as}\ k \to -1, \ k \in \bar{E}_{5}.
\end{align}
\end{subequations}
Furthermore, there exist scalar functions $\alpha, \beta, \gamma, \tilde{\alpha}, \tilde{\beta}, \tilde{\gamma}$ depending on $x$ and $t$, but not on $k$, such that
\begin{align}\nonumber
& \mathcal{M}_{14}^{(-1)}(x,t) = \begin{pmatrix}
\alpha(x,t) & 0 & \beta(x,t) \\
-\alpha(x,t) & 0 & -\beta(x,t) \\
0 & 0 & 0
\end{pmatrix}, & & \mathcal{M}_{14}^{(0)}(x,t) = \begin{pmatrix}
\star & \gamma(x,t) & \star \\
\star & -\gamma(x,t) & \star \\
\star & 0 & \star
\end{pmatrix}, 
	\\ \label{mathcalMcoefficients}
& \widetilde{\mathcal{M}}_{5}^{(-1)}(x,t) = \begin{pmatrix}
\tilde{\alpha}(x,t) & 0 & \tilde{\beta}(x,t) \\
-\tilde{\alpha}(x,t) & 0 & -\tilde{\beta}(x,t) \\
0 & 0 & 0
\end{pmatrix}, & & \widetilde{\mathcal{M}}_{5}^{(0)}(x,t) = \begin{pmatrix}
\star & \tilde{\gamma}(x,t) & \star \\
\star & -\tilde{\gamma}(x,t) & \star \\
\star & 0 & \star
\end{pmatrix}.
\end{align}
Here $\star$ denotes an entry whose value is irrelevant for us.

\item $M$ satisfies the symmetries 
\begin{align}\label{symmetry of M}
M(x,t, k) = \mathcal{A} M(x,t,\omega k)\mathcal{A}^{-1} = \mathcal{B} M(x,t,\tfrac{1}{k})\mathcal{B}, \qquad k \in \mathbb{C}\setminus \Gamma.
\end{align}
\item $M(x,t,k) = O(1)$ as $k\to k_{\star} \in \Gamma_\star$.
\end{enumerate}
\end{RHproblem}

We prove in Appendix \ref{Appendix:uniqueness} that the solution to RH problem \ref{RH problem for M} is unique whenever it exists.

Our second theorem states that the solution $u(x,t)$ of the Boussinesq equation \eqref{badboussinesq} on the half-line domain \eqref{halflinedomain} with initial-boundary values \eqref{initial data u}-\eqref{boundaryvalues u} can be recovered from the unique solution $M$ of RH problem \ref{RH problem for M}.

\begin{theorem}[Solution of (\ref{badboussinesq}) via inverse scattering]\label{thm:inverse sca}
Suppose $u$ is a Schwartz class solution of (\ref{badboussinesq}) with existence time $T \in (0, \infty)$, initial data $u_0, u_{1} \in \mathcal{S}(\R_{+})$, and boundary values $\tilde{u}_{0},\tilde{u}_{1}, \tilde{u}_{2}, \tilde{u}_3 \in C^{\infty}([0,T])$ such that Assumptions \ref{solitonlessassumption} and \ref{originassumption} hold. Define the functions $r_1, \tilde{r}_1, r_{2}, \tilde{r}_{2}, \hat{r}_{2}, \check{r}_{2},R_{1}, R_{2}, \tilde{R}_{2}$ in terms of $u_0, u_1, \tilde{u}_{0},\tilde{u}_{1}, \tilde{u}_{2}, \tilde{u}_3$ by (\ref{r1r2def})--\eqref{def of R1 R2}.
Then RH problem \ref{RH problem for M} has a unique solution $M(x,t,k)$ for each $(x,t) \in \R_{+} \times [0,T]$ and the formulas
\begin{align}\label{recoveruv intro}
 \displaystyle{u(x,t) = -i\sqrt{3}\frac{\partial}{\partial x}\lim_{k\to \infty}k\big[(M(x,t,k))_{33} - 1\big] = \frac{1-\omega}{2} \lim_{k\to \infty}k^{2}(M(x,t,k))_{32},}
\end{align}
expressing $u(x,t)$ in terms of $M(x,t,k)$ are valid for all $(x,t) \in \R_{+} \times [0,T]$.
\end{theorem}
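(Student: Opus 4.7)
The plan is to construct the solution $M(x,t,k)$ of RH problem \ref{RH problem for M} explicitly from the eigenfunctions of the Lax pair associated with \eqref{badboussinesq}, and then to recover $u$ from the large-$k$ asymptotics. The functions $\mu_1, \mu_3, \mu_1^A, \mu_3^A$ introduced in \eqref{XXAdef intro} are normalized at $x = 0$ or $t = 0$ and depend on $u$ only through its initial-boundary values; to obtain the full $(x,t)$-dependence I would first introduce the analogous eigenfunctions in $\R_+ \times [0,T]$, in particular a third eigenfunction $\mu_2(x,t,k)$ defined as the unique solution of the Volterra equation for the Lax pair based at $(0,0)$. Each column of $\mu_1, \mu_2, \mu_3$ is analytic and bounded only in specific sectors of the $k$-plane, determined by the signs of $\re l_j(k)$ and $\re z_j(k)$; these sectors are the open sets $D_n, E_n$ of Figure \ref{fig: Gamma}. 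I define $M(x,t,k)$ sector by sector: in each $D_n \cup E_n$, $M$ is assembled from the bounded column of whichever of $\mu_1, \mu_2, \mu_3$ is available, divided by an appropriate minor of $s, S, s^A, S^A$ so as to enforce $M(x,t,k) \to I$ as $k \to \infty$.

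Next I would verify properties (a)--(f) of the RH problem. Analyticity (a) and continuity of the boundary values (b) are built into the Volterra construction. The jump matrix on each arc $\Gamma_j, \Gamma_{j'}, \Gamma_{j''}$ is computed from the global scattering relations among $\mu_1, \mu_2, \mu_3$ and their adjoints; after substituting the definitions \eqref{r1r2def}--\eqref{def of R1 R2} and using the identities \eqref{relations on the unit circle} from Theorem \ref{thm:r1r2}(iv) on the unit circle, these produce precisely the matrices $\tilde v_j, \tilde v_{j'}, \tilde v_{j''}$ listed in the theorem. The symmetries \eqref{symmetry of M} are inherited from the symmetries of $\mathsf{U}, \mathsf{V}, P(k)$ under $k\mapsto \omega k$ and $k \mapsto 1/k$, which propagate to each $\mu_j$ by uniqueness of the Volterra solutions. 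The asymptotic expansion at $k = \infty$ is obtained by iterating the Volterra equations: the first two orders yield \eqref{singRHMatinftyb} together with explicit formulas for the entries of $M^{(1)}(x,t)$ and $M^{(2)}(x,t)$ in terms of $u$ and its $x$-derivative, from which the two recovery formulas in \eqref{recoveruv intro} follow. The singularity structure \eqref{singRHMat0}--\eqref{mathcalMcoefficients} at $k = \pm 1$ originates from the degeneracy of $P(k)$ at the sixth roots of unity: two rows of $P(k)$ coincide at $\kappa_1 = 1$ and $\kappa_4 = -1$, so $P(k)^{-1}$ has simple poles there, and the specific shape of $\mathcal{M}_{14}^{(-1)}$ and $\widetilde{\mathcal{M}}_5^{(-1)}$ reflects the resulting alignment of the first two rows of $M$ and the cancellation in the third row.

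With (a)--(f) established, the constructed $M$ is a solution of RH problem \ref{RH problem for M}, and Appendix \ref{Appendix:uniqueness} identifies it as the unique one; the recovery formulas then follow from the $k \to \infty$ expansion computed above. The principal obstacle is the bookkeeping in the jump calculation: the contour has $54$ arcs, and on each the jump must be extracted from a product of several elementary factors (each a quotient of minors of $s, S, s^A, S^A$) and simplified using the non-obvious relations of Theorem \ref{thm:r1r2}(iv); checking that this simplification yields precisely the matrices in the statement---including the compound entries $f_1,\ldots,f_4, g, \tilde g, h, \tilde h$ defined by \eqref{def of h}--\eqref{def of f}---is the most laborious part of the proof. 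A secondary technical point is controlling the behavior of $M$ near $\Gamma_\star \cup \mathcal{Q}$ so that RH problem \ref{RH problem for M} is well-posed; the generic-behavior Assumption \ref{originassumption} is used precisely here, through the detailed expansions at $k = \pm 1$.
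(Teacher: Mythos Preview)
Your proposal is correct and follows essentially the same route as the paper: introduce the additional eigenfunction $\mu_2$ based at $(0,0)$, build $M$ piecewise from the $\mu_j$, verify the jump relations using the scattering relations and the identities of Theorem \ref{thm:r1r2}(\ref{Theorem2.3itemiv}), read off the large-$k$ expansion to obtain the recovery formulas, and handle $k=\pm 1$ via Assumption \ref{originassumption}.

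One organizational difference is worth flagging. The paper does not define $M_n$ directly as an assembly of columns of $\mu_1,\mu_2,\mu_3$; instead it introduces $M_n$ through a single Fredholm integral equation (whose $(i,j)$ entry is integrated along $\gamma_1$, $\gamma_2$ or $\gamma_3$ according to the signs of $\re(l_i-l_j)$ and $\re(z_i-z_j)$ in $F_n$), and only afterwards identifies the columns with the explicit combinations of $\mu_j$ that you describe. This buys two things: analyticity and the large-$k$ expansion come for free from the Fredholm formulation, and the column-by-column identification (which a priori requires $\mu_3$ to be defined off its natural domain) is first carried out for compactly supported initial data and then passed to the limit. In your direct construction you should be prepared to justify the analyticity of, say, $[M_{14}]_3$---which is a genuine linear combination of all three columns of $\mu_2$ with exponential factors $e^{\theta_{13}}$, $e^{\theta_{23}}$, not just a single column divided by a minor---and to explain why the jump computation, which formally manipulates $\mu_3$ outside its domain of definition, is legitimate; the compact-support approximation is the cleanest way to do this.
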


\subsection{An equivalent system}

In \cite{Z1974}, Zakharov established a Lax pair for the system
\begin{align}\label{boussinesqsystem}
& \begin{cases}
 v_{t} = u_{x} + (u^2)_{x} + u_{xxx},
 \\
 u_t = v_x.
\end{cases}
\end{align}
The goal of this short subsection is to show that all results presented above on \eqref{badboussinesq} can be reformulated as results for \eqref{boussinesqsystem}. 

\begin{definition}\label{Schwartzsolutiondef}\upshape
We say that $\{u(x,t), v(x,t)\}$ is a {\it Schwartz class solution of \eqref{boussinesqsystem} with existence time $T\in (0,\infty)$, initial data $u_0, v_0 \in \mathcal{S}(\R_{+})$ and boundary values $\tilde{u}_{0},\tilde{u}_{1}, \tilde{u}_{2}, \tilde{v}_0 \in C^{\infty}([0,T])$} if
\begin{enumerate}[$(i)$] 
  \item $u,v$ are smooth real-valued functions of $(x,t) \in \R_{+} \times [0,T]$.

\item $u,v$ satisfy \eqref{boussinesqsystem} for $(x,t) \in \R_{+} \times [0,T]$ and 
\begin{align}
& u(x,0) = u_0(x), \quad v(x,0) = v_0(x), & & x \in \R_{+}, \label{initial data u v} \\
& u(0,t) = \tilde{u}_{0}(t), \quad u_{x}(0,t) = \tilde{u}_{1}(t), \quad u_{xx}(0,t) = \tilde{u}_{2}(t), \quad v(0,t) = \tilde{v}_{0}(t), & & t \in [0,T]. \label{boundaryvalues u v}
\end{align}

  \item $u,v$ have rapid decay as $x \to +\infty$ in the sense that, for each integer $N \geq 1$,
\begin{align}\label{rapiddecay u v}
\sup_{\substack{x \in [0,\infty) \\ t \in [0, T]}} \sum_{i =0}^N (1+|x|)^N(|\partial_x^i u(x,t)| + |\partial_x^i v(x,t)| ) < \infty.
\end{align}
\end{enumerate} 
\end{definition}

The following lemma establishes that the initial-boundary value problems for \eqref{badboussinesq} and \eqref{boussinesqsystem} are equivalent. (In particular, Theorems \ref{thm:r1r2} and \ref{thm:inverse sca} on \eqref{badboussinesq} can be reformulated as results for \eqref{boussinesqsystem}; however, for conciseness, we have chosen not to do so.) 

\begin{lemma}\label{lemma:equivalence}
If $\{u,v\}$ is a Schwartz class solution of \eqref{boussinesqsystem} with existence time $T\in (0, \infty)$, initial data $u_0, v_0 \in \mathcal{S}(\R_{+})$ and boundary values $\tilde{u}_{0},\tilde{u}_{1}, \tilde{u}_{2}, \tilde{v}_0 \in C^{\infty}([0,T])$, then $u$ is Schwartz class solution of \eqref{badboussinesq} with existence time $T$, initial data $u_0, u_1 \in \mathcal{S}(\R_{+})$ and boundary values $\tilde{u}_{0},\tilde{u}_{1}, \tilde{u}_{2}, \tilde{u}_{3} \in C^{\infty}([0,T])$, where 
\begin{align}\label{def of u1 ut3}
u_1(x) := u_t(x,0) = v_{0x}(x) \quad \mbox{ and } \quad \tilde{u}_{3}(t) := u_{xxx}(0,t) = (\tilde{v}_{0t}-\tilde{u}_{1}-2\tilde{u}_{0}\tilde{u}_{1})(t).
\end{align}

Conversely, suppose $u$ is Schwartz class solution of \eqref{badboussinesq} with existence time $T\in (0, \infty)$, initial data $u_0, u_{1} \in \mathcal{S}(\R_{+})$ and boundary values $\tilde{u}_{0},\tilde{u}_{1}, \tilde{u}_{2}, \tilde{u}_{3} \in C^{\infty}([0,T])$. Let 
\begin{align}
& v(x,t) = \int_{+\infty}^x u_t(x', t) dx', \qquad v_0(x) = \int_{+\infty}^x u_1(x') dx', \label{vxtdef} \\
& \tilde{v}_{0}(t) = \int_{+\infty}^0 u_1(x') dx' + \int_{0}^{t}(\tilde{u}_{1}+2\tilde{u}_{0}\tilde{u}_{1}+\tilde{u}_{3})(t')dt'. \nonumber
\end{align}
Then $\{u,v\}$ is a Schwartz class solution of \eqref{boussinesqsystem} with existence time $T$, initial data $u_0, v_0 \in \mathcal{S}(\R_{+})$ and boundary values $\tilde{u}_{0},\tilde{u}_{1}, \tilde{u}_{2}, \tilde{v}_0 \in C^{\infty}([0,T])$.
\end{lemma}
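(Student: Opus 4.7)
\medskip

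\noindent\textbf{Proof proposal for Lemma \ref{lemma:equivalence}.} The plan is to handle the two implications separately, each by a direct calculation exploiting the two elementary relations at the core of the equivalence: $v_x=u_t$ and $v_t=u_x+(u^2)_x+u_{xxx}$. The routine nature of the computation is what makes this lemma fall into place; the only genuine care is needed at the boundary, where an integration constant must be matched, and at infinity, where rapid decay of $u$ in $x$ must be upgraded to rapid decay of $u_t$ and $v$ in $x$.

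\medskip

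\noindent\emph{From the system to the equation.} Assume $\{u,v\}$ is a Schwartz class solution of \eqref{boussinesqsystem}. I would cross-differentiate: from $u_t=v_x$ I get $u_{tt}=v_{xt}=v_{tx}$, and from $v_t=u_x+(u^2)_x+u_{xxx}$ I then obtain $u_{tt}=u_{xx}+(u^2)_{xx}+u_{xxxx}$, which is \eqref{badboussinesq}. Evaluating $u_t=v_x$ at $t=0$ and at $x=0$ respectively gives $u_t(x,0)=v_{0x}(x)=:u_1(x)$ and places $u_1$ in $\mathcal{S}(\R_+)$ since $v_0\in\mathcal{S}(\R_+)$. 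Evaluating the first equation of \eqref{boussinesqsystem} at $x=0$ yields $\tilde{v}_{0t}(t)=\tilde{u}_1(t)+2\tilde{u}_0(t)\tilde{u}_1(t)+u_{xxx}(0,t)$, which is precisely the definition of $\tilde{u}_3$ in \eqref{def of u1 ut3} and identifies $u_{xxx}(0,t)=\tilde{u}_3(t)$. The remaining boundary values $u(0,t)$, $u_x(0,t)$, $u_{xx}(0,t)$ transfer verbatim, and the rapid decay of $u$ from \eqref{rapiddecay u v} implies \eqref{rapiddecay u}.

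\medskip

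\noindent\emph{From the equation to the system.} Assume $u$ is a Schwartz class solution of \eqref{badboussinesq} and define $v(x,t)$ by \eqref{vxtdef}. The relation $v_x=u_t$ is immediate from the fundamental theorem of calculus applied to the definition of $v$. For the first equation of the system, I differentiate under the integral sign and use \eqref{badboussinesq} to write
\begin{align*}
v_t(x,t)=\int_{+\infty}^{x}u_{tt}(x',t)\,dx'=\int_{+\infty}^{x}\bigl[u_{xx}+(u^2)_{xx}+u_{xxxx}\bigr](x',t)\,dx',
\end{align*}
after which the rapid decay of $u$ and its $x$-derivatives at $+\infty$ (guaranteed by \eqref{rapiddecay u}) lets me evaluate the primitives at $x'=+\infty$ as zero, yielding $v_t=u_x+(u^2)_x+u_{xxx}$. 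For the initial data, setting $t=0$ gives $v(x,0)=\int_{+\infty}^x u_1(x')\,dx'=v_0(x)$. For the boundary value at $x=0$, I integrate $v_t(0,t)=\tilde{u}_1(t)+2\tilde{u}_0(t)\tilde{u}_1(t)+\tilde{u}_3(t)$ from $0$ to $t$ and add the constant $v(0,0)=\int_{+\infty}^0 u_1(x')\,dx'$, which reproduces exactly the formula for $\tilde{v}_0(t)$ in \eqref{vxtdef}.

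\medskip

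\noindent\emph{Schwartz regularity.} The only point demanding a small argument is that $v\in\mathcal{S}(\R_+)$ uniformly in $t$. Since $\partial_x^j v=\partial_x^{j-1}u_t$ for $j\ge 1$, and since $u_t$ can be recovered via the identity $u_t(x,t)=u_1(x)+\int_0^t[u_{xx}+(u^2)_{xx}+u_{xxxx}](x,t')\,dt'$, the rapid decay of $u_1$ together with \eqref{rapiddecay u} yields rapid decay of $u_t$ and hence of all $\partial_x^j v$ with $j\ge 1$; the pointwise decay of $v$ itself follows by integrating the tail bound $|u_t(x',t)|\lesssim(1+x')^{-N-1}$ from $x$ to $+\infty$. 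This part is straightforward but is the only step where one has to look beyond formal manipulation, so I would treat it as the main (mild) obstacle in the proof. With these estimates in hand, both maps $\{u,v\}\mapsto u$ and $u\mapsto\{u,v\}$ land in the relevant Schwartz class, completing the equivalence.
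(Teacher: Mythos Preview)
Your proposal is correct and follows essentially the same approach as the paper's proof in Appendix \ref{Appendix:equivalence}: both directions proceed by the same cross-differentiation and integration arguments, and the one nontrivial point you flag---upgrading the rapid decay of $u$ to rapid decay of $u_t$ via $u_t(x,t)=u_1(x)+\int_0^t u_{tt}(x,t')\,dt'$, and thence to $v$---is precisely the step the paper singles out as well. The only difference is cosmetic ordering (the paper establishes decay of $u_t$ before defining $v$, you do it after), so nothing further is needed.
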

\begin{proof}
See Appendix \ref{Appendix:equivalence}.
\end{proof}

\subsection{Notation}\label{notationsubsec}
The following notation will be used throughout the article.

\begin{enumerate}[$-$]
\item $C>0$ and $c>0$ denote generic constants that may change within a computation.

\item $[A]_j$ denotes the $j$th-column of a matrix $A$.

\item If $A$ is an $n \times m$ matrix, $|A| \ge 0$ is defined by $|A|^2=\Sigma_{i,j}|A_{ij}|^2$. Note that $|A + B| \leq |A| + |B|$ and $|AB| \leq |A| |B|$.

\item $\D := \{k \in \C \, | \, |k| < 1\}$ and $\partial \D := \{k \in \C \, | \, |k| = 1\}$. 



\item $\R_{+}:=[0,+\infty)$ and $\mathcal{S}(\R_{+})$ is the Schwartz space of all smooth functions $f$ on $[0,+\infty)$ such that $f$ and all its derivatives have rapid decay as $x \to + \infty$. 

\item $\kappa_{j} = e^{\frac{\pi i(j-1)}{3}}$, $j=1,\ldots,6$, are the sixth roots of unity, see Figure \ref{fig: Gamma} (right).

\item $\mathcal{Q} := \{\kappa_{j}\}_{j=1}^{6}$ and $\hat{\mathcal{Q}} := \mathcal{Q} \cup \{0\}$.

\item $D_n$, $E_{n}$, $n = 1, \dots, 18$, are the open subsets of $\C$ shown in Figure \ref{fig: Gamma} (right).

\item $\Gamma = \cup_{j=1}^{18} \overline{\Gamma_j \cup \Gamma_{j'} \cup \Gamma_{j''}}$ is the contour shown and oriented as in Figure \ref{fig: Gamma} (left).


\end{enumerate}

\section{Spectral analysis}\nequation\label{laxsec}

Let $T>0$ be fixed. Throughout the rest of the paper, we suppose that $u_0, v_{0} \in \mathcal{S}(\R_{+})$, $\tilde{u}_{0},\tilde{u}_{1}, \tilde{u}_{2}, \tilde{v}_{0} \in C^{\infty}([0,T])$ and that $\{u,v\}$ is a Schwartz class solution of \eqref{boussinesqsystem} with existence time $T$, initial data $u_0, v_{0}$ and boundary values $\tilde{u}_{0},\tilde{u}_{1}, \tilde{u}_{2}, \tilde{v}_{0}$. 

 By \cite{Z1974}, the system (\ref{boussinesqsystem}) is the compatibility condition of the Lax pair
\begin{equation}\label{Xlax}
\begin{cases}
\mu_{x} - [\mathcal{L},\mu] = \mathsf{U} \mu, \\
\mu_{t} - [\mathcal{Z},\mu] = \mathsf{V} \mu,
\end{cases}
\end{equation}
where $\mathcal{L} = \mbox{diag}(l_{1},l_{2},l_{3})$, $\mathcal{Z} = \mbox{diag}(z_{1},z_{2},z_{3})$, 
\begin{align*}
& \mathsf{U} = L - \mathcal{L}, \qquad \mathsf{V} = Z - \mathcal{Z}, \qquad L = P^{-1}\tilde{L}P, \qquad Z = P^{-1}\tilde{Z}P, \\
& \tilde{L} = \begin{pmatrix}
0 & 1 & 0 \\
0 & 0 & 1 \\
\frac{\lambda}{12 i \sqrt{3}}-\frac{u_{x}}{4}-\frac{iv}{4\sqrt{3}} & -\frac{1+2u}{4} & 0
\end{pmatrix}, \quad
\tilde{Z} = \begin{pmatrix}
-i \frac{1+2u}{2\sqrt{3}} & 0 & -i \sqrt{3} \\
- \frac{\lambda}{12} - i \frac{u_{x}}{4\sqrt{3}} - \frac{v}{4} & i \frac{1+2u}{4\sqrt{3}} & 0 \\
-i \frac{u_{xx}}{4\sqrt{3}}-\frac{v_{x}}{4} & - \frac{\lambda}{12} + \frac{iu_{x}}{4\sqrt{3}}-\frac{v}{4} & i \frac{1+2u}{4\sqrt{3}}
\end{pmatrix},
\end{align*}
$\lambda = \frac{k^3 + k^{-3}}{2}$, $P$ is defined in \eqref{Pdef intro}, and $\mu(x,t,k)$ is a $3\times 3$-matrix valued function depending on $x \geq 0$, $t\in [0,T]$ and $k\in \C\setminus \hat{\mathcal{Q}}$. The adjoint system of \eqref{Xlax} is 
\begin{equation}\label{Xlax adjoint}
\begin{cases}
\mu_{x}^{A} + [\mathcal{L},\mu^{A}] = -\mathsf{U}^{T} \mu^{A}, \\
\mu_{t}^{A} + [\mathcal{Z},\mu^{A}] = -\mathsf{V}^{T} \mu^{A},
\end{cases}
\end{equation}
in the sense that if $\mu(x,t,k)$ is a $3\times 3$ invertible matrix valued function satisfying \eqref{Xlax}, then $\mu^{A} := (\mu^{-1})^{T}$ satisfies \eqref{Xlax adjoint}.

We write the systems (\ref{Xlax}) and \eqref{Xlax adjoint} in differential forms as
\begin{equation}\label{mulaxdiffform}
d\left(e^{-\hat{\mathcal{L}}x - \hat{\mathcal{Z}}t} \mu \right) = W, \qquad d\left(e^{\hat{\mathcal{L}}x + \hat{\mathcal{Z}}t} \mu^{A} \right) = W^{A},
\end{equation}
where $W(x,t,k)$ and $W^{A}(x,t,k)$ are the closed one-forms defined by
\begin{equation}\label{Wdef}  
W = e^{-\hat{\mathcal{L}}x - \hat{\mathcal{Z}}t}[(\mathsf{U} dx + \mathsf{V} dt) \mu], \qquad W^{A} = -e^{\hat{\mathcal{L}}x + \hat{\mathcal{Z}}t}[(\mathsf{U}^{T} dx + \mathsf{V}^{T} dt) \mu^{A}].
\end{equation}  

\subsection{Preliminaries}

It will be convenient for us to introduce the following 36 open, pairwisely disjoint subsets $\{D_n\}_1^{18}$ and $\{E_n\}_1^{18}$ of the complex plane by (see also Figure \ref{fig: Gamma} (right))

\begin{align}
D_{1} & = \{k \in \C \,|\, \text{Re}\,l_3 < \text{Re}\,l_2 < \text{Re}\,l_1 \text{  and  } 
\text{Re}\,z_1 < \text{Re}\,z_3 < \text{Re}\,z_2 \}, \nonumber \\
D_{2} & = \{k \in \C \,|\, \text{Re}\,l_3 < \text{Re}\,l_2 < \text{Re}\,l_1 \text{  and  } 
\text{Re}\,z_1 < \text{Re}\,z_2 < \text{Re}\,z_3 \}, \nonumber \\
D_{3} & = \{k \in \C \,|\, \text{Re}\,l_3 < \text{Re}\,l_2 < \text{Re}\,l_1 \text{  and  } 
\text{Re}\,z_2 < \text{Re}\,z_1 < \text{Re}\,z_3 \}, \nonumber \\
D_{4} & = \{k \in \C \,|\, \text{Re}\,l_2 < \text{Re}\,l_3 < \text{Re}\,l_1 \text{  and  } 
\text{Re}\,z_2 < \text{Re}\,z_1 < \text{Re}\,z_3 \}, \nonumber \\
D_{5} & = \{k \in \C \,|\, \text{Re}\,l_2 < \text{Re}\,l_3 < \text{Re}\,l_1 \text{  and  } 
\text{Re}\,z_2 < \text{Re}\,z_3 < \text{Re}\,z_1 \}, \nonumber \\
D_{6} & = \{k \in \C \,|\, \text{Re}\,l_2 < \text{Re}\,l_3 < \text{Re}\,l_1 \text{  and  } 
\text{Re}\,z_3 < \text{Re}\,z_2 < \text{Re}\,z_1 \}, \nonumber \\
D_{7} & = \{k \in \C \,|\, \text{Re}\,l_2 < \text{Re}\,l_1 < \text{Re}\,l_3 \text{  and  } 
\text{Re}\,z_3 < \text{Re}\,z_2 < \text{Re}\,z_1 \}, \nonumber \\
D_{8} & = \{k \in \C \,|\, \text{Re}\,l_2 < \text{Re}\,l_1 < \text{Re}\,l_3 \text{  and  } 
\text{Re}\,z_3 < \text{Re}\,z_1 < \text{Re}\,z_2 \}, \nonumber \\
D_{9} & = \{k \in \C \,|\, \text{Re}\,l_2 < \text{Re}\,l_1 < \text{Re}\,l_3 \text{  and  } 
\text{Re}\,z_1 < \text{Re}\,z_3 < \text{Re}\,z_2 \}, \nonumber \\
D_{10} & = \{k \in \C \,|\, \text{Re}\,l_1 < \text{Re}\,l_2 < \text{Re}\,l_3 \text{  and  } 
\text{Re}\,z_1 < \text{Re}\,z_3 < \text{Re}\,z_2 \}, \nonumber \\
D_{11} & = \{k \in \C \,|\, \text{Re}\,l_1 < \text{Re}\,l_2 < \text{Re}\,l_3 \text{  and  } 
\text{Re}\,z_1 < \text{Re}\,z_2 < \text{Re}\,z_3 \}, \nonumber \\
D_{12} & = \{k \in \C \,|\, \text{Re}\,l_1 < \text{Re}\,l_2 < \text{Re}\,l_3 \text{  and  } 
\text{Re}\,z_2 < \text{Re}\,z_1 < \text{Re}\,z_3 \}, \nonumber \\
D_{13} & = \{k \in \C \,|\, \text{Re}\,l_1 < \text{Re}\,l_3 < \text{Re}\,l_2 \text{  and  } 
\text{Re}\,z_2 < \text{Re}\,z_1 < \text{Re}\,z_3 \}, \nonumber \\
D_{14} & = \{k \in \C \,|\, \text{Re}\,l_1 < \text{Re}\,l_3 < \text{Re}\,l_2 \text{  and  } 
\text{Re}\,z_2 < \text{Re}\,z_3 < \text{Re}\,z_1 \}, \nonumber \\
D_{15} & = \{k \in \C \,|\, \text{Re}\,l_1 < \text{Re}\,l_3 < \text{Re}\,l_2 \text{  and  } 
\text{Re}\,z_3 < \text{Re}\,z_2 < \text{Re}\,z_1 \}, \nonumber \\
D_{16} & = \{k \in \C \,|\, \text{Re}\,l_3 < \text{Re}\,l_1 < \text{Re}\,l_2 \text{  and  } 
\text{Re}\,z_3 < \text{Re}\,z_2 < \text{Re}\,z_1 \}, \nonumber \\
D_{17} & = \{k \in \C \,|\, \text{Re}\,l_3 < \text{Re}\,l_1 < \text{Re}\,l_2 \text{  and  } 
\text{Re}\,z_3 < \text{Re}\,z_1 < \text{Re}\,z_2 \}, \nonumber \\
D_{18} & = \{k \in \C \,|\, \text{Re}\,l_3 < \text{Re}\,l_1 < \text{Re}\,l_2 \text{  and  } 
\text{Re}\,z_1 < \text{Re}\,z_3 < \text{Re}\,z_2 \}, \nonumber \\
E_{1} & = \{k \in \C \,|\, \text{Re}\,l_1 < \text{Re}\,l_2 < \text{Re}\,l_3 \text{  and  } 
\text{Re}\,z_2 < \text{Re}\,z_3 < \text{Re}\,z_1 \}, \nonumber \\
E_{2} & = \{k \in \C \,|\, \text{Re}\,l_1 < \text{Re}\,l_2 < \text{Re}\,l_3 \text{  and  } 
\text{Re}\,z_3 < \text{Re}\,z_2 < \text{Re}\,z_1 \}, \nonumber \\
E_{3} & = \{k \in \C \,|\, \text{Re}\,l_1 < \text{Re}\,l_2 < \text{Re}\,l_3 \text{  and  } 
\text{Re}\,z_3 < \text{Re}\,z_1 < \text{Re}\,z_2 \}, \nonumber \\
E_{4} & = \{k \in \C \,|\, \text{Re}\,l_1 < \text{Re}\,l_3 < \text{Re}\,l_2 \text{  and  } 
\text{Re}\,z_3 < \text{Re}\,z_1 < \text{Re}\,z_2 \}, \nonumber \\
E_{5} & = \{k \in \C \,|\, \text{Re}\,l_1 < \text{Re}\,l_3 < \text{Re}\,l_2 \text{  and  } 
\text{Re}\,z_1 < \text{Re}\,z_3 < \text{Re}\,z_2 \}, \nonumber \\
E_{6} & = \{k \in \C \,|\, \text{Re}\,l_1 < \text{Re}\,l_3 < \text{Re}\,l_2 \text{  and  } 
\text{Re}\,z_1 < \text{Re}\,z_2 < \text{Re}\,z_3 \}, \nonumber \\
E_{7} & = \{k \in \C \,|\, \text{Re}\,l_3 < \text{Re}\,l_1 < \text{Re}\,l_2 \text{  and  } 
\text{Re}\,z_1 < \text{Re}\,z_2 < \text{Re}\,z_3 \}, \nonumber \\
E_{8} & = \{k \in \C \,|\, \text{Re}\,l_3 < \text{Re}\,l_1 < \text{Re}\,l_2 \text{  and  } 
\text{Re}\,z_2 < \text{Re}\,z_1 < \text{Re}\,z_3 \}, \nonumber \\
E_{9} & = \{k \in \C \,|\, \text{Re}\,l_3 < \text{Re}\,l_1 < \text{Re}\,l_2 \text{  and  } 
\text{Re}\,z_2 < \text{Re}\,z_3 < \text{Re}\,z_1 \}, \nonumber \\
E_{10} & = \{k \in \C \,|\, \text{Re}\,l_3 < \text{Re}\,l_2 < \text{Re}\,l_1 \text{  and  } 
\text{Re}\,z_2 < \text{Re}\,z_3 < \text{Re}\,z_1 \}, \nonumber \\
E_{11} & = \{k \in \C \,|\, \text{Re}\,l_3 < \text{Re}\,l_2 < \text{Re}\,l_1 \text{  and  } 
\text{Re}\,z_3 < \text{Re}\,z_2 < \text{Re}\,z_1 \}, \nonumber \\
E_{12} & = \{k \in \C \,|\, \text{Re}\,l_3 < \text{Re}\,l_2 < \text{Re}\,l_1 \text{  and  } 
\text{Re}\,z_3 < \text{Re}\,z_1 < \text{Re}\,z_2 \}, \nonumber \\
E_{13} & = \{k \in \C \,|\, \text{Re}\,l_2 < \text{Re}\,l_3 < \text{Re}\,l_1 \text{  and  } 
\text{Re}\,z_3 < \text{Re}\,z_1 < \text{Re}\,z_2 \}, \nonumber \\
E_{14} & = \{k \in \C \,|\, \text{Re}\,l_2 < \text{Re}\,l_3 < \text{Re}\,l_1 \text{  and  } 
\text{Re}\,z_1 < \text{Re}\,z_3 < \text{Re}\,z_2 \}, \nonumber \\
E_{15} & = \{k \in \C \,|\, \text{Re}\,l_2 < \text{Re}\,l_3 < \text{Re}\,l_1 \text{  and  } 
\text{Re}\,z_1 < \text{Re}\,z_2 < \text{Re}\,z_3 \}, \nonumber \\
E_{16} & = \{k \in \C \,|\, \text{Re}\,l_2 < \text{Re}\,l_1 < \text{Re}\,l_3 \text{  and  } 
\text{Re}\,z_1 < \text{Re}\,z_2 < \text{Re}\,z_3 \}, \nonumber \\
E_{17} & = \{k \in \C \,|\, \text{Re}\,l_2 < \text{Re}\,l_1 < \text{Re}\,l_3 \text{  and  } 
\text{Re}\,z_2 < \text{Re}\,z_1 < \text{Re}\,z_3 \}, \nonumber \\
E_{18} & = \{k \in \C \,|\, \text{Re}\,l_2 < \text{Re}\,l_1 < \text{Re}\,l_3 \text{  and  } 
\text{Re}\,z_2 < \text{Re}\,z_3 < \text{Re}\,z_1 \}, \label{def of Dj and Ej}
\end{align}
and let $\mathcal{S}_{j}$, $j=1,2,3$ be defined by (see also Figure \ref{fig: S1S2S3})
\begin{figure}
\begin{center}
\scalebox{0.5}{\begin{tikzpicture}[scale=1.1, master]
\node at (0,0) {};

\draw[line width=0.025 mm, dashed,fill=gray!30] (0,0) --  (-105:4.5) arc(-105:-150:4.5) -- cycle;
\draw[line width=0.25 mm, dashed,fill=white] (0,0) --  (-105:3) arc(-105:-150:3) -- cycle;
\node at (-130:3.8) {$\mathcal{S}_{1}$ };

\draw[line width=0.025 mm, dashed,fill=gray!30] (0,0) --  (-30:4.5) arc(-30:15:4.5) -- cycle;
\draw[line width=0.25 mm, dashed,fill=white] (0,0) --  (-30:3) arc(-30:15:3) -- cycle;
\node at (-5:3.8) {$\omega\mathcal{S}_{1}$ };

\draw[line width=0.025 mm, dashed,fill=gray!30] (0,0) --  (90:4.5) arc(90:135:4.5) -- cycle;
\draw[line width=0.25 mm, dashed,fill=white] (0,0) --  (90:3) arc(90:135:3) -- cycle;
\node at (115:3.8) {$\omega^{2}\mathcal{S}_{1}$ };

\draw[line width=0.25 mm, dashed,fill=gray!30] (0,0) --  (105:3) arc(105:150:3) -- cycle;
\node at (130:2) {$\mathcal{S}_{1}$ };

\draw[line width=0.25 mm, dashed,fill=gray!30] (0,0) --  (-135:3) arc(-135:-90:3) -- cycle;
\node at (-110:2) {$\omega \mathcal{S}_{1}$ };

\draw[line width=0.25 mm, dashed,fill=gray!30] (0,0) --  (-15:3) arc(-15:30:3) -- cycle;
\node at (10:2) {$\omega^{2} \mathcal{S}_{1}$ };

\draw[black,line width=0.45 mm] (15:3)--(15:4.5);
\draw[black,line width=0.45 mm] (0,0)--(30:4.5);
\draw[black,line width=0.45 mm] (0,0)--(45:4.5);
\draw[black,line width=0.45 mm] (0,0)--(75:4.5);
\draw[black,line width=0.45 mm] (0,0)--(90:4.5);
\draw[black,line width=0.45 mm] (0,0)--(105:3);
\draw[black,line width=0.45 mm] (135:3)--(135:4.5);
\draw[black,line width=0.45 mm] (0,0)--(150:4.5);
\draw[black,line width=0.45 mm] (0,0)--(165:4.5);
\draw[black,line width=0.45 mm] (0,0)--(-15:3);
\draw[black,line width=0.45 mm] (0,0)--(-30:4.5);
\draw[black,line width=0.45 mm] (0,0)--(-45:4.5);
\draw[black,line width=0.45 mm] (0,0)--(-75:4.5);
\draw[black,line width=0.45 mm] (0,0)--(-90:4.5);
\draw[black,line width=0.45 mm] (-105:3)--(-105:4.5);
\draw[black,line width=0.45 mm] (0,0)--(-135:3);
\draw[black,line width=0.45 mm] (0,0)--(-150:4.5);
\draw[black,line width=0.45 mm] (0,0)--(-165:4.5);

\draw[black,line width=0.45 mm] ([shift=(-180:3cm)]0,0) arc (-180:180:3cm);

\draw[fill] (0:3) circle (0.08);
\draw[fill] (60:3) circle (0.08);
\draw[fill] (120:3) circle (0.08);
\draw[fill] (180:3) circle (0.08);
\draw[fill] (240:3) circle (0.08);
\draw[fill] (300:3) circle (0.08);

\end{tikzpicture} \hspace{0.05cm} \begin{tikzpicture}[scale=1.1,slave]
\node at (0,0) {};

\draw[line width=0.025 mm, dashed,fill=gray!30] (0,0) --  (-75:4.5) arc(-75:-30:4.5) -- cycle;
\draw[line width=0.25 mm, dashed,fill=white] (0,0) --  (-75:3) arc(-75:-30:3) -- cycle;
\node at (-55:3.8) {$\mathcal{S}_{2}$ };

\draw[line width=0.025 mm, dashed,fill=gray!30] (0,0) --  (45:4.5) arc(45:90:4.5) -- cycle;
\draw[line width=0.25 mm, dashed,fill=white] (0,0) --  (45:3) arc(45:90:3) -- cycle;
\node at (65:3.8) {$\omega\mathcal{S}_{2}$ };

\draw[line width=0.025 mm, dashed,fill=gray!30] (0,0) --  (165:4.5) arc(165:210:4.5) -- cycle;
\draw[line width=0.25 mm, dashed,fill=white] (0,0) --  (165:3) arc(165:210:3) -- cycle;
\node at (185:3.8) {$\omega^{2}\mathcal{S}_{2}$ };

\draw[line width=0.25 mm, dashed,fill=gray!30] (0,0) --  (30:3) arc(30:75:3) -- cycle;
\node at (55:2) {$\mathcal{S}_{2}$ };

\draw[line width=0.25 mm, dashed,fill=gray!30] (0,0) --  (150:3) arc(150:195:3) -- cycle;
\node at (175:2) {$\omega \mathcal{S}_{2}$ };

\draw[line width=0.25 mm, dashed,fill=gray!30] (0,0) --  (270:3) arc(270:315:3) -- cycle;
\node at (295:2) {$\omega^{2} \mathcal{S}_{2}$ };

\draw[black,line width=0.45 mm] (0,0)--(15:4.5);
\draw[black,line width=0.45 mm] (0,0)--(30:4.5);
\draw[black,line width=0.45 mm] (45:3)--(45:4.5);
\draw[black,line width=0.45 mm] (0,0)--(75:3);
\draw[black,line width=0.45 mm] (0,0)--(90:4.5);
\draw[black,line width=0.45 mm] (0,0)--(105:4.5);
\draw[black,line width=0.45 mm] (0,0)--(135:4.5);
\draw[black,line width=0.45 mm] (0,0)--(150:4.5);
\draw[black,line width=0.45 mm] (165:3)--(165:4.5);
\draw[black,line width=0.45 mm] (0,0)--(-15:4.5);
\draw[black,line width=0.45 mm] (0,0)--(-30:4.5);
\draw[black,line width=0.45 mm] (0,0)--(-45:3);
\draw[black,line width=0.45 mm] (-75:3)--(-75:4.5);
\draw[black,line width=0.45 mm] (0,0)--(-90:4.5);
\draw[black,line width=0.45 mm] (0,0)--(-105:4.5);
\draw[black,line width=0.45 mm] (0,0)--(-135:4.5);
\draw[black,line width=0.45 mm] (0,0)--(-150:4.5);
\draw[black,line width=0.45 mm] (0,0)--(-165:3);

\draw[black,line width=0.45 mm] ([shift=(-180:3cm)]0,0) arc (-180:180:3cm);

\draw[fill] (0:3) circle (0.08);
\draw[fill] (60:3) circle (0.08);
\draw[fill] (120:3) circle (0.08);
\draw[fill] (180:3) circle (0.08);
\draw[fill] (240:3) circle (0.08);
\draw[fill] (300:3) circle (0.08);

\end{tikzpicture} \hspace{0.05cm}  \begin{tikzpicture}[scale=1.1,slave]
\node at (0,0) {};

\draw[line width=0.025 mm, dashed,fill=gray!30] (0,0) --  (30:4.5) arc(30:150:4.5) -- cycle;
\draw[line width=0.25 mm, dashed,fill=white] (0,0) --  (30:3) arc(30:150:3) -- cycle;
\node at (90:3.8) {$\mathcal{S}_{3}$ };

\draw[line width=0.025 mm, dashed,fill=gray!30] (0,0) --  (150:4.5) arc(150:270:4.5) -- cycle;
\draw[line width=0.25 mm, dashed,fill=white] (0,0) --  (150:3) arc(150:270:3) -- cycle;
\node at (210:3.8) {$\omega\mathcal{S}_{3}$ };

\draw[line width=0.025 mm, dashed,fill=gray!30] (0,0) --  (270:4.5) arc(270:390:4.5) -- cycle;
\draw[line width=0.25 mm, dashed,fill=white] (0,0) --  (270:3) arc(270:390:3) -- cycle;
\node at (330:3.8) {$\omega^{2}\mathcal{S}_{3}$ };

\draw[line width=0.25 mm, dashed,fill=gray!30] (0,0) --  (-150:3) arc(-150:-30:3) -- cycle;
\node at (-90:2) {$\mathcal{S}_{3}$ };

\draw[line width=0.25 mm, dashed,fill=gray!30] (0,0) --  (-30:3) arc(-30:90:3) -- cycle;
\node at (30:2) {$\omega\mathcal{S}_{3}$ };

\draw[line width=0.25 mm, dashed,fill=gray!30] (0,0) --  (90:3) arc(90:210:3) -- cycle;
\node at (150:2) {$\omega^{2}\mathcal{S}_{3}$ };

\draw[black,line width=0.45 mm] (30:3)--(30:4.5);
\draw[black,line width=0.45 mm] (0,0)--(90:3);
\draw[black,line width=0.45 mm] (150:3)--(150:4.5);
\draw[black,line width=0.45 mm] (0,0)--(-30:3);
\draw[black,line width=0.45 mm] (-90:3)--(-90:4.5);
\draw[black,line width=0.45 mm] (0,0)--(-150:3);

\draw[black,line width=0.45 mm] ([shift=(-180:3cm)]0,0) arc (-180:180:3cm);

\draw[fill] (0:3) circle (0.08);
\draw[fill] (60:3) circle (0.08);
\draw[fill] (120:3) circle (0.08);
\draw[fill] (180:3) circle (0.08);
\draw[fill] (240:3) circle (0.08);
\draw[fill] (300:3) circle (0.08);
\end{tikzpicture}}
\\[0.3cm]
\scalebox{0.5}{\begin{tikzpicture}[scale=1.1, master]
\node at (0,0) {};

\draw[line width=0.25 mm, dashed,fill=gray!30] (0,0) --  (105:4.5) arc(105:150:4.5) -- cycle;
\draw[line width=0.25 mm, dashed,fill=white] (0,0) --  (105:3) arc(105:150:3) -- cycle;
\node at (130:3.8) {$\mathcal{S}_{1}^{*}$ };

\draw[line width=0.25 mm, dashed,fill=gray!30] (0,0) --  (-135:4.5) arc(-135:-90:4.5) -- cycle;
\draw[line width=0.25 mm, dashed,fill=white] (0,0) --  (-135:3) arc(-135:-90:3) -- cycle;
\node at (-110:3.8) {$\omega \mathcal{S}_{1}^{*}$ };

\draw[line width=0.25 mm, dashed,fill=gray!30] (0,0) --  (-15:4.5) arc(-15:30:4.5) -- cycle;
\draw[line width=0.25 mm, dashed,fill=white] (0,0) --  (-15:3) arc(-15:30:3) -- cycle;
\node at (10:3.8) {$\omega^{2} \mathcal{S}_{1}^{*}$ };

\draw[line width=0.25 mm, dashed,fill=gray!30] (0,0) --  (-105:3) arc(-105:-150:3) -- cycle;
\node at (-130:2) {$\mathcal{S}_{1}^{*}$ };

\draw[line width=0.25 mm, dashed,fill=gray!30] (0,0) --  (-30:3) arc(-30:15:3) -- cycle;
\node at (-5:2) {$\omega\mathcal{S}_{1}^{*}$ };

\draw[line width=0.25 mm, dashed,fill=gray!30] (0,0) --  (90:3) arc(90:135:3) -- cycle;
\node at (115:2) {$\omega^{2}\mathcal{S}_{1}^{*}$ };

\draw[black,line width=0.45 mm] (0,0)--(15:3);
\draw[black,line width=0.45 mm] (0,0)--(30:4.5);
\draw[black,line width=0.45 mm] (0,0)--(45:4.5);
\draw[black,line width=0.45 mm] (0,0)--(75:4.5);
\draw[black,line width=0.45 mm] (0,0)--(90:4.5);
\draw[black,line width=0.45 mm] (105:3)--(105:4.5);
\draw[black,line width=0.45 mm] (0,0)--(135:3);
\draw[black,line width=0.45 mm] (0,0)--(150:4.5);
\draw[black,line width=0.45 mm] (0,0)--(165:4.5);
\draw[black,line width=0.45 mm] (-15:3)--(-15:4.5);
\draw[black,line width=0.45 mm] (0,0)--(-30:4.5);
\draw[black,line width=0.45 mm] (0,0)--(-45:4.5);
\draw[black,line width=0.45 mm] (0,0)--(-75:4.5);
\draw[black,line width=0.45 mm] (0,0)--(-90:4.5);
\draw[black,line width=0.45 mm] (0,0)--(-105:3);
\draw[black,line width=0.45 mm] (-135:3)--(-135:4.5);
\draw[black,line width=0.45 mm] (0,0)--(-150:4.5);
\draw[black,line width=0.45 mm] (0,0)--(-165:4.5);

\draw[black,line width=0.45 mm] ([shift=(-180:3cm)]0,0) arc (-180:180:3cm);

\draw[fill] (0:3) circle (0.08);
\draw[fill] (60:3) circle (0.08);
\draw[fill] (120:3) circle (0.08);
\draw[fill] (180:3) circle (0.08);
\draw[fill] (240:3) circle (0.08);
\draw[fill] (300:3) circle (0.08);

\end{tikzpicture} \hspace{0.05cm} \begin{tikzpicture}[scale=1.1,slave]
\node at (0,0) {};

\draw[line width=0.25 mm, dashed,fill=gray!30] (0,0) --  (30:4.5) arc(30:75:4.5) -- cycle;
\draw[line width=0.25 mm, dashed,fill=white] (0,0) --  (30:3) arc(30:75:3) -- cycle;
\node at (55:3.8) {$\mathcal{S}_{2}^{*}$ };

\draw[line width=0.25 mm, dashed,fill=gray!30] (0,0) --  (150:4.5) arc(150:195:4.5) -- cycle;
\draw[line width=0.25 mm, dashed,fill=white] (0,0) --  (150:3) arc(150:195:3) -- cycle;
\node at (175:3.8) {$\omega \mathcal{S}_{2}^{*}$ };

\draw[line width=0.25 mm, dashed,fill=gray!30] (0,0) --  (270:4.5) arc(270:315:4.5) -- cycle;
\draw[line width=0.25 mm, dashed,fill=white] (0,0) --  (270:3) arc(270:315:3) -- cycle;
\node at (295:3.8) {$\omega^{2} \mathcal{S}_{2}^{*}$ };

\draw[line width=0.25 mm, dashed,fill=gray!30] (0,0) --  (-75:3) arc(-75:-30:3) -- cycle;
\node at (-55:2) {$\mathcal{S}_{2}^{*}$ };

\draw[line width=0.25 mm, dashed,fill=gray!30] (0,0) --  (45:3) arc(45:90:3) -- cycle;
\node at (65:2) {$\omega\mathcal{S}_{2}^{*}$ };

\draw[line width=0.25 mm, dashed,fill=gray!30] (0,0) --  (165:3) arc(165:210:3) -- cycle;
\node at (185:2) {$\omega^{2}\mathcal{S}_{2}^{*}$ };

\draw[black,line width=0.45 mm] (0,0)--(15:4.5);
\draw[black,line width=0.45 mm] (0,0)--(30:4.5);
\draw[black,line width=0.45 mm] (0,0)--(45:3);
\draw[black,line width=0.45 mm] (75:3)--(75:4.5);
\draw[black,line width=0.45 mm] (0,0)--(90:4.5);
\draw[black,line width=0.45 mm] (0,0)--(105:4.5);
\draw[black,line width=0.45 mm] (0,0)--(135:4.5);
\draw[black,line width=0.45 mm] (0,0)--(150:4.5);
\draw[black,line width=0.45 mm] (0,0)--(165:3);
\draw[black,line width=0.45 mm] (0,0)--(-15:4.5);
\draw[black,line width=0.45 mm] (0,0)--(-30:4.5);
\draw[black,line width=0.45 mm] (-45:3)--(-45:4.5);
\draw[black,line width=0.45 mm] (0,0)--(-75:3);
\draw[black,line width=0.45 mm] (0,0)--(-90:4.5);
\draw[black,line width=0.45 mm] (0,0)--(-105:4.5);
\draw[black,line width=0.45 mm] (0,0)--(-135:4.5);
\draw[black,line width=0.45 mm] (0,0)--(-150:4.5);
\draw[black,line width=0.45 mm] (-165:3)--(-165:4.5);

\draw[black,line width=0.45 mm] ([shift=(-180:3cm)]0,0) arc (-180:180:3cm);

\draw[fill] (0:3) circle (0.08);
\draw[fill] (60:3) circle (0.08);
\draw[fill] (120:3) circle (0.08);
\draw[fill] (180:3) circle (0.08);
\draw[fill] (240:3) circle (0.08);
\draw[fill] (300:3) circle (0.08);

\end{tikzpicture} \hspace{0.05cm}  \begin{tikzpicture}[scale=1.1,slave]
\node at (0,0) {};

\draw[line width=0.25 mm, dashed,fill=gray!30] (0,0) --  (-150:4.5) arc(-150:-30:4.5) -- cycle;
\draw[line width=0.25 mm, dashed,fill=white] (0,0) --  (-150:3) arc(-150:-30:3) -- cycle;
\node at (-90:3.8) {$\mathcal{S}_{3}^{*}$ };

\draw[line width=0.25 mm, dashed,fill=gray!30] (0,0) --  (-30:4.5) arc(-30:90:4.5) -- cycle;
\draw[line width=0.25 mm, dashed,fill=white] (0,0) --  (-30:3) arc(-30:90:3) -- cycle;
\node at (30:3.8) {$\omega\mathcal{S}_{3}^{*}$ };

\draw[line width=0.25 mm, dashed,fill=gray!30] (0,0) --  (90:4.5) arc(90:210:4.5) -- cycle;
\draw[line width=0.25 mm, dashed,fill=white] (0,0) --  (90:3) arc(90:210:3) -- cycle;
\node at (150:3.8) {$\omega^{2}\mathcal{S}_{3}^{*}$ };

\draw[line width=0.25 mm, dashed,fill=gray!30] (0,0) --  (30:3) arc(30:150:3) -- cycle;
\node at (90:2) {$\mathcal{S}_{3}^{*}$ };

\draw[line width=0.25 mm, dashed,fill=gray!30] (0,0) --  (150:3) arc(150:270:3) -- cycle;
\node at (210:2) {$\omega\mathcal{S}_{3}^{*}$ };

\draw[line width=0.25 mm, dashed,fill=gray!30] (0,0) --  (270:3) arc(270:390:3) -- cycle;
\node at (330:2) {$\omega^{2}\mathcal{S}_{3}^{*}$ };

\draw[black,line width=0.45 mm] (0,0)--(30:3);
\draw[black,line width=0.45 mm] (90:3)--(90:4.5);
\draw[black,line width=0.45 mm] (0,0)--(150:3);
\draw[black,line width=0.45 mm] (-30:3)--(-30:4.5);
\draw[black,line width=0.45 mm] (0,0)--(-90:3);
\draw[black,line width=0.45 mm] (-150:3)--(-150:4.5);

\draw[black,line width=0.45 mm] ([shift=(-180:3cm)]0,0) arc (-180:180:3cm);

\draw[fill] (0:3) circle (0.08);
\draw[fill] (60:3) circle (0.08);
\draw[fill] (120:3) circle (0.08);
\draw[fill] (180:3) circle (0.08);
\draw[fill] (240:3) circle (0.08);
\draw[fill] (300:3) circle (0.08);
\end{tikzpicture}}

\end{center}
\begin{figuretext}\label{fig: S1S2S3}
The sets $\mathcal{S}_{j},\omega\mathcal{S}_{j},\omega^{2}\mathcal{S}_{j}$, $j=1,2,3$ (top), and $\mathcal{S}_{j}^{*},\omega\mathcal{S}_{j}^{*},\omega^{2}\mathcal{S}_{j}^{*}$, $j=1,2,3$ (bottom).
\end{figuretext}
\end{figure}
\begin{align*}
& \mathcal{S}_{1} := \{\re  l_1 < \re  l_3\} \cap \{\re  l_2 < \re  l_3\} \cap \{\re  z_3 < \re  z_1\} \cap \{\re  z_3 < \re  z_2\}, \\
& \mathcal{S}_{2} := \{\re  l_1 < \re  l_3\} \cap \{\re  l_2 < \re  l_3\} \cap \{\re  z_1 < \re  z_3\} \cap \{\re  z_2 < \re  z_3\}, \\
& \mathcal{S}_{3} := \{\re  l_3 < \re  l_1\} \cap \{\re  l_3 < \re  l_2\}.
\end{align*}
Using \eqref{lmexpressions intro} and \eqref{def of Dj and Ej}, these sets can also be rewritten as 
\begin{align*}
\mathcal{S}_{1} & = \big\{k \in \C: \big(\arg k \in (-\tfrac{5\pi}{6},-\tfrac{7\pi}{12}) \mbox{ and } |k| > 1\big) \mbox{ or } \big(\arg k \in (\tfrac{7\pi}{12},\tfrac{5\pi}{6}) \mbox{ and } |k| < 1\big) \big\} \\
& = \mbox{int}(\bar{D}_{7}\cup \bar{D}_{8} \cup \bar{E}_{2}\cup \bar{E}_{3}), \\
\mathcal{S}_{2} & = \big\{k \in \C: \big(\arg k \in (-\tfrac{5\pi}{12},-\tfrac{\pi}{6}) \mbox{ and } |k| > 1\big) \mbox{ or } \big(\arg k \in (\tfrac{\pi}{6},\tfrac{5\pi}{12}) \mbox{ and } |k| < 1\big) \big\} \\
& = \mbox{int}(\bar{D}_{11}\cup \bar{D}_{12} \cup \bar{E}_{16} \cup \bar{E}_{17}), \\
\mathcal{S}_{3} & = \big\{k \in \C: \big(\arg k \in (\tfrac{\pi}{6},\tfrac{5\pi}{6}) \mbox{ and } |k| > 1\big) \mbox{ or } \big(\arg k \in (-\tfrac{5\pi}{6},-\tfrac{\pi}{6}) \mbox{ and } |k| < 1\big) \big\} \\
& = \mbox{int}(\cup_{j=16}^{18} \bar{D}_{j} \cup \cup_{j=1}^{3} \bar{D}_{j} \cup \cup_{j=7}^{12} \bar{E}_{j}).
\end{align*}
Define also $\hat{\mathcal{S}}_{j} = \partial \D \cup \bar{\mathcal{S}}_{j}$, $j=1,2,3$. Given a set $S\subset \C$, we let $S^{*} := \{z : \overline{z}^{-1}\in S\}$. Using \eqref{lmexpressions intro}, we verify that $\mathcal{S}_{1}^{*},\mathcal{S}_{2}^{*},\mathcal{S}_{3}^{*}$ can be rewritten as
\begin{align*}
& \mathcal{S}_{1}^{*} = \{\re  l_3 < \re  l_1\} \cap \{\re  l_3 < \re  l_2\} \cap \{\re  z_1 < \re  z_3\} \cap \{\re  z_2 < \re  z_3\}, \\
& \mathcal{S}_{2}^{*} = \{\re  l_3 < \re  l_1\} \cap \{\re  l_3 < \re  l_2\} \cap \{\re  z_3 < \re  z_1\} \cap \{\re  z_3 < \re  z_2\}, \\
& \mathcal{S}_{3}^{*} = \{\re  l_1 < \re  l_3\} \cap \{\re  l_2 < \re  l_3\}.
\end{align*}   
The open sets $\mathcal{S}_{j},\omega\mathcal{S}_{j},\omega^{2}\mathcal{S}_{j},\mathcal{S}_{j}^{*},\omega\mathcal{S}_{j}^{*},\omega^{2}\mathcal{S}_{j}^{*}$, $j=1,2,3$ are represented in Figure \ref{fig: S1S2S3}.

\subsection{The eigenfunctions $\mu_{1},\mu_{2},\mu_{3}$ and $\mu_{1}^{A},\mu_{2}^{A},\mu_{3}^{A}$}
We define three contours $\{\gamma_j\}_1^3$ in the $(x, t)$-plane going from $(x_j, t_j)$ to $(x,t)$, where $(x_1, t_1) = (0, T)$, $(x_2, t_2) = (0, 0)$, and $(x_3, t_3) = (+\infty, t)$; we choose the particular contours shown in Figure \ref{mucontours.pdf}. This choice implies the following inequalities for $(x',t') \in \gamma_{j}$, $j=1,2,3$:
\begin{align}
\gamma_1: x' - x \leq 0,& \qquad t' - t \geq 0, \nonumber \\
\gamma_2: x' - x \leq 0,& \qquad t' - t \leq 0, \label{contourinequalities}
	\\ 
\gamma_3: x' - x \geq 0.& \nonumber
\end{align}

\noindent We define three eigenfunctions $\{\mu_j\}_{j=1}^3$ of \eqref{Xlax} by 
\begin{align}
\mu_j(x,t,k) & = I + e^{x\hat{\mathcal{L}}(k) + t\hat{\mathcal{Z}}(k)} \int_{\gamma_j}  W_{j}(x',t',k) \nonumber \\
& = I +  \int_{\gamma_j} e^{(x-x')\hat{\mathcal{L}}(k) + (t-t')\hat{\mathcal{Z}}(k)} \big[\big((\mathsf{U} dx' + \mathsf{V} dt') \mu_{j}\big) (x',t',k)\big],  \label{mujdef}
\end{align}
where $W_{j}$ is given by the right-hand side of the first equation in (\ref{Wdef}) with $\mu$ replaced by $\mu_j$. Similarly, we define three eigenfunctions $\{\mu_j^{A}\}_{j=1}^3$ of \eqref{Xlax adjoint} by 
\begin{align}
\mu_j^{A}(x,t,k) & = I + e^{-x\hat{\mathcal{L}}(k) - t\hat{\mathcal{Z}}(k)} \int_{\gamma_j}  W_{j}^{A}(x',t',k) \nonumber \\
& = I -  \int_{\gamma_j} e^{(x'-x)\hat{\mathcal{L}}(k) + (t'-t)\hat{\mathcal{Z}}(k)} \big[\big((\mathsf{U}^{T} dx' + \mathsf{V}^{T} dt') \mu_{j}^{A}\big) (x',t',k)\big], \label{mujdef adjoint}
\end{align}
where $W_{j}^{A}$ is given by the right-hand side of the second equation in (\ref{Wdef}) with $\mu^{A}$ replaced by $\mu_j^{A}$. 

\begin{figure}
\begin{center}
\begin{tikzpicture}[master]
\draw[line width=0.15 mm, dashed,fill=gray!25] (0,0)--(3.5,0)--(3.5,1.2)--(0,1.2)--(0,0) -- cycle;
\draw[line width=0.2 mm,-<-=0,->-=1] (0,1.8)--(0,0)--(3.5,0);
\draw[line width=0.25 mm, white] (3.5,0)--(3.5,1.8);
\draw[fill] (1.35,0.6) circle (0.04); 
\draw[fill] (0,1.2) circle (0.04); 
\draw[line width=0.35 mm,->-=0.23,->-=0.75] (0,1.2)--(0,0.6)--(1.35,0.6);
\node at (1.85,0.6) {\small $(x,t)$};
\node at (-0.25,1.2) {\small $T$};
\node at (1.75,-0.5) {$\gamma_{1}$};
\end{tikzpicture}
 \hspace{0.6cm}
\begin{tikzpicture}[slave]
\draw[line width=0.15 mm, dashed,fill=gray!25] (0,0)--(3.5,0)--(3.5,1.2)--(0,1.2)--(0,0) -- cycle;
\draw[line width=0.2 mm,-<-=0,->-=1] (0,1.8)--(0,0)--(3.5,0);
\draw[line width=0.25 mm, white] (3.5,0)--(3.5,1.8);
\draw[fill] (1.35,0.6) circle (0.04); 
\draw[fill] (0,0) circle (0.04); 
\draw[line width=0.35 mm,->-=0.23,->-=0.75] (0,0)--(0,0.6)--(1.35,0.6);
\node at (1.85,0.6) {\small $(x,t)$};
\node at (-0.25,1.2) {\small $T$};
\node at (1.75,-0.5) {$\gamma_{2}$};
\end{tikzpicture} \hspace{0.6cm}
\begin{tikzpicture}[slave]
\draw[line width=0.15 mm, dashed,fill=gray!25] (0,0)--(3.5,0)--(3.5,1.2)--(0,1.2)--(0,0) -- cycle;
\draw[line width=0.2 mm,-<-=0,->-=1] (0,1.8)--(0,0)--(3.5,0);
\draw[line width=0.25 mm, white] (3.5,0)--(3.5,1.8);
\draw[fill] (1.35,0.6) circle (0.04); 
\draw[fill] (3.5,0.6) circle (0.04); 
\draw[line width=0.35 mm,->-=0.6] (3.5,0.6)--(1.35,0.6);
\node at (0.85,0.6) {\small $(x,t)$};
\node at (-0.25,1.2) {\small $T$};
\node at (1.75,-0.5) {$\gamma_{3}$};
\end{tikzpicture}
\begin{figuretext}\label{mucontours.pdf}
       The contours $\gamma_1$, $\gamma_2$, and $\gamma_3$ in the $(x, t)$-plane.
   \end{figuretext}
\end{center}
\end{figure}

The third column of the matrix equation (\ref{mujdef}) involves the exponentials
$$e^{(l_1 - l_3)(x - x') + (z_1 - z_3)(t - t')}, \qquad e^{(l_2 - l_3)(x - x') + (z_2 - z_3)(t - t')}.$$ 
Using the inequalities in (\ref{contourinequalities}) it follows that for fixed $x\in \R_{+}$, $t\in [0,T]$ and $(x',t') \in \gamma_{j}$, these exponentials are bounded for $k \in \hat{\mathcal{S}}_{j}$, $j=1,2,3$, respectively. These boundedness properties, combined with a standard analysis of the Volterra integral equations \eqref{mujdef}, imply that the third columns of $\mu_1,\mu_2$ and $\mu_3$ are bounded for $k$ in $\hat{\mathcal{S}}_{1},\hat{\mathcal{S}}_{2}$ and $\hat{\mathcal{S}}_{3}$, respectively, except for $k$ in small neighborhoods of $\mathcal{Q}$ because $\mathsf{U}$ and $\mathsf{V}$ have simple poles at each of the six points $\kappa_{j}$, $j=1,\ldots,6$.

Let $j\in \{1,2,3\}$. A similar argument shows that the $j$-th columns of $\mu_1,\mu_2$ and $\mu_3$ are bounded for $k$ in $\omega^{3-j}\hat{\mathcal{S}}_{1},\omega^{3-j}\hat{\mathcal{S}}_{2}$ and $\omega^{3-j}\hat{\mathcal{S}}_{3}$, respectively, except for $k$ in small neighborhoods of $\mathcal{Q}$.

Note also that $\mu_{1}$ and $\mu_{2}$ are well-defined for all $k\in \C\setminus \hat{\mathcal{Q}}$, because $\gamma_{1}$ and $\gamma_{2}$ are bounded.

The above properties of $\mu_{1},\mu_{2},\mu_{3}$, along with other properties, are summarized in Proposition \ref{XYprop}. The spectral analysis for the initial value problem of \eqref{badboussinesq} was carried out in \cite[Section 3]{CLmain}. The statement of Proposition \ref{XYprop} is similar to \cite[Proposition 3.2]{CLmain}; the main difference is that our initial-boundary value problem require a detailed study of the three functions $\mu_{j}$, $j=1,2,3$, whereas in \cite{CLmain} only two functions were needed (which were denoted $X$ and $Y$). The function $X$ in \cite{CLmain} corresponds to $\mu_{3}$ here. However, neither $\mu_{1}$ nor $\mu_{2}$ corresponds to $Y$ in \cite{CLmain}, so these functions require a new (albeit standard) analysis. The proof is omitted.


\begin{proposition}[Basic properties of $\mu_{1},\mu_{2},\mu_{3}$]\label{XYprop}
The equations (\ref{mujdef}) uniquely define three $3 \times 3$-matrix valued solutions $\mu_{1},\mu_{2}, \mu_{3}$ of (\ref{Xlax}) with the following properties:
\begin{enumerate}[$(a)$]
\item The functions $\mu_{j}$ have the following domains of definition:
\begin{subequations}\label{lol4main}
\begin{align}
& \mu_1(x,t,k)\; \text{is defined for} \; x\in \R_{+}, \; t \in [0,T], \; k\in \C\setminus \hat{\mathcal{Q}}, \\
& \mu_2(x,t,k) \; \text{is defined for} \; x\in \R_{+}, \; t \in [0,T], \; k \in \C\setminus \hat{\mathcal{Q}}, \\
& \mu_3(x,t,k)\; \text{is defined for} \; x\in \R_{+}, \; t \in [0,T], \; k \in (\omega^{2} \hat{\mathcal{S}}_{3}, \omega \hat{\mathcal{S}}_{3}, \hat{\mathcal{S}}_{3})\setminus \hat{\mathcal{Q}}.
\end{align}
\end{subequations}

\smallskip \noindent For each $k$ as in \eqref{lol4main} and $t\in [0,T]$, $\mu_{j}(\cdot,t, k)$ is smooth and satisfies the $x$-part of (\ref{Xlax}).

\smallskip \noindent For each $k$ as in \eqref{lol4main} and $x \geq 0$, $\mu_{j}(x,\cdot, k)$ is smooth and satisfies the $t$-part of (\ref{Xlax}).

\item For each $x \in \R_{+}$ and $t\in [0,T]$, 
\begin{subequations}
\begin{align*}
& \mu_1(x,t,k) \text{ is analytic for } k \in \C\setminus \hat{\mathcal{Q}}, \\
& \mu_2(x,t,k) \text{ is analytic for } k \in \C\setminus\hat{\mathcal{Q}}, \\
& \mu_3(x,t,k) \text{ is continuous for } k \in (\omega^{2} \hat{\mathcal{S}}_{3}, \omega \hat{\mathcal{S}}_{3}, \hat{\mathcal{S}}_{3})\setminus\hat{\mathcal{Q}} \mbox{ and analytic for } k \in (\omega^{2} \mathcal{S}_{3}, \omega \mathcal{S}_{3}, \mathcal{S}_{3})\setminus\hat{\mathcal{Q}}.
\end{align*}
\end{subequations}

\item For each $x \in \R_{+}$, $t\in [0,T]$, and $j = 0,1,\dots$,
\begin{align*}
& \tfrac{\partial^j }{\partial k^j}\mu_3(x,t,\cdot) \text{ is well defined for } k \in (\omega^2 \hat{\mathcal{S}}_{3}, \omega \hat{\mathcal{S}}_{3}, \hat{\mathcal{S}}_{3})\setminus \hat{\mathcal{Q}}.
\end{align*}

\item For each $n \geq 1$ and $\epsilon > 0$, there exists $C>0$ such that the following estimates hold for $x \in \R_{+}$, $t\in [0,T]$, $j=1,2,3$ and $ \ell = 0, 1, \dots, n$:
\begin{align}\label{region of boundedness of muj}
& \big|\tfrac{\partial^\ell}{\partial k^\ell}\mu_{j}(x,t,k) \big| \leq
C, & & k \in (\omega^2 \hat{\mathcal{S}}_{j}, \omega \hat{\mathcal{S}}_{j}, \hat{\mathcal{S}}_{j}), \ \dist(k,\hat{\mathcal{Q}}) > \epsilon.
\end{align} 
 
\item $\mu_{1},\mu_{2},\mu_{3}$ obey the following symmetries for each $x \in \R_{+}$, $t\in[0,T]$ and $k$ as in \eqref{lol4main}:
\begin{align}\label{XYsymm}
&  \mu_{j}(x,t, k) = \mathcal{A} \mu_{j}(x,t,\omega k)\mathcal{A}^{-1} = \mathcal{B} \mu_{j}(x,t,k^{-1})\mathcal{B}.
\end{align}

\item For each $x \in \R_{+}$, $t\in [0,T]$ and $k \in \C \setminus \hat{\mathcal{Q}}$, $\det \mu_{1}(x,t,k) = \det \mu_{2}(x,t,k) = 1$. 

\medskip \noindent Furthermore, if $u_{0}, v_{0}$ have compact support, then, for each  $x \in \R_{+}$ and $t\in [0,T]$, $\mu_{3}(x,t, k)$ is defined and analytic for $k \in \C \setminus \hat{\mathcal{Q}}$ and $\det \mu_{3}(x,t,k) = 1$.
\end{enumerate}
\end{proposition}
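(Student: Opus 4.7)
The plan is to establish each part of Proposition \ref{XYprop} by running the standard Neumann iteration on the Volterra equations \eqref{mujdef}, exploiting (i) the inequalities \eqref{contourinequalities} satisfied on the contours $\gamma_j$, (ii) the defining conditions of the sets $\mathcal{S}_1,\mathcal{S}_2,\mathcal{S}_3$ (which are exactly the regions on which the relevant off-diagonal exponentials are bounded), and (iii) the explicit algebraic structure of $\mathsf{U},\mathsf{V},P$. The main technical point is to show that for $k$ in the right region, the $n$-th Picard iterate for column $j$ of $\mu_\ell$ is controlled by $\tfrac{C^n}{n!}\|(\mathsf{U},\mathsf{V})\|^n$, summing to a convergent Neumann series. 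The only obstacle to global-in-$k$ analyticity is the pole structure of $\mathsf{U},\mathsf{V}$ at $\hat{\mathcal{Q}}$ coming from $P(k)^{-1}$; this is why all statements are made on $\C\setminus\hat{\mathcal{Q}}$, respectively away from an $\epsilon$-neighborhood of $\hat{\mathcal{Q}}$.

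For parts (a)--(b), I would first observe that the kernel of the $j$-th column of $\mu_\ell$ involves the exponentials $e^{(x-x')(l_i-l_j)(k)+(t-t')(z_i-z_j)(k)}$ with $i\neq j$. Using \eqref{contourinequalities} and the definitions of $\mathcal{S}_1,\mathcal{S}_2,\mathcal{S}_3$, one checks column-by-column that these kernels are bounded on $\omega^{3-j}\hat{\mathcal{S}}_\ell$. Because $\gamma_1,\gamma_2$ are compact, the Schwartz assumption on $u_0,u_1,\tilde u_i$ delivers integrable kernels for all $k\in\C\setminus\hat{\mathcal{Q}}$, so $\mu_1,\mu_2$ are entire in $k$ off $\hat{\mathcal{Q}}$. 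For $\mu_3$ the contour $\gamma_3$ is unbounded in $x'$, so convergence requires the rapid decay of $u(\cdot,t),v(\cdot,t)$ together with boundedness of the exponentials, which restricts the $j$-th column of $\mu_3$ to $\omega^{3-j}\hat{\mathcal{S}}_3\setminus\hat{\mathcal{Q}}$. Analyticity then follows from Morera's theorem applied to the uniformly convergent Neumann series on compacts.

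Part (c) follows from differentiating the Volterra equation for $\mu_3$ under the integral sign: each $\partial_k^\ell$ produces a finite sum of terms in which polynomial factors in $x'-x$ and $t'-t$ are absorbed by the Schwartz decay of the potentials, yielding uniform-in-$(x,t)$ estimates of the form claimed in \eqref{region of boundedness of muj}. Part (d) is the corresponding quantitative bound: on the set $\{k\in\omega^{3-j}\hat{\mathcal{S}}_\ell:\dist(k,\hat{\mathcal{Q}})>\epsilon\}$ the kernels of the iterated integrals are uniformly bounded and the Neumann series is majorized by a convergent exponential, giving \eqref{region of boundedness of muj} by induction on $\ell$.

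For the symmetries in part (e), I would verify directly from \eqref{lmexpressions intro}, \eqref{mathsfUdef intro}, and \eqref{Pdef intro} the two identities
\begin{align*}
\mathcal{L}(\omega k)=\mathcal{A}\mathcal{L}(k)\mathcal{A}^{-1},\quad \mathcal{Z}(\omega k)=\mathcal{A}\mathcal{Z}(k)\mathcal{A}^{-1},\quad \mathsf{U}(x,t,\omega k)=\mathcal{A}^{-1}\mathsf{U}(x,t,k)\mathcal{A},
\end{align*}
and the corresponding $k\mapsto k^{-1}$ identities involving $\mathcal{B}$. Substituting into \eqref{mujdef} shows that $\mathcal{A}\mu_j(x,t,\omega k)\mathcal{A}^{-1}$ and $\mathcal{B}\mu_j(x,t,k^{-1})\mathcal{B}$ solve the same Volterra equation as $\mu_j(x,t,k)$ with the same contour $\gamma_j$; uniqueness of the Neumann solution gives \eqref{XYsymm}. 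Finally, for part (f), a computation from \eqref{mathsfUdef intro} shows $\tr\mathsf{U}=\tr\mathsf{V}=0$, so Abel's identity applied along $\gamma_j$ yields $\det\mu_j\equiv 1$ whenever the contour starts at a point where $\mu_j=I$; in particular for $\mu_1,\mu_2$, and for $\mu_3$ under compact support of $u_0,v_0$ after noting that in that case the $\gamma_3$-integral becomes effectively finite and $\mu_3(\cdot,\cdot,k)$ extends to $\C\setminus\hat{\mathcal{Q}}$ by the same Neumann argument as for $\mu_1,\mu_2$. The subtlest step throughout is the column-wise boundedness of the exponential kernels on the mixed region $(\omega^2\hat{\mathcal{S}}_j,\omega\hat{\mathcal{S}}_j,\hat{\mathcal{S}}_j)$, which is why the sets $\mathcal{S}_j$ were designed in the form given.
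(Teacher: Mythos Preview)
Your outline is correct and is precisely the standard Volterra/Neumann-series argument that the paper has in mind: the paper omits the proof of this proposition, pointing to \cite[Proposition~3.2]{CLmain} for the analogous analysis of $\mu_3$ and noting that $\mu_1,\mu_2$ require only a ``new (albeit standard)'' treatment because $\gamma_1,\gamma_2$ are bounded. One small slip: your displayed identity $\mathcal{L}(\omega k)=\mathcal{A}\mathcal{L}(k)\mathcal{A}^{-1}$ has the conjugation in the wrong direction (it should read $\mathcal{L}(\omega k)=\mathcal{A}^{-1}\mathcal{L}(k)\mathcal{A}$, consistent with your correct identity for $\mathsf{U}$), but this does not affect the argument.
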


We now turn to the properties of $\mu_{1}^{A}, \mu_{2}^{A}, \mu_{3}^{A}$.

\begin{proposition}[Basic properties of $\mu_{1}^{A},\mu_{2}^{A},\mu_{3}^{A}$]\label{XYprop adjoint}
The equations (\ref{mujdef adjoint}) uniquely define three $3 \times 3$-matrix valued solutions $\mu_{1}^{A},\mu_{2}^{A}, \mu_{3}^{A}$ of (\ref{Xlax adjoint}) with the following properties:
\begin{enumerate}[$(a)$]
\item The functions $\mu_{j}^{A}$ have the following domains of definition:
\begin{subequations}\label{lol4main adjoint}
\begin{align}
& \mu_1^{A}(x,t,k)\; \text{is defined for} \; x\in \R_{+}, \; t \in [0,T], \; k\in \C\setminus \hat{\mathcal{Q}}, \\
& \mu_2^{A}(x,t,k) \; \text{is defined for} \; x\in \R_{+}, \; t \in [0,T], \; k \in \C\setminus \hat{\mathcal{Q}}, \\
& \mu_3^{A}(x,t,k)\; \text{is defined for} \; x\in \R_{+}, \; t \in [0,T], \; k \in (\omega^{2} \hat{\mathcal{S}}_{3}^{*}, \omega \hat{\mathcal{S}}_{3}^{*}, \hat{\mathcal{S}}_{3}^{*})\setminus \hat{\mathcal{Q}}.
\end{align}
\end{subequations}

\smallskip \noindent For each $k$ as in \eqref{lol4main} and $t\in [0,T]$, $\mu_{j}^{A}(\cdot,t, k)$ is smooth and satisfies the $x$-part of (\ref{Xlax adjoint}).

\smallskip \noindent For each $k$ as in \eqref{lol4main} and $x \geq 0$, $\mu_{j}^{A}(x,\cdot, k)$ is smooth and satisfies the $t$-part of (\ref{Xlax adjoint}).

\item For each $x \in \R_{+}$ and $t\in [0,T]$, 
\begin{subequations}
\begin{align*}
& \mu_1^{A}(x,t,k) \text{ is analytic for } k \in \C\setminus \hat{\mathcal{Q}}, \\
& \mu_2^{A}(x,t,k) \text{ is analytic for } k \in \C\setminus\hat{\mathcal{Q}}, \\
& \mu_3^{A}(x,t,k) \text{ is continuous for } k \in (\omega^{2} \hat{\mathcal{S}}_{3}^{*}, \omega \hat{\mathcal{S}}_{3}^{*}, \hat{\mathcal{S}}_{3}^{*})\setminus\hat{\mathcal{Q}} \mbox{ and analytic for } k \in (\omega^{2} \mathcal{S}_{3}^{*}, \omega \mathcal{S}_{3}^{*}, \mathcal{S}_{3}^{*})\setminus\hat{\mathcal{Q}}.
\end{align*}
\end{subequations}

\item For each $x \in \R_{+}$, $t\in [0,T]$, and $j = 0,1,\dots$,
\begin{align*}
& \tfrac{\partial^j }{\partial k^j}\mu_3^{A}(x,t,\cdot) \text{ is well defined for } k \in (\omega^2 \hat{\mathcal{S}}_{3}^{*}, \omega \hat{\mathcal{S}}_{3}^{*}, \hat{\mathcal{S}}_{3}^{*})\setminus \hat{\mathcal{Q}}.
\end{align*}

\item For each $n \geq 1$ and $\epsilon > 0$, there exists $C>0$ such that the following estimates hold for $x \in \R_{+}$, $t\in [0,T]$, $j=1,2,3$ and $ \ell = 0, 1, \dots, n$:
\begin{align}\label{region of boundedness of muj adjoint}
& \big|\tfrac{\partial^\ell}{\partial k^\ell}\mu_{j}^{A}(x,t,k) \big| \leq
C, & & k \in (\omega^2 \hat{\mathcal{S}}_{j}^{*}, \omega \hat{\mathcal{S}}_{j}^{*}, \hat{\mathcal{S}}_{j}^{*}), \ \dist(k,\hat{\mathcal{Q}}) > \epsilon.
\end{align} 
\item $\mu_{1}^{A},\mu_{2}^{A},\mu_{3}^{A}$ obey the following symmetries for each $x \in \R_{+}$, $t\in[0,T]$, and $k$ as in \eqref{lol4main adjoint}:
\begin{align}\label{XYsymm adjoint}
&  \mu_{j}^{A}(x,t, k) = \mathcal{A} \mu_{j}^{A}(x,t,\omega k)\mathcal{A}^{-1} = \mathcal{B} \mu_{j}^{A}(x,t,k^{-1})\mathcal{B}.
\end{align}

\item For each $x \in \R_{+}$, $t\in [0,T]$ and $k \in \C \setminus \hat{\mathcal{Q}}$, $\mu_{1}^{A}(x,t,k) = (\mu_{1}(x,t,k)^{-1})^{T}$ and $\mu_{2}^{A}(x,t,k) = (\mu_{2}(x,t,k)^{-1})^{T}$. 

\medskip \noindent Furthermore, if $u_{0}, v_{0}$ have compact support, then, for each  $x \in \R_{+}$, $t\in [0,T]$ and $k \in \C \setminus \hat{\mathcal{Q}}$, $\mu_{3}^{A}(x,t,k) = (\mu_{3}(x,t,k)^{-1})^{T}$.
\end{enumerate}
\end{proposition}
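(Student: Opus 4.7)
The plan is to mirror the proof of Proposition \ref{XYprop}, introducing the necessary modifications to handle the adjoint system. The existence, analyticity, and smoothness claims (parts (a)--(c)) follow from a standard iteration argument applied to the Volterra equations \eqref{mujdef adjoint}: set $\mu_j^{A,(0)} = I$ and iteratively define $\mu_j^{A,(n+1)}$ by substituting $\mu_j^{A,(n)}$ into the right-hand side of \eqref{mujdef adjoint}; the rapid decay of $u, v$ in $x$ together with the boundedness of $t$ in $[0,T]$ yield absolute convergence of the resulting Neumann series on compact subsets of the relevant domains, and the limit satisfies \eqref{Xlax adjoint} as a consequence of the closedness of $W^A$.

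The key new point is the identification of the boundedness regions, which is where $\mathcal{S}_n^*$ (rather than $\mathcal{S}_n$) enters. Focus on the third column of $\mu_n^A$, which involves exponentials $e^{(l_i - l_3)(x' - x) + (z_i - z_3)(t' - t)}$ for $i = 1, 2$. Using the sign inequalities \eqref{contourinequalities} satisfied along $\gamma_n$, these are bounded precisely when $\re(l_i - l_3) \leq 0$ (together with the analogous $z$-condition), which, in comparison with the $\mu_n$-analysis (where the analogous conditions read $\re(l_i - l_3) \geq 0$), is exactly the defining condition of $\mathcal{S}_n^*$. An analogous argument for the $j$-th column ($j = 1, 2$) produces the region $\omega^{3-j}\hat{\mathcal{S}}_n^*$, yielding (d). The higher derivative estimates in (c)--(d) follow by differentiating the Volterra equations and iterating.

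For the symmetries in (e), the simplest route is to verify directly that the coefficient matrices $\mathsf{U}^T, \mathsf{V}^T, \mathcal{L}, \mathcal{Z}$ obey the same invariances under $k \mapsto \omega k$ and $k \mapsto k^{-1}$ as their non-transposed counterparts used in Proposition \ref{XYprop} (namely $\mathcal{A}\, \mathsf{U}^T(x,t,\omega k)\, \mathcal{A}^{-1} = \mathsf{U}^T(x,t,k)$ and $\mathcal{B}\, \mathsf{U}^T(x,t,k^{-1})\, \mathcal{B} = \mathsf{U}^T(x,t,k)$, and similarly for $\mathsf{V}^T$), so that the transformed Volterra integrals satisfy the same equations; uniqueness of the Volterra solution then yields the stated identities.

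For (f), observe that whenever both sides are defined, $(\mu_j^{-1})^T$ satisfies the adjoint Lax pair \eqref{Xlax adjoint}, and because $\gamma_j$ starts at $(x_j, t_j)$ we also have $(\mu_j^{-1})^T(x_j, t_j, k) = I$. Uniqueness of the Volterra solution \eqref{mujdef adjoint} then forces $(\mu_j^{-1})^T = \mu_j^A$. This applies unconditionally for $j = 1, 2$ (whose $\gamma_j$ are bounded, so both $\mu_j$ and $\mu_j^A$ are defined on all of $\C \setminus \hat{\mathcal{Q}}$). For $j = 3$ the difficulty is that $\mu_3$ is defined on $\mathcal{S}_3$-type regions while $\mu_3^A$ lives on $\mathcal{S}_3^*$-type regions, which are disjoint off $\partial \D$; the compact support hypothesis on $u_0, v_0$ truncates the effective range of integration in \eqref{mujdef}--\eqref{mujdef adjoint}, extending both to all of $\C \setminus \hat{\mathcal{Q}}$, after which the uniqueness argument applies. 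The main obstacle is precisely this domain juggling: tracking carefully how the sign flip in the exponentials produced by the adjoint equation corresponds to the inversion $k \mapsto 1/\bar k$ that exchanges $\mathcal{S}_n$ and $\mathcal{S}_n^*$.
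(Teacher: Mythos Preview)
Your proposal is correct and takes essentially the same approach as the paper, which omits the proof as it is parallel to that of Proposition~\ref{XYprop} (itself standard Volterra iteration, as in \cite[Proposition~3.2]{CLmain}). Your identification of the sign flip in the exponentials as the mechanism exchanging $\mathcal{S}_n$ for $\mathcal{S}_n^{*}$, and your use of $\mathcal{A}^T=\mathcal{A}^{-1}$, $\mathcal{B}^T=\mathcal{B}=\mathcal{B}^{-1}$ to transfer the symmetries to the transposed coefficients, are exactly the right observations.
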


A similar analysis as in \cite[Proposition 3.3]{CLmain} shows that, as $k\to \infty$ within the regions of boundedness \eqref{region of boundedness of muj},
\begin{align}\label{mujlargekexpansion}
\mu_{j}(x,t,k) = I + \frac{\mu_{j}^{(1)}(x,t)}{k} + \frac{\mu_{j}^{(2)}(x,t)}{k^2} + \cdots.
\end{align}
for some matrices $\{\mu_{j}^{(l)}(x,t)\}_{l=1}^{+\infty}$, $j=1,2,3$, that are bounded functions of $x\in \R_{+}$ and $t\in [0,T]$. Moreover, the asymptotic formula \eqref{mujlargekexpansion} can be differentiated any fixed number of times with respect to $k$. 
For any integer $n \geq 1$, $\mathsf{U}$ and $\mathsf{V}$ have expansions as $k\to \infty$ of the form
\begin{align*}
& \mathsf{U}(x,t,k) = \sum_{j=1}^{n} \frac{\mathsf{U}^{(j)}(x,t)}{k^{j}} + O(k^{-n-1}), \qquad k \to \infty, \\
& \mathsf{V}(x,t,k) = \sum_{j=0}^{n} \frac{\mathsf{V}^{(j)}(x,t)}{k^{j}} + O(k^{-n-1}), \qquad k \to \infty,
\end{align*}
for some matrices $\{\mathsf{U}^{(j)}(x,t)\}_{j=1}^{+\infty}$ and $\{\mathsf{V}^{(j)}(x,t)\}_{j=0}^{+\infty}$, where
\begin{align*}
& \mathsf{U}^{(1)} = \frac{i \, u}{\sqrt{3}} \begin{pmatrix}
\omega^{2} & 1 & \omega \\
1 & \omega & \omega^{2} \\
\omega & \omega^{2} & 1
\end{pmatrix}, \qquad \mathsf{U}^{(2)} = \left( \frac{i \, v}{\sqrt{3}} + u_{x} \right) \begin{pmatrix}
\omega & \omega & \omega \\
\omega^{2} & \omega^{2} & \omega^{2} \\
1 & 1 & 1
\end{pmatrix}, \\
& \mathsf{V}^{(0)} = \frac{-i\,u}{2\sqrt{3}}\begin{pmatrix}
0 & 1 & 1 \\ 1 & 0 & 1 \\ 1 & 1 & 0
\end{pmatrix}, \\
& \mathsf{V}^{(1)} = \frac{i\, v}{\sqrt{3}}\begin{pmatrix}
\omega^{2} & - \frac{\omega}{2} & - \frac{1}{2} \\[0.05cm]
- \frac{\omega^{2}}{2} & \omega & - \frac{1}{2} \\[0.05cm]
- \frac{\omega^{2}}{2} & -\frac{\omega}{2} & 1
\end{pmatrix} + \frac{u_{x}(\omega^{2}-\omega)}{6}\begin{pmatrix}
0 & \omega & -1 \\
-\omega^{2} & 0 & 1 \\
\omega^{2} & -\omega & 0
\end{pmatrix}.
\end{align*}
Moreover, $\mathcal{L}$ can be written as
\begin{align*}
& \mathcal{L} = \frac{\mathcal{L}^{(-1)}}{k^{-1}} + \frac{\mathcal{L}^{(1)}}{k}, \quad \mathcal{L}^{(-1)} := \frac{i}{2\sqrt{3}}\begin{pmatrix}
\omega & 0 & 0 \\
0 & \omega^{2} & 0 \\
0 & 0 & 1
\end{pmatrix}, \quad \mathcal{L}^{(1)} := \frac{i}{2\sqrt{3}}\begin{pmatrix}
\omega^{2} & 0 & 0 \\
0 & \omega & 0 \\
0 & 0 & 1
\end{pmatrix}.
\end{align*}
Substituting (\ref{mujlargekexpansion}) into (\ref{Xlax}) and identifying terms of the same order yields the relations
\begin{align}\label{xrecursive}
\begin{cases}
[\mathcal{L}^{(-1)}, \mu_{j}^{(\ell)}] = -[\mathcal{L}^{(1)}, \mu_{j}^{(\ell-2)}] + \big(\partial_x \mu_{j}^{(\ell-1)} - \sum_{m=1}^{\ell-1} \mathsf{U}^{(m)}\mu_{j}^{(\ell-1-m)} \big)^{(o)},
	\\
(\partial_x \mu_{j}^{(\ell)})^{(d)} = \big( \sum_{m=1}^{\ell} \mathsf{U}^{(m)}\mu_{j}^{(\ell-m)} \big)^{(d)}, \\
(\partial_t \mu_{j}^{(\ell)})^{(d)} = \big( \sum_{m=0}^{\ell} \mathsf{V}^{(m)}\mu_{j}^{(\ell-m)} \big)^{(d)},
\end{cases}
\end{align}
where $A^{(d)}$ and $A^{(o)}$ denote the diagonal and off-diagonal parts of a $3 \times 3$ matrix $A$, respectively. The coefficients $\{\mu_{j}^{(\ell)}(x,t)\}$ are uniquely determined from (\ref{xrecursive}), the initial values $\mu_{j}^{(-1)} = 0, \mu_{j}^{(0)} = I$, and the normalizations $\mu_j^{(\ell)}(x_{j},t_{j}) = 0$. The first two coefficients are given in the following proposition.
\begin{proposition}\label{prop:first two coeff at infty}
The coefficients $\{\mu_{j}^{(\ell)}\}_{j=1,2,3}^{\ell=1,2}$ are given by 
\begin{align*}
& \mu_{j}^{(1)}(x,t) =  \int_{(x_{j},t_{j})}^{(x,t)} \Delta^{(1)}(x',t') \begin{pmatrix} \omega^2 & 0 & 0 \\ 
0 & \omega & 0 \\ 
0 & 0 & 1
\end{pmatrix}, \\
& \mu_{j}^{(2)}(x,t) = \frac{-2u(x,t)}{1-\omega}\begin{pmatrix}
0 & \omega^{2} & -\omega \\ - \omega^{2} & 0 & 1 \\ \omega & -1 & 0
\end{pmatrix} + \int_{(x_{j},t_{j})}^{(x,t)} \Delta_{j}^{(2)}(x',t') \begin{pmatrix} \omega & 0 & 0 \\ 
0 & \omega^{2} & 0 \\ 
0 & 0 & 1
\end{pmatrix},
\end{align*}
where $\Delta^{(1)}(x,t) = \frac{i}{\sqrt{3}}\big(u(x,t)dx+v(x,t)dt\big)$ and 
\begin{align*}
\Delta_{j}^{(2)}(x,t) = \frac{\sqrt{3}u_{x}+iv+iu(\mu_{j}^{(1)})_{33}}{\sqrt{3}}dx + \frac{\sqrt{3}v_{x} + i(u+u^{2}+u_{xx}+v \, (\mu_{j}^{(1)})_{33})}{\sqrt{3}}dt.
\end{align*}
\end{proposition}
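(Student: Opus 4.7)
The plan is to iterate the recursion \eqref{xrecursive} starting from $\mu_{j}^{(-1)}=0$ and $\mu_{j}^{(0)}=I$, computing $\mu_{j}^{(1)}$ and then $\mu_{j}^{(2)}$ in turn.

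For $\mu_{j}^{(1)}$, the $\ell=1$ commutator relation collapses to $[\mathcal{L}^{(-1)},\mu_{j}^{(1)}]=0$. Since $\mathcal{L}^{(-1)}=\tfrac{i}{2\sqrt{3}}\diag(\omega,\omega^{2},1)$ has three distinct eigenvalues, $\mu_{j}^{(1)}$ is forced to be diagonal. The two derivative equations, combined with $(\mathsf{V}^{(0)}\mu_{j}^{(1)})^{(d)}=0$ (since $\mathsf{V}^{(0)}$ is off-diagonal and $\mu_{j}^{(1)}$ is diagonal), simplify to $\partial_{x}\mu_{j}^{(1)}=(\mathsf{U}^{(1)})^{(d)}$ and $\partial_{t}\mu_{j}^{(1)}=(\mathsf{V}^{(1)})^{(d)}$. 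Substituting the explicit formulas from the preamble yields $d\mu_{j}^{(1)}=\Delta^{(1)}\diag(\omega^{2},\omega,1)$, whose closedness is precisely $u_{t}=v_{x}$ from \eqref{boussinesqsystem}, so path-integration from $(x_{j},t_{j})$ gives the first formula.

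For $\mu_{j}^{(2)}$, the $\ell=2$ commutator relation $[\mathcal{L}^{(-1)},\mu_{j}^{(2)}]=-(\mathsf{U}^{(1)})^{(o)}$ fixes the off-diagonal entries algebraically via $(\mu_{j}^{(2)})_{pq}=-(\mathsf{U}^{(1)})_{pq}/((\mathcal{L}^{(-1)})_{pp}-(\mathcal{L}^{(-1)})_{qq})$, and the six entries simplify (using $\omega^{3}=1$ and $1+\omega+\omega^{2}=0$) to $\frac{-2u(x,t)}{1-\omega}M$ with $M$ as stated. The diagonal part then comes from $\partial_{x}(\mu_{j}^{(2)})^{(d)}=(\mathsf{U}^{(1)}\mu_{j}^{(1)}+\mathsf{U}^{(2)})^{(d)}$ and $\partial_{t}(\mu_{j}^{(2)})^{(d)}=(\mathsf{V}^{(0)}\mu_{j}^{(2)}+\mathsf{V}^{(1)}\mu_{j}^{(1)}+\mathsf{V}^{(2)})^{(d)}$. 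The $x$-equation is a direct substitution of $\mu_{j}^{(1)}=(\mu_{j}^{(1)})_{33}\diag(\omega^{2},\omega,1)$ and the explicit $(\mathsf{U}^{(2)})^{(d)}$. The crucial term in the $t$-equation is $(\mathsf{V}^{(0)}(\mu_{j}^{(2)})^{(o)})^{(d)}$: a short multiplication shows the diagonal entries of $\mathsf{V}^{(0)}M$ are $\omega-\omega^{2}$, $\omega^{2}-1$, $1-\omega$, which factor uniformly as $(1-\omega)(\omega,\omega^{2},1)$ (using $1+\omega=-\omega^{2}$); this cancels the $\tfrac{1}{1-\omega}$ prefactor and produces $\tfrac{iu^{2}}{\sqrt{3}}\diag(\omega,\omega^{2},1)$---this is how the Boussinesq nonlinearity $u^{2}$ enters $\Delta_{j}^{(2)}$. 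Combining with $(\mathsf{V}^{(1)}\mu_{j}^{(1)})^{(d)}$ and $(\mathsf{V}^{(2)})^{(d)}$ (extracted from the $1/k^{2}$ coefficient of the sandwich $\mathsf{V}=P^{-1}\tilde{V}P$) gives $d(\mu_{j}^{(2)})^{(d)}=\Delta_{j}^{(2)}\diag(\omega,\omega^{2},1)$, and integration from $(x_{j},t_{j})$ together with the algebraic off-diagonal piece completes the formula.

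The main obstacle is the algebraic bookkeeping, particularly extracting $(\mathsf{V}^{(2)})^{(d)}$ from the sandwich $P^{-1}\tilde{V}P$ so that the combination $u+u^{2}+u_{xx}$ with the correct coefficients appears in the $dt$-part of $\Delta_{j}^{(2)}$. A useful sanity check, which also proves path-independence of the integral defining the diagonal part, is that closedness of $\Delta_{j}^{(2)}$ reduces to the first equation of \eqref{boussinesqsystem}, $v_{t}=u_{x}+(u^{2})_{x}+u_{xxx}$, so no input beyond the Boussinesq system itself is required.
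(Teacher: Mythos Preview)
Your proposal is correct and follows exactly the approach the paper sets up: iterating the recursion \eqref{xrecursive} from the initial values $\mu_j^{(-1)}=0$, $\mu_j^{(0)}=I$ with the normalizations at $(x_j,t_j)$. The paper states the proposition without an explicit proof, leaving precisely this computation to the reader, and your handling of the key nontrivial step---the cancellation $(\mathsf{V}^{(0)}(\mu_j^{(2)})^{(o)})^{(d)}=\tfrac{iu^{2}}{\sqrt{3}}\diag(\omega,\omega^{2},1)$---is correct.
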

This leads to the following formulas for recovering $u$ and $v$:
\begin{align}\label{recoveru}
& u = -i \sqrt{3} \frac{d}{dx}(\mu_{j}^{(1)})_{33}, & & v = -i \sqrt{3} \frac{d}{dt}(\mu_{j}^{(1)})_{33}.
\end{align}
Using the symmetries $\mu_{j}(x,t, k) = \mathcal{B} \mu_{j}(x,t,k^{-1})\mathcal{B}$, we can also deduce from \eqref{mujlargekexpansion} the asymptotics for $\mu_{j}(x,t,k)$ as $k \to 0$, $k \in (\omega^2 \hat{\mathcal{S}}_{j}, \omega \hat{\mathcal{S}}_{j}, \hat{\mathcal{S}}_{j})$. In particular, for each $x \in \R_{+}$, $t\in [0,T]$ and $j\in \{1,2,3\}$, the limit of $\mu_{j}(x,t,k)$ as $k\to 0$, $k \in (\omega^2 \hat{\mathcal{S}}_{j}, \omega \hat{\mathcal{S}}_{j}, \hat{\mathcal{S}}_{j})$, is $I$.

\medskip Similarly, as $k\to \infty$ within the regions of boundedness \eqref{region of boundedness of muj adjoint}, we find
\begin{align}\label{mujAlargekexpansion}
\mu_{j}^{A}(x,t,k) = I + \frac{\mu_{j}^{A(1)}(x,t)}{k} + \frac{\mu_{j}^{A(2)}(x,t)}{k^2} + \cdots.
\end{align}
for some matrices $\{\mu_{j}^{A(l)}(x,t)\}_{l=1}^{+\infty}$, $j=1,2,3$, such that
\begin{align*}
\bigg(I + \sum_{j=1}^{p}\frac{\mu_{j}^{(1)}(x,t)}{k^{j}}\bigg)^{T} \bigg( I + \sum_{j=1}^{p}\frac{\mu_{j}^{A(1)}(x,t)}{k^{j}} \bigg) = I + O(k^{-p-1}), \qquad \mbox{as } k \to \infty
\end{align*}
holds for all integers $p$.

\medskip Since $\mathsf{U}$ and $\mathsf{V}$ have simple poles at each of the six points $\kappa_{j}$, $j=1,\ldots,6$, the functions $\mu_{1},\mu_{2},\mu_{3}$ are singular near these points. The next proposition establishes the behavior of $\{\mu_{j}(x,t,k)\}_{j=1}^{3}$ as $k\to \pm 1$. The proof is omitted; we refer to  \cite[Proposition]{CLgoodboussinesq} for a proof of a similar (albeit different) statement. The behavior of $\{\mu_{j}(x,t,k)\}_{j=1}^{3}$ as $k\to \kappa_{j}$, $j=2,3,5,6$ can be obtained by combining Proposition \ref{XYat1prop} with the $\mathcal{A}$-symmetry in \eqref{XYsymm adjoint}.

\begin{proposition}[Asymptotics of $\mu_{j}$ as $k \to \pm 1$]\label{XYat1prop}
Let $p \geq 0$ be an integer. 
There are $3 \times 3$-matrix valued functions $C_j^{(l)}(x,t)$, $j = 1, 2, 3$, $l = -1, \dots, p$, with the following properties:
\begin{itemize}
\item For $x \geq 0$, $t\in [0,T]$ and $k$ within the domains of definition \eqref{lol4main} of $\mu_{j}$, we have
\begin{subequations}\label{XYat1}
\begin{align}
& \bigg|\frac{\partial^m}{\partial k^m}\big(\mu_{j} - I - \sum_{l=-1}^p C_j^{(l)}(x,t)(k-1)^l\big) \bigg| \leq
f_j(x,t)|k-1|^{p+1-m}, \qquad |k-1| \leq \frac{1}{2}, \label{Xat1} \\
& \bigg|\frac{\partial^m}{\partial k^m}\big(\mu_{j} - I - \sum_{l=-1}^p C_{3+j}^{(l)}(x,t)(k+1)^l\big) \bigg| \leq
f_j(x,t)|k+1|^{p+1-m}, \qquad |k+1| \leq \frac{1}{2}, \label{Xatm1}
\end{align}
\end{subequations}
where $m \geq 0$ is any integer, $f_j(x,t)$ are smooth positive functions of $x \geq 0$ and $t \in [0,T]$, and $f_{3}(x,t)$ has rapid decay as $x \to +\infty$.

\item For each $l \geq -1$ and $j\in \{1,2,3\}$, $C_j^{(l)}(x,t),C_{3+j}^{(l)}(x,t)$ are smooth functions of $x \geq 0$ and $t \in [0,T]$, and $C_3^{(l)}(x,t),C_6^{(l)}(x,t)$ have rapid decay as $x \to +\infty$. 

\item The leading coefficients have the form
\begin{align}
  & C_{j}^{(-1)}(x,t)
= \hspace{-0.1cm} \begin{pmatrix}
\alpha_{j}(x,t) & \hspace{-0.1cm}\alpha_{j}(x,t) & \hspace{-0.1cm}\beta_{j}(x,t) \\
-\alpha_{j}(x,t) & \hspace{-0.1cm}-\alpha_{j}(x,t) & \hspace{-0.1cm}-\beta_{j}(x,t) \\
0 & \hspace{-0.1cm}0 & \hspace{-0.1cm}0
\end{pmatrix}, \label{Cjpm1p} \\
& C_{j}^{(0)}(x,t) = - I + \begin{pmatrix}
\gamma_{j,3}(x,t) & \gamma_{j,4}(x,t) & \gamma_{j,5}(x,t) \\
\gamma_{j,4}(x,t)\mp \alpha_{j}(x,t) & \gamma_{j,3}(x,t)\mp \alpha_{j}(x,t) & \gamma_{j,5}(x,t)\mp \beta_{j}(x,t) \\
\gamma_{j,1}(x,t) & \gamma_{j,1}(x,t) & \gamma_{j,2}(x,t)
\end{pmatrix}, \label{Cjp0p}
\end{align}
where $\mp$ in \eqref{Cjp0p} is equal to $-1$ for $j=1,2,3$ and to $+1$ for $j=4,5,6$, $\alpha_j(x,t)$, $\beta_j(x,t)$, $\gamma_{j,\ell}(x,t)$, $j = 1,\ldots,6$, $\ell=1,\ldots,5$, are complex-valued functions of $x \geq 0$ and $t\in [0,T]$,  and for $j=3,6$ these functions have rapid decay as $x \to +\infty$.
\end{itemize}
\end{proposition}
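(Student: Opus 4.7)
The plan is to derive the expansions by a singular perturbation analysis of the Volterra integral equation \eqref{mujdef}, combined with a careful study of the singularities of $\mathsf{U}$ and $\mathsf{V}$ at $k=\pm 1$. The starting point is the identity
\begin{align*}
l_1(k) - l_2(k) = -\frac{(k-1)(k+1)}{2k},
\end{align*}
which shows that columns $1$ and $2$ of $P(k)$ coincide at $k=\pm 1$, while column $3$ remains distinct. Thus $\det P(k)$ has simple zeros at $k = \pm 1$, $P(\pm 1)$ has rank two with right nullspace spanned by $e_1-e_2$, and
\begin{align*}
P(k)^{-1} = \frac{(e_1-e_2)\, F_{\pm}^{T}}{k \mp 1} + O(1), \qquad k\to \pm 1,
\end{align*}
for explicit row vectors $F_{\pm}^{T}$ spanning the left nullspaces of $P(\pm 1)$. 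Plugging this into (\ref{mathsfUdef intro}) shows that $\mathsf{U}$ and $\mathsf{V}$ have simple poles at $k=\pm 1$ with rank-one residues of the form $(e_1 - e_2)\otimes r_\pm^{T}(x,t)$; in particular, the image of each residue is the line spanned by $e_1 - e_2$.

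Next, I would substitute the ansatz
\begin{align*}
\mu_j(x,t,k) = I + \sum_{l=-1}^{p} C_j^{(l)}(x,t)\,(k-1)^{l} + (k-1)^{p+1}\rho_{j,p}(x,t,k),
\end{align*}
together with the analogous ansatz at $k=-1$, into \eqref{mujdef}, using the Laurent expansions of $\mathsf{U}$ and $\mathsf{V}$ and the Taylor expansions of $e^{(x-x')\hat{\mathcal{L}}+(t-t')\hat{\mathcal{Z}}}$. Matching powers of $(k-1)$ yields a lower-triangular system in which $C_j^{(l)}$ is determined from $C_j^{(-1)},\ldots,C_j^{(l-1)}$ by an explicit integral along $\gamma_j$, with the starting datum coming from the residues integrated against $I$. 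Because the image of each residue is $e_1-e_2$, every $C_j^{(-1)}$ automatically has rows $1$ and $2$ opposite and row $3$ equal to zero. To pin down the additional constraint that columns $1$ and $2$ of $C_j^{(-1)}$ coincide (as displayed in (\ref{Cjpm1p})), I would substitute the ansatz directly into the $x$- and $t$-parts of \eqref{Xlax} and read off the algebraic identities at order $(k-1)^{-2}$,
\begin{align*}
\mathsf{U}^{[-1]}(x,t)\,C_j^{(-1)}(x,t) = 0 = \mathsf{V}^{[-1]}(x,t)\,C_j^{(-1)}(x,t),
\end{align*}
where $\mathsf{U}^{[-1]},\mathsf{V}^{[-1]}$ denote the residues. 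Combined with the explicit rank-one form of these residues, this collapses the free parameters to the two functions $\alpha_j(x,t), \beta_j(x,t)$ of (\ref{Cjpm1p}). An analogous but more involved matching at order $(k-1)^{-1}$, namely
\begin{align*}
\partial_x C_j^{(-1)} - [\mathcal{L}(1), C_j^{(-1)}] = \mathsf{U}^{[-1]}C_j^{(0)} + \mathsf{U}^{[0]}C_j^{(-1)},
\end{align*}
together with its $t$-analogue, determines the remaining entries of $C_j^{(0)}$ and produces the block structure (\ref{Cjp0p}).

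Finally, the bounds \eqref{Xat1}--\eqref{Xatm1} and the smoothness and decay of the coefficients follow from a standard contraction-mapping argument applied to the Volterra equation satisfied by $\rho_{j,p}$ after the explicit coefficients $C_j^{(l)}$ have been subtracted off; the bounds on $\partial_{k}^{m}$ are obtained by differentiating this equation in $k$ and iterating. The rapid decay of $C_3^{(l)}$ and of $f_3$ as $x\to +\infty$ is inherited from the Schwartz decay of $u,v$ along $\gamma_3$, where $x' \in [x,+\infty)$. The main obstacle in this plan is the purely algebraic bookkeeping needed to extract the precise forms (\ref{Cjpm1p})--(\ref{Cjp0p}); once the nullspace constraints from the $(k\mp 1)^{-2}$ and $(k\mp 1)^{-1}$ orders are correctly identified, the remaining contraction-mapping and decay arguments are standard, and essentially the same as those developed in \cite{CLgoodboussinesq} for the good Boussinesq case, whose $P(k)^{-1}$ exhibits the same rank-one residue structure.
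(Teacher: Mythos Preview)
Your overall plan---singular Laurent ansatz, recursion from the Volterra equations, structure constraints from the Lax pair, and a contraction argument for the remainder---is exactly the route the paper has in mind (it omits the proof and refers to \cite{CLgoodboussinesq}). The residue computation is also correct: $l_1-l_2$ vanishes simply at $k=\pm1$, $P^{-1}$ has a rank-one pole with image $\mathrm{span}(e_1-e_2)$, and this forces rows $1,2$ of $C_j^{(-1)}$ to be opposite with row $3$ equal to zero.

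There is, however, a genuine gap in how you derive the \emph{column} structure in \eqref{Cjpm1p} and the relations in \eqref{Cjp0p}. You claim these follow from the algebraic identity $\mathsf{U}^{[-1]}C_j^{(-1)}=0$ at order $(k-1)^{-2}$. But that identity is vacuous once the row structure is known: the residue has the form $\mathsf{U}^{[-1]}=(e_1-e_2)\,r^T$ with $r_1=r_2$ (because $P(1)$ has equal first two columns), so $r^T$ annihilates every vector of the form $(c,-c,0)^T$. Thus $\mathsf{U}^{[-1]}C_j^{(-1)}=0$ holds automatically and gives no constraint linking columns $1$ and $2$. The same happens for the $(k-1)^{-1}$ level: it is a differential relation for $C_j^{(-1)}$, not an algebraic constraint on $C_j^{(0)}$.

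The missing ingredient is the $\mathcal{B}$-symmetry $\mu_j(x,t,k)=\mathcal{B}\mu_j(x,t,k^{-1})\mathcal{B}$ from \eqref{XYsymm}, which fixes $k=\pm1$. Expanding $k^{-1}-1=-(k-1)/k$ gives $C_j^{(-1)}=-\mathcal{B}C_j^{(-1)}\mathcal{B}$ and, at the next order, $C_j^{(0)}-\mathcal{B}C_j^{(0)}\mathcal{B}=C_j^{(-1)}$. Combined with the row structure you already have, the first relation forces columns $1$ and $2$ of $C_j^{(-1)}$ to coincide (yielding \eqref{Cjpm1p}), and the second is exactly the set of linear relations encoded in \eqref{Cjp0p}. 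Equivalently, one may pass to the gauge $\phi_j=P\mu_j e^{x\mathcal{L}+t\mathcal{Z}}$, which solves the \emph{regular} system $\phi_x=\tilde L\phi$, $\phi_t=\tilde Z\phi$; the normalization $\mu_j(x_j,t_j,k)=I$ makes the first two columns of $\phi_j(\cdot,\cdot,1)$ coincide for all $(x,t)$, and pulling back through $P^{-1}$ produces the same structure. Either of these replaces your $(k-1)^{-2}$ argument; with that fix, the rest of your outline goes through.
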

We now consider the behavior of $\{\mu_{j}^{A}(x,t,k)\}_{j=1}^{3}$ as $k\to \pm 1$.
\begin{proposition}[Asymptotics of $\mu_{j}^{A}$ as $k \to \pm 1$]\label{XYat1prop adjoint}
Let $p \geq 0$ be an integer. 
There are $3 \times 3$-matrix valued functions $D_j^{(l)}(x,t)$, $j = 1, \ldots, 6$, $l = -1, \dots, p$, with the following properties:
\begin{itemize}
\item For $x \geq 0$, $t\in [0,T]$ and $k$ within the domains of definition \eqref{lol4main adjoint} of $\mu_{j}^{A}$, we have
\begin{subequations}\label{XYat1 adjoint}
\begin{align}
& \bigg|\frac{\partial^m}{\partial k^m}\big(\mu_{j}^{A} - I - \sum_{l=-1}^p D_j^{(l)}(x,t)(k-1)^l\big) \bigg| \leq
f_j(x,t)|k-1|^{p+1-m}, \qquad |k-1| \leq \frac{1}{2}, \label{XAat1} \\
& \bigg|\frac{\partial^m}{\partial k^m}\big(\mu_{j}^{A} - I - \sum_{l=-1}^p D_{3+j}^{(l)}(x,t)(k+1)^l\big) \bigg| \leq
f_j(x,t)|k+1|^{p+1-m}, \qquad |k+1| \leq \frac{1}{2}, \label{XAatm1}
\end{align}
\end{subequations}
where $m \geq 0$ is any integer, $f_j(x,t)$ are smooth positive functions of $x \geq 0$ and $t \in [0,T]$, and $f_{3}(x,t)$ has rapid decay as $x \to +\infty$.

\item For each $l \geq -1$ and $j\in \{1,2,3\}$, $D_j^{(l)}(x,t),D_{3+j}^{(l)}(x,t)$ are smooth functions of $x \geq 0$ and $t \in [0,T]$, and $D_3^{(l)}(x,t),D_6^{(l)}(x,t)$ have rapid decay as $x \to +\infty$. 

\item The leading coefficients have the form
\begin{align}
D_{j}^{(-1)}(x,t)
= & \;
\begin{pmatrix}
\tilde{\alpha}_{j}(x,t) & -\tilde{\alpha}_{j}(x,t) & 0 \\
\tilde{\alpha}_{j}(x,t) & -\tilde{\alpha}_{j}(x,t) & 0 \\
\tilde{\beta}_{j}(x,t) & -\tilde{\beta}_{j}(x,t) & 0
\end{pmatrix}, \label{Djpm1p} \\
D_{j}^{(0)}(x,t) = & \; -I+ \begin{pmatrix}
\tilde{\gamma}_{j,3}(x,t) & \tilde{\gamma}_{j,4}(x,t)\mp \tilde{\alpha}_{j}(x,t) & \tilde{\gamma}_{j,1}(x,t) \\
\tilde{\gamma}_{j,4}(x,t) & \tilde{\gamma}_{j,3}(x,t)\mp \tilde{\alpha}_{j}(x,t) & \tilde{\gamma}_{j,1}(x,t) \\
\tilde{\gamma}_{j,5}(x,t) & \tilde{\gamma}_{j,5}(x,t)\mp \tilde{\beta}_{j}(x,t) & \tilde{\gamma}_{j,2}(x,t)
\end{pmatrix},  \label{Djp0p}
\end{align}
where $\mp$ in \eqref{Djp0p} is equal to $-1$ for $j=1,2,3$ and to $+1$ for $j=4,5,6$, $\tilde{\alpha}_j(x,t)$, $\tilde{\beta}_j(x,t)$, $\tilde{\gamma}_{j,\ell}(x,t)$, $j = 1,\ldots,6$, $\ell=1,\ldots,5$, are complex-valued functions of $x \geq 0$ and $t\in [0,T]$, and for $j=3,6$ these functions have rapid decay as $x \to +\infty$.
\end{itemize}
\end{proposition}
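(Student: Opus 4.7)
The plan is to prove Proposition \ref{XYat1prop adjoint} by running on the adjoint Volterra equation \eqref{mujdef adjoint} the same singular-perturbation argument that \cite{CLgoodboussinesq} uses for $\mu_j$ to establish Proposition \ref{XYat1prop}. The setup is parallel: the only changes are that $\mathsf{U}, \mathsf{V}$ are replaced by $-\mathsf{U}^T, -\mathsf{V}^T$ and the exponential factor in the kernel is conjugated in the opposite way, while the contour $\gamma_j$ is unchanged. Since the residues of $\mathsf{U}^T$ and $\mathsf{V}^T$ at each $\kappa_\ell$ are the transposes of those of $\mathsf{U}, \mathsf{V}$, the entire local analysis at $k=\pm 1$ goes through with only linear-algebraic dualisation.

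Concretely, for fixed $j \in \{1,2,3\}$ and $k_\star \in \{1,-1\}$ I would set $j_\star = j$ when $k_\star = 1$ and $j_\star = 3+j$ when $k_\star = -1$, make the ansatz
\[
\mu_j^A(x,t,k) = I + \sum_{l=-1}^{p} D_{j_\star}^{(l)}(x,t)(k-k_\star)^l + R_j(x,t,k),
\]
substitute it into \eqref{mujdef adjoint}, and expand the integrand in Laurent series around $k=k_\star$ using the known pole structure of $\mathsf{U}^T, \mathsf{V}^T, \mathcal{L}, \mathcal{Z}$. Matching coefficients of $(k-k_\star)^l$ for $l=-1,0,\ldots,p$ determines each $D_{j_\star}^{(l)}$ recursively as an iterated integral along $\gamma_j$ of polynomial expressions in $u, v$ and their derivatives. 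For $j=3$ the contour runs from $(+\infty,t)$ and $u,v \in \mathcal{S}(\R_+)$, so $D_3^{(l)}, D_6^{(l)}$ inherit rapid decay in $x$; for $j=1,2$ the contours are bounded, yielding smoothness only. The sparse form \eqref{Djpm1p} is forced by the one-dimensional kernel of the leading singular coefficient of $\mathsf{U}^T$ at $k_\star$, which by transposition is the column-space dual of the row-space structure behind \eqref{Cjpm1p}; the ``$\mp$'' sign flip in \eqref{Djp0p} between $j_\star \in \{1,2,3\}$ and $j_\star \in \{4,5,6\}$ reflects the sign change of the leading residue of $\mathcal{L}$ when comparing $k=1$ with $k=-1$. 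The remainder $R_j$ then satisfies a Volterra equation with bounded kernel near $k_\star$, and a Gronwall-type estimate delivers the bounds \eqref{XYat1 adjoint}. An attractive shortcut is available for $j=1,2$ (and for $j=3$ under compact support): from Proposition \ref{XYprop adjoint}(f) one has $\mu_j^A = (\mu_j^{-1})^T$, so one may read the expansion off from Proposition \ref{XYat1prop} directly. The key algebraic observation is that $C_j^{(-1)}$ has the factorization $(1,-1,0)^T (\alpha_j, \alpha_j, \beta_j)$ with $(1,-1,0)\cdot(\alpha_j,\alpha_j,\beta_j)^T = 0$, so $C_j^{(-1)}$ is nilpotent of index two; this nilpotency is exactly what guarantees that $\mu_j^{-1}$ also has only a simple pole at $k=1$, with residue of the dual rank-one form $u \cdot (\tilde\alpha_j, \tilde\alpha_j, \tilde\beta_j)$ whose transpose is \eqref{Djpm1p}, and $D_j^{(0)}$ is then extracted from the next order of $\mu_j \mu_j^{-1} = I$.

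The main obstacle is algebraic rather than analytical: one must verify that the recursion produces coefficients with exactly the sparse structure \eqref{Djpm1p}--\eqref{Djp0p}, including the vanishing third column of $D_{j_\star}^{(-1)}$, the duplication pattern of the entries of $D_{j_\star}^{(0)}$, and the correct ``$\mp$'' sign distribution between $j_\star \in \{1,2,3\}$ and $j_\star \in \{4,5,6\}$. This is the same bookkeeping carried out for $\mu_j$ in \cite{CLgoodboussinesq}, transported to the adjoint setting by swapping the roles of rows and columns.
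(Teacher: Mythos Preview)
Your proposal is correct and matches the paper's approach: the paper omits the proof entirely, implicitly relying on the same argument as for Proposition \ref{XYat1prop} (itself deferred to \cite{CLgoodboussinesq}), which is precisely the singular-perturbation analysis of the Volterra equation that you outline, transported to the adjoint setting by replacing $(\mathsf{U},\mathsf{V})$ with $(-\mathsf{U}^T,-\mathsf{V}^T)$. Your shortcut via $\mu_j^A = (\mu_j^{-1})^T$ for $j=1,2$ is also legitimate, and your observation that the rank-one, trace-zero structure of $C_j^{(-1)}$ forces the inverse to have only a simple pole (concretely: every $2\times 2$ minor of the first two rows of $\mu_j$ has vanishing double-pole coefficient, while minors involving the bounded third row are at worst simple) is exactly the algebra needed to read off \eqref{Djpm1p}--\eqref{Djp0p} from \eqref{Cjpm1p}--\eqref{Cjp0p}.
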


Recall from Proposition \ref{XYprop} (d) that $\mu_{1}(x,t,k)$ is bounded for $k \in (\omega^2 \hat{\mathcal{S}}_{1}, \omega \hat{\mathcal{S}}_{1}, \hat{\mathcal{S}}_{1}), \ \dist(k,\hat{\mathcal{Q}}) > \epsilon$, where 
\begin{align*}
& \mathcal{S}_{1} = \{\re  l_1 < \re  l_3\} \cap \{\re  l_2 < \re  l_3\} \cap \{\re  z_3 < \re  z_1\} \cap \{\re  z_3 < \re  z_2\}.
\end{align*}
For $x=0$, the contour $\gamma_{1}$ consists of only a vertical segment in the $(x,t)$-plane going from $(0,T)$ to $(0,t)$. This implies that $\mu_{1}(0,t,k)$ is bounded on a larger domain than $(\omega^2 \hat{\mathcal{S}}_{1}, \omega \hat{\mathcal{S}}_{1}, \hat{\mathcal{S}}_{1}), \ \dist(k,\hat{\mathcal{Q}}) > \epsilon$; more precisely, we have that
\begin{align*}
& \mu_{1}(0,t,k) \; \text{is bounded for} \; k \in (\omega^{2}\hat{\mathsf{S}}_{1}, \omega\hat{\mathsf{S}}_{1}, 
\hat{\mathsf{S}}_{1}), \; \; \dist(k,\hat{\mathcal{Q}}) > \epsilon, 
\end{align*}
where $\hat{\mathsf{S}}_{1} = \partial \D \cup \bar{\mathsf{S}}_{1}$ and
\begin{align*}
\mathsf{S}_{1} = \{\re  z_3 < \re  z_1\} \cap \{\re  z_3 < \re  z_2\}.
\end{align*}
Similarly, we find that 
\begin{align*}
& \mu_{2}^{A}(0,t,k) \; \text{is bounded for} \; k \in (\omega^{2}\hat{\mathsf{S}}_{1}, \omega\hat{\mathsf{S}}_{1}, 
\hat{\mathsf{S}}_{1}), \; \; \dist(k,\hat{\mathcal{Q}}) > \epsilon, \\
& \mu_{1}^{A}(0,t,k) \mbox{ and } \mu_{2}(0,t,k) \; \text{are bounded for} \; k \in (\omega^{2}\hat{\mathsf{S}}_{2}, \omega\hat{\mathsf{S}}_{2}, 
\hat{\mathsf{S}}_{2}), \; \; \dist(k,\hat{\mathcal{Q}}) > \epsilon, 
\end{align*}
where $\hat{\mathsf{S}}_{2} = \partial \D \cup \bar{\mathsf{S}}_{2}$ and
\begin{align*}
\mathsf{S}_{2} = \{\re  z_1 < \re  z_3\} \cap \{\re  z_2 < \re  z_3\}.
\end{align*}
Note that $\mathsf{S}_{2}^{*}=\mathsf{S}_{1}$. For convenience, we also define $\mathsf{S}_{3}=\mathcal{S}_{3}$ and $\hat{\mathsf{S}}_{3}=\partial \D \cup \mathcal{S}_{3}$, see Figure \ref{fig: S1S2S3 sf}. 

It follows from the above considerations that the statements of Proposition \ref{XYprop} (d), Proposition \ref{XYprop adjoint} (d), \eqref{mujlargekexpansion} and \eqref{mujAlargekexpansion} can be extended as follows for $x=0$:
\begin{proposition}\label{prop: basic properties for t=0}
The functions $\mu_{1},\mu_{2}, \mu_{3},\mu_{1}^{A},\mu_{2}^{A}, \mu_{3}^{A}$ have the following properties:
\begin{enumerate}[$(a)$]
\item For each $n \geq 1$ and $\epsilon > 0$, there exists $C>0$ such that the following estimates hold for $t\in [0,T]$, $j=1,2,3$ and $ \ell = 0, 1, \dots, n$:
\begin{subequations}\label{region of boundedness}
\begin{align}
& \big|\tfrac{\partial^\ell}{\partial k^\ell}\mu_{j}(0,t,k) \big| \leq
C, & & k \in (\omega^2 \hat{\mathsf{S}}_{j}, \omega \hat{\mathsf{S}}_{j}, \hat{\mathsf{S}}_{j}), \ \dist(k,\hat{\mathcal{Q}}) > \epsilon, \label{region of boundedness of muj t=0} \\
& \big|\tfrac{\partial^\ell}{\partial k^\ell}\mu_{j}^{A}(0,t,k) \big| \leq
C, & & k \in (\omega^2 \hat{\mathsf{S}}_{j}^{*}, \omega \hat{\mathsf{S}}_{j}^{*}, \hat{\mathsf{S}}_{j}^{*}), \ \dist(k,\hat{\mathcal{Q}}) > \epsilon. \label{region of boundedness of mujA t=0}
\end{align}
\end{subequations}

\item For any integer $p \geq 0$ and any $j\in \{1,2,3\}$, as $k\to \infty$ within the regions of boundedness \eqref{region of boundedness}, we have
\begin{subequations}\label{mujlargekexpansion t=0}
\begin{align}
& \mu_{j}(0,t,k) = I + \sum_{l=1}^{p}\frac{\mu_{j}^{(l)}(0,t)}{k^{l}} + O(k^{-p-1}), & & \mbox{as } k\to \infty, \; k \in (\omega^2 \hat{\mathsf{S}}_{j}, \omega \hat{\mathsf{S}}_{j}, \hat{\mathsf{S}}_{j}), \label{mujlargekexpansion t=0 muj} \\
& \mu_{j}^{A}(0,t,k) = I + \sum_{l=1}^{p}\frac{\mu_{j}^{A(l)}(0,t)}{k^{l}} + O(k^{-p-1}), & & \mbox{as } k\to \infty, \; k \in (\omega^2 \hat{\mathsf{S}}_{j}^{*}, \omega \hat{\mathsf{S}}_{j}^{*}, \hat{\mathsf{S}}_{j}^{*}), \label{mujlargekexpansion t=0 muAj}
\end{align}
\end{subequations}
where the error terms are uniform for $t\in [0,T]$. Moreover, \eqref{mujlargekexpansion t=0} can be differentiated any fixed number of times with respect to $k$.
\end{enumerate}
\end{proposition}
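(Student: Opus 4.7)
The plan is to revisit the Volterra construction of $\mu_j(0,t,k)$ and $\mu_j^A(0,t,k)$ and exploit the fact that, at $x=0$, the contours $\gamma_1$ and $\gamma_2$ (Figure \ref{mucontours.pdf}) degenerate to purely vertical segments in the $(x,t)$-plane. This eliminates the $x$-dependent exponentials $e^{(x-x')(l_i-l_j)}$ from the integral kernels, leaving only the $t$-dependent exponentials $e^{(t-t')(z_i-z_j)}$. Consequently, boundedness of the $j$-th column is dictated solely by the ordering of $\re z_1,\re z_2,\re z_3$, producing the larger sets $\hat{\mathsf{S}}_1,\hat{\mathsf{S}}_2$ in place of $\hat{\mathcal{S}}_1,\hat{\mathcal{S}}_2$.

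More precisely, at $x=0$ the integral equations (\ref{mujdef}) reduce to
\begin{equation*}
\mu_j(0,t,k) = I - \int_{t_j}^{t} e^{(t-t')\hat{\mathcal{Z}}(k)}\big[(\mathsf{V}\mu_j)(0,t',k)\big]\,dt', \qquad j=1,2,
\end{equation*}
with $t_1=T$ and $t_2=0$. On $\gamma_1$ we have $t'-t\geq 0$, so the $(i,j)$-entry of the kernel carries the factor $e^{(t-t')(z_i-z_j)}$, which is bounded whenever $\re z_j\leq \re z_i$; for the third column one obtains precisely $\re z_3\leq \re z_1$ and $\re z_3\leq \re z_2$, which describes $\hat{\mathsf{S}}_1$. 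The $\mathcal{A}$-symmetry in \eqref{XYsymm} then forces columns $1$ and $2$ to be bounded on $\omega^{2}\hat{\mathsf{S}}_1$ and $\omega\hat{\mathsf{S}}_1$ respectively. The same analysis applied to $\gamma_2$ (where $t'-t\leq 0$) yields $\hat{\mathsf{S}}_2$, and the dual argument applied to \eqref{mujdef adjoint} gives the starred regions for $\mu_j^A(0,t,k)$.

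To prove (a), I would feed these bounds into the standard Neumann iteration for Volterra equations, exactly as in the proof of Proposition \ref{XYprop}. Since the time interval $[0,T]$ is compact and $\mathsf{V}(0,t,k)$ is smooth in $t$ with all $k$-derivatives uniformly bounded on any region bounded away from $\hat{\mathcal{Q}}$, the series converges uniformly in $(t,k)$ on the prescribed region intersected with $\{\dist(k,\hat{\mathcal{Q}})>\epsilon\}$. Differentiating the Volterra equation $\ell\leq n$ times with respect to $k$ produces only finitely many additional factors built from $\partial_k^j\mathsf{V}$, which obey the same kind of bounds, yielding \eqref{region of boundedness of muj t=0} and \eqref{region of boundedness of mujA t=0}. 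For part (b), I would plug the formal ansatz $\mu_j(0,t,k)\sim I+\sum_{l\geq 1}\mu_j^{(l)}(0,t)k^{-l}$ into the Volterra equation and match powers of $k^{-1}$: the resulting recursion coincides with \eqref{xrecursive} restricted to $x=0$, so the coefficients agree with the already-constructed $\mu_j^{(l)}$. The residual $R_p := \mu_j - I - \sum_{l=1}^{p}\mu_j^{(l)}k^{-l}$ solves a Volterra equation with inhomogeneity of size $O(k^{-p-1})$, whence the estimate of part (a), applied now to $R_p$ in the enlarged region, gives the desired uniform $O(k^{-p-1})$ bound; the differentiated versions follow by the same scheme applied to $\partial_k^\ell R_p$. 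The adjoint case \eqref{mujlargekexpansion t=0 muAj} is identical.

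The only real subtlety is the behaviour near $\hat{\mathcal{Q}}$: because $P(k)^{-1}$ is singular at each $\kappa_j$, the coefficient $\mathsf{V}(0,t,k)$ has simple poles at $\hat{\mathcal{Q}}$, which is why the estimates are uniform only on $\{\dist(k,\hat{\mathcal{Q}})>\epsilon\}$. Apart from this standard exclusion, no new analytic input is required: the proposition is essentially the observation that, at $x=0$, the $\re l_j$-ordering drops out of the problem, so the admissible regions shrink only according to the $\re z_j$-ordering. This is why I expect no genuine obstacle beyond transcribing the Volterra/Neumann estimates from Propositions \ref{XYprop}--\ref{XYprop adjoint} with the enlarged domains in place.
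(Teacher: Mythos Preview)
Your approach is correct and coincides with the paper's: the proposition is stated immediately after the paragraph explaining that at $x=0$ the contours $\gamma_1,\gamma_2$ are purely vertical, so only the $\re z_j$-ordering constrains boundedness, and the paper simply asserts that the statements of Propositions \ref{XYprop}(d), \ref{XYprop adjoint}(d), \eqref{mujlargekexpansion} and \eqref{mujAlargekexpansion} extend accordingly. One small typo: your displayed Volterra equation should read $\mu_j(0,t,k)=I+\int_{t_j}^{t}e^{(t-t')\hat{\mathcal Z}(k)}(\mathsf V\mu_j)(0,t',k)\,dt'$ (compare \eqref{def mu 1 intro}), but this does not affect the argument.
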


\subsection{The spectral functions $s(k)$ and $S(k)$}
Recall from Proposition \ref{XYprop} (b) that all columns of $\mu_{1},\mu_{2}$ are analytic for $k \in \C\setminus \hat{\mathcal{Q}}$. Suppose temporarily that the initial data $u_0, v_0$ have compact supports. Then all columns of $\mu_{3}$ are also analytic for $k \in \C\setminus \hat{\mathcal{Q}}$ (see Proposition \ref{XYprop} (f)), and since each of the three matrix valued functions $\psi_{j}(x,t,k):=\mu_{j}(x,t,k)e^{x \mathcal{L}(k)+t \mathcal{Z}(k)}$, $j = 1,2,3$, satisfies the system
\begin{align*}
\begin{cases}
\psi_{x} = L \psi, \\
\psi_{t} = Z \psi,
\end{cases}
\end{align*}
they are related by some $(x,t)$-independent matrices, which we denote by $S(k)$ and $s(k)$, so that 
\begin{align}\label{mu3mu2mu1sS}
& \mu_{1}(x,t,k) = \mu_{2}(x,t,k)e^{x\hat{\mathcal{L}}(k)+t\hat{\mathcal{Z}}(k)}S(k), & & \mu_{3}(x,t,k) = \mu_{2}(x,t,k)e^{x\hat{\mathcal{L}}(k)+t\hat{\mathcal{Z}}(k)}s(k).
\end{align}
Evaluating both equations at $(x,t)=(0,0)$ and recalling that $\mu_{2}(0,0,k)=I$, we get
\begin{align}\label{def of S and s}
S(k) = \mu_1(0,0,k), \qquad  s(k) = \mu_3(0,0,k).
\end{align}

Suppose now that the initial data are not necessarily compactly supported. Then the second equation in \eqref{mu3mu2mu1sS} does not necessarily hold for all $k \in \C\setminus \hat{\mathcal{Q}}$, but we nevertheless define $s$ as in \eqref{def of S and s}. 
Similarly, we define $S^A$ and $s^A$ by
\begin{align}\label{SAsAdef}
& S^{A}(k) = \mu_{1}^{A}(0,0,k), \qquad s^{A}(k) = \mu_{3}^{A}(0,0,k).
\end{align}

\begin{figure}
\begin{center}
\scalebox{0.5}{\hspace{-0.15cm}\begin{tikzpicture}[scale=1.1, master]
\node at (0,0) {};

\draw[line width=0.025 mm, dashed,fill=gray!30] (0,0) --  (-105:4.5) arc(-105:-165:4.5) -- cycle;
\draw[line width=0.25 mm, dashed,fill=white] (0,0) --  (-105:3) arc(-105:-165:3) -- cycle;
\node at (-135:3.8) {$\mathsf{S}_{1}$ };
\draw[line width=0.025 mm, dashed,fill=gray!30] (0,0) --  (15:4.5) arc(15:75:4.5) -- cycle;
\draw[line width=0.25 mm, dashed,fill=white] (0,0) --  (15:3) arc(15:75:3) -- cycle;
\node at (45:3.8) {$\mathsf{S}_{1}$ };

\draw[line width=0.025 mm, dashed,fill=gray!30] (0,0) --  (-45:4.5) arc(-45:15:4.5) -- cycle;
\draw[line width=0.25 mm, dashed,fill=white] (0,0) --  (-45:3) arc(-45:15:3) -- cycle;
\node at (-15:3.8) {$\omega\mathsf{S}_{1}$ };
\draw[line width=0.025 mm, dashed,fill=gray!30] (0,0) --  (135:4.5) arc(135:195:4.5) -- cycle;
\draw[line width=0.25 mm, dashed,fill=white] (0,0) --  (135:3) arc(135:195:3) -- cycle;
\node at (165:3.8) {$\omega\mathsf{S}_{1}$ };

\draw[line width=0.025 mm, dashed,fill=gray!30] (0,0) --  (75:4.5) arc(75:135:4.5) -- cycle;
\draw[line width=0.25 mm, dashed,fill=white] (0,0) --  (75:3) arc(75:135:3) -- cycle;
\node at (105:3.8) {$\omega^{2}\mathsf{S}_{1}$ };
\draw[line width=0.025 mm, dashed,fill=gray!30] (0,0) --  (-105:4.5) arc(-105:-45:4.5) -- cycle;
\draw[line width=0.25 mm, dashed,fill=white] (0,0) --  (-105:3) arc(-105:-45:3) -- cycle;
\node at (-75:3.8) {$\omega^{2}\mathsf{S}_{1}$ };

\draw[line width=0.25 mm, dashed,fill=gray!30] (0,0) --  (105:3) arc(105:165:3) -- cycle;
\node at (135:2) {$\mathsf{S}_{1}$ };
\draw[line width=0.25 mm, dashed,fill=gray!30] (0,0) --  (-15:3) arc(-15:-75:3) -- cycle;
\node at (-45:2) {$\mathsf{S}_{1}$ };

\draw[line width=0.25 mm, dashed,fill=gray!30] (0,0) --  (-135:3) arc(-135:-75:3) -- cycle;
\node at (-105:2) {$\omega \mathsf{S}_{1}$ };
\draw[line width=0.25 mm, dashed,fill=gray!30] (0,0) --  (45:3) arc(45:105:3) -- cycle;
\node at (75:2) {$\omega \mathsf{S}_{1}$ };

\draw[line width=0.25 mm, dashed,fill=gray!30] (0,0) --  (-15:3) arc(-15:45:3) -- cycle;
\node at (15:2) {$\omega^{2} \mathsf{S}_{1}$ };
\draw[line width=0.25 mm, dashed,fill=gray!30] (0,0) --  (-195:3) arc(-195:-135:3) -- cycle;
\node at (-165:2) {$\omega^{2} \mathsf{S}_{1}$ };

\draw[black,line width=0.45 mm] (15:3)--(15:4.5);
\draw[black,line width=0.45 mm] (0,0)--(45:3);
\draw[black,line width=0.45 mm] (75:3)--(75:4.5);
\draw[black,line width=0.45 mm] (0,0)--(105:3);
\draw[black,line width=0.45 mm] (135:3)--(135:4.5);
\draw[black,line width=0.45 mm] (0,0)--(165:3);
\draw[black,line width=0.45 mm] (0,0)--(-15:3);
\draw[black,line width=0.45 mm] (-45:3)--(-45:4.5);
\draw[black,line width=0.45 mm] (0,0)--(-75:3);
\draw[black,line width=0.45 mm] (-105:3)--(-105:4.5);
\draw[black,line width=0.45 mm] (0,0)--(-135:3);
\draw[black,line width=0.45 mm] (-165:3)--(-165:4.5);

\draw[black,line width=0.45 mm] ([shift=(-180:3cm)]0,0) arc (-180:180:3cm);

\draw[fill] (0:3) circle (0.08);
\draw[fill] (60:3) circle (0.08);
\draw[fill] (120:3) circle (0.08);
\draw[fill] (180:3) circle (0.08);
\draw[fill] (240:3) circle (0.08);
\draw[fill] (300:3) circle (0.08);

\end{tikzpicture} \hspace{-0.4cm} \begin{tikzpicture}[scale=1.1, master]
\node at (0,0) {};

\draw[line width=0.025 mm, dashed,fill=gray!30] (0,0) --  (-15:4.5) arc(-15:-75:4.5) -- cycle;
\draw[line width=0.25 mm, dashed,fill=white] (0,0) --  (-15:3) arc(-15:-75:3) -- cycle;
\node at (-45:3.8) {$\mathsf{S}_{2}$ };
\draw[line width=0.025 mm, dashed,fill=gray!30] (0,0) --  (105:4.5) arc(105:165:4.5) -- cycle;
\draw[line width=0.25 mm, dashed,fill=white] (0,0) --  (105:3) arc(105:165:3) -- cycle;
\node at (135:3.8) {$\mathsf{S}_{2}$ };

\draw[line width=0.025 mm, dashed,fill=gray!30] (0,0) --  (45:4.5) arc(45:105:4.5) -- cycle;
\draw[line width=0.25 mm, dashed,fill=white] (0,0) --  (45:3) arc(45:105:3) -- cycle;
\node at (75:3.8) {$\omega\mathsf{S}_{2}$ };
\draw[line width=0.025 mm, dashed,fill=gray!30] (0,0) --  (225:4.5) arc(225:285:4.5) -- cycle;
\draw[line width=0.25 mm, dashed,fill=white] (0,0) --  (225:3) arc(225:285:3) -- cycle;
\node at (255:3.8) {$\omega\mathsf{S}_{2}$ };

\draw[line width=0.025 mm, dashed,fill=gray!30] (0,0) --  (165:4.5) arc(165:225:4.5) -- cycle;
\draw[line width=0.25 mm, dashed,fill=white] (0,0) --  (165:3) arc(165:225:3) -- cycle;
\node at (195:3.8) {$\omega^{2}\mathsf{S}_{2}$ };
\draw[line width=0.025 mm, dashed,fill=gray!30] (0,0) --  (-15:4.5) arc(-15:45:4.5) -- cycle;
\draw[line width=0.25 mm, dashed,fill=white] (0,0) --  (-15:3) arc(-15:45:3) -- cycle;
\node at (15:3.8) {$\omega^{2}\mathsf{S}_{2}$ };

\draw[line width=0.25 mm, dashed,fill=gray!30] (0,0) --  (195:3) arc(195:255:3) -- cycle;
\node at (225:2) {$\mathsf{S}_{2}$ };
\draw[line width=0.25 mm, dashed,fill=gray!30] (0,0) --  (75:3) arc(75:15:3) -- cycle;
\node at (45:2) {$\mathsf{S}_{2}$ };

\draw[line width=0.25 mm, dashed,fill=gray!30] (0,0) --  (-45:3) arc(-45:15:3) -- cycle;
\node at (-15:2) {$\omega \mathsf{S}_{2}$ };
\draw[line width=0.25 mm, dashed,fill=gray!30] (0,0) --  (135:3) arc(135:195:3) -- cycle;
\node at (165:2) {$\omega \mathsf{S}_{2}$ };

\draw[line width=0.25 mm, dashed,fill=gray!30] (0,0) --  (75:3) arc(75:135:3) -- cycle;
\node at (105:2) {$\omega^{2} \mathsf{S}_{2}$ };
\draw[line width=0.25 mm, dashed,fill=gray!30] (0,0) --  (-105:3) arc(-105:-45:3) -- cycle;
\node at (-75:2) {$\omega^{2} \mathsf{S}_{2}$ };

\draw[black,line width=0.45 mm] (105:3)--(105:4.5);
\draw[black,line width=0.45 mm] (0,0)--(135:3);
\draw[black,line width=0.45 mm] (165:3)--(165:4.5);
\draw[black,line width=0.45 mm] (0,0)--(195:3);
\draw[black,line width=0.45 mm] (225:3)--(225:4.5);
\draw[black,line width=0.45 mm] (0,0)--(255:3);
\draw[black,line width=0.45 mm] (0,0)--(75:3);
\draw[black,line width=0.45 mm] (45:3)--(45:4.5);
\draw[black,line width=0.45 mm] (0,0)--(15:3);
\draw[black,line width=0.45 mm] (-15:3)--(-15:4.5);
\draw[black,line width=0.45 mm] (0,0)--(-45:3);
\draw[black,line width=0.45 mm] (-75:3)--(-75:4.5);

\draw[black,line width=0.45 mm] ([shift=(-180:3cm)]0,0) arc (-180:180:3cm);

\draw[fill] (0:3) circle (0.08);
\draw[fill] (60:3) circle (0.08);
\draw[fill] (120:3) circle (0.08);
\draw[fill] (180:3) circle (0.08);
\draw[fill] (240:3) circle (0.08);
\draw[fill] (300:3) circle (0.08);

\end{tikzpicture} \hspace{-0.4cm} \begin{tikzpicture}[scale=1.1,slave]
\node at (0,0) {};

\draw[line width=0.025 mm, dashed,fill=gray!30] (0,0) --  (30:4.5) arc(30:150:4.5) -- cycle;
\draw[line width=0.25 mm, dashed,fill=white] (0,0) --  (30:3) arc(30:150:3) -- cycle;
\node at (90:3.8) {$\mathsf{S}_{3}$ };

\draw[line width=0.025 mm, dashed,fill=gray!30] (0,0) --  (150:4.5) arc(150:270:4.5) -- cycle;
\draw[line width=0.25 mm, dashed,fill=white] (0,0) --  (150:3) arc(150:270:3) -- cycle;
\node at (210:3.8) {$\omega\mathsf{S}_{3}$ };

\draw[line width=0.025 mm, dashed,fill=gray!30] (0,0) --  (270:4.5) arc(270:390:4.5) -- cycle;
\draw[line width=0.25 mm, dashed,fill=white] (0,0) --  (270:3) arc(270:390:3) -- cycle;
\node at (330:3.8) {$\omega^{2}\mathsf{S}_{3}$ };

\draw[line width=0.25 mm, dashed,fill=gray!30] (0,0) --  (-150:3) arc(-150:-30:3) -- cycle;
\node at (-90:2) {$\mathsf{S}_{3}$ };

\draw[line width=0.25 mm, dashed,fill=gray!30] (0,0) --  (-30:3) arc(-30:90:3) -- cycle;
\node at (30:2) {$\omega\mathsf{S}_{3}$ };

\draw[line width=0.25 mm, dashed,fill=gray!30] (0,0) --  (90:3) arc(90:210:3) -- cycle;
\node at (150:2) {$\omega^{2}\mathsf{S}_{3}$ };

\draw[black,line width=0.45 mm] (30:3)--(30:4.5);
\draw[black,line width=0.45 mm] (0,0)--(90:3);
\draw[black,line width=0.45 mm] (150:3)--(150:4.5);
\draw[black,line width=0.45 mm] (0,0)--(-30:3);
\draw[black,line width=0.45 mm] (-90:3)--(-90:4.5);
\draw[black,line width=0.45 mm] (0,0)--(-150:3);

\draw[black,line width=0.45 mm] ([shift=(-180:3cm)]0,0) arc (-180:180:3cm);

\draw[fill] (0:3) circle (0.08);
\draw[fill] (60:3) circle (0.08);
\draw[fill] (120:3) circle (0.08);
\draw[fill] (180:3) circle (0.08);
\draw[fill] (240:3) circle (0.08);
\draw[fill] (300:3) circle (0.08);
\end{tikzpicture}}
\end{center}
\begin{figuretext}\label{fig: S1S2S3 sf}
The sets $\mathsf{S}_{j},\omega\mathsf{S}_{j},\omega^{2}\mathsf{S}_{j}$, $j=1,2,3$.
\end{figuretext}
\end{figure}

The next two propositions are a consequence of the properties of $\mu_1$ and $\mu_3$ established in Propositions \ref{XYprop}--\ref{prop: basic properties for t=0}. An important difference compared to \cite[Proposition 3.5]{CLmain} is that, in our case, the matrices $\{s_{l},S_{l}\}_{1}^{\infty}$ of Proposition \ref{sprop} (e) are not necessarily diagonal. 

\begin{proposition}[Properties of $s(k)$ and $S(k)$]\label{sprop}
The functions $s(k)$ and $S(k)$ defined in \eqref{def of S and s} have the following properties:
\begin{enumerate}[$(a)$]
\item The entries of $s(k)$ and $S(k)$ have the following domains of definition:
\begin{subequations}\label{sSdomainofdefinition}
\begin{align}
& s(k) \mbox{ is well-defined and continuous for } k \in (\omega^2 \hat{\mathcal{S}}_{3}, \omega \hat{\mathcal{S}}_{3}, \hat{\mathcal{S}}_{3}) \setminus \hat{\mathcal{Q}}, \label{region of def of s} \\
& S(k) \mbox{ is well-defined and continuous for } k \in \C \setminus \hat{\mathcal{Q}}. \label{region of def of S}
\end{align}
\end{subequations}

\item The entries of $s(k)$ and $S(k)$ are analytic in the interior of their domains of definition as given in \eqref{sSdomainofdefinition}.

\item For $j = 1, 2, \dots$, the derivatives $\partial_k^js(k)$ and $\partial_k^j S(k)$ are well-defined and continuous for $k$ as in \eqref{sSdomainofdefinition}.

\item $s(k)$ and $S(k)$ obey the symmetries
\begin{align}\label{symmetries of s}
&  s(k) = \mathcal{A} s(\omega k)\mathcal{A}^{-1} = \mathcal{B} s(k^{-1})\mathcal{B}, \qquad S(k) = \mathcal{A} S(\omega k)\mathcal{A}^{-1} = \mathcal{B} S(k^{-1})\mathcal{B}.
\end{align}

\item $s(k)$ and $S(k)$ approach the identity matrix as $k \to \infty$. More precisely, for any $N \geq 1$, 
\begin{subequations}
\begin{align}
& \Big|\partial_k^j \big(s(k) - I - \sum_{l=1}^N \frac{s_l}{k^l}\big)\Big| = O(k^{-N-1}), & & \mbox{as } k \to \infty, \;\; k \in (\omega^2 \hat{\mathsf{S}}_{3}, \omega \hat{\mathsf{S}}_{3}, \hat{\mathsf{S}}_{3}), \label{asymp of s at inf} \\
& \Big|\partial_k^j \big(S(k) - I - \sum_{l=1}^N \frac{S_l}{k^l}\big)\Big| = O(k^{-N-1}), & & \mbox{as }k \to \infty, \;\; k \in (\omega^2 \hat{\mathsf{S}}_{1}, \omega \hat{\mathsf{S}}_{1}, \hat{\mathsf{S}}_{1}), \label{asymp of S at inf}
\end{align}
\end{subequations}
for $j = 0, 1, \dots, N$, where $s_{l} = \mu_{3}^{(l)}(0,0)$, $S_{l} = \mu_{1}^{(l)}(0,0)$, and the $\mu_{j}^{(l)}$ are as in \eqref{mujlargekexpansion}.

\item For each integer $l\geq -1$, let 
\begin{align*}
& s_{1}^{(l)}=C_{3}^{(l)}(0,0), & & s_{-1}^{(l)}=C_{6}^{(l)}(0,0), & & S_{1}^{(l)}=C_{1}^{(l)}(0,0), & & S_{-1}^{(l)}=C_{4}^{(l)}(0,0),
\end{align*}
where the quantities $C_{j}^{(l)}$ are as in Proposition \ref{XYat1prop}. We have
\begin{subequations}
\begin{align}
& s(k) = \frac{s_{1}^{(-1)}}{k-1} + I+s_{1}^{(0)} + s_{1}^{(1)}(k-1) + \cdots & & \mbox{as } k \to 1, \;\; k \in (\omega^2 \hat{\mathcal{S}}_{3}, \omega \hat{\mathcal{S}}_{3}, \hat{\mathcal{S}}_{3}), \label{s at 1} \\
& s(k) = \frac{s_{-1}^{(-1)}}{k+1} + I+s_{-1}^{(0)} + s_{-1}^{(1)}(k+1) + \cdots & & \mbox{as } k \to -1, \;\; k \in (\omega^2 \hat{\mathcal{S}}_{3}, \omega \hat{\mathcal{S}}_{3}, \hat{\mathcal{S}}_{3}), \label{s at -1} \\
& S(k) = \frac{S_{1}^{(-1)}}{k-1} + I+S_{1}^{(0)} + S_{1}^{(1)}(k-1) + \cdots & & \mbox{as } k \to 1,  \label{S at 1} \\
& S(k) = \frac{S_{-1}^{(-1)}}{k+1} + I+S_{-1}^{(0)} + S_{-1}^{(1)}(k+1) + \cdots & & \mbox{as } k \to -1,  \label{S at -1}
\end{align}
\end{subequations}
and the expansions can be differentiated termwise any number of times. The first matrices satisfy
\begin{align}
& s_{1}^{(-1)} = \begin{pmatrix}
\mathfrak{s}_{1} & \mathfrak{s}_{1} & \mathfrak{s}_{2} \\
-\mathfrak{s}_{1} & -\mathfrak{s}_{1} & -\mathfrak{s}_{2} \\
0 & 0 & 0
\end{pmatrix}, & & s_{-1}^{(-1)} = \begin{pmatrix}
\mathfrak{s}_{3} & \mathfrak{s}_{3} & \mathfrak{s}_{4} \\
-\mathfrak{s}_{3} & -\mathfrak{s}_{3} & -\mathfrak{s}_{4} \\
0 & 0 & 0
\end{pmatrix}, \label{s1pm1p and sm1pm1p} \\
& (s_{1}^{(0)})_{31} = (s_{1}^{(0)})_{32}, & & (s_{-1}^{(0)})_{31} = (s_{-1}^{(0)})_{32}, \\
& S_{1}^{(-1)} = \begin{pmatrix}
\mathscr{S}_{1} & \mathscr{S}_{1} & \mathscr{S}_{2} \\
-\mathscr{S}_{1} & -\mathscr{S}_{1} & -\mathscr{S}_{2} \\
0 & 0 & 0
\end{pmatrix}, & & S_{-1}^{(-1)} = \begin{pmatrix}
\mathscr{S}_{3} & \mathscr{S}_{3} & \mathscr{S}_{4} \\
-\mathscr{S}_{3} & -\mathscr{S}_{3} & -\mathscr{S}_{4} \\
0 & 0 & 0
\end{pmatrix}, \label{S1pm1p and Sm1pm1p} \\
& (S_{1}^{(0)})_{31} = (S_{1}^{(0)})_{32}, & & (S_{-1}^{(0)})_{31} = (S_{-1}^{(0)})_{32},
\end{align}
for certain constants $\mathfrak{s}_{1},\mathfrak{s}_{2},\mathfrak{s}_{3},\mathfrak{s}_{4},\mathscr{S}_{1},\mathscr{S}_{2},\mathscr{S}_{3},\mathscr{S}_{4} \in  \mathbb{C}$.
\item If $u_0, v_0$ have compact support, then $s(k)$ is defined and analytic for $k \in \C \setminus \hat{\mathcal{Q}}$, $\det s(k) = 1$ for $k \in \C \setminus \hat{\mathcal{Q}}$, and the second equation in \eqref{mu3mu2mu1sS} holds for $k \in \C \setminus \hat{\mathcal{Q}}$.

\medskip \noindent $\det S(k) = 1$ for $k \in \C \setminus \hat{\mathcal{Q}}$, and the first equation in \eqref{mu3mu2mu1sS} holds for $k \in \C \setminus \hat{\mathcal{Q}}$.

\item For each $n \geq 1$ and $\epsilon > 0$, there exists $C>0$ such that the following estimates hold for $ \ell = 0, 1, \dots, n$:
\begin{align}
& \big|\tfrac{\partial^\ell}{\partial k^\ell}s(k) \big| \leq C, & & k \in (\omega^2 \hat{\mathsf{S}}_{3}, \omega \hat{\mathsf{S}}_{3}, \hat{\mathsf{S}}_{3}), \ \dist(k,\hat{\mathcal{Q}}) > \epsilon, \label{region of boundedness of s} \\
& \big|\tfrac{\partial^\ell}{\partial k^\ell}S(k) \big| \leq C, & & k \in (\omega^2 \hat{\mathsf{S}}_{1}, \omega \hat{\mathsf{S}}_{1}, \hat{\mathsf{S}}_{1}), \ \dist(k,\hat{\mathcal{Q}}) > \epsilon. \label{region of boundedness of S}
\end{align} 
\end{enumerate}
\end{proposition}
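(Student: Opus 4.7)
My plan is to derive everything by specializing the general properties of the eigenfunctions $\mu_1$ and $\mu_3$ to the point $(x,t)=(0,0)$, using the definitions $s(k)=\mu_3(0,0,k)$ and $S(k)=\mu_1(0,0,k)$. Parts (a), (b), (c) follow at once from Proposition \ref{XYprop}(a)(b)(c); part (d) is an immediate specialization of the symmetries \eqref{XYsymm}; and part (h) is \eqref{region of boundedness of muj t=0} from Proposition \ref{prop: basic properties for t=0}(a) at $t=0$. All of this is bookkeeping.

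For part (e), the asymptotic expansions \eqref{asymp of s at inf} and \eqref{asymp of S at inf} as $k\to\infty$, together with the identifications $s_l=\mu_3^{(l)}(0,0)$ and $S_l=\mu_1^{(l)}(0,0)$, follow directly from the expansions \eqref{mujlargekexpansion t=0} at $t=0$. The crucial point is that these hold on the \emph{enlarged} domains $\hat{\mathsf{S}}_3,\hat{\mathsf{S}}_1$ (and their $\omega^j$-rotations), rather than $\hat{\mathcal{S}}_3,\hat{\mathcal{S}}_1$. For $S(k)$, this enlargement comes from the fact that at $x=0$ the contour $\gamma_1$ degenerates to a purely vertical segment, so only the $t$-component of the exponentials needs to be controlled — a point already handled in the passage preceding Proposition \ref{prop: basic properties for t=0}. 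For $s(k)$, $\mathsf{S}_3=\mathcal{S}_3$, so there is nothing to enlarge.

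For part (f), the expansions \eqref{s at 1}--\eqref{S at -1} and their termwise differentiability are Proposition \ref{XYat1prop} at $(x,t)=(0,0)$ with the definitions $s_{\pm 1}^{(l)}:=C_{3,6}^{(l)}(0,0)$ and $S_{\pm 1}^{(l)}:=C_{1,4}^{(l)}(0,0)$. The structure \eqref{s1pm1p and sm1pm1p}, \eqref{S1pm1p and Sm1pm1p} of the leading residues is inherited directly from \eqref{Cjpm1p}, whose first two columns coincide and whose third row vanishes; the equalities $(s_{\pm 1}^{(0)})_{31}=(s_{\pm 1}^{(0)})_{32}$ and $(S_{\pm 1}^{(0)})_{31}=(S_{\pm 1}^{(0)})_{32}$ are read off from the bottom row $(\gamma_{j,1},\gamma_{j,1},\gamma_{j,2})$ of \eqref{Cjp0p}. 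For part (g), the standard argument given in the paragraph preceding \eqref{def of S and s} applies: when $u_0,v_0$ have compact support, both $\mu_1 e^{-x\hat{\mathcal{L}}-t\hat{\mathcal{Z}}}$ and $\mu_2 e^{-x\hat{\mathcal{L}}-t\hat{\mathcal{Z}}}$ solve the common Lax system \eqref{Xlax} on all of $\mathbb{C}\setminus\hat{\mathcal{Q}}$, hence differ by an $(x,t)$-independent matrix; evaluating at $(x,t)=(0,0)$ and using $\mu_2(0,0,k)=I$ yields the first identity in \eqref{mu3mu2mu1sS}, and the same reasoning works for $s(k)$. The determinant identities then follow from $\det\mu_{1,2,3}=1$ (Proposition \ref{XYprop}(f)). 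I do not expect a serious obstacle: all the hard analysis — Volterra bounds, analyticity domains, singularity structure at $k=\pm1$, and the enlargement of the boundedness region at $x=0$ — has already been packaged in Propositions \ref{XYprop}, \ref{XYat1prop}, and \ref{prop: basic properties for t=0}, so the proof of the present proposition reduces to matching notation and evaluating at $(x,t)=(0,0)$.
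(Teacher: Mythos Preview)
Your proposal is correct and matches the paper's approach: the paper itself states only that the proposition ``is a consequence of the properties of $\mu_1$ and $\mu_3$ established in Propositions \ref{XYprop}--\ref{prop: basic properties for t=0}'', and your item-by-item specialization to $(x,t)=(0,0)$ is exactly that. One trivial slip: in part (g) you want $\psi_j=\mu_j e^{x\mathcal{L}+t\mathcal{Z}}$ (matrix exponential, positive sign) rather than the operator $e^{-x\hat{\mathcal{L}}-t\hat{\mathcal{Z}}}$, but the argument is otherwise as in the paragraph preceding \eqref{def of S and s}.
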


\begin{proposition}[Properties of $s^{A}(k)$ and $S^{A}(k)$]\label{sAprop}
The functions $s^A(k)$ and $S^A(k)$ defined in (\ref{SAsAdef}) have the following properties:
\begin{enumerate}[$(a)$]
\item The entries of $s^A(k)$ and $S^A(k)$ have the following domains of definition:
\begin{subequations}\label{sASAdomainofdefinition}
\begin{align}
& s^{A}(k) \mbox{ is well-defined and continuous for } k \in (\omega^2 \hat{\mathcal{S}}_{3}^{*}, \omega \hat{\mathcal{S}}_{3}^{*}, \hat{\mathcal{S}}_{3}^{*}) \setminus \hat{\mathcal{Q}}, \label{region of def of sA} \\
& S^{A}(k) \mbox{ is well-defined and continuous for } k \in \C \setminus \hat{\mathcal{Q}}. \label{region of def of SA}
\end{align}
\end{subequations}
 
\item The entries of $s^A(k)$ and $S^A(k)$ are analytic in the interior of their domains of definition as given in (\ref{sASAdomainofdefinition}). 
 
\item For $j = 1, 2, \dots$, the derivatives $\partial_k^js^A(k)$ and $\partial_k^j S^A(k)$ are well-defined and continuous for $k$ as in (\ref{sASAdomainofdefinition}).

\item $s^A(k)$ and $S^A(k)$ obey the symmetries
\begin{align}\label{symmetries of sASA}
&  s^A(k) = \mathcal{A} s^A(\omega k)\mathcal{A}^{-1} = \mathcal{B} s^A(k^{-1})\mathcal{B}, \qquad S^A(k) = \mathcal{A} S^A(\omega k)\mathcal{A}^{-1} = \mathcal{B} S^A(k^{-1})\mathcal{B}.
\end{align}

\item $s^{A}(k)$ and $S^{A}(k)$ approach the identity matrix as $k \to \infty$. More precisely, for any $N \geq 1$,
\begin{subequations}
\begin{align}
& \Big|\partial_k^j \big(s^{A}(k) - I - \sum_{l=1}^N \frac{s_l^{A}}{k^l}\big)\Big| = O(k^{-N-1}), & & \mbox{as } k \to \infty, \; \; k \in (\omega^2 \hat{\mathsf{S}}_{3}^{*}, \omega \hat{\mathsf{S}}_{3}^{*},  \hat{\mathsf{S}}_{3}^{*}), \label{asymp of sA at inf} \\
& \Big|\partial_k^j \big(S^{A}(k) - I - \sum_{l=1}^N \frac{S_l^{A}}{k^l}\big)\Big| = O(k^{-N-1}), & & \mbox{as } k \to \infty, \; \; k \in (\omega^2 \hat{\mathsf{S}}_{2}, \omega \hat{\mathsf{S}}_{2},  \hat{\mathsf{S}}_{2}), \label{asymp of SA at inf}
\end{align}
\end{subequations}
for $j = 0, \dots, N$, where $s_{l}^{A} = \mu_{3}^{A(l)}(0,0)$, $S_{l}^{A}=\mu_{1}^{A(l)}(0,0)$, and the $\mu_{j}^{A(l)}$ are as in \eqref{mujAlargekexpansion}.

\item For each interger $l\geq -1$, let
\begin{align*}
& s_{1}^{A(l)}=D_{3}^{(l)}(0,0), & & s_{-1}^{A(l)}=D_{6}^{(l)}(0,0), & & S_{1}^{A(l)}=D_{1}^{(l)}(0,0), & & S_{-1}^{A(l)}=D_{4}^{(l)}(0,0),
\end{align*}
where the quantities $D_{j}^{(l)}$ are as in Proposition \ref{XYat1prop adjoint}. We have
\begin{subequations}
\begin{align}
& s^{A}(k) = \frac{s_{1}^{A(-1)}}{k-1} + I+s_{1}^{A(0)} + s_{1}^{A(1)}(k-1) + \cdots & & \hspace{-0.25cm}\mbox{as } k \to 1, \;\; k \in (\omega^2 \hat{\mathcal{S}}_{3}^{*}, \omega \hat{\mathcal{S}}_{3}^{*}, \hat{\mathcal{S}}_{3}^{*}), \label{sA at 1} \\
& s^{A}(k) = \frac{s_{-1}^{A(-1)}}{k+1}  + I+s_{-1}^{A(0)} + s_{-1}^{A(1)}(k+1) + \cdots & & \hspace{-0.25cm} \mbox{as } k \to -1, \; k \in (\omega^2 \hat{\mathcal{S}}_{3}^{*}, \omega \hat{\mathcal{S}}_{3}^{*}, \hat{\mathcal{S}}_{3}^{*}), \label{sA at -1} \\
& S^{A}(k) = \frac{S_{1}^{A(-1)}}{k-1} + I+S_{1}^{A(0)} + S_{1}^{A(1)}(k-1) + \cdots & & \hspace{-0.25cm}\mbox{as } k \to 1,  \label{SA at 1} \\
& S^{A}(k) = \frac{S_{-1}^{A(-1)}}{k+1} + I+S_{-1}^{A(0)} + S_{-1}^{A(1)}(k+1) + \cdots & & \hspace{-0.25cm}\mbox{as } k \to -1,  \label{SA at -1}
\end{align}
\end{subequations}
and the expansions can be differentiated termwise any number of times. The first matrices satisfy
\begin{align}
& s_{1}^{A(-1)} = \begin{pmatrix}
\mathfrak{s}_{1}^{A} & -\mathfrak{s}_{1}^{A} & 0 \\
\mathfrak{s}_{1}^{A} & -\mathfrak{s}_{1}^{A} & 0 \\
\mathfrak{s}_{2}^{A} & -\mathfrak{s}_{2}^{A} & 0
\end{pmatrix}, & & s_{-1}^{A(-1)} = \begin{pmatrix}
\mathfrak{s}_{3}^{A} & -\mathfrak{s}_{3}^{A} & 0 \\
\mathfrak{s}_{3}^{A} & -\mathfrak{s}_{3}^{A} & 0 \\
\mathfrak{s}_{4}^{A} & -\mathfrak{s}_{4}^{A} & 0
\end{pmatrix}, \label{sA1pm1p and sAm1pm1p} \\
& (s_{1}^{A(0)})_{13} = (s_{1}^{A(0)})_{23}, & & (s_{-1}^{A(0)})_{13} = (s_{-1}^{A(0)})_{23}, \\
& S_{1}^{A(-1)} = \begin{pmatrix}
\mathscr{S}_{1}^{A} & -\mathscr{S}_{1}^{A} & 0 \\
\mathscr{S}_{1}^{A} & -\mathscr{S}_{1}^{A} & 0 \\
\mathscr{S}_{2}^{A} & -\mathscr{S}_{2}^{A} & 0
\end{pmatrix}, & & S_{-1}^{A(-1)} = \begin{pmatrix}
\mathscr{S}_{3}^{A} & -\mathscr{S}_{3}^{A} & 0 \\
\mathscr{S}_{3}^{A} & -\mathscr{S}_{3}^{A} & 0 \\
\mathscr{S}_{4}^{A} & -\mathscr{S}_{4}^{A} & 0
\end{pmatrix}, \label{SA1pm1p and SAm1pm1p} \\
& (S_{1}^{A(0)})_{13} = (S_{1}^{A(0)})_{23}, & & (S_{-1}^{A(0)})_{13} = (S_{-1}^{A(0)})_{23},
\end{align}
for certain constants $\mathfrak{s}_{1}^{A},\mathfrak{s}_{2}^{A},\mathfrak{s}_{3}^{A},\mathfrak{s}_{4}^{A},\mathscr{S}_{1}^{A},\mathscr{S}_{2}^{A},\mathscr{S}_{3}^{A},\mathscr{S}_{4}^{A} \in  \mathbb{C}$.
\item If $u_0, v_0$ have compact support, then $s^A(k)$ is defined and equals the cofactor matrix of $s(k)$ for all $k \in \C \setminus \hat{\mathcal{Q}}$.

\medskip \noindent $S^A(k)$ is defined and equals the cofactor matrix of $S(k)$ for all $k \in \C \setminus \hat{\mathcal{Q}}$.
\item For each $n \geq 1$ and $\epsilon > 0$, there exists $C>0$ such that the following estimates hold for $ \ell = 0, 1, \dots, n$:
\begin{align}
& \big|\tfrac{\partial^\ell}{\partial k^\ell}s^{A}(k) \big| \leq C, & & k \in (\omega^2 \hat{\mathsf{S}}_{3}^{*}, \omega \hat{\mathsf{S}}_{3}^{*}, \hat{\mathsf{S}}_{3}^{*}), \ \dist(k,\hat{\mathcal{Q}}) > \epsilon, \label{region of boundedness of sA} \\
& \big|\tfrac{\partial^\ell}{\partial k^\ell}S^{A}(k) \big| \leq C, & & k \in (\omega^2 \hat{\mathsf{S}}_{2}, \omega \hat{\mathsf{S}}_{2}, \hat{\mathsf{S}}_{2}), \ \dist(k,\hat{\mathcal{Q}}) > \epsilon. \label{region of boundedness of SA}
\end{align} 
\end{enumerate}
\end{proposition}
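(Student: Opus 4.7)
The plan is to reduce every claim in Proposition \ref{sAprop} to the corresponding statement about the eigenfunctions $\mu_1^A, \mu_3^A$ established in Propositions \ref{XYprop adjoint}, \ref{XYat1prop adjoint}, and \ref{prop: basic properties for t=0}, by evaluating at $(x,t)=(0,0)$. Since $S^A(k)=\mu_1^A(0,0,k)$ and $s^A(k)=\mu_3^A(0,0,k)$ by definition \eqref{SAsAdef}, items $(a)$, $(b)$, $(c)$ are immediate consequences of the corresponding parts of Proposition \ref{XYprop adjoint}: the domain of definition $\C\setminus\hat{\mathcal{Q}}$ for $S^A$ and the columnwise domain $(\omega^2\hat{\mathcal{S}}_3^*,\omega\hat{\mathcal{S}}_3^*,\hat{\mathcal{S}}_3^*)\setminus\hat{\mathcal{Q}}$ for $s^A$, along with continuity/analyticity in the interior and smoothness of $k$-derivatives. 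Similarly, the symmetries in $(d)$ drop out of \eqref{XYsymm adjoint} specialized at $(0,0)$, and the boundedness estimates in $(h)$ are nothing but the $t=0$ case of \eqref{region of boundedness of mujA t=0}.

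For the large-$k$ asymptotics in $(e)$, I would substitute $(x,t)=(0,0)$ into \eqref{mujAlargekexpansion} and set $s_l^A=\mu_3^{A(l)}(0,0)$, $S_l^A=\mu_1^{A(l)}(0,0)$. The region of validity for $S^A$ is stated in terms of $\hat{\mathsf{S}}_2$, while Proposition \ref{prop: basic properties for t=0}$(a)$ provides boundedness of $\mu_1^A(0,t,k)$ in $\hat{\mathsf{S}}_1^*$; the identity $\hat{\mathsf{S}}_1^*=\hat{\mathsf{S}}_2$ is a short computation based on the explicit form of $z_j(k)$ in \eqref{lmexpressions intro}: writing $k=re^{i\theta}$ one finds $\re z_j(k)=-\frac{1}{4\sqrt{3}}(r^2-r^{-2})\sin(2\theta+4\pi j/3)$, so $\re z_j(1/\bar k)=-\re z_j(k)$ and the inequalities defining $\mathsf{S}_1$ get swapped into those defining $\mathsf{S}_2$. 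The asymptotics for $s^A$ use $\hat{\mathsf{S}}_3^*=\hat{\mathcal{S}}_3^*$ (an identity by definition) so no conversion is needed.

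For the expansions at $k=\pm 1$ in $(f)$, Proposition \ref{XYat1prop adjoint} applied at $(x,t)=(0,0)$ produces \eqref{sA at 1}--\eqref{SA at -1} with coefficients $s_{\pm 1}^{A(l)}=D_{3,6}^{(l)}(0,0)$ and $S_{\pm 1}^{A(l)}=D_{1,4}^{(l)}(0,0)$. The structural constraints \eqref{sA1pm1p and sAm1pm1p}, \eqref{SA1pm1p and SAm1pm1p} on the $(k\mp 1)^{-1}$ coefficients and the equalities $(s_{\pm 1}^{A(0)})_{13}=(s_{\pm 1}^{A(0)})_{23}$, $(S_{\pm 1}^{A(0)})_{13}=(S_{\pm 1}^{A(0)})_{23}$ are read off directly from the explicit forms \eqref{Djpm1p}--\eqref{Djp0p}: the third column of $D_j^{(-1)}$ vanishes and its first two columns are negatives of each other, while the $-I$ contribution to $D_j^{(0)}$ only affects the diagonal and so does not disturb the required off-diagonal equality coming from the appearance of $\tilde\gamma_{j,1}$ in both positions $(1,3)$ and $(2,3)$.

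Finally, for $(g)$, when $u_0,v_0$ are compactly supported, Propositions \ref{XYprop}$(f)$ and \ref{XYprop adjoint}$(f)$ give $\mu_j^A=(\mu_j^{-1})^T$ on $\C\setminus\hat{\mathcal{Q}}$ for $j=1,3$; evaluating at $(0,0)$ and using $\det s=\det S=1$ from Proposition \ref{sprop}$(f)$ identifies $s^A$ and $S^A$ with the cofactor matrices of $s$ and $S$ respectively. I do not anticipate a genuine obstacle: the only nonmechanical step is the identification $\hat{\mathsf{S}}_1^*=\hat{\mathsf{S}}_2$, and this is simply the parity property of $\re z_j$ under $k\mapsto 1/\bar k$ noted above.
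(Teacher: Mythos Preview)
Your proposal is correct and takes essentially the same approach as the paper: the paper does not give a detailed proof but simply records that Propositions \ref{sprop} and \ref{sAprop} are consequences of the properties of $\mu_1,\mu_3$ (resp.\ $\mu_1^A,\mu_3^A$) established in Propositions \ref{XYprop}--\ref{prop: basic properties for t=0}, obtained by evaluating at $(x,t)=(0,0)$. Your identification $\hat{\mathsf S}_1^*=\hat{\mathsf S}_2$ is exactly the relation $\mathsf S_2^*=\mathsf S_1$ noted in the text just before Proposition \ref{prop: basic properties for t=0}.
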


\subsection{Proof of Theorem \ref{thm:r1r2}}
Recall from (\ref{r1r2def}) that 
\begin{align}
& r_1(k) = \frac{s_{12}(k)}{s_{11}(k)}, \qquad \tilde{r}_1(k) = \frac{(S^{-1}s)_{12}(k)}{(S^{-1}s)_{11}(k)}, \qquad r_2(k) = \frac{s^A_{12}(k)}{s^A_{11}(k)}, \qquad \tilde{r}_2(k) = \frac{(S^{T}s^{A})_{12}(k)}{(S^{T}s^{A})_{11}(k)}, \nonumber \\
& \hat{r}_2(k) = \frac{s^A_{12}S^{A}_{33}-s^{A}_{32}S^{A}_{13}}{s^A_{11}S^{A}_{33}-s^{A}_{31}S^{A}_{13}}(k), \qquad \check{r}_2(k) = \frac{s^A_{12}S^{A}_{22}-s^{A}_{22}S^{A}_{12}}{s^A_{11}S^{A}_{22}-s^{A}_{21}S^{A}_{12}}(k),
\end{align}
and that $R_{1},R_{2},\tilde{R}_{2}$ are defined in \eqref{def of R1 R2}. 
Statements $(a)$ and $(c)$ of Propositions \ref{sprop} and \ref{sAprop} imply that 
$r_{1},\tilde{r}_{1},r_{2},\tilde{r}_{2},\hat{r}_{2},\check{r}_{2},R_{1},R_{2},\tilde{R}_{2}$ are smooth on their respective domains of definition, except at possible values of $k$ where a denominator vanish.  Assumption \ref{solitonlessassumption}, combined with Remark \ref{remark:sym combined with no soliton assumption}, implies that the denominators of $r_{1},\tilde{r}_{1},r_{2},\tilde{r}_{2},\hat{r}_{2},\check{r}_{2},R_{1},R_{2},\tilde{R}_{2}$ do not vanish on their domains of definition. It follows that these functions are smooth everywhere on their domains; in particular,
\begin{align}\label{lol6}
r_1, \tilde{r}_{1} \in C^\infty\big((\Gamma_{1'}\cup \Gamma_{10''}\cup \partial \D)\setminus \hat{\mathcal{Q}}\big), \qquad r_2,\tilde{r}_{2},\hat{r}_{2},\check{r}_{2} \in C^\infty \big((\Gamma_{1''}\cup \Gamma_{10'}\cup \partial \D)\setminus \hat{\mathcal{Q}}\big).
\end{align}
Moreover, statements $(b)$ of Propositions \ref{sprop} and \ref{sAprop}, together with Assumption \ref{solitonlessassumption}, imply that 
$R_{1},R_{2},\tilde{R}_{2}$ are analytic on the interior of their domains of definition.

\medskip By combining \eqref{lol6}, Assumption \ref{originassumption}, the asymptotics of $s(k), s^{A}(k)$ as $k \to k_{\star}\in \{-1,1\}$ (see the statements $(f)$ of Propositions \ref{sprop} and \ref{sAprop}), and the symmetries \eqref{symmetries of s} and \eqref{symmetries of sASA}, we infer that assertions $(\ref{Theorem2.3itemi})$ and $(\ref{Theorem2.3itemii})$ hold.

\bigskip  For $k \in \partial \mathbb{D}\setminus \hat{\mathcal{Q}}$, all entries of $s$ and $s^{A}$ are well-defined and thus $s^{A}=(s^{-1})^{T}$. The relations in \eqref{relations on the unit circle} can be verified by a long but direct computation, using \eqref{r1r2def}, \eqref{def of R1 R2}, the symmetries \eqref{symmetries of s} and \eqref{symmetries of sASA}, $s^{A}=(s^{-1})^{T}$ and $S^{A}=(S^{-1})^{T}$. This finishes the proof of $(\ref{Theorem2.3itemiv})$. 


\medskip Using $r_1(k) = s_{12}(k)/s_{11}(k)$, the definition \eqref{def of R1 R2} of $\tilde{R}_{2}$ and  \eqref{asymp of s at inf}--\eqref{asymp of S at inf}, \eqref{asymp of sA at inf}--\eqref{asymp of SA at inf}, we find \eqref{r1 at inf} and \eqref{Rt2 at inf}. Similarly, \eqref{r2 at 0}, \eqref{R1 at 0} and \eqref{R2 at 0} directly follow from $r_2(k) = s^A_{12}(k)/s^A_{11}(k)$, the definitions \eqref{def of R1 R2} of $R_{1},R_{2}$, \eqref{asymp of s at inf}--\eqref{asymp of S at inf}, \eqref{asymp of sA at inf}--\eqref{asymp of SA at inf} and the $\mathcal{B}$-symmetries in \eqref{symmetries of s} and \eqref{symmetries of sASA}. On the other hand, the asymptotic formulas \eqref{rt1 at 0} and \eqref{rt2 at inf} do not directly follow from Propositions \ref{sprop} and \ref{sAprop}. For example, since $\tilde{r}_2(k) = (S^{T}s^{A})_{12}(k)/(S^{T}s^{A})_{11}(k)$, the behavior of $\tilde{r}_2(k)$ as $k\to \infty$, $k\in \Gamma_{1''}$ apriori requires asymptotic formulas for $[s^{A}]_{1}(k),[s^{A}]_{2}(k),[S]_{1}(k)$ as $k\to \infty$, $k\in \Gamma_{1''}$. While such formulas can be obtained from Propositions \ref{sprop} and \ref{sAprop}, a computation only yields $\tilde{r}_{2}(k)=O(k^{-3})$ as $k\to \infty$, $k\in \Gamma_{1''}$. To improve on this formula, we note that the two equations in \eqref{mu3mu2mu1sS}, together with $\mu_{1}(0,T,k)=I$, imply that
\begin{align}\label{global rel}
(S^{-1}s)(k) = e^{-T\hat{\mathcal{Z}}(k)} \mu_{3}(0,T,k), \qquad k \in (\omega^2 \hat{\mathcal{S}}_{3}, \omega \hat{\mathcal{S}}_{3}, \hat{\mathcal{S}}_{3}) \setminus \hat{\mathcal{Q}}.
\end{align} 
Similarly, we have
\begin{align}\label{global rel Adj}
(S^{T}s^{A})(k) = e^{T\hat{\mathcal{Z}}(k)} \mu_{3}^{A}(0,T,k), \qquad k \in (\omega^2 \hat{\mathcal{S}}_{3}^{*}, \omega \hat{\mathcal{S}}_{3}^{*}, \hat{\mathcal{S}}_{3}^{*}) \setminus \hat{\mathcal{Q}}.
\end{align}
Now \eqref{rt2 at inf} directly follows from \eqref{global rel Adj} and \eqref{mujlargekexpansion t=0 muAj}, and \eqref{rt1 at 0} follows from \eqref{global rel}, \eqref{mujlargekexpansion t=0 muj} and the $\mathcal{B}$-symmetry in \eqref{XYsymm}. This finishes the proof of $(\ref{Theorem2.3itemiii})$.


\section{The function $M$}\nequation

For convenience, we define
\begin{align*}
F_{n} = \begin{cases}
D_{n}, & \mbox{if } n \in \{1,\ldots,18\}, \\
E_{n-18}, & \mbox{if } n \in \{19,\ldots,36\}.
\end{cases}
\end{align*}
For each $n = 1, \dots, 36$, we denote by $M_n(x,t,k)$ the $3\times 3$-matrix valued solution of (\ref{Xlax}) defined for $k \in \bar{F}_n\setminus \hat{\mathcal{Q}}$ by the following Fredholm integral equations
\begin{align}\label{Mndef}
(M_n)_{ij}(x,t,k) = \delta_{ij} + \int_{\gamma_{ij}^n} \left(e^{x\hat{\mathcal{L}}+t\hat{\mathcal{Z}}} W_{n}(x',t',k) \right)_{ij}, \qquad  i,j = 1, 2,3,
\end{align}
where $\gamma^n_{ij}$, $n = 1, \dots, 36$, $i,j = 1,2,3$, are given by
\begin{align} \label{gammaijndef}
 \gamma_{ij}^n =  \begin{cases}
 \gamma_1,  & \mbox{if } \text{Re}\, l_i(k) < \text{Re}\, l_j(k), \quad \text{Re}\, z_i(k) \geq \text{Re}\, z_j(k),
	\\
\gamma_2,  & \mbox{if } \text{Re}\, l_i(k) < \text{Re}\, l_j(k),\quad \text{Re}\, z_i(k) < \text{Re}\, z_j(k),
	\\
\gamma_3,  & \mbox{if } \text{Re}\, l_i(k) \geq \text{Re}\, l_j(k),
	\\
\end{cases} \quad \text{for} \quad k \in F_n,
\end{align}
and $W_{n}$ is given by the first equation in \eqref{Wdef} with $\mu$ replaced by $M_{n}$.

The definition of $\gamma_{ij}^n$ guarantees that the exponential $e^{(l_i - l_j)(x-x')+(z_i - z_j)(t-t')}$ appearing in the integrand in \eqref{Mndef} remains bounded for $k \in F_n$ and $(x',t') \in \gamma_{ij}^n$. The next proposition states that this makes all entries of $M_n$ well-defined for $k \in \bar{F}_n\setminus (\hat{\mathcal{Q}}\cup\mathsf{Z})$, where $\mathsf{Z}$ denotes the set of zeros of the Fredholm determinants associated with (\ref{Mndef}). The proof is omitted (see \cite[Proposition 4.1 and Section 5]{CLgoodboussinesq} for a detailed proof of a similar statement).

\begin{proposition}[Basic properties of $M_n$]\label{Mnprop}
Equation (\ref{Mndef}) uniquely defines thirty-six $3 \times 3$-matrix valued solutions $\{M_n\}_1^{36}$ of (\ref{Xlax}) with the following properties:
\begin{enumerate}[$(a)$]
\item The function $M_n(x,t, k)$ is defined for $x \geq 0$, $t\in [0,T]$ and $k \in \bar{F}_n \setminus (\hat{\mathcal{Q}}\cup \mathsf{Z})$. For each $k \in \bar{F}_n  \setminus (\hat{\mathcal{Q}}\cup \mathsf{Z})$ and $t\in [0,T]$, $M_n(\cdot,t, k)$ is smooth and satisfies the $x$-part of (\ref{Xlax}). For each $k \in \bar{F}_n  \setminus (\hat{\mathcal{Q}}\cup \mathsf{Z})$ and $x \geq 0$, $M_n(x,\cdot, k)$ is smooth and satisfies the $t$-part of (\ref{Xlax}).

\item For each $x \geq 0$ and $t\in [0,T]$, $M_n(x,t,\cdot)$ is continuous for $k \in \bar{F}_n \setminus (\hat{\mathcal{Q}}\cup \mathsf{Z})$ and analytic for $k \in F_n\setminus (\hat{\mathcal{Q}}\cup \mathsf{Z})$.

\item For each $\epsilon > 0$, there exists a $C = C(\epsilon)$ such that
\begin{align}\label{Mnbounded}
|M_n(x,t,k)| \leq C, \qquad x \geq 0, \; t\in[0,T], \ k \in \bar{F}_n, \ \mbox{dist}(k, \hat{\mathcal{Q}}\cup \mathsf{Z}) \geq \epsilon.
\end{align}

\item For each $x \geq 0$, $t\in [0,T]$ and each integer $j \geq 1$, $\partial_{k}^j M_n(x,t, \cdot)$ has a continuous extension to $\bar{F}_n \setminus (\hat{\mathcal{Q}}\cup \mathsf{Z})$.

\item $\det M_n(x,t,k) = 1$ for $x \geq 0$, $t\in [0,T]$ and $k \in \bar{F}_n \setminus (\hat{\mathcal{Q}}\cup \mathsf{Z})$.

\item For each $x \geq 0$ and $t\in [0,T]$, the function $M(x,t,k)$ defined by $M(x,t,k) = M_n(x,t,k)$ for $k \in F_n$ obeys the symmetries
\begin{align}\label{Msymm}
& M(x,t, k) = \mathcal{A} M(x,t,\omega k)\mathcal{A}^{-1} = \mathcal{B} M(x,t,k^{-1})\mathcal{B}, \qquad k \in \C \setminus (\hat{\mathcal{Q}}\cup \mathsf{Z}).
\end{align}
\end{enumerate}
\end{proposition}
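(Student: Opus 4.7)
The plan is to follow the classical Fredholm approach that underlies the construction of the sectorially defined solutions in \cite{CLmain, CLgoodboussinesq}, adapted to the half-line contours $\gamma_1,\gamma_2,\gamma_3$ depicted in Figure \ref{mucontours.pdf}. The crucial input is that, by the very definition \eqref{gammaijndef}, on each path $\gamma_{ij}^n$ and for $k\in \bar F_n$ the exponential factor $e^{(l_i(k)-l_j(k))(x-x')+(z_i(k)-z_j(k))(t-t')}$ which appears in the $(i,j)$-entry of the integrand in \eqref{Mndef} is uniformly bounded (by $1$ when $\gamma_{ij}^n=\gamma_3$ and by $e^{CT}$ when $\gamma_{ij}^n \in \{\gamma_1,\gamma_2\}$). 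Combined with the rapid decay of $\mathsf U(\cdot,t,k)$ and the boundedness of $\mathsf V(0,\cdot,k)$ on $[0,T]$, this makes the integral operator $K_n[M]_{ij}(x,t,k):=\int_{\gamma_{ij}^n} (e^{x\hat{\mathcal{L}}+t\hat{\mathcal{Z}}}W_n(x',t',k))_{ij}$ a well-defined compact operator on the Banach space of bounded continuous $3\times 3$-matrix valued functions on $\overline{\{\gamma_1\cup\gamma_2\cup\gamma_3\}}$ equipped with the sup norm. The Fredholm alternative then produces a unique solution $M_n(x,t,k)$ of \eqref{Mndef} away from the discrete set $\mathsf{Z}$ where the Fredholm determinant of $I-K_n(\cdot,\cdot,k)$ vanishes; standard estimates give \eqref{Mnbounded} uniformly on $\{k\in\bar F_n:\dist(k,\hat{\mathcal{Q}}\cup\mathsf{Z})\geq\epsilon\}$, yielding (c).

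Next, I would establish the smoothness/analyticity claims (a), (b), (d). Differentiating \eqref{Mndef} in $x$ along the $x$-segment of each $\gamma_{ij}^n$, and using the closedness of the one-form $W_n$ to move the endpoint of integration, one checks that $M_n$ satisfies the $x$-part of \eqref{Xlax}; the analogous calculation with the $t$-segment yields the $t$-part. Holomorphy of the kernel in $k\in F_n$, together with the existence of the resolvent away from $\hat{\mathcal{Q}}\cup\mathsf{Z}$, gives analyticity of $M_n$ in $k\in F_n\setminus(\hat{\mathcal{Q}}\cup\mathsf{Z})$; continuity up to $\partial F_n$ is obtained as in \cite{CLgoodboussinesq}, by noting that on each piece of $\partial F_n$ one of the inequalities in \eqref{gammaijndef} degenerates to equality, in which case the relevant choice of contour ($\gamma_1,\gamma_2$, or $\gamma_3$) gives the same solution by uniqueness. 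The $k$-derivatives are handled by differentiating the Volterra/Fredholm equation, leading to integral equations of the same type whose kernels are bounded by the same argument.

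For (e), the trace identities $\tr\,\mathsf U\equiv 0$ and $\tr\,\mathsf V\equiv 0$ (immediate from the explicit form of $\mathsf{U},\mathsf{V}$ after conjugation by $P(k)$, since $\tilde L$ and $\tilde Z$ are traceless modulo a scalar multiple of $\mathcal L$ and $\mathcal Z$ that is absorbed into the diagonal) give $\partial_x\det\psi_n=\partial_t\det\psi_n=0$ for $\psi_n=M_n e^{x\mathcal L+t\mathcal Z}$; evaluating at the initial point $(x_j,t_j)$ of the contour used in the $(i,i)$-diagonal entry, where $M_n=I$, yields $\det M_n=1$. Property (f) is obtained by exploiting the symmetries
\[
\mathsf U(x,t,\omega k)=\mathcal{A}^{-1}\mathsf U(x,t,k)\mathcal{A},\quad \mathsf V(x,t,\omega k)=\mathcal{A}^{-1}\mathsf V(x,t,k)\mathcal{A},\quad \mathcal L(\omega k)=\mathcal{A}^{-1}\mathcal L(k)\mathcal{A},
\]
and the analogous $\mathcal{B}$-symmetry under $k\mapsto k^{-1}$. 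These transform the integral equation \eqref{Mndef} on $F_n$ into the corresponding equation on the image sector $\omega^{-1}F_n$ (resp.\ $F_n^{-1}$); since the partition $\{F_n\}$ and the choice rules \eqref{gammaijndef} are invariant under these maps (the thresholds $\re l_i=\re l_j$ and $\re z_i=\re z_j$ are permuted consistently), uniqueness of the Fredholm solution forces \eqref{Msymm}.

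The main obstacle, in my view, is not any single analytic step but the combinatorial bookkeeping behind (b) and (f): one must verify that, across every common boundary arc between two of the thirty-six sets $F_n$, the integer-index assignments $\gamma_{ij}^n$ and $\gamma_{ij}^{n'}$ agree on the ``critical'' pair(s) of indices where the corresponding real-part equality holds, and that under $k\mapsto \omega k$ and $k\mapsto k^{-1}$ the induced index permutation of $(l_1,l_2,l_3)$ (resp.\ of $(z_1,z_2,z_3)$) maps the prescription \eqref{gammaijndef} for $F_n$ exactly to that for the image sector. Once this bookkeeping is organized (for instance in a table indexed by $n$ and by the six permutations of $\{1,2,3\}$ arising from the orderings of $\re l_j$ and $\re z_j$), the remaining arguments are the routine Fredholm-theoretic verifications sketched above.
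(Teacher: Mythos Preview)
Your outline is correct and matches the approach the paper defers to: the paper omits the proof and refers to \cite[Proposition~4.1 and Section~5]{CLgoodboussinesq}, where exactly the Fredholm-theoretic argument you sketch (boundedness of the exponentials on $\gamma_{ij}^n$ for $k\in\bar F_n$, compactness of the resulting integral operator, Fredholm alternative away from~$\mathsf Z$, trace identities for $\det M_n$, and the $\mathcal A,\mathcal B$ symmetries of $\mathsf U,\mathsf V,\mathcal L,\mathcal Z$) is carried out in detail. Your identification of the main labor as the combinatorial bookkeeping for (b) and (f) is accurate; one small clarification for (e): the normalization $M_n\to I$ is realized in the limit $x\to+\infty$ (all diagonal entries use $\gamma_3$, and the $\gamma_1,\gamma_2$ entries decay there because $\mathrm{Re}(l_i-l_j)<0$ and $\mathsf U$ has rapid decay), not at a finite point.
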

Because $\gamma_{ii}^n = \gamma_{3}$ for $i = 1,2,3$ and $n = 1, \dots,36$, the large $k$ expansion of $M_{n}(x,t,k)$ is given by the right-hand side of \eqref{mujlargekexpansion} with $j=3$. This fact can be proved in the same way as in \cite[Lemma 4.3]{CLgoodboussinesq}, so we omit the proof. 
\begin{lemma}[Asymptotics of $M$ as $k \to \infty$]\label{Matinftylemma}
For any integer $p \geq 1$, there exists $R > 0$ and $C>0$ such that
\begin{align*}
& \bigg|M(x,t,k) - \bigg(I + \frac{\mu_{3}^{(1)}(x,t)}{k} + \cdots + \frac{\mu_{3}^{(p)}(x,t)}{k^{p}}\bigg) \bigg| \leq
\frac{C}{|k|^{p+1}}, \quad x \geq 0, \, t\in [0,T], \   |k| \geq R
\end{align*}
with $k \in \C \setminus \Gamma$. The coefficients $\mu_{3}^{(1)}$ and $\mu_{3}^{(2)}$ were computed explicitly in Proposition \ref{prop:first two coeff at infty}.
\end{lemma}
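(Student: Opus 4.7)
\textbf{Proof plan for Lemma \ref{Matinftylemma}.} The strategy parallels the argument for \cite[Lemma 4.3]{CLgoodboussinesq}. The key structural observation is that the definition \eqref{gammaijndef} of the integration contours $\gamma_{ij}^n$ forces $\gamma_{ii}^n = \gamma_3$ for all $i \in \{1,2,3\}$ and all $n \in \{1,\ldots,36\}$, since $\mathrm{Re}\, l_i(k) \geq \mathrm{Re}\, l_i(k)$ trivially. Hence on the diagonal, every $M_n$ satisfies an integral equation of exactly the same form as $\mu_3$ (i.e., integration along $\gamma_3$ from $(+\infty,t)$ to $(x,t)$), and only the off-diagonal entries involve the different contours $\gamma_1$ or $\gamma_2$. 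This is the geometric reason the large-$k$ expansion of $M$ should agree with that of $\mu_3$.

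First I would establish the existence of a full asymptotic expansion
\[
M_n(x,t,k) = I + \sum_{l=1}^{p}\frac{M_n^{(l)}(x,t)}{k^{l}} + O(k^{-p-1}), \qquad k \to \infty, \ k \in \bar F_n,
\]
uniform for $x \in \R_+$, $t \in [0,T]$, and $|k| \geq R$, by iterating the Fredholm equation \eqref{Mndef} (or using the classical $k^{-1}$-expansion of the WKB / dressing method for systems of the form \eqref{Xlax}). The off-diagonal contours $\gamma_1$ and $\gamma_2$ are selected precisely so that the relevant exponentials $e^{(l_i-l_j)(x-x') + (z_i-z_j)(t-t')}$ stay bounded on them; repeated integration by parts in $x'$ (or $t'$) using $\partial_{x'} e^{(l_i - l_j)(x-x')} = -(l_i - l_j) e^{(l_i-l_j)(x-x')}$ and $l_i - l_j = O(k)$ then extracts the powers of $k^{-1}$ together with an integrable remainder, yielding the desired error bound.

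Next I would identify the coefficients $M_n^{(l)}$ with $\mu_3^{(l)}$. Substituting the expansion into the Lax pair \eqref{Xlax} and matching powers of $k$ produces exactly the same recursion \eqref{xrecursive} that determines the $\mu_3^{(l)}$; in particular, the off-diagonal part of $M_n^{(l)}$ is fixed by the diagonal parts of $M_n^{(m)}$ with $m<l$, while the diagonal parts satisfy first-order ODEs in $x$ (and in $t$) with right-hand sides built from lower-order data. A unique solution is singled out by a boundary condition, and the diagonal integral equation $(M_n)_{ii}(x,t,k) = 1 + \int_{\gamma_3} \bigl(e^{x\hat{\mathcal{L}}+t\hat{\mathcal{Z}}} W_n\bigr)_{ii}$ provides it: the rapid decay of $u,v$ as $x \to +\infty$ gives $(M_n^{(l)})_{ii}(x,t) \to 0$ as $x \to +\infty$, which is precisely the normalization satisfied by $(\mu_3^{(l)})_{ii}$ because $\mu_3$ itself is defined along $\gamma_3$. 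Induction on $l$ combined with the uniqueness coming from \eqref{xrecursive} then yields $M_n^{(l)} = \mu_3^{(l)}$ for all $l$ and all $n$, so the expansion is independent of $n$ and coincides with the one given by Proposition \ref{prop:first two coeff at infty} at orders $l = 1, 2$.

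The main obstacle is the uniformity of the expansion down to the boundary of each sector $F_n$, especially near the rays where $\mathrm{Re}\, l_i = \mathrm{Re}\, l_j$ or $\mathrm{Re}\, z_i = \mathrm{Re}\, z_j$ (i.e., near $\Gamma$): there the exponential factors cease to decay, so the integration-by-parts argument must be carried out carefully enough to produce constants independent of the approach direction. This is handled exactly as in \cite[Section 5]{CLgoodboussinesq}: the bound \eqref{Mnbounded} away from $\hat{\mathcal{Q}}\cup \mathsf{Z}$ combined with the above boundary-value argument yields a single expansion valid uniformly on $\C\setminus \Gamma$ for $|k| \geq R$, as stated.
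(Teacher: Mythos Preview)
Your proposal is correct and follows essentially the same approach as the paper: the paper's own proof is omitted with a reference to \cite[Lemma 4.3]{CLgoodboussinesq}, preceded by exactly the key observation you make, namely that $\gamma_{ii}^n = \gamma_3$ for all $i$ and $n$ forces the large-$k$ expansion of every $M_n$ to coincide with that of $\mu_3$. Your outline (existence of the expansion via iteration/integration by parts, identification of the coefficients via the recursion \eqref{xrecursive} with the $x\to+\infty$ normalization inherited from $\gamma_3$, and uniformity up to $\partial F_n$) is precisely the content of the cited argument.
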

The next lemma expresses $M_{n}$ in terms of $\mu_{1},\mu_{2}$ and $\mu_{3}$, in the case where $u_0, v_0 \in \mathcal{S}(\R_{+})$ have compact support. 
\begin{lemma}[Relation between $M_n$ and $\mu_{j}$]\label{Snexplicitlemma}
Suppose $u_0, v_0 \in \mathcal{S}(\R_{+})$ have compact support. Then for $x \geq 0$ and $t \in [0,T]$ we have
\begin{align}
M_n(x,t,k) & = \mu_{1}(x,t, k) e^{x\hat{\mathcal{L}}(k)+t\hat{\mathcal{Z}}(k)} R_n(k) \nonumber \\
& = \mu_{2}(x,t, k) e^{x\hat{\mathcal{L}}(k)+t\hat{\mathcal{Z}}(k)} S_n(k) \nonumber \\
& = \mu_{3}(x,t, k) e^{x\hat{\mathcal{L}}(k)+t\hat{\mathcal{Z}}(k)} T_n(k), \qquad  \ k \in \bar{F}_n\setminus (\hat{\mathcal{Q}}\cup \mathsf{Z}), \ n = 1, \dots, 36, \label{Mn mu1mu2mu3}
\end{align}
where $\{T_{n}\}_{n=7}^{12}$ are given by
\begin{align}
& T_{7}(k) = \begin{pmatrix}
1 & 0 & \frac{(S^{T}s^{A})_{31}}{(S^{T}s^{A})_{33}} \\
- \frac{s_{21}}{s_{22}} & 1 & \frac{(S^{T}s^{A})_{32}}{(S^{T}s^{A})_{33}} \\
0 & 0 & 1
\end{pmatrix}, \qquad T_{8}(k) = \begin{pmatrix}
1 & 0 & \frac{(S^{T}s^{A})_{31}}{(S^{T}s^{A})_{33}} \\
- \frac{(S^{-1}s)_{21}}{(S^{-1}s)_{22}} & 1 & \frac{(S^{T}s^{A})_{32}}{(S^{T}s^{A})_{33}} \\
0 & 0 & 1
\end{pmatrix}, \nonumber \\
&  T_{9}(k) = \begin{pmatrix}
1 & 0 & \frac{s^{A}_{31}S^{A}_{22}-s^{A}_{21}S^{A}_{32}}{s^{A}_{33}S^{A}_{22}-s^{A}_{23}S^{A}_{32}} \\[0.2cm]
- \frac{(S^{-1}s)_{21}}{(S^{-1}s)_{22}} & 1 & \frac{s^{A}_{32}S^{A}_{22}-s^{A}_{22}S^{A}_{32}}{s^{A}_{33}S^{A}_{22}-s^{A}_{23}S^{A}_{32}} \\[0.2cm]
0 & 0 & 1
\end{pmatrix},
\quad
  T_{10}(k) = \begin{pmatrix}
  1 & -\frac{s_{12}}{s_{11}} & \frac{s^{A}_{31}S^{A}_{22}-s^{A}_{21}S^{A}_{32}}{s^{A}_{33}S^{A}_{22}-s^{A}_{23}S^{A}_{32}} \\[0.2cm]
 0 & 1 & \frac{s^{A}_{32}S^{A}_{22}-s^{A}_{22}S^{A}_{32}}{s^{A}_{33}S^{A}_{22}-s^{A}_{23}S^{A}_{32}} \\[0.2cm]
 0 & 0 & 1
   \end{pmatrix}, \nonumber \\
& T_{11}(k) =  \begin{pmatrix}
  1 & -\frac{s_{12}}{s_{11}} & \frac{s^{A}_{31}}{s^{A}_{33}} \\[0.2cm]
 0 & 1 & \frac{s^{A}_{32}}{s^{A}_{33}} \\[0.2cm]
 0 & 0 & 1
   \end{pmatrix}, \quad T_{12}(k) = \begin{pmatrix}
1 & - \frac{(S^{-1}s)_{12}}{(S^{-1}s)_{11}} & \frac{s^{A}_{31}}{s^{A}_{33}} \\[0.2cm]
0 & 1 & \frac{s^{A}_{32}}{s^{A}_{33}} \\[0.2cm]
0 & 0 & 1
\end{pmatrix}, \label{SnTnexplicit}
\end{align}
and $\{T_{n}\}_{n=1}^{6}, \{T_{n}\}_{n=13}^{36}$ are defined such that the sectionally analytic function $T(x,t,k)$ defined by $T(x,t,k) = T_n(x,t,k)$ for $k \in F_n \setminus (\hat{\mathcal{Q}}\cup \mathsf{Z})$ satisfies the symmetries
\begin{align}\label{T symmetries}
& T(x,t, k) = \mathcal{A} T(x,t,\omega k)\mathcal{A}^{-1} = \mathcal{B} T(x,t,k^{-1})\mathcal{B}, \qquad k \in \C \setminus (\hat{\mathcal{Q}}\cup \mathsf{Z}).
\end{align}
The functions $\{S_{n},T_{n}\}_{n=1}^{36}$ appearing  in \eqref{Mn mu1mu2mu3} are given by
\begin{align*}
S_n(k)=s(k)T_n(k), \qquad R_n(k)=S(k)^{-1}s(k)T_{n}(k)=S(k)^{-1}S_{n}(k),
\end{align*}
where $s$ and $S$ are given by \eqref{def of S and s}.
\end{lemma}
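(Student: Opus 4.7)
The plan is to first establish the existence of matrices $T_n^{(j)}(k)$, $j=1,2,3$, connecting $M_n$ to each $\mu_j$, and then to compute them explicitly. Since both $M_n(x,t,k)e^{x\mathcal{L}+t\mathcal{Z}}$ and $\mu_j(x,t,k)e^{x\mathcal{L}+t\mathcal{Z}}$ satisfy the linear system $\psi_x = L\psi$, $\psi_t = Z\psi$ on $\bar{F}_n \setminus (\hat{\mathcal{Q}}\cup \mathsf{Z})$, and since the compact support assumption on $u_0,v_0$ yields $\det \mu_j = 1$ globally (see Proposition \ref{XYprop}(f)), each $\mu_j$ is invertible and uniqueness up to right multiplication by an $(x,t)$-independent matrix immediately gives
\begin{align*}
M_n(x,t,k) = \mu_j(x,t,k)\, e^{x\hat{\mathcal{L}}+t\hat{\mathcal{Z}}}\, T_n^{(j)}(k),
\end{align*}
with $T_n^{(3)}=:T_n$, $T_n^{(2)}=:S_n$, $T_n^{(1)}=:R_n$. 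Evaluating at $(x,t)=(0,0)$ and using $\mu_2(0,0,k) = I$, $\mu_3(0,0,k) = s(k)$, $\mu_1(0,0,k) = S(k)$ then yields the identities $S_n = sT_n$ and $R_n = S^{-1}sT_n$.

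Next I would derive explicit formulas for $T_7,\ldots,T_{12}$ by reading off boundary conditions from the Fredholm definition \eqref{Mndef}. For each $(i,j)$, the choice of contour $\gamma_{ij}^n$ encodes a constraint on $M_n$: if $\gamma_{ij}^n=\gamma_3$, then $(M_n)_{ij}(x,t,k)\to \delta_{ij}$ as $x\to+\infty$, which combined with $M_n=\mu_3 e^{x\hat{\mathcal{L}}+t\hat{\mathcal{Z}}}T_n$ and $\mu_3\to I$ as $x\to+\infty$ forces $(T_n)_{ii}=1$ on the diagonal and $(T_n)_{ij}=0$ for off-diagonal $\gamma_3$-positions (the prefactor $e^{x(l_i-l_j)}$ would otherwise blow up). If $\gamma_{ij}^n=\gamma_2$, then $(M_n)_{ij}(0,0,k) = \delta_{ij}$, so $(sT_n)_{ij}=\delta_{ij}$. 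If $\gamma_{ij}^n=\gamma_1$, then $(M_n)_{ij}(0,T,k)=\delta_{ij}$, and using the global relation \eqref{global rel} one checks that this is equivalent to $((S^{-1}s)T_n)_{ij}=\delta_{ij}$. For each $n\in\{7,\ldots,12\}$ one reads off the classification of the nine positions from the definition \eqref{def of Dj and Ej} of $D_n$ and solves the small linear system that results for the few unknown entries of $T_n$.

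The main obstacle will be recasting the solutions of these linear systems into the compact forms stated in \eqref{SnTnexplicit}. For instance, solving the two-by-two $\gamma_1$-system $((S^{-1}s)T_n)_{i3}=0$, $i=1,2$, by Cramer's rule produces $(T_n)_{13},(T_n)_{23}$ as quotients of $2\times 2$ minors of $S^{-1}s$; rewriting these as $(S^Ts^A)_{3j}/(S^Ts^A)_{33}$, or as the more elaborate cofactor ratios appearing in $T_9,T_{10}$, requires the identities $\det s = \det S = 1$, $s^A=(s^{-1})^T$, $S^A=(S^{-1})^T$, and $S^Ts^A=(s^{-1}S)^T$, all available under the compact support hypothesis. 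Finally, once $T_7,\ldots,T_{12}$ are in hand, the remaining $T_n$ are obtained by extending via the $\mathcal{A}$- and $\mathcal{B}$-symmetries; consistency is guaranteed because the symmetries of $M$ from Proposition \ref{Mnprop}(f) permute the sectors $F_n$ in the same way that \eqref{T symmetries} permutes the $T_n$.
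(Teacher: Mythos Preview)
Your proposal is correct and follows essentially the same route as the paper: both solutions of the Lax pair are connected by $(x,t)$-independent matrices, the relations $S_n=sT_n$ and $R_n=S^{-1}sT_n$ come from evaluation at $(0,0)$, the three types of constraints $(T_n)_{ij}=\delta_{ij}$, $(S_n)_{ij}=0$, $(R_n)_{ij}=0$ are read off from the contours $\gamma_{ij}^n$, and the resulting $18\times 18$ linear system is solved explicitly for $n=7,\dots,12$. The paper phrases the $\gamma_3$-constraint slightly more directly by setting $T_n=\lim_{x\to\infty}e^{-x\hat{\mathcal{L}}}M_n(x,0,k)$ and noting this limit is constant for $x$ beyond the support of $u_0,v_0$, whereas you argue via boundedness of $(M_n)_{ij}$; both give the same conclusion.
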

\proofbegin
Since $u_0, v_0$ have compact support, Propositions \ref{XYprop} and \ref{sprop} imply that 
$\{\mu_{j}(x,t,k)\}_{1}^{3}$, $s(k),S(k)$, are well-defined for all $k \in \C \setminus \hat{\mathcal{Q}}$.
Moreover, since each of the matrix valued functions $\{\mu_{j}\}_{1}^{3},M_{n}$ solves the Lax pair \eqref{Xlax}, they must satisfy
\begin{align*}
M_n(x,t,k) = \mu_{1}(x,t,k) e^{x\hat{\mathcal{L}}+t\hat{\mathcal{Z}}} R_n(k) = \mu_{2}(x,t,k) e^{x\hat{\mathcal{L}}+t\hat{\mathcal{Z}}} S_n(k) = \mu_{3}(x,t,k) e^{x\hat{\mathcal{L}}+t\hat{\mathcal{Z}}} T_n(k),
\end{align*}   
for some matrices $R_{n}(k)$, $S_n(k)$ and $T_n(k)$, $n = 1, \dots, 36$. Since $\mu_{j}(x_{j},t_{j},k)=I$, $j=1,2,3$, we have
\begin{align}\label{SnTndef}
\begin{cases}
R_n(k) = e^{-T\hat{\mathcal{Z}}(k)}M_n(0,T,k), \\
S_n(k) = M_n(0,0,k), \\
T_n(k) =  \displaystyle{\lim_{x \to \infty}} e^{-x\hat{\mathcal{L}}(k)}M_n(x,0,k), 
\end{cases}
\quad k \in \bar{F}_n\setminus (\hat{\mathcal{Q}}\cup \mathsf{Z}).
\end{align}
Let $K > 0$ be large enough so that the supports of $u_0, v_0$ are in $[0,K] \subset \R_{+}$. Since $\mathsf{U}(x,0,k) = 0$ for $|x| > K$, it follows from \eqref{Mndef} that $e^{-x\hat{\mathcal{L}}}M_n(x,0,k)$ is independent of $x$  for $|x| > K$; hence, $T_{n}$ in \eqref{SnTndef} is well-defined.

Comparing with (\ref{mu3mu2mu1sS}), we have
\begin{equation}\label{sSSnrelations}  
s(k) = S_n(k)T_n(k)^{-1}, \quad S(k)=S_{n}(k)R_{n}(k)^{-1}, \qquad k \in \bar{F}_n\setminus (\hat{\mathcal{Q}}\cup \mathsf{Z}).
\end{equation}
Moreover, the integral equations (\ref{Mndef}) imply that
\begin{subequations}\label{sSSnrelations 2} 
\begin{align}
& \left(R_n(k)\right)_{ij} = 0 \quad \text{if} \quad \gamma_{ij}^n = \gamma_{1}, \\
& \left(S_n(k)\right)_{ij} = 0 \quad \text{if} \quad \gamma_{ij}^n = \gamma_{2}, \\
& \left(T_n(k)\right)_{ij} = \delta_{ij} \quad \text{if} \quad \gamma_{ij}^n = \gamma_{3}, 
\end{align}
\end{subequations}
For each $n\in \{1,\ldots,36\}$, equations \eqref{sSSnrelations}--\eqref{sSSnrelations 2} yields $18$ scalar equations for the $18$ unknowns entries of $R_{n}(k),S_n(k)$ and $T_n(k)$. 
For $n = 7, \ldots, 12$, solving the algebraic system explicitly yields the expressions in \eqref{SnTnexplicit}. The fact that $T(x,t,k)$ satisfies \eqref{T symmetries} is a direct consequence of \eqref{SnTndef} and \eqref{Msymm}.
\proofend

Let $\eta \in C_c^\infty(\R_{+})$ be a decreasing function such that $\eta(x) = 1$ for $x \leq 1$ and $\eta(x) = 0$ for $x \geq 2$. For each $\ell \geq 1$, define $\eta_{\ell}(x) = \eta(x/\ell)$. Then, for any $f \in \mathcal{S}(\R_{+})$, the sequence $(\eta_{\ell}f)_{\ell\geq 1} \subset C_c^\infty(\R_{+})$ converges to $f$ in $\mathcal{S}(\R_{+})$ as $\ell \to \infty$. The following lemma can be proved using the same argument as in \cite[Lemma 4.5]{CLgoodboussinesq}. 

\begin{lemma}\label{sequencelemma}
Let $\{\mu_{j}(x,t,k),$ $\mu_{j}^{A}(x,t,k), s(k), s^{A}(k),S(k), S^{A}(k), M(x,t,k)\}$ and $\{\mu_{j}^{(\ell)}(x,t,k)$, $\mu_{j}^{A(\ell)}(x,t,k), s^{(\ell)}(k), s^{A(\ell)}(k),$ $S^{(\ell)}(k), S^{A(\ell)}(k),$ $M^{(\ell)}(x,t,k)\}$ be the spectral functions associated with $(u_0, v_0, \tilde{u}_{0}, \tilde{u}_{1}, \tilde{u}_{2}, \tilde{v}_0)$ and 
\begin{align}\label{uvsequence}
(u_0^{(\ell)}, v_0^{(\ell)}, \tilde{u}_{0}, \tilde{u}_{1}, \tilde{u}_{2}, \tilde{v}_0) := (\eta_{\ell} u_0, \eta_{\ell} v_0, \tilde{u}_{0}, \tilde{u}_{1}, \tilde{u}_{2}, \tilde{v}_0) \in (C_c^\infty(\R_{+}))^{2} \times (C^{\infty}([0,T]))^{4}, 
\end{align} 
respectively. Then 
\begin{align*}
& \lim_{\ell\to\infty} s^{(\ell)}(k) = s(k), \qquad k \in (\omega^2 \hat{\mathcal{S}}_{3}, \omega \hat{\mathcal{S}}_{3}, \hat{\mathcal{S}}_{3}) \setminus \hat{\mathcal{Q}},  \\
& \lim_{\ell\to\infty} s^{A(\ell)}(k) = s^A(k), \qquad k \in	(\omega^2 \hat{\mathcal{S}}_{3}^{*}, \omega \hat{\mathcal{S}}_{3}^{*}, \hat{\mathcal{S}}_{3}^{*}) \setminus \hat{\mathcal{Q}}, 	\\ 
& \lim_{\ell\to\infty} S^{(\ell)}(k) = S(k), \; \lim_{\ell\to\infty} S^{A(\ell)}(k) = S^{A}(k), \qquad k \in \C \setminus \hat{\mathcal{Q}},  \\
& \lim_{\ell\to \infty} \mu_{j}^{(\ell)}(x,t,k) = \mu_{j}(x,t,k), \qquad x \geq 0, \; t \in [0,T], \ k \in \C \setminus \hat{\mathcal{Q}}, j=1,2, \\ 
& \lim_{\ell\to \infty} \mu_{3}^{(\ell)}(x,t,k) = \mu_{3}(x,t,k), \qquad x \geq 0, \; t \in [0,T], \ k \in (\omega^2 \hat{\mathcal{S}}_{3}, \omega \hat{\mathcal{S}}_{3}, \hat{\mathcal{S}}_{3}) \setminus \hat{\mathcal{Q}},  \\ 
& \lim_{\ell\to \infty} \mu_{j}^{A(\ell)}(x,t,k) = \mu_{j}^{A}(x,t,k), \qquad x \geq 0, \; t \in [0,T], \ k \in \C \setminus \hat{\mathcal{Q}}, j=1,2,  \\ 
& \lim_{\ell\to \infty} \mu_{3}^{A(\ell)}(x,t,k) = \mu_{3}^{A}(x,t,k), \qquad x \geq 0, \; t \in [0,T], \ k \in (\omega^2 \hat{\mathcal{S}}_{3}^{*}, \omega \hat{\mathcal{S}}_{3}^{*}, \hat{\mathcal{S}}_{3}^{*}) \setminus \hat{\mathcal{Q}},  \\ 
& \lim_{\ell\to \infty} M_n^{(\ell)}(x,t,k) = M_n(x,t,k), \qquad x \geq 0, \; t \in [0,T], \ k \in \bar{F}_n\setminus (\hat{\mathcal{Q}}\cup\mathsf{Z}), \ n = 1, \dots, 36. 
\end{align*}
\end{lemma}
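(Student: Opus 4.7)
The plan is to combine continuous dependence of the defining Volterra and Fredholm integral equations on their kernels with the fact that $(\eta_{\ell}u_{0},\eta_{\ell}v_{0})\to(u_{0},v_{0})$ in $\mathcal{S}(\R_{+})$. I would first dispatch the four scalar spectral matrices. The equations \eqref{def mu 1 intro} and \eqref{def muA 1 intro} for $\mu_{1}(0,t,k)$ and $\mu_{1}^{A}(0,t,k)$ involve only the boundary values $\tilde{u}_{0},\tilde{u}_{1},\tilde{u}_{2},\tilde{v}_{0}$, which are unchanged by the cutoff, so the identities $S^{(\ell)}\equiv S$ and $S^{A(\ell)}\equiv S^{A}$ are immediate. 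For $s^{(\ell)}(k)=\mu_{3}^{(\ell)}(0,0,k)$ and $s^{A(\ell)}(k)=\mu_{3}^{A(\ell)}(0,0,k)$, the Volterra equations \eqref{def mu 3 intro} and \eqref{def muA 3 intro} at $t=0$ have kernels built from $(\eta_{\ell}u_{0},\eta_{\ell}v_{0})$ and finitely many $x$-derivatives; these converge in $L^{1}_{x}(\R_{+})$ uniformly for $k$ on compact subsets of the relevant domain away from $\hat{\mathcal{Q}}$. A standard Gronwall estimate applied to $\mu_{3}^{(\ell)}(\cdot,0,k)-\mu_{3}(\cdot,0,k)$ (and its analogue for the adjoint system) then yields $s^{(\ell)}\to s$ and $s^{A(\ell)}\to s^{A}$ on their respective domains. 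Note that the compatibility of the approximate initial data with the boundary values at $(0,0)$ is automatic because $\eta_{\ell}\equiv 1$ near the origin.

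For the remaining claims --- convergence of $\mu_{j}^{(\ell)},\mu_{j}^{A(\ell)}$ and $M_{n}^{(\ell)}$ at a general interior $(x,t)$ --- the PDE solution $(u^{(\ell)},v^{(\ell)})$ of \eqref{boussinesqsystem} corresponding to the approximate initial data and the unchanged boundary data must exist on $[0,T]$ and converge to $(u,v)$. Granting $(u^{(\ell)},v^{(\ell)})\to(u,v)$ in $C([0,T];\mathcal{S}(\R_{+}))$, the coefficient matrices $\mathsf{U}^{(\ell)},\mathsf{V}^{(\ell)}$ constructed from $(u^{(\ell)},v^{(\ell)})$ converge uniformly along each of the contours $\gamma_{1},\gamma_{2},\gamma_{3}$, and a Gronwall estimate along $\gamma_{j}$ delivers $\mu_{j}^{(\ell)}\to\mu_{j}$ and $\mu_{j}^{A(\ell)}\to\mu_{j}^{A}$ on the indicated domains. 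For $M_{n}^{(\ell)}$, whose defining system \eqref{Mndef} is Fredholm rather than Volterra, I would use continuity of the resolvent of a compact operator under uniform convergence of its smooth kernel; this is valid precisely when $k$ is kept away from the zero set $\mathsf{Z}$ of the associated Fredholm determinant, which is exactly the exclusion made in the statement.

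The main obstacle is therefore the well-posedness input: producing $(u^{(\ell)},v^{(\ell)})$ on the full interval $[0,T]$ and inheriting Schwartz convergence from the Schwartz convergence of initial data. Because the limit $(u,v)$ is itself a Schwartz class solution on $[0,T]$, I would obtain this via a perturbation/continuation argument paralleling \cite[Lemma 4.5]{CLgoodboussinesq}: local well-posedness of the Boussinesq IBVP in a sufficiently high-regularity space produces $(u^{(\ell)},v^{(\ell)})$ on a short initial interval with the desired convergence, and a bootstrap exploiting the a priori existence of the limit solution on all of $[0,T]$ permits iteration across $[0,T]$ without loss of regularity, with the rapid decay in $x$ preserved at each step by the rapid decay of $(u_{0},v_{0})$ themselves. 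Once this input is in hand, every limit in the lemma reduces to a routine continuous-dependence argument for Volterra or Fredholm integral equations.
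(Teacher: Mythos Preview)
The paper does not give a proof of this lemma; it only states that ``the following lemma can be proved using the same argument as in \cite[Lemma 4.5]{CLgoodboussinesq}.'' Your proposal is consistent with that reference and with what the paper intends: the observations that $S^{(\ell)}\equiv S$ and $S^{A(\ell)}\equiv S^{A}$ because the boundary data are untouched, the Volterra/Gronwall argument for $s^{(\ell)},s^{A(\ell)}$ and the $\mu_{j}^{(\ell)}$, and the Fredholm-resolvent continuity for $M_{n}^{(\ell)}$ away from $\mathsf{Z}$, are exactly the ingredients that argument uses.

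You have also correctly isolated the only genuinely nontrivial step, namely producing a Schwartz-class solution $(u^{(\ell)},v^{(\ell)})$ on all of $[0,T]$ for the truncated initial data and proving its convergence to $(u,v)$. One caveat worth flagging: the reference \cite{CLgoodboussinesq} treats the \emph{good} Boussinesq equation, which enjoys a standard local well-posedness theory; here the equation is the linearly ill-posed ``bad'' Boussinesq, so the phrase ``local well-posedness of the Boussinesq IBVP'' in your outline cannot be invoked off the shelf. The perturbation/continuation argument you sketch must therefore be run as a stability argument \emph{around the given solution} $(u,v)$ rather than as an abstract well-posedness statement, exploiting that $\eta_{\ell}\equiv 1$ on increasingly large sets and that the limit solution already exists on $[0,T]$. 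With that adjustment your plan matches the paper's.
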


\begin{lemma}[Jump condition for $M$]\label{Mjumplemma}
For each $x \geq 0$ and $t\in [0,T]$, $M(x,t,k)$ satisfies the jump condition
\begin{align}\label{jump M first}
  M_+(x,t,k) = M_-(x,t, k) v(x, t, k), \qquad k \in \Gamma \setminus (\Gamma_{\star}\cup \hat{\mathcal{Q}}\cup\mathsf{Z}),
\end{align}
where $v$ is the jump matrix defined in (\ref{vdef}).
\end{lemma}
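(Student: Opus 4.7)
The plan is to first reduce to the compactly supported case via Lemma \ref{sequencelemma}: if $(u_0^{(\ell)}, v_0^{(\ell)})$ is the approximating sequence defined in \eqref{uvsequence}, then $M_n^{(\ell)} \to M_n$ pointwise on $\bar{F}_n \setminus (\hat{\mathcal{Q}}\cup \mathsf{Z})$, and the reflection coefficients on the right-hand side of \eqref{vdef} converge to their limits on each open arc of $\Gamma\setminus(\Gamma_\star\cup\hat{\mathcal{Q}})$. Hence it suffices to establish \eqref{jump M first} under the assumption that $u_0, v_0$ have compact support, so that Lemma \ref{Snexplicitlemma} applies and each $M_n$ has the three exact factorizations
\begin{align*}
M_n(x,t,k)=\mu_1 e^{x\hat{\mathcal{L}}+t\hat{\mathcal{Z}}}R_n(k)=\mu_2 e^{x\hat{\mathcal{L}}+t\hat{\mathcal{Z}}}S_n(k)=\mu_3 e^{x\hat{\mathcal{L}}+t\hat{\mathcal{Z}}}T_n(k).
\end{align*}

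Next, for each arc of $\Gamma \setminus (\Gamma_\star \cup \hat{\mathcal{Q}})$ separating two domains $F_m$ (on the $-$ side) and $F_n$ (on the $+$ side), I would choose a single eigenfunction $\mu_j$ whose $j$th, $\ell$th domain of boundedness contains the arc on both sides, and write
\begin{align*}
v(x,t,k)=(M_-^{-1}M_+)(x,t,k)= e^{x\hat{\mathcal{L}}+t\hat{\mathcal{Z}}}\bigl(X_m(k)^{-1}X_n(k)\bigr)e^{-x\hat{\mathcal{L}}-t\hat{\mathcal{Z}}},
\end{align*}
where $X\in\{R,S,T\}$ is the letter matching the chosen $\mu_j$. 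Since $R_n, S_n, T_n$ are given explicitly in terms of $s, S, s^A, S^A$ (either directly, via \eqref{SnTnexplicit}, or via the symmetries \eqref{T symmetries} together with $S_n=sT_n$, $R_n=S^{-1}S_n$), each such product $X_m^{-1}X_n$ reduces to a computation with the spectral data, and the quotients produce the reflection coefficients $r_1,\tilde r_1,r_2,\tilde r_2,\hat r_2,\check r_2,R_1,R_2,\tilde R_2$ exactly as defined in \eqref{r1r2def}--\eqref{def of R1 R2}. This matches the entries $\tilde{v}_{n''}, \tilde{v}_{n'}$ of the jump matrix for the contours $\Gamma_{n''}$ and $\Gamma_{n'}$.

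I would then exploit the symmetries \eqref{Msymm} and \eqref{T symmetries} to avoid computing all $54$ arcs: the identity $M(x,t,k)=\mathcal{A}M(x,t,\omega k)\mathcal{A}^{-1}$ cyclically relates the three $\omega$-copies of any given arc, so in practice only one third of the arcs on $\Gamma\setminus\partial\mathbb{D}$ need an independent verification, while the $\mathcal{B}$-symmetry $M(x,t,k)=\mathcal{B}M(x,t,k^{-1})\mathcal{B}$ further links each $\Gamma_{j'}$ with the corresponding $\Gamma_{j''}$. The unit-circle arcs $\Gamma_1,\dots,\Gamma_{18}$ are the main obstacle: there both the inside and outside factorizations are active, the jump matrices involve the longer expressions built from $h, \tilde h, g, \tilde g$ and $f_1,\ldots,f_4$ in \eqref{def of h}--\eqref{def of f}, and consistency requires the algebraic relations \eqref{r1r2 relation on the unit circle}--\eqref{R1R2 on the unit circle} proved in Theorem \ref{thm:r1r2}(\ref{Theorem2.3itemiv}). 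For a representative unit-circle arc, say $\Gamma_{14}$, I would compute $T_m^{-1}T_n$ using \eqref{SnTnexplicit} (and its $\mathcal{A}$-cyclic images), simplify the entries using $s^A=(s^{-1})^T$, $S^A=(S^{-1})^T$ available on $\partial\mathbb{D}$, and match with $\tilde{v}_{14}$; the equivalent forms of $f_1,\ldots,f_4$ in \eqref{def of f} correspond to the different but equal choices of $\mu_j$-representation at the triple junction on $\partial\mathbb{D}$.

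Finally, the $\mathcal{A}$- and $\mathcal{B}$-symmetries \eqref{Msymm}, together with the symmetries \eqref{symmetries of s}, \eqref{symmetries of sASA} of the spectral data, propagate the explicitly verified jumps to all remaining arcs, completing the identification $v = e^{x\hat{\mathcal{L}}+t\hat{\mathcal{Z}}}\tilde{v}$ on $\Gamma \setminus (\Gamma_\star \cup \hat{\mathcal{Q}}\cup \mathsf{Z})$. Removing the set $\mathsf{Z}$ will follow a posteriori from the validity of the jump on a dense subset and continuity in $k$ away from $\Gamma_\star\cup\hat{\mathcal{Q}}$, which is provided by Proposition \ref{Mnprop}(b).
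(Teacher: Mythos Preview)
Your proposal is correct and follows essentially the same strategy as the paper: reduce to compactly supported data via Lemma \ref{sequencelemma}, use the factorization $M_n=\mu_3 e^{x\hat{\mathcal L}+t\hat{\mathcal Z}}T_n$ from Lemma \ref{Snexplicitlemma} to identify the jump as $e^{x\hat{\mathcal L}+t\hat{\mathcal Z}}(T_m^{-1}T_n)$, compute explicitly on a representative arc (the paper does $\Gamma_{1''}$, simplifying via \eqref{relations on the unit circle}), and appeal to symmetry for the remaining arcs. The paper takes a slight shortcut you did not mention: rather than selecting a $\mu_j$ whose boundedness domain covers both sides of the arc, it simply evaluates \eqref{M1M6J1} at $x=K+1$ where $\mu_3\equiv I$ (compact support), which yields $J=T_m^{-1}T_n$ directly without any boundedness consideration. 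Your final sentence about removing $\mathsf{Z}$ is extraneous here---the lemma is stated on $\Gamma\setminus(\Gamma_\star\cup\hat{\mathcal Q}\cup\mathsf Z)$, and the actual removal of $\mathsf Z$ is handled separately in Lemma \ref{QtildeQlemma} using Assumption \ref{solitonlessassumption}, not continuity.
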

\begin{proof}
For each $k\in \overline{F_{n}}\setminus (\hat{\mathcal{Q}}\cup\mathsf{Z})$, $M_n(x,t,k)$ is a smooth function of $x \geq 0$ and $t\in [0,T]$ which satisfies (\ref{Xlax}). 
Hence, for $k\in \Gamma_{1}=\overline{F}_{1}\cap \overline{F}_{18}\setminus\{i\}$, $k\notin \hat{\mathcal{Q}}\cup\mathsf{Z}$, 
\begin{align}\label{M1M6J1}
M_1(x,t,k) = M_{18}(x,t,k) e^{x\widehat{\mathcal{L}(k)}+t\widehat{\mathcal{Z}(k)}}J_1(k), \qquad k \in \Gamma_{1}\setminus (\hat{\mathcal{Q}}\cup\mathsf{Z}),
\end{align}
for some matrix $J_1(k)$ independent of $x$ and $t$.
If $u_0, v_0$ have support in some compact subset $[0,K] \subset [0,+\infty)$,  $K > 0$, then $\mu_{3}(x,t,k)=I$ for all $x>K$, and thus, by \eqref{Mn mu1mu2mu3}, $M_n(x,t,k) = e^{x\hat{\mathcal{L}}(k)+t\hat{\mathcal{Z}}(k)}T_n(k)$ for all $x>K$. Therefore, replacing $x$ by $K+1$ in \eqref{M1M6J1} yields
$$J_1(k) = T_{18}(k)^{-1}T_1(k),$$
and substituting the expressions from Lemma \ref{Snexplicitlemma} for $T_1,T_{18}$, we find 
\begin{align*}
J_1(k) & = \begin{pmatrix}
1 & r_{2}(\frac{1}{k}) & 0 \\
0 & 1 & 0 \\
-\tilde{r}_{1}(\omega^{2}k) & r_{2}(\omega k) & 1
\end{pmatrix}^{-1} \begin{pmatrix}
1 & 0 & 0 \\
\tilde{r}_{2}(k) & 1 & 0 \\
\tilde{r}_{2}(\frac{1}{\omega^{2}k}) & -r_{1}(\frac{1}{\omega k}) & 1
\end{pmatrix} \\
& = \begin{pmatrix}
1-r_{2}(\frac{1}{k})\tilde{r}_{2}(k) & -r_{2}(\frac{1}{k}) & 0 \\
\tilde{r}_{2}(k) & 1 & 0 \\
\tilde{g}(k) & -h(k) & 1
\end{pmatrix} = \tilde{v}_{1''}(k),
\end{align*}
where for the second equality we have used \eqref{relations on the unit circle}, $\tilde{g},h$ are defined in \eqref{def of h} and $\tilde{v}_{1''}$ is as in \eqref{vdef}. This finishes the proof of 
\begin{align}\label{lol7}
M_+(x,t,k) = M_-(x,t, k) v(x, t, k), \qquad k \in \Gamma_{1} \setminus (\hat{\mathcal{Q}}\cup\mathsf{Z}),
\end{align}
in the case where $u_0,v_0$ are compactly supported.

If $u_0,v_0$ are not compactly supported, then we find that \eqref{lol7} still holds by applying Lemma \ref{sequencelemma} to the sequence $(u_0^{(\ell)}, v_0^{(\ell)})$ defined in \eqref{uvsequence}.

This finishes the proof of \eqref{jump M first} for $k\in \Gamma_{1}$. The proof of \eqref{jump M first} on the other parts of $\Gamma$ is similar.
\end{proof}

Define the complex-valued functions $\theta_{ij}(x,t, k)$ for $1 \leq i,j \leq 3$ by 
\begin{align}\label{def of Phi ij}
\theta_{ij}(x,t,k) = (l_{i}(k)-l_{j}(k))x + (z_{i}(k)-z_{j}(k))t.
\end{align}
The next lemma expresses $M_{n}$ in terms of $\mu_{1},\mu_{2},\mu_{3},s,S,s^{A},S^{A}$.

\begin{lemma}\label{M1XYlemma}
The functions $M_{13},M_{14},M_{15},M_{22},M_{23},M_{24}$, which are defined on $D_{13},D_{14}$, $D_{15},E_{4},E_{5},E_{6}$, respectively, can be expressed in terms of the entries of $\mu_{1},\mu_{2},\mu_{3},s,S,s^{A}$ and $S^{A}$ as follows:
\begin{align*}
& M_{13} = \begin{pmatrix} 
(\mu_{3})_{11} & \frac{(\mu_{1})_{12}}{(S^{T}s^{A})_{22}} & (\mu_{2})_{12} \frac{s^{A}_{32}e^{\theta_{23}}}{-s_{11}} + (\mu_{2})_{13} \frac{s^{A}_{22}}{s_{11}}  \\
(\mu_{3})_{21} & \frac{(\mu_{1})_{22}}{(S^{T}s^{A})_{22}} & (\mu_{2})_{22} \frac{s^{A}_{32}e^{\theta_{23}}}{-s_{11}} + (\mu_{2})_{23} \frac{s^{A}_{22}}{s_{11}}  \\
(\mu_{3})_{31} & \frac{(\mu_{1})_{32}}{(S^{T}s^{A})_{22}} & (\mu_{2})_{32} \frac{s^{A}_{32}e^{\theta_{23}}}{-s_{11}} + (\mu_{2})_{33} \frac{s^{A}_{22}}{s_{11}}
\end{pmatrix}, \\
& M_{14} = \begin{pmatrix} 
(\mu_{3})_{11} & \frac{(\mu_{1})_{12}}{(S^{T}s^{A})_{22}} & *  \\
(\mu_{3})_{21} & \frac{(\mu_{1})_{22}}{(S^{T}s^{A})_{22}} & *  \\
(\mu_{3})_{31} & \frac{(\mu_{1})_{32}}{(S^{T}s^{A})_{22}} & *
\end{pmatrix}, \\
& [M_{14}]_{3} = \begin{pmatrix}
(\mu_{2})_{11} \frac{(s^{A}_{32}S^{A}_{21}-s^{A}_{22}S^{A}_{31})e^{\theta_{13}}}{(S^{-1}s)_{11}} + (\mu_{2})_{12} \frac{(s^{A}_{12}S^{A}_{31}-s^{A}_{32}S^{A}_{11})e^{\theta_{23}}}{(S^{-1}s)_{11}} + (\mu_{2})_{13} \frac{s^{A}_{22}S^{A}_{11}-s^{A}_{12}S^{A}_{21}}{(S^{-1}s)_{11}} \\
(\mu_{2})_{21} \frac{(s^{A}_{32}S^{A}_{21}-s^{A}_{22}S^{A}_{31})e^{\theta_{13}}}{(S^{-1}s)_{11}} + (\mu_{2})_{22} \frac{(s^{A}_{12}S^{A}_{31}-s^{A}_{32}S^{A}_{11})e^{\theta_{23}}}{(S^{-1}s)_{11}} + (\mu_{2})_{23} \frac{s^{A}_{22}S^{A}_{11}-s^{A}_{12}S^{A}_{21}}{(S^{-1}s)_{11}} \\
(\mu_{2})_{31} \frac{(s^{A}_{32}S^{A}_{21}-s^{A}_{22}S^{A}_{31})e^{\theta_{13}}}{(S^{-1}s)_{11}} + (\mu_{2})_{32} \frac{(s^{A}_{12}S^{A}_{31}-s^{A}_{32}S^{A}_{11})e^{\theta_{23}}}{(S^{-1}s)_{11}} + (\mu_{2})_{33} \frac{s^{A}_{22}S^{A}_{11}-s^{A}_{12}S^{A}_{21}}{(S^{-1}s)_{11}}
\end{pmatrix}, \\
& M_{15} = \hspace{-0.1cm} \begin{pmatrix} 
(\mu_{3})_{11} & (\mu_{2})_{11}\frac{S^{A}_{21}e^{\theta_{12}}}{s^{A}_{12}S^{A}_{21}-s^{A}_{22}S^{A}_{11}}-(\mu_{2})_{12}\frac{S^{A}_{11}}{s^{A}_{12}S^{A}_{21}-s^{A}_{22}S^{A}_{11}} & *  \\
(\mu_{3})_{21} & (\mu_{2})_{21}\frac{S^{A}_{21}e^{\theta_{12}}}{s^{A}_{12}S^{A}_{21}-s^{A}_{22}S^{A}_{11}}-(\mu_{2})_{22}\frac{S^{A}_{11}}{s^{A}_{12}S^{A}_{21}-s^{A}_{22}S^{A}_{11}} & *  \\
(\mu_{3})_{31} & (\mu_{2})_{31}\frac{S^{A}_{21}e^{\theta_{12}}}{s^{A}_{12}S^{A}_{21}-s^{A}_{22}S^{A}_{11}}-(\mu_{2})_{32}\frac{S^{A}_{11}}{s^{A}_{12}S^{A}_{21}-s^{A}_{22}S^{A}_{11}} & *
\end{pmatrix}\hspace{-0.1cm}, \; M_{22} = \hspace{-0.1cm} \begin{pmatrix} 
(\mu_{3})_{11} & \frac{(\mu_{2})_{12}}{s_{22}^A} & *  \\
(\mu_{3})_{21} & \frac{(\mu_{2})_{22}}{s_{22}^A} & *  \\
(\mu_{3})_{31} & \frac{(\mu_{2})_{32}}{s_{22}^A} & *
\end{pmatrix}\hspace{-0.1cm}, \\
& M_{23} = \begin{pmatrix} 
(\mu_{3})_{11} & \frac{(\mu_{2})_{12}}{s_{22}^A} & *  \\
(\mu_{3})_{21} & \frac{(\mu_{2})_{22}}{s_{22}^A} & *  \\
(\mu_{3})_{31} & \frac{(\mu_{2})_{32}}{s_{22}^A} & *
\end{pmatrix}\hspace{-0.1cm}, \; M_{24} = \hspace{-0.1cm} \begin{pmatrix} 
(\mu_{3})_{11} & (\mu_{2})_{12}\frac{S^{A}_{33}}{s_{22}^A S^{A}_{33}-s^{A}_{32}S^{A}_{23}}-(\mu_{2})_{13}\frac{S^{A}_{23}e^{\theta_{32}}}{s_{22}^A S^{A}_{33}-s^{A}_{32}S^{A}_{23}} & *  \\
(\mu_{3})_{21} & (\mu_{2})_{22}\frac{S^{A}_{33}}{s_{22}^A S^{A}_{33}-s^{A}_{32}S^{A}_{23}}-(\mu_{2})_{23}\frac{S^{A}_{23}e^{\theta_{32}}}{s_{22}^A S^{A}_{33}-s^{A}_{32}S^{A}_{23}} & *  \\
(\mu_{3})_{31} & (\mu_{2})_{32}\frac{S^{A}_{33}}{s_{22}^A S^{A}_{33}-s^{A}_{32}S^{A}_{23}}-(\mu_{2})_{33}\frac{S^{A}_{23}e^{\theta_{32}}}{s_{22}^A S^{A}_{33}-s^{A}_{32}S^{A}_{23}} & * 
\end{pmatrix}\hspace{-0.1cm}, \\
& [M_{13}]_{3} = [M_{23}]_{3} = [M_{24}]_{3}, \quad [M_{14}]_{3} = [M_{15}]_{3} = [M_{22}]_{3},
\end{align*}
for all $x \in \R_{+}$, $t\in [0,T]$ and $k \in F_{n} \setminus (\hat{\mathcal{Q}}\cup\mathsf{Z})$, with $n=13,14,15,22,23,24,$ respectively.
\end{lemma}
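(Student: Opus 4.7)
My plan is to first assume that $u_0, v_0$ have compact support and then remove this assumption by a density argument based on Lemma~\ref{sequencelemma}. Under the compact-support hypothesis, Lemma~\ref{Snexplicitlemma} gives the three factorizations
\begin{align*}
M_n(x,t,k) = \mu_1 e^{x\hat{\mathcal{L}}+t\hat{\mathcal{Z}}} R_n(k) = \mu_2 e^{x\hat{\mathcal{L}}+t\hat{\mathcal{Z}}} S_n(k) = \mu_3 e^{x\hat{\mathcal{L}}+t\hat{\mathcal{Z}}} T_n(k),
\end{align*}
with $S_n = s T_n$ and $R_n = S^{-1} S_n = S^{-1} s T_n$. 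For each $n\in\{13,14,15,22,23,24\}$ and each column index $j$ I will first tabulate, using the definitions \eqref{def of Dj and Ej} of $D_n,E_n$ and \eqref{gammaijndef}, the triple $(\gamma_{1j}^n, \gamma_{2j}^n, \gamma_{3j}^n)$. If the triple is $(\gamma_3,\gamma_3,\gamma_3)$ then column $j$ of $T_n$ is $\delta_{\cdot j}$ and $[M_n]_j = [\mu_3]_j$; when two of the three entries are $\gamma_1$ (resp.~$\gamma_2$) then column $j$ of $R_n$ (resp.~$S_n$) has two forced zeros, which I will use together with the relations $S R_n = s T_n = S_n$ to determine the remaining unknown entries.

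The second step is to solve these linear systems. For the columns that I want to express through $\mu_1$, the constraint pattern on column~$j$ of $R_n$ yields two homogeneous scalar equations in the three unknowns $\{(T_n)_{kj}\}_{k=1}^3$, together with one normalization coming from an entry of $T_n$ that equals $\delta$ (or from the $R_n$ entry that is not forced zero). By Cramer's rule, the solution involves $2\times 2$ minors of $S^{-1}s$; since $\det S=\det s=1$ in the compact support case (Propositions~\ref{sprop}(g) and \ref{sAprop}(g)), these minors are entries of $S^T s^A$ or products of entries of $S^A$ and $s^A$. This is precisely how the denominators $(S^T s^A)_{22}$ and the $2\times 2$ combinations $s^{A}_{12}S^{A}_{21}-s^{A}_{22}S^{A}_{11}$, $s_{22}^A S^{A}_{33}-s^{A}_{32}S^{A}_{23}$, etc.\ that appear in the statement of the lemma arise. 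An analogous calculation handles the columns to be expressed through $\mu_2$ (using $R_n = S^{-1} s T_n$ together with $S_n = s T_n$), and through $\mu_3$ (directly from the forced $\delta$-pattern in $T_n$).

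The identities of the type $[M_{13}]_3 = [M_{23}]_3 = [M_{24}]_3$ and $[M_{14}]_3 = [M_{15}]_3 = [M_{22}]_3$ will be verified by observing that the two adjacent regions share a common third-column contour pattern, so the corresponding columns of $S_n$ (or $T_n$) agree; combined with the uniqueness of the factorization through $\mu_2$ (resp.\ $\mu_3$) this yields equality of the $\mu_2$-expressions (resp.\ $\mu_3$-expressions). At this stage the formulas hold for $k \in F_n \setminus (\hat{\mathcal{Q}}\cup\mathsf{Z})$ under the compact-support hypothesis. To remove the hypothesis, I apply Lemma~\ref{sequencelemma} to the mollified initial data $(u_0^{(\ell)},v_0^{(\ell)}) = (\eta_\ell u_0, \eta_\ell v_0)$: since $\mu_j^{(\ell)}, s^{(\ell)}, S^{(\ell)}, s^{A(\ell)}, S^{A(\ell)}, M_n^{(\ell)}$ all converge pointwise on the relevant domain to their limits $\mu_j, s, S, s^A, S^A, M_n$, the identities pass to the limit.

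The main obstacle is purely organizational: there are six regions, three columns each, and for each column one must correctly identify the forced zero/identity pattern of $T_n, S_n, R_n$, solve the associated linear system, and then rewrite the solution in terms of entries of $s^A, S^A$ using cofactor expansions. Once the pattern is identified correctly in one region, the remaining five follow by the same algorithm. No new analytic input beyond Lemma~\ref{Snexplicitlemma}, Lemma~\ref{sequencelemma}, and Propositions~\ref{sprop}--\ref{sAprop} is required.
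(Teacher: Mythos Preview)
Your proposal is correct and follows essentially the same approach as the paper: first use Lemma~\ref{Snexplicitlemma} under the compact-support assumption to obtain the explicit column formulas for $M_n^{(\ell)}$, and then pass to the limit via Lemma~\ref{sequencelemma}. The paper's proof simply writes out the three columns of $M_{14}^{(\ell)}$ explicitly and takes the limit, declaring the remaining cases analogous, whereas you spell out more of the underlying bookkeeping (the $\gamma_{ij}^n$ patterns and the Cramer-type solution of the resulting linear systems), but the logical structure is identical.
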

\begin{proof}
Let $(u_0^{(\ell)}, v_0^{(\ell)})$ be as in (\ref{uvsequence}). By Lemma \ref{Snexplicitlemma} we have
\begin{align*}
M_n^{(\ell)}(x,t,k) & = \mu_{1}^{(\ell)}(x,t, k) e^{x\hat{\mathcal{L}}(k)+t\hat{\mathcal{Z}}(k)} R_n^{(\ell)}(k) = \mu_{2}^{(\ell)}(x,t, k) e^{x\hat{\mathcal{L}}(k)+t\hat{\mathcal{Z}}(k)} S_n^{(i)}(k) \nonumber \\
& = \mu_{3}^{(\ell)}(x,t, k) e^{x\hat{\mathcal{L}}(k)+t\hat{\mathcal{Z}}(k)} T_n^{(\ell)}(k), \qquad  \ k \in \bar{F}_n\setminus (\hat{\mathcal{Q}}\cup \mathsf{Z}), \ n = 1, \dots, 36.
\end{align*}
In particular, the columns of $M_{14}^{(\ell)}$ are given by
\begin{align*}
\begin{cases}
 [M_{14}^{(\ell)}(x,t,k)]_1 = [\mu_{3}^{(\ell)}(x,t,k)]_1,
	\\
 [M_{14}^{(\ell)}(x,t,k)]_2 = \frac{[\mu_{1}^{(\ell)}(x,t,k)]_2}{(S^{(\ell)T}s^{A(\ell)})_{22}(k)}, \\
 [M_{14}^{(\ell)}(x,t,k)]_3 = [\mu_{2}^{(\ell)}(x,t,k)]_{1} \frac{(s^{(\ell)A}_{32}S^{(\ell)A}_{21}-s^{(\ell)A}_{22}S^{(\ell)A}_{31})(k)e^{\theta_{13}(x,t,k)}}{((S^{(\ell)})^{-1}s^{(\ell)})_{11}(k)} \\[0.15cm]
 \qquad + [\mu_{2}^{(\ell)}(x,t,k)]_{2} \frac{(s^{(\ell)A}_{12}S^{(\ell)A}_{31}-s^{(\ell)A}_{32}S^{(\ell)A}_{11})(k)e^{\theta_{23}(x,t,k)}}{((S^{(\ell)})^{-1}s^{(\ell)})_{11}(k)} \\[0.15cm]
 \qquad + [\mu_{2}^{(\ell)}(x,t,k)]_{3} \frac{(s^{(\ell)A}_{22}S^{(\ell)A}_{11}-s^{(\ell)A}_{12}S^{(\ell)A}_{21})(k)e^{\theta_{23}(x,t,k)}}{((S^{(\ell)})^{-1}s^{(\ell)})_{11}(k)},
\end{cases} 
\end{align*}
for $x \in \R_{+}$, $t\in[0,T]$, $k \in \bar{F}_{14}\setminus (\hat{\mathcal{Q}}\cup\mathsf{Z})$, and $\ell \geq 1$. 
Using Lemma \ref{sequencelemma}  to take $\ell \to \infty$, we find
\begin{align*}
\begin{cases}
 [M_{14}(x,t,k)]_1 = [\mu_{3}(x,t,k)]_1,
	\\
 [M_{14}(x,t,k)]_2 = \frac{[\mu_{1}(x,t,k)]_2}{(S^{T}s^{A})_{22}(k)}, \\
 [M_{14}(x,t,k)]_3 = [\mu_{2}(x,t,k)]_{1} \frac{(s^{A}_{32}S^{A}_{21}-s^{A}_{22}S^{A}_{31})(k)e^{\theta_{13}(x,t,k)}}{(S^{-1}s)_{11}(k)} \\[0.15cm]
 \qquad + [\mu_{2}(x,t,k)]_{2} \frac{(s^{A}_{12}S^{A}_{31}-s^{A}_{32}S^{A}_{11})(k)e^{\theta_{23}(x,t,k)}}{(S^{-1}s)_{11}(k)}  + [\mu_{2}(x,t,k)]_{3} \frac{(s^{A}_{22}S^{A}_{11}-s^{A}_{12}S^{A}_{21})(k)e^{\theta_{23}(x,t,k)}}{(S^{-1}s)_{11}(k)},
\end{cases} 
\end{align*}
for $x \in \R_{+}$, $t\in[0,T]$, $k \in \bar{F}_{14}\setminus (\hat{\mathcal{Q}}\cup\mathsf{Z})$. This proves the claim for $M_{14}$. The proofs for $M_{13},M_{15},M_{22},M_{23},M_{24}$ are analogous.
\end{proof}

\begin{lemma}\label{QtildeQlemma}
Suppose $u_0,v_0 \in \mathcal{S}(\R_{+})$ and $\tilde{u}_{0}, \tilde{u}_{1}, \tilde{u}_{2}, \tilde{v}_0 \in C^{\infty}([0,T])$ are such that Assumption \ref{solitonlessassumption} holds. Then the statements of Proposition \ref{Mnprop} and Lemmas \ref{Snexplicitlemma}-\ref{M1XYlemma} hold with $\mathsf{Z}$ replaced by the empty set.
\end{lemma}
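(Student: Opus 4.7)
The plan is to use the explicit representations of $M_n$ in terms of $\mu_1,\mu_2,\mu_3$ (Lemma \ref{Snexplicitlemma}, Lemma \ref{M1XYlemma}) to locate exactly the possible singularities of $M_n$, and then invoke Assumption \ref{solitonlessassumption} (together with Remark \ref{remark:sym combined with no soliton assumption}) to rule them out.

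First I would promote Lemmas \ref{Snexplicitlemma} and \ref{M1XYlemma} from compactly supported initial data to general $u_0,v_0\in\mathcal{S}(\R_{+})$. The same approximation argument used in Lemma \ref{sequencelemma} applies: replacing $(u_0,v_0)$ by $(\eta_\ell u_0,\eta_\ell v_0)$, the resulting $M_n^{(\ell)}$ are given by the formulas of Lemma \ref{M1XYlemma}/\ref{Snexplicitlemma} with all spectral quantities replaced by their $\ell$-indexed counterparts. Since $\mu_j^{(\ell)},s^{(\ell)},S^{(\ell)},s^{A(\ell)},S^{A(\ell)}$ converge to $\mu_j,s,S,s^A,S^A$ on their respective domains, and since the $M_n^{(\ell)}$ are constructed from Volterra-like Fredholm equations that depend continuously on the potentials, passing to the limit shows that the same formulas hold in the non-compactly supported case, away from points where any denominator vanishes.

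Once these representations are established, I would simply read off where each $M_n$ can fail to be defined. For the ``easy'' sectors ($n=7,\dots,12$) the denominators appearing in Lemma \ref{Snexplicitlemma} are exactly $s_{11}, (S^{-1}s)_{11}, s^A_{11}, (S^Ts^A)_{11}, s^A_{11}S^A_{33}-s^A_{31}S^A_{13}$, and $s^A_{11}S^A_{22}-s^A_{21}S^A_{12}$ (and their images under $k\mapsto 1/(\omega^2 k)$), together with higher-index analogues obtained from the $\mathcal{A}$-symmetry of $M$. In the ``hard'' sectors ($n=13,14,15,22,23,24$) Lemma \ref{M1XYlemma} exhibits precisely the same denominators. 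By Assumption \ref{solitonlessassumption}, together with the symmetries recorded in Remark \ref{remark:sym combined with no soliton assumption}, each of these functions is nonzero on the closure of the relevant sector (away from $\hat{\mathcal{Q}}$). Consequently $M_n$ extends continuously to $\overline{F}_n\setminus\hat{\mathcal{Q}}$, and the zero set $\mathsf{Z}$ of the associated Fredholm determinant does not intersect this closure; in other words, $\mathsf{Z}$ may be replaced by $\emptyset$ throughout Proposition \ref{Mnprop}.

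Finally I would verify that each conclusion of Proposition \ref{Mnprop} and Lemmas \ref{Snexplicitlemma}--\ref{M1XYlemma} remains valid with $\mathsf{Z}=\emptyset$: smoothness in $(x,t)$, analyticity in $k$, the uniform bound \eqref{Mnbounded}, the unit determinant, the $\mathcal{A}$- and $\mathcal{B}$-symmetries, and the relations \eqref{SnTnexplicit}. All of these transfer immediately from the compactly supported case via the approximation $(u_0^{(\ell)},v_0^{(\ell)})$, since outside a compact neighborhood of $\hat{\mathcal{Q}}$ the convergence in Lemma \ref{sequencelemma} is uniform in $k$ and the limits $\mu_j,s,S,s^A,S^A$ satisfy the same algebraic identities. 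The main point where care is needed is to check that the limiting denominators in Lemma \ref{M1XYlemma} are bounded below uniformly on compact subsets of $\overline{F}_n\setminus\hat{\mathcal{Q}}$; this follows from the continuity statements in Propositions \ref{sprop}, \ref{sAprop} together with Assumption \ref{solitonlessassumption}, and it is really the only non-routine step in the argument.
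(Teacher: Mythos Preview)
Your approach is essentially the paper's: use the explicit column-by-column representations of $M_n$ to identify the possible singular denominators, invoke Assumption \ref{solitonlessassumption} (and the symmetries in Remark \ref{remark:sym combined with no soliton assumption}) to rule them out, and then propagate to all $36$ sectors by the $\mathcal{A}$- and $\mathcal{B}$-symmetries \eqref{Msymm}.

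Two points where the paper is leaner than your sketch. First, Lemma \ref{M1XYlemma} is already stated and proved for general $u_0,v_0\in\mathcal{S}(\R_+)$ (the approximation argument is built into its proof), so no separate ``promotion'' step is needed. Second, the paper does not touch sectors $7$--$12$ directly via Lemma \ref{Snexplicitlemma}; that lemma genuinely requires compact support (the identity $M_n=\mu_3 e^{x\hat{\mathcal{L}}+t\hat{\mathcal{Z}}}T_n$ fails when $\mu_3$ and $s$ are only columnwise defined). Instead the paper treats only the six sectors $n=13,14,15,22,23,24$ via Lemma \ref{M1XYlemma}, checks that the denominators there---$s_{11}$, $(S^{-1}s)_{11}$, $s^A_{22}$, $(S^Ts^A)_{22}$, $s^A_{22}S^A_{33}-s^A_{32}S^A_{23}$, $s^A_{22}S^A_{11}-s^A_{12}S^A_{21}$---are nonvanishing on the relevant closures by Assumption \ref{solitonlessassumption} combined with the $\mathcal{B}$-symmetry, and then obtains all remaining sectors by \eqref{Msymm}. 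Your ``easy sectors $7$--$12$'' step is therefore superfluous, and the denominators you list for it are not quite the ones appearing in \eqref{SnTnexplicit} (they differ by an index shift coming from the symmetries). None of this is a genuine gap, just redundancy.
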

\begin{proof}
Lemma \ref{M1XYlemma} implies that the first columns of $M_{n}$ have no singularities on $\bar{F}_{n}\setminus \hat{\mathcal{Q}}$, $n\in \{13,14,15,22,23,24\}$. Moreover, if the initial-boundary values satisfy Assumption \ref{solitonlessassumption}, we have in particular that 
\begin{align*}
s_{11},(S^{-1}s)_{11} \mbox{ are nonzero on } (\bar{D}_{13} \cup \bar{D}_{14} \cup \bar{D}_{15} \cup \bar{E}_{4} \cup \bar{E}_{5} \cup \bar{E}_{6} \cup \partial \D) \setminus \hat{\mathcal{Q}}.
\end{align*}
Using Lemma \ref{M1XYlemma}, this implies that the third columns of $M_{n}$ have no singularities on $\bar{F}_{n}\setminus \hat{\mathcal{Q}}$, $n\in \{13,14,15,22,23,24\}$. By Assumption \ref{solitonlessassumption}, we also have that
\begin{align*}
s^A_{11},(S^{T}s^{A})_{11} \mbox{ are nonzero on } (\bar{E}_{13} \cup \bar{E}_{14} \cup \bar{E}_{15} \cup \bar{D}_{4} \cup \bar{D}_{5} \cup \bar{D}_{6} \cup \partial \D) \setminus \hat{\mathcal{Q}}.
\end{align*}
The $\mathcal{B}$-symmetries in \eqref{symmetries of s} and \eqref{symmetries of sASA} then implies that
\begin{align*}
s^A_{22},(S^{T}s^{A})_{22} \mbox{ are nonzero on } (\bar{D}_{13} \cup \bar{D}_{14} \cup \bar{D}_{15} \cup \bar{E}_{4} \cup \bar{E}_{5} \cup \bar{E}_{6} \cup \partial \D) \setminus \hat{\mathcal{Q}},
\end{align*}
and therefore, by Lemma \ref{M1XYlemma}, the second columns of $M_{n}$ have no singularities on $\bar{F}_{n}\setminus \hat{\mathcal{Q}}$, $n\in \{13,14,22,23\}$. Finally, by Assumption \ref{solitonlessassumption} and Remark \ref{remark:sym combined with no soliton assumption},
\begin{align*}
s^A_{11}S^{A}_{33}-s^{A}_{31}S^{A}_{13} \mbox{ and } s^A_{11}S^{A}_{22}-s^{A}_{21}S^{A}_{12}  \mbox{ are nonzero on } \omega^{2}\hat{\mathcal{S}}_{3}^{*} \setminus\hat{\mathcal{Q}},
\end{align*}
which is equivalent, thanks to the $\mathcal{B}$-symmetries in \eqref{symmetries of sASA}, to
\begin{align*}
s^A_{22}S^{A}_{33}-s^{A}_{32}S^{A}_{23} \mbox{ and } s^A_{22}S^{A}_{11}-s^{A}_{12}S^{A}_{21}  \mbox{ are nonzero on } \omega\hat{\mathcal{S}}_{3}^{*} \setminus \hat{\mathcal{Q}}.
\end{align*}
This implies, again by Lemma \ref{M1XYlemma}, that the second columns of $M_{n}$ have no singularities on $\bar{F}_{n}\setminus \hat{\mathcal{Q}}$, $n\in \{15,24\}$. The $\mathcal{A}$- and $\mathcal{B}$-symmetries in (\ref{Msymm}) then imply that $M_n$ has no singularities in $\bar{F}_n \setminus \hat{\mathcal{Q}}$ for any $n\in \{1,\ldots,36\}$. 
\end{proof}



The next lemma establishes the behavior of $M$ as $k \to \pm 1$.

\begin{lemma}\label{Mat1lemma}
Suppose $u_0,v_0 \in \mathcal{S}(\R_{+})$ and $\tilde{u}_{0}, \tilde{u}_{1}, \tilde{u}_{2}, \tilde{v}_0 \in C^{\infty}([0,T])$ are such that Assumptions \ref{solitonlessassumption} and \ref{originassumption} hold.
Let $p \geq 1$ be an integer.
Then there are $3 \times 3$-matrix valued functions $\{\mathcal{M}_{14}^{(l)}(x,t),\widetilde{\mathcal{M}}_{5}^{(l)}(x,t)\}$, $l = -1,0, \dots, p$, with the following properties:
\begin{enumerate}[$(a)$]
\item The function $M$ satisfies, for $x \in \R_{+}$ and $t\in[0,T]$,
\begin{align*}
\begin{cases}
& \big|M(x,t,k) - \sum_{l=-1}^p \mathcal{M}_{14}^{(l)}(x,t)(k-1)^l\big| \leq C
|k-1|^{p+1}, \qquad |k-1| \leq \frac{1}{2}, \ k \in \bar{D}_{14}, \\
& \big|M(x,t,k) - \sum_{l=-1}^p \widetilde{\mathcal{M}}_5^{(l)}(x,t)(k+1)^l\big| \leq C
|k+1|^{p+1}, \qquad |k+1| \leq \frac{1}{2}, \ k \in \bar{E}_5.
\end{cases}
\end{align*}
\item For each $l \geq -1$, $\{\mathcal{M}_{14}^{(l)}(x,t),\widetilde{\mathcal{M}}_{5}^{(l)}(x,t)\}$ are smooth functions of $x \in \R_{+}$ and $t\in[0,T]$.
\item The first coefficients are of the form
\begin{align*}
& \mathcal{M}_{14}^{(-1)}(x,t) = \begin{pmatrix}
\alpha(x,t) & 0 & \beta(x,t) \\
-\alpha(x,t) & 0 & -\beta(x,t) \\
0 & 0 & 0
\end{pmatrix}, & & \mathcal{M}_{14}^{(0)}(x,t) = \begin{pmatrix}
\star & \gamma(x,t) & \star \\
\star & -\gamma(x,t) & \star \\
\star & 0 & \star
\end{pmatrix}, \\
& \widetilde{\mathcal{M}}_{5}^{(-1)}(x,t) = \begin{pmatrix}
\tilde{\alpha}(x,t) & 0 & \tilde{\beta}(x,t) \\
-\tilde{\alpha}(x,t) & 0 & -\tilde{\beta}(x,t) \\
0 & 0 & 0
\end{pmatrix}, & & \widetilde{\mathcal{M}}_{5}^{(0)}(x,t) = \begin{pmatrix}
\star & \tilde{\gamma}(x,t) & \star \\
\star & -\tilde{\gamma}(x,t) & \star \\
\star & 0 & \star
\end{pmatrix},
\end{align*}
for some functions $\alpha,\beta,\tilde{\alpha},\tilde{\beta}$.
\end{enumerate}
\end{lemma}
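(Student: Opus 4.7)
The starting point is Lemma \ref{M1XYlemma}, which on $D_{14}$ expresses the columns of $M=M_{14}$ as (i) $[\mu_3]_1$, (ii) $[\mu_1]_{\bullet 2}/(S^{T}s^{A})_{22}$, and (iii) an explicit combination $\sum_{j=1}^{3}V_j\,[\mu_2]_j$ whose scalar coefficients $V_j$ are built from $s^{A},S^{A},(S^{-1}s)_{11}$, and the exponentials $e^{\theta_{13}},e^{\theta_{23}}$. The plan is to substitute into these formulas the Laurent expansions of $\mu_1,\mu_2,\mu_3$ at $k=1$ furnished by Proposition \ref{XYat1prop} and of $s,S,s^{A},S^{A}$ furnished by Propositions \ref{sprop}(f) and \ref{sAprop}(f), and to use Assumption \ref{originassumption} to pin down the orders of the denominators $(S^{T}s^{A})_{22}$ and $(S^{-1}s)_{11}$ at $k=1$. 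Columns~1 and~2 are then immediate: column~1 has leading term $(\alpha_3,-\alpha_3,0)^{T}/(k-1)$ by \eqref{Cjpm1p}; for column~2, the $\mathcal{B}$-symmetries in \eqref{symmetries of s} and \eqref{symmetries of sASA} give $(S^{T}s^{A})_{22}(k)=(S^{T}s^{A})_{11}(1/k)$, which is a simple pole at $k=1$ by Assumption \ref{originassumption}, so the ratio is regular at $k=1$ and its value there has the shape $(\gamma,-\gamma,0)^{T}$ inherited from the residue $(\alpha_1,-\alpha_1,0)^{T}$ of $[\mu_1]_2$.

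The main obstacle is column~3, where a naive order count would predict a pole of order two because each $V_j$ and each $[\mu_2]_j$ is singular at $k=1$. Two cancellations will save the day. First, the rigid form of the residue matrices \eqref{s1pm1p and sm1pm1p}, \eqref{S1pm1p and Sm1pm1p}, \eqref{sA1pm1p and sAm1pm1p}, \eqref{SA1pm1p and SAm1pm1p} (whose first two columns differ only by a sign) forces the identities
\begin{align*}
\bigl(s^{A}_{32}S^{A}_{21}-s^{A}_{22}S^{A}_{31}\bigr)+\bigl(s^{A}_{12}S^{A}_{31}-s^{A}_{32}S^{A}_{11}\bigr) &= O\bigl((k-1)^{-1}\bigr), \\
s^{A}_{22}S^{A}_{11}-s^{A}_{12}S^{A}_{21} &= O\bigl((k-1)^{-1}\bigr),
\end{align*}
at $k=1$. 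Second, a direct computation from \eqref{lmexpressions intro} yields $l_1(1)=l_2(1)$ and $z_1(1)=z_2(1)$, hence $\theta_{13}(x,t,1)=\theta_{23}(x,t,1)$, so the exponentials in the numerators of $V_1$ and $V_2$ agree at $k=1$ and do not spoil the above cancellation. Together these give $V_1+V_2=O(1)$ and $V_3=O(1)$ at $k=1$. Using next that $[C_2^{(-1)}]_1=[C_2^{(-1)}]_2=(\alpha_2,-\alpha_2,0)^{T}$, the splitting
\[
V_1[\mu_2]_1+V_2[\mu_2]_2=(V_1+V_2)\,[\mu_2]_1+V_2\bigl([\mu_2]_2-[\mu_2]_1\bigr)
\]
shows that $[M_{14}]_3$ has at most a simple pole at $k=1$, whose residue lies along $(1,-1,0)^{T}$ since every residue vector appearing ($[C_2^{(-1)}]_j$ for $j=1,2,3$) has that shape. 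This will give the third column $(\beta,-\beta,0)^{T}$ of $\mathcal{M}_{14}^{(-1)}$, and the vanishing third row of $\mathcal{M}_{14}^{(-1)}$ as well as the vanishing $(3,2)$-entry of $\mathcal{M}_{14}^{(0)}$ will follow simultaneously from the zero third rows of all residue matrices $C_j^{(-1)}$.

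Smoothness of $\alpha,\beta,\gamma$ in $(x,t)$ will be inherited from the smoothness statements in Propositions \ref{XYat1prop}, \ref{XYat1prop adjoint}, \ref{sprop}, and \ref{sAprop}, once Assumption \ref{originassumption} is used to keep the denominators away from zero at leading order. The analysis near $k=-1$ on $E_5$ will be entirely parallel: one uses the analogous formula for $M_{23}$ on $E_5$ from Lemma \ref{M1XYlemma}, the expansions at $k=-1$ provided by $C_{3+j}^{(l)}$, $D_{3+j}^{(l)}$, $s_{-1}^{(l)}$, $s_{-1}^{A(l)}$, $S_{-1}^{(l)}$, $S_{-1}^{A(l)}$, and the $k_{\star}=-1$ part of Assumption \ref{originassumption}. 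The same cancellation mechanism in the third column will produce $\widetilde{\mathcal{M}}_{5}^{(-1)}$ and $\widetilde{\mathcal{M}}_{5}^{(0)}$ with the required structure.
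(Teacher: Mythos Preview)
Your proposal is correct and follows the same route as the paper's proof: it combines the explicit column formulas for $M_{14}$ and $M_{23}$ from Lemma~\ref{M1XYlemma} with the Laurent expansions of $\mu_1,\mu_2,\mu_3,s,s^A,S,S^A$ at $k=\pm 1$ from Propositions~\ref{XYat1prop}--\ref{sAprop} and Assumption~\ref{originassumption}, which is exactly what the paper invokes. Your write-up is in fact considerably more detailed than the paper's sketch---in particular you make explicit the two cancellations in column~3 (the sign structure of the residue matrices~\eqref{sA1pm1p and sAm1pm1p},~\eqref{SA1pm1p and SAm1pm1p} and the coincidence $\theta_{13}(x,t,1)=\theta_{23}(x,t,1)$) that reduce the naive double pole to a simple one; the paper hides these under ``the claim follows'', also citing the global relations~\eqref{global rel}--\eqref{global rel Adj} as a shortcut for the expansions of $(S^{-1}s)_{11}$ and $(S^{T}s^{A})_{22}$, which you obtain instead via the $\mathcal{B}$-symmetry.
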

\begin{proof}
Lemma \ref{M1XYlemma} provides expressions for $M_{n}$, with $n\in \{14,23\}$ and $k \in \bar{F}_n\setminus \hat{\mathcal{Q}}$, in terms of $\mu_{1}, \mu_{2}, \mu_{3}, s, s^A, S, S^{A}$. Also, by Assumption \ref{originassumption}, $\lim_{k\to -1,  k \in \overline{E}_{5}} (k+1) s_{11}(k) \neq 0$ and
\begin{align*}
& \lim_{\substack{ k\to 1 \\ k \in \overline{D}_{14}}} (k-1)(S^{T}s^{A})_{22}(k)\neq 0, && \lim_{\substack{ k\to 1 \\ k \in \overline{D}_{14}}} (k-1) (S^{-1}s)_{11}(k) \neq 0, && \lim_{\substack{ k\to -1 \\ k \in \overline{E}_{5}}} (k+1)s^{A}_{22}(k)\neq 0. 
\end{align*} 
Hence, using also the expansions of $\mu_{1}, \mu_{2}, \mu_{3}, s, s^A, S, S^{A}$ as $k \to \pm 1$ given in Propositions \ref{XYat1prop}, \ref{XYat1prop adjoint}, \ref{sprop}, and \ref{sAprop}, as well as \eqref{global rel} and \eqref{global rel Adj}, the claim follows.
%
%
  
\end{proof}

\begin{remark}\upshape
Lemma \ref{Mat1lemma} gives the asymptotic behavior of $M(x,t,k)$ as $k \to 1$, $k \in \bar{D}_{14}$ and as $k \to -1$, $k \in \bar{E}_{5}$. The behavior of $M(x,t,k)$ near any of the critical points $\kappa_{j}$, $j=1,...,6$, from either the inside or the outside of the unit circle, can then be derived using $M(x,t,k) = \mathcal{A}M(x,t,\omega k) \mathcal{A}^{-1} = \mathcal{B} M(x,t,1/k)\mathcal{B}$.
\end{remark}

%
%
%

\begin{proposition}\label{RHth}
Suppose $\{u, v\}$ is a Schwartz class solution of (\ref{boussinesqsystem}) on $\R_{+} \times [0,T]$ with initial data $u_0, v_0 \in \mathcal{S}(\R_{+})$ and boundary values $\tilde{u}_{0}, \tilde{u}_{1}, \tilde{u}_{2}, \tilde{v}_0 \in C^{\infty}([0,T])$ such that Assumptions \ref{solitonlessassumption} and \ref{originassumption} hold. 
Then, for each $(x,t) \in \R_{+} \times [0,T]$,  $M(x,t,k)$ satisfies RH problem \ref{RH problem for M}  and
\begin{align}\label{recoveruv}
\begin{cases}
 \displaystyle{u(x,t) = -i\sqrt{3}\frac{\partial}{\partial x} M^{(1)}(x,t)_{33} = \frac{1-\omega}{2} M^{(2)}(x,t)_{32},}
	\vspace{.1cm}\\
 \displaystyle{v(x,t) = -i\sqrt{3}\frac{\partial}{\partial t}M^{(1)}(x,t)_{33}.}
\end{cases}
\end{align}
\end{proposition}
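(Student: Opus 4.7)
The plan is to verify each of the properties (a)--(f) of RH problem \ref{RH problem for M} separately, since most of them have already been isolated in the earlier lemmas of this section; the recovery formulas \eqref{recoveruv} will then follow by matching coefficients in the asymptotic expansion of $M$ at $k=\infty$.

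First I would handle analyticity (a) and the jump condition (b): by Lemma \ref{QtildeQlemma}, Assumption \ref{solitonlessassumption} allows us to take the zero set $\mathsf{Z}$ of the Fredholm determinants to be empty, so Proposition \ref{Mnprop}(b) gives analyticity of each $M_{n}$ on $F_{n}\setminus \hat{\mathcal{Q}}$, and Lemma \ref{Mjumplemma} supplies the jump $M_{+}=M_{-}v$ on $\Gamma\setminus(\Gamma_{\star}\cup\mathcal{Q})$; the removable singularities at points of $\hat{\mathcal{Q}}\setminus\{\pm 1\}$ then follow from the $\mathcal{A}$- and $\mathcal{B}$-symmetries (see below) combined with Lemma \ref{Mat1lemma}. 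The symmetries (e) are precisely the content of Proposition \ref{Mnprop}(f), again after removing $\mathsf{Z}$ via Lemma \ref{QtildeQlemma}. The local behavior at $\pm 1$, i.e.\ (d) together with the specific matrix shapes \eqref{mathcalMcoefficients}, is exactly Lemma \ref{Mat1lemma}.

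Next I would verify the asymptotics at infinity (c). Lemma \ref{Matinftylemma} yields
\begin{align*}
M(x,t,k)=I+\frac{\mu_{3}^{(1)}(x,t)}{k}+\frac{\mu_{3}^{(2)}(x,t)}{k^{2}}+O(k^{-3}),\qquad k\to\infty,
\end{align*}
so $M^{(j)}=\mu_{3}^{(j)}$. From Proposition \ref{prop:first two coeff at infty}, $\mu_{3}^{(1)}$ is diagonal, which immediately gives $M^{(1)}_{12}=M^{(1)}_{13}=0$; and the off-diagonal part of $\mu_{3}^{(2)}$ is the explicit skew matrix $-\frac{2u}{1-\omega}\bigl(\begin{smallmatrix}0&\omega^{2}&-\omega\\-\omega^{2}&0&1\\\omega&-1&0\end{smallmatrix}\bigr)$, so $(M^{(2)})_{12}+(M^{(2)})_{21}=0$, establishing \eqref{singRHMatinftyb}. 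The same formula yields $(M^{(2)})_{32}=\frac{2u}{1-\omega}$, and combined with \eqref{recoveru} applied to $\mu_{3}^{(1)}$ (whose $(3,3)$-entry is $\int_{(+\infty,t)}^{(x,t)}\frac{i}{\sqrt{3}}(u\,dx'+v\,dt')$), we obtain both equalities in \eqref{recoveruv intro} and hence \eqref{recoveruv}.

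Finally I would dispose of the boundedness condition (f). The set $\Gamma_{\star}$ consists of $\{0\}$ together with the $18$ interior intersection points $\{e^{\frac{5\pi i}{12}}\kappa_{j},\,i\kappa_{j},\,e^{\frac{7\pi i}{12}}\kappa_{j}\}_{j=1}^{6}$. At each such point other than $0$ the three adjacent cells $F_{n}$ satisfy $\mathrm{dist}(k_{\star},\hat{\mathcal{Q}})>0$, so Proposition \ref{Mnprop}(c) (with $\mathsf{Z}=\emptyset$) gives $M_{n}(x,t,k)=O(1)$ as $k\to k_{\star}$ from within each $F_{n}$; boundedness at $k=0$ follows by applying the $\mathcal{B}$-symmetry to the large-$k$ expansion of Lemma \ref{Matinftylemma}. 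The main obstacle I anticipate is a bookkeeping one rather than a conceptual one: one must check that the definitions of the $\tilde{v}_{j}$ in the jump matrix truly match the ratios produced by $T_{m}^{-1}T_{n}$ for each of the $54$ pieces of $\Gamma$, so that the short computation done in Lemma \ref{Mjumplemma} on $\Gamma_{1}$ really does extend, piece by piece, to the entire contour; this is where the many algebraic identities \eqref{relations on the unit circle}--\eqref{def of f} and Assumption \ref{solitonlessassumption} are repeatedly invoked to eliminate spurious denominators.
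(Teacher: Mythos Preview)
Your proposal is correct and follows essentially the same route as the paper's proof: you invoke Proposition \ref{Mnprop} and Lemmas \ref{Mjumplemma}, \ref{QtildeQlemma} for (a), (b), (e), (f), Lemma \ref{Matinftylemma} together with Proposition \ref{prop:first two coeff at infty} for (c) and the recovery formulas, and Lemma \ref{Mat1lemma} for (d). One minor point: your remark about ``removable singularities at points of $\hat{\mathcal{Q}}\setminus\{\pm 1\}$'' is unnecessary, since every point of $\hat{\mathcal{Q}}$ lies on $\Gamma$ (the $\kappa_{j}$ are on $\partial\D$ and $0\in\Gamma_{\star}$), so $F_{n}\setminus\hat{\mathcal{Q}}=F_{n}$ and Proposition \ref{Mnprop}(b) already yields analyticity on all of $F_{n}$.
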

\begin{proof}
The fact that $M$ satisfies properties (a), (b) and (f) of RH problem \ref{RH problem for M} is a direct consequence of Proposition \ref{Mnprop} and Lemmas \ref{Mjumplemma} and \ref{QtildeQlemma}. Lemma \ref{Matinftylemma} implies that $M$ satisfies property (c) of RH problem \ref{RH problem for M}, whereas property (e) follows by combining \eqref{Msymm} with Lemma \ref{QtildeQlemma}. Moreover, by combining Lemma \ref{Matinftylemma} with the expressions of $\mu_{3}^{(1)}$ and $\mu_{3}^{(2)}$ of Proposition \ref{prop:first two coeff at infty}, we have
\begin{align*}
& M^{(1)}(x,t)_{33} = \frac{i}{\sqrt{3}}  \int_{+\infty}^{x} u(x^{\prime}, t) dx', \qquad  M^{(2)}(x,t)_{32} = \frac{2u(x,t)}{1-\omega}.
\end{align*}
Since $u,v$ have rapid decay as $x \to +\infty$ and satisfy $u_t = v_x$, this proves (\ref{recoveruv}).
Finally, property $(d)$ related to the behavior of $M$ as $k \to \pm 1$ follows from Lemma \ref{Mat1lemma}. 
\end{proof}

Theorem \ref{thm:inverse sca} is now a direct consequence of Proposition \ref{RHth} and Lemma \ref{lemma:equivalence}.

\appendix \setcounter{equation}{0}\renewcommand\theequation{A.\arabic{equation}}

\section{Uniqueness for RH problem \ref{RH problem for M}}\label{Appendix:uniqueness}

In this appendix, we prove the following result.

\begin{proposition}\label{uniquenessprop}
The solution $M$ of RH problem \ref{RH problem for M} is unique, if it exists.
\end{proposition}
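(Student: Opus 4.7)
The plan is the standard Liouville strategy for $3\times 3$ Riemann--Hilbert problems. Given two solutions $M$ and $\tilde{M}$ of RH problem~\ref{RH problem for M}, I will define $N(x,t,k):=M(x,t,k)\tilde{M}(x,t,k)^{-1}$ and argue that $N$ extends to an entire function of $k$ tending to $I$ at infinity; Liouville then forces $N\equiv I$, hence $M\equiv\tilde{M}$.

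First I would verify $\det\tilde{M}\equiv 1$. Every jump matrix $\tilde{v}_{j}$ in \eqref{vdef} has unit determinant, which can be read off from the triangular or block-triangular structure listed there using the reflection-coefficient identities \eqref{relations on the unit circle}; hence $\det\tilde{M}$ has no jump across $\Gamma\setminus(\Gamma_\star\cup\mathcal{Q})$. The rank-one form of $\mathcal{M}_{14}^{(-1)}$ and $\widetilde{\mathcal{M}}_{5}^{(-1)}$ in \eqref{mathcalMcoefficients}, combined with the fact that the second column of $\mathcal{M}_{14}^{(0)}$ and $\widetilde{\mathcal{M}}_{5}^{(0)}$ lies in $\mathrm{span}\{e\}$ with $e=(1,-1,0)^{T}$, is exactly what is needed so that the Laurent expansion of $(k\mp 1)^{3}\det\tilde{M}$ in the coefficients of $\tilde{M}$ begins at order $(k\mp 1)^{3}$; hence $\det\tilde{M}$ is regular at $k=\pm 1$. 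Propagating by the symmetries \eqref{symmetry of M} gives regularity at every $\kappa_{j}$, and the $\mathcal{B}$-symmetry together with $\tilde{M}\to I$ at infinity forces $\tilde{M}\to I$ also as $k\to 0$. Since $\tilde{M}$ is bounded at $\Gamma_\star$, $\det\tilde{M}$ is entire, and Liouville gives $\det\tilde{M}\equiv 1$; in particular $\tilde{M}^{-1}$ exists pointwise.

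The identity $N_{+}=M_{-}vv^{-1}\tilde{M}_{-}^{-1}=N_{-}$ gives analyticity of $N$ across each arc of $\Gamma\setminus(\Gamma_\star\cup\mathcal{Q})$; boundedness at $\Gamma_\star\setminus\mathcal{Q}$ and analyticity at $k=0$ follow as above. By the $\mathcal{A}$- and $\mathcal{B}$-symmetries it then suffices to show $N$ is analytic at $k=1$ and $k=-1$. For $k=1$, writing $\tilde{M}=ev^{T}/(k-1)+B+O(k-1)$ with $v=(\tilde{\alpha},0,\tilde{\beta})^{T}$ and $B$ satisfying $B_{*,2}=\tilde{\gamma}e$, $B_{3,2}=0$, a direct cofactor calculation of $\mathrm{adj}(\tilde{M})=\tilde{M}^{-1}$ yields
\[
\tilde{M}^{-1}=\frac{e_{2}\,w(x,t)^{T}}{k-1}+O(1),\qquad k\to 1,\ k\in\bar{D}_{14},
\]
for an explicit row vector $w$; the fact that the singular part is confined to row~$2$ comes from the cancellation of the $(k-1)^{-1}$ contributions to the $2\times 2$ cofactors in rows~$1$ and~$3$ of $\mathrm{adj}(\tilde{M})$, which is forced by the proportionality of the two singular rows of $\tilde{M}$ and by the relations $B_{3,2}=0$, $B_{1,2}+B_{2,2}=0$. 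Multiplying,
\[
N=\frac{(ev_{M}^{T})(e_{2}w^{T})}{(k-1)^{2}}+O\!\left(\frac{1}{k-1}\right),
\]
with $v_{M}=(\alpha,0,\beta)^{T}$; the $(k-1)^{-2}$ term vanishes identically because $v_{M}^{T}e_{2}=0$. Thus $N$ has at worst a simple pole at $k=1$ with nilpotent rank-one residue of the form $e\cdot s^{T}$ (a $\mathcal{B}$-symmetry check forces $s_{1}=s_{2}$).

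To close the argument I would rule out a nonzero such residue by combining (i) the $\mathcal{A}$- and $\mathcal{B}$-symmetries of $N$, which express the six residues of $N$ at the $\kappa_{j}$'s in terms of the single residue at $k=1$, with (ii) the normalization \eqref{singRHMatinftyb} at $k=\infty$, which imposes three linear scalar constraints on $\sum_{j=1}^{6}R_{\kappa_{j}}$. The only nilpotent rank-one residue compatible with all these constraints is zero, so $N$ is analytic at every $\kappa_{j}$, hence entire, and Liouville gives $N\equiv I$. The main obstacle is precisely this last step: the $(k-1)^{-2}$ cancellation is immediate from the identity $v_{M}^{T}e_{2}=0$, but eliminating the residual simple pole requires handling all six singular points $\kappa_{j}$ simultaneously, combining the two symmetries with the asymptotic constraints rather than working at a single $\kappa_{j}$.
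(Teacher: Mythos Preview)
Your overall strategy matches the paper's: form $N=M\tilde{M}^{-1}$, show it is meromorphic with at worst simple poles at the $\kappa_j$, determine the structure of the residues from \eqref{mathcalMcoefficients}, and then kill those residues using symmetry and behavior at infinity. Your treatment of $\det\tilde{M}\equiv 1$ and of the $(k-1)^{-2}$ cancellation is correct and essentially identical to the paper's.

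There is, however, a genuine gap in your last paragraph. First, the $\mathcal{A}$- and $\mathcal{B}$-symmetries do \emph{not} express all six residues in terms of the single residue at $k=1$: the orbit of $\{\kappa_j\}$ under $k\mapsto\omega k$ and $k\mapsto k^{-1}$ splits into two pieces $\{1,\omega,\omega^2\}$ and $\{-1,-\omega,-\omega^2\}$, so you have two independent residues, say
\[
K_1=\begin{pmatrix}p_1&p_1&p_2\\-p_1&-p_1&-p_2\\0&0&0\end{pmatrix},\qquad
K_4=\begin{pmatrix}\tilde p_1&\tilde p_1&\tilde p_2\\-\tilde p_1&-\tilde p_1&-\tilde p_2\\0&0&0\end{pmatrix},
\]
and hence four real unknowns $p_1,p_2,\tilde p_1,\tilde p_2$. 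Second, the constraints you cite do not close. The asymptotic normalization you invoke (via \eqref{singRHMatinftyb}, or equivalently the form of $M^{(1)},M^{(2)}$ forced by the symmetries) yields at order $k^{-1}$ only the two relations $p_1+\tilde p_1=0$ and $p_2+\tilde p_2=0$, and at order $k^{-2}$ one more relation $p_2=0$. Nilpotency of each $K_j$ is automatic from its rank-one form $e\cdot s^T$ with $s^Te=0$, so it adds nothing. This leaves a one-parameter family (parametrized by $p_1$) of candidate residues satisfying every linear constraint you have listed; your sentence ``the only nilpotent rank-one residue compatible with all these constraints is zero'' is therefore not justified.

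The missing ingredient is precisely the identity $\det N\equiv 1$ that you already proved. Substituting $\tilde p_1=-p_1$, $p_2=\tilde p_2=0$ into $N=I+\sum_j K_j/(k-\kappa_j)$ and expanding gives $\det N=1-12p_1^2 k^{-6}+O(k^{-7})$, which forces $p_1=0$. Once you insert this step, your argument is complete and coincides with the paper's proof.
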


Proposition \ref{uniquenessprop} is proved via a series of lemmas.

\begin{lemma}[Asymptotics of $M$ as $k \to \infty$]\label{lemma: asymp at inf appedix}
If $M$ satisfies RH problem \ref{RH problem for M}, then
\begin{align}\nonumber
M(x,t,k) = & \; I + \frac{M_{33}^{(1)}}{k} \begin{pmatrix} \omega^2 & 0 & 0 \\ 
0 & \omega & 0 \\ 
0 & 0 & 1
\end{pmatrix} 
+ \frac{M_{33}^{(2)}}{k^2}\begin{pmatrix} 
\omega  & 0 & 0 \\ 
0 & \omega^2 & 0 \\ 
0 & 0 &  1
\end{pmatrix}  
	\\ \label{singMatinfty}
&  + \frac{M_{12}^{(2)}}{k^2}\begin{pmatrix} 
0 & 1 & -\omega^{2} \\ 
-1 & 0 & \omega \\ 
\omega^{2} & -\omega &  0
\end{pmatrix} + O(k^{-3}),  \quad \mbox{as } k \to \infty,
\end{align}
where $M_{33}^{(1)}(x,t)$, $M_{33}^{(2)}(x,t)$, and $M_{12}^{(2)}(x,t)$ are complex-valued functions of $x$ and $t$.
\end{lemma}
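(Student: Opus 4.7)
The plan is to extract the stated structural form of $M^{(1)}$ and $M^{(2)}$ purely from the expansion in property (c) of RH problem \ref{RH problem for M}, combined with the $\mathcal{A}$-symmetry in property (e). Substituting the expansion $M(x,t,k)=I+M^{(1)}/k+M^{(2)}/k^{2}+O(k^{-3})$ into $M(x,t,k)=\mathcal{A}M(x,t,\omega k)\mathcal{A}^{-1}$ and identifying like powers of $k$ yields
\begin{align*}
\mathcal{A}M^{(n)}\mathcal{A}^{-1}=\omega^{n}M^{(n)}, \qquad n=1,2.
\end{align*}
Since conjugation by $\mathcal{A}$ acts on a matrix $B$ by $(\mathcal{A}B\mathcal{A}^{-1})_{ij}=B_{\sigma^{-1}(i),\sigma^{-1}(j)}$ with $\sigma=(1\,2\,3)$, these identities relate the nine entries of each $M^{(n)}$ in groups of three along each $\sigma$-orbit.

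For $n=1$: the diagonal orbit gives $M^{(1)}_{11}=\omega^{2}M^{(1)}_{33}$ and $M^{(1)}_{22}=\omega M^{(1)}_{33}$, so the diagonal is $M^{(1)}_{33}\,\mathrm{diag}(\omega^{2},\omega,1)$. Feeding the constraint $M^{(1)}_{12}=M^{(1)}_{13}=0$ from (c) into the off-diagonal orbits $\{(1,2),(3,1),(2,3)\}$ and $\{(1,3),(3,2),(2,1)\}$ forces every off-diagonal entry of $M^{(1)}$ to vanish. This produces the first term on the right-hand side of \eqref{singMatinfty}.

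For $n=2$: the diagonal orbit similarly gives $M^{(2)}_{11}=\omega M^{(2)}_{33}$, $M^{(2)}_{22}=\omega^{2}M^{(2)}_{33}$, which yields the second term in \eqref{singMatinfty}. On the off-diagonal orbit through $(1,2)$ one iterates $\mathcal{A}M^{(2)}\mathcal{A}^{-1}=\omega^{2}M^{(2)}$ to obtain
\begin{align*}
M^{(2)}_{31}=\omega^{2}M^{(2)}_{12},\qquad M^{(2)}_{23}=\omega M^{(2)}_{12},
\end{align*}
and, using the remaining constraint $M^{(2)}_{21}=-M^{(2)}_{12}$ from (c), the orbit through $(2,1)$ yields
\begin{align*}
M^{(2)}_{13}=-\omega^{2}M^{(2)}_{12},\qquad M^{(2)}_{32}=-\omega M^{(2)}_{12}.
\end{align*}
Collecting these six entries reproduces the rank-one matrix $M^{(2)}_{12}\bigl(\begin{smallmatrix} 0 & 1 & -\omega^{2} \\ -1 & 0 & \omega \\ \omega^{2} & -\omega & 0 \end{smallmatrix}\bigr)$, i.e.\ the third term in \eqref{singMatinfty}. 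There is no genuine obstacle here: the entire argument is a finite linear-algebra computation, the main point being simply to combine the $\mathbb{Z}_{3}$ action coming from the $\mathcal{A}$-symmetry with the three scalar constraints in (c) and verify that they completely determine $M^{(1)}$ and $M^{(2)}$ up to the three scalar unknowns $M^{(1)}_{33}$, $M^{(2)}_{33}$, $M^{(2)}_{12}$.
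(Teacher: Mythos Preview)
Your proof is correct and follows exactly the approach of the paper: combine the large-$k$ expansion from property~(c) with the $\mathcal{A}$-symmetry from property~(e) to obtain $\mathcal{A}M^{(n)}\mathcal{A}^{-1}=\omega^{n}M^{(n)}$, then feed in the three scalar constraints \eqref{singRHMatinftyb}. The paper's proof is a two-line summary of precisely this computation; you have simply spelled out the orbit-by-orbit linear algebra. (One harmless slip: the off-diagonal matrix $\bigl(\begin{smallmatrix} 0 & 1 & -\omega^{2} \\ -1 & 0 & \omega \\ \omega^{2} & -\omega & 0 \end{smallmatrix}\bigr)$ has rank two, not rank one.)
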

\begin{proof}
Using the asymptotic formula \eqref{asymp for M at infty in RH def} together with the symmetries \eqref{symmetry of M}, we obtain the relations $M^{(1)} = \omega^{2}\mathcal{A}M^{(1)}\mathcal{A}^{-1}$ and $M^{(2)} = \omega\mathcal{A}M^{(2)}\mathcal{A}^{-1}$. Substituting the conditions \eqref{singRHMatinftyb} into these relations directly yields the claim.
\end{proof}

\begin{lemma}[Unit determinant of $M$]\label{unitdetlemma}
If $M$ is a solution of RH problem \ref{RH problem for M}, then $M$ has unit determinant. 
\end{lemma}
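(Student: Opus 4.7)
The plan is to show that $d(x,t,k) := \det M(x,t,k)$ extends to an entire function of $k$ which tends to $1$ at infinity, so that Liouville's theorem forces $d\equiv 1$.

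First I would verify that $\det \tilde v(k) = 1$ for every jump matrix listed below \eqref{vdef}. The matrices $\tilde v_{j'}, \tilde v_{j''}$ that are triangular with unit diagonal require no check. For the remaining primed/double-primed matrices ($\tilde v_{1'}, \tilde v_{4'}, \tilde v_{7'}, \tilde v_{10'}, \tilde v_{13'}, \tilde v_{16'}$ and their double-primed analogues), a Laplace expansion along the row or column that is a standard basis vector reduces the computation to a $2\times 2$ determinant which equals $1$ by inspection. For the unprimed matrices $\tilde v_j$, $j=1,\ldots,18$, a direct computation using the identities in Theorem \ref{thm:r1r2}(\ref{Theorem2.3itemiv}) (in particular \eqref{r1r2 relation on the unit circle}--\eqref{R1R2 on the unit circle}, together with the definitions \eqref{def of h} and the alternative expressions \eqref{def of f}) produces the cancellations needed for $\det \tilde v_j = 1$. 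Since $e^{x\hat{\mathcal{L}}+t\hat{\mathcal{Z}}}$ preserves determinants, this yields $d_+ = d_-$ on $\Gamma\setminus(\Gamma_\star\cup\mathcal{Q})$, so $d$ extends analytically across that set.

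The crux is removability of the singularities of $d$ at $k=\pm 1$. Near $k=1$ property $(d)$ of RH problem \ref{RH problem for M} gives $M = \mathcal{M}_{14}^{(-1)}/(k-1) + \mathcal{M}_{14}^{(0)} + O(k-1)$ in $\bar D_{14}$, with the block structure \eqref{mathcalMcoefficients} in which the leading matrix $\mathcal{M}_{14}^{(-1)}$ has rank at most one (its second column vanishes and its first two rows are negatives of each other). Setting $N(k) := \mathcal{M}_{14}^{(-1)} + (k-1)\mathcal{M}_{14}^{(0)} + O((k-1)^2)$ and expanding $\det N(k)$ along its third row (which is $O(k-1)$), a short explicit calculation shows that the coefficient of $(k-1)^{2}$ in $\det N$ vanishes thanks to the structural zeros in \eqref{mathcalMcoefficients}, namely $(\mathcal{M}_{14}^{(0)})_{12}+(\mathcal{M}_{14}^{(0)})_{22}=0$ and $(\mathcal{M}_{14}^{(0)})_{32}=0$. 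Hence $\det N(k) = O((k-1)^3)$ and therefore $d(x,t,k) = \det N(k)/(k-1)^3 = O(1)$ as $k\to 1$; since $d$ is also meromorphic at $k=1$, this singularity is removable. The identical argument using $\widetilde{\mathcal{M}}_5^{(\pm 1)}$ in place of $\mathcal{M}_{14}^{(\pm 1)}$ treats $k=-1$.

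The remaining four sixth roots of unity in $\mathcal{Q}$ and the intersection set $\Gamma_\star$ are handled by symmetry and boundedness: the symmetries \eqref{symmetry of M}, together with $\det\mathcal{A}=1$ and $(\det\mathcal{B})^{2}=1$, yield $d(x,t,k)=d(x,t,\omega k)=d(x,t,1/k)$, propagating removability from $k=\pm 1$ to the remaining points of $\mathcal{Q}$, while condition $(f)$ gives $d=O(1)$ at each point of $\Gamma_\star$. Thus $d(x,t,\cdot)$ extends to an entire function of $k$, and \eqref{asymp for M at infty in RH def} together with the normalization \eqref{singRHMatinftyb} yields $d(x,t,k)\to 1$ as $k\to\infty$; Liouville's theorem then forces $d\equiv 1$. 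I expect the principal obstacle to be the first step: carefully bookkeeping the algebraic identities of Theorem \ref{thm:r1r2}(\ref{Theorem2.3itemiv}) so that the eighteen unprimed determinants $\det\tilde v_j$ all collapse to $1$ simultaneously.
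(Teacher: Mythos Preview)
Your proposal is correct and follows essentially the same route as the paper: show $\det v\equiv 1$ so that $d=\det M$ is analytic on $\mathbb{C}\setminus(\Gamma_\star\cup\mathcal{Q})$, use condition (f) to remove $\Gamma_\star$, use the structural form \eqref{mathcalMcoefficients} to verify by direct computation that $d$ is bounded (hence holomorphic) at $k=\pm1$, propagate to the other $\kappa_j$ via the $\mathcal{A}$-symmetry, and conclude by Liouville. The paper's proof is terser (it simply asserts $\det v\equiv 1$ and says ``by a direct computation'' for the removability at $\pm1$), but your added detail on both points is accurate; the normalization \eqref{singRHMatinftyb} is not actually needed for $d\to 1$ at infinity, since $M=I+O(k^{-1})$ already suffices.
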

\begin{proof}
Since $\det v \equiv 1$, criteria (a) and (b) of RH problem \ref{RH problem for M}, combined with Morera's theorem, imply that $\det M$ is analytic in $\C \setminus (\Gamma_{\star} \cup \mathcal{Q})$. Criteria (f) then implies that $\det M$ is in fact analytic in $\C \setminus \mathcal{Q}$. Moreover, by criteria (c), $\det M(x,t,k) = 1+O(k^{-1})$ as $k \to \infty$. The formulas (\ref{singRHMat0}) on the behavior of $M(x,t,k)$ as $k \to k_{\star}\in \{1,-1\}$ implies (by a direct computation) that $\det M$ has no pole at $k = 1$ and $k=-1$. The $\mathcal{A}$-symmetry in \eqref{symmetry of M} then imply that $\det M$ has no pole at each of the points $\kappa_{j}$, $j=1,\ldots,6$. Thus $\det M$ is entire and the claim follows from Liouville's theorem.
\end{proof}

\begin{proof}[Proof of Proposition \ref{uniquenessprop}]
Suppose $M$ and $N$ are two solutions of RH problem \ref{RH problem for M}. By Lemma \ref{unitdetlemma}, $\det N \equiv 1$. In particular, $N^{-1}$ can be expressed in terms of the minors of $N$. Using this expression and (\ref{singRHMat0}) to expand $N^{-1}$ as $k \to 1$ in $\bar{D}_{14}$, we get 
\begin{align}\label{N1Aat0}
N^{-1}(x,t,k) = 
\frac{1}{k-1} 
\begin{pmatrix}
0 & 0 & 0  \\
a & a & * \\
0 & 0 & 0 
\end{pmatrix} 
+ \begin{pmatrix}
c  & c & * \\
*  & * & * \\
d  & d & * 
\end{pmatrix} 
+  O(k-1), \qquad \mbox{as } k\to 1, \; k \in \bar{D}_{14}.
\end{align}
Therefore, 
\begin{align}\label{asymp MNinv 1}
(MN^{-1})(x,t,k) = \frac{K_{1}}{k-1} + O(1), \qquad \mbox{as }k \to 1, \, k \in \bar{D}_{14}, 
\end{align}
where $K_{1}=K_{1}(x,t)$ is of the form
\begin{align*}
K_{1}=\begin{pmatrix}
p_{1} & p_{1} & p_{2} \\
-p_{1} & -p_{1} & -p_{2} \\
0 & 0 & 0
\end{pmatrix}
\end{align*}
for some functions $p_{1},p_{2}$ depending on $x$ and $t$ but not on $k$. Similarly, we find
\begin{align}\label{asymp MNinv 4}
(MN^{-1})(x,t,k) = \frac{K_{4}}{k+1} + O(1), \quad \mbox{as }k \to -1, \, k \in \bar{E}_{5},
\end{align}
where $K_{4}=K_{4}(x,t)$ is of the form
\begin{align*}
K_{4} = \begin{pmatrix}
\tilde{p}_{1} & \tilde{p}_{1} & \tilde{p}_{2} \\
-\tilde{p}_{1} & -\tilde{p}_{1} & -\tilde{p}_{2} \\
0 & 0 & 0
\end{pmatrix}
\end{align*}
for some functions $\tilde{p}_{1},\tilde{p}_{2}$ depending on $x$ and $t$ but not on $k$.

Since $M$ and $N$ have the same jumps and are bounded as $k\to k_{\star}\in \Gamma_{\star}$, $MN^{-1}$ is a meromorphic function whose only poles are at $\kappa_{j}$, $j=1,\ldots,6$. The $\mathcal{A}$-symmetry in \eqref{symmetry of M}, combined with \eqref{asymp MNinv 1}-\eqref{asymp MNinv 4}, then implies that
\begin{align*}
(MN^{-1})(x,t,k) = \frac{K_{j}(x,t)}{k-\kappa_{j}}+O(1), \qquad \mbox{as } k \to \kappa_{j}, \; j=1,\ldots,6,
\end{align*}
where
\begin{align*}
& K_{2} = \omega^{2} \mathcal{A}K_{4}\mathcal{A}^{-1} = \omega^{2} \begin{pmatrix}
0 & 0 & 0 \\
\tilde{p}_{2} & \tilde{p}_{1} & \tilde{p}_{1} \\
-\tilde{p}_{2} & -\tilde{p}_{1} & -\tilde{p}_{1}
\end{pmatrix}, & & K_{3} = \omega \mathcal{A}^{-1}K_{1}\mathcal{A} = \omega \begin{pmatrix}
-p_{1} & -p_{2} & -p_{1} \\
0 & 0 & 0 \\
p_{1} & p_{2} & p_{1}
\end{pmatrix}, \\
& K_{5} = \omega^{2} \mathcal{A}K_{1}\mathcal{A}^{-1} = \omega^{2} \begin{pmatrix}
0 & 0 & 0 \\
p_{2} & p_{1} & p_{1} \\
-p_{2} & -p_{1} & -p_{1}
\end{pmatrix}, & & K_{6} = \omega \mathcal{A}^{-1}K_{4}\mathcal{A} = \omega \begin{pmatrix}
-\tilde{p}_{1} & -\tilde{p}_{2} & -\tilde{p}_{1} \\
0 & 0 & 0 \\
\tilde{p}_{1} & \tilde{p}_{2} & \tilde{p}_{1}
\end{pmatrix}. 
\end{align*}
Since $M(x,t,k)= I+O(k^{-1})$ and $N(x,t,k) = I+O(k^{-1})$ as $k\to \infty$, it follows that 
\begin{align}\label{lol3}
(MN^{-1})(x,t,k) = I + \sum_{j=1}^{6} \frac{K_{j}(x,t)}{k-\kappa_{j}}, \qquad k \in \C\setminus \mathcal{Q}.
\end{align}
Expanding this expression as $k \to \infty$, we obtain
\begin{align}\label{lol1}
& (MN^{-1})(x,t,k) = I + \frac{1}{k}\begin{pmatrix}
(1-\omega)q_{1} & q_{1}-\omega q_{2} & q_{2} - \omega q_{1} \\
\omega^{2} q_{2} - q_{1} & (\omega^{2}-1)q_{1} & \omega^{2} q_{1} - q_{2} \\
\omega q_{1} - \omega^{2} q_{2} & \omega q_{2} - \omega^{2} q_{1} & (\omega - \omega^{2})q_{1}
\end{pmatrix} + O(k^{-2}),
\end{align}
where $q_{j} := p_{j}+\tilde{p}_{j}$, $j=1,2$. 
On the other hand, using Lemma \ref{lemma: asymp at inf appedix} to expand $MN^{-1}$ as $k \to \infty$, we find 
\begin{align}\nonumber
(MN^{-1})(x,t,k) = &\; I +\frac{M_{33}^{(1)}-N_{33}^{(1)}}{k}\begin{pmatrix}
\omega^{2} & 0 & 0 \\
0 & \omega & 0 \\
0 & 0 & 1
\end{pmatrix} + \frac{1}{k^{2}}\bigg\{ (M_{12}^{(2)}-N_{12}^{(2)}) \begin{pmatrix}
0 & 1 & -\omega^{2} \\
-1 & 0 & \omega \\
\omega^{2} & -\omega & 0
\end{pmatrix} \\
& + \big( M_{33}^{(2)}-N_{33}^{(2)} + N_{33}^{(1)}(N_{33}^{(1)}-M_{33}^{(1)}) \big) \begin{pmatrix}
\omega & 0 & 0 \\
0 & \omega^{2} & 0 \\
0 & 0 & 1
\end{pmatrix} \bigg\} + O(k^{-3}). \label{lol2}
\end{align}
Comparing the coefficients of $k^{-1}$ in \eqref{lol1} and \eqref{lol2}, we infer that $q_{1}=q_{2}=0$, i.e.
\begin{align*}
p_{1}+\tilde{p}_{1}= 0, \qquad p_{2}+\tilde{p}_{2}=0.
\end{align*}
Substituting these identities in \eqref{lol3}, we obtain
\begin{align}\label{lol5}
(MN^{-1})(x,t,k) = I + \frac{2}{k^{2}}\begin{pmatrix}
(1-\omega^{2})p_{1} & p_{1}-\omega^{2} p_{2} & p_{2}-\omega^{2} p_{1} \\
\omega p_{2}-p_{1} & (\omega-1)p_{1} & \omega p_{1}-p_{2} \\
\omega^{2}p_{1}-\omega p_{2} & \omega^{2} p_{2} - \omega p_{1} & (\omega^{2}-\omega)p_{1}
\end{pmatrix} + O(k^{-3})
\end{align}
as $k \to \infty$. Equating the coefficients of $k^{-2}$ in \eqref{lol2} and \eqref{lol5}, we get $p_{2} = 0$. Moreover, using again \eqref{lol3}, we also have
\begin{align*}
\det(MN^{-1}(x,t,k)) & = 1 - \frac{3\big( (p_{1}-p_{2})^{2}-(\tilde{p}_{1}-\tilde{p}_{2})^{2} \big)}{k^{3}} - \frac{3(p_{1}-p_{2}-\tilde{p}_{1}+\tilde{p}_{2})^{2}}{k^{6}}+O(k^{-7}) \\
& = 1 - \frac{12p_{1}^{2}}{k^{6}} + O(k^{-7}).
\end{align*}
By Lemma \ref{unitdetlemma}, $MN^{-1}$ has unit determinant, and therefore $p_{1} = 0$. Hence $K_{j}=0$ for all $j=1,\ldots,6$, and thus $MN^{-1}\equiv I$, i.e. $M = N$.
\end{proof}

\section{Proof of Lemma \ref{lemma:equivalence}}\label{Appendix:equivalence}
Suppose $\{u,v\}$ is a Schwartz class solution of \eqref{boussinesqsystem} with existence time $T\in (0,\infty)$, initial data $u_0, v_0 \in \mathcal{S}(\R_{+})$ and boundary values $\tilde{u}_{0},\tilde{u}_{1}, \tilde{u}_{2}, \tilde{v}_0 \in C^{\infty}([0,T])$.
Taking the derivative with respect to $x$ of the first equation in (\ref{boussinesqsystem}), and using $v_{tx}=u_{tt}$ (which follows from the second equation in (\ref{boussinesqsystem})), we infer that $u$ satisfies (\ref{badboussinesq}) with initial data $u(x,0)=u_0(x)$, $u_t(x,0) = v_{x}(x,0) = v_{0x}(x) =: u_1(x)$, and with boundary values 
\begin{align*}
& u(0,t) = \tilde{u}_{0}(t), \quad u_{x}(0,t) = \tilde{u}_{1}(t), \quad u_{xx}(0,t) = \tilde{u}_{2}(t), \\
& u_{xxx}(0,t) = (v_{t} - u_{x} - (u^2)_{x})(0,t) = (\tilde{v}_{0t}-\tilde{u}_{1}-2\tilde{u}_{0}\tilde{u}_{1})(t)=:\tilde{u}_{3}(t),
\end{align*}
where for the last equation we have used the first equation in (\ref{boussinesqsystem}). Clearly, $u_1 \in \mathcal{S}(\R_{+})$ and $\tilde{u}_{3} \in C^{\infty}([0,T])$, so this finishes the first part of the claim.

For the second part, suppose $u$ is Schwartz class solution of \eqref{badboussinesq} with existence time $T\in (0,\infty)$, initial data $u_0, u_1 \in \mathcal{S}(\R_{+})$ and boundary values $\tilde{u}_{0},\tilde{u}_{1}, \tilde{u}_{2}, \tilde{u}_{3} \in C^{\infty}([0,T])$. It follows from (\ref{badboussinesq}) that (\ref{rapiddecay u}) holds with $u$ replaced by $u_{tt}$. Since $u_t(x,t) = u_{1}(x) + \int_{0}^t u_{tt}(x,t') dt'$ and $u_{1}\in \mathcal{S}(\R_{+})$, we have that (\ref{rapiddecay u}) holds also with $u$ replaced by $u_{t}$.
Define $v$ by (\ref{vxtdef}). Since $u$ is smooth on $\R_{+}\times [0,T]$ and satisfies \eqref{rapiddecay u}, $v$ is also smooth on $\R_{+}\times [0,T]$. Straightforward estimates show then that (\ref{rapiddecay u}) holds also with $u$ replaced by $v$, and therefore \eqref{rapiddecay u v} holds. Integration of (\ref{badboussinesq}) from $+\infty$ to $x$ yields
\begin{align*}
\int_{+\infty}^x u_{tt}(x', t) dx' = u_{x}(x,t) + (u^{2})_{x}(x,t) + u_{xxx}(x,t).
\end{align*}
Since $\int_{+\infty}^x u_{tt}(x', t) dx' = v_t(x,t)$, 
we infer that $u,v$ satisfy (\ref{boussinesqsystem}).

Moreover, $v(x,0) = \int_{+\infty}^x u_{t}(x', 0) dx' = \int_{+\infty}^x u_{1}(x') dx' =: v_{0}(x)$ and
\begin{align*}
v(0,t) & = v(0,0) + \int_{0}^{t}v_{t}(0,t')dt' = \int_{+\infty}^0 u_t(x',0) dx' + \int_{0}^{t}(u_{x}+(u^{2})_{x}+u_{xxx})(0,t')dt' \\
& = \int_{+\infty}^0 u_1(x') dx' + \int_{0}^{t}(\tilde{u}_{1}+2\tilde{u}_{0}\tilde{u}_{1}+\tilde{u}_{3})(t')dt' =: \tilde{v}_{0}(t).
\end{align*}
Since $v\in \mathcal{S}(\R_{+})$ and $\tilde{v}_{0}\in C^{\infty}([0,T])$, this finishes the proof.

\bigskip
\noindent
{\bf Acknowledgement.} The author is a Research Associate of the Fonds de la Recherche Scientifique - FNRS. The author also acknowledges support from the Swedish Research Council, Grant No. 2021-04626, and from the European Research Council (ERC), Grant Agreement No. 101115687.

\bibliographystyle{plain}
\bibliography{is}

\end{document}